\newcommand{\A}{\mathbb{A}}
\newcommand{\B}{\mathbb{B}}
\newcommand{\N}{\mathbb{N}}
\newcommand{\Q}{\mathbb{Q}}
\newcommand{\R}{\mathbb{R}}
\newcommand{\BB}{\mathscr{B}}
\newcommand{\DD}{\mathscr{D}}
\newcommand{\LL}{\mathscr{L}}
\newcommand{\ZZ}{\mathscr{Z}}
\newcommand{\cA}{{\ensuremath{\mathcal A}}}
\newcommand{\cB}{{\ensuremath{\mathcal B}}}
\newcommand{\cI}{{\ensuremath{\mathcal I}}}
\newcommand{\cL}{{\ensuremath{\mathcal L}}}
\newcommand{\cM}{{\ensuremath{\mathcal M}}}
\newcommand{\cP}{{\ensuremath{\mathcal P}}}
\newcommand{\cX}{{\ensuremath{\mathcal X}}}
\renewcommand{\gg}{{\mbox{\boldmath$g$}}}
\newcommand{\pp}{{\mbox{\boldmath$p$}}}
\newcommand{\qq}{{\mbox{\boldmath$q$}}}
\newcommand{\vv}{{\mbox{\boldmath$v$}}}
\newcommand{\ww}{{\mbox{\boldmath$w$}}}
\newcommand{\yy}{{\mbox{\boldmath$y$}}}
\newcommand{\zz}{{\mbox{\boldmath$z$}}}
\newcommand{\spp}{{\mbox{\scriptsize\boldmath$p$}}}
\newcommand{\svv}{{\mbox{\scriptsize\boldmath$v$}}}
\newcommand{\sww}{{\mbox{\scriptsize\boldmath$w$}}}
\newcommand{\eeta}{{\mbox{\boldmath$\eta$}}}
\newcommand{\mmu}{{\mbox{\boldmath$\mu$}}}
\newcommand{\nnu}{{\mbox{\boldmath$\nu$}}}
\newcommand{\ssigma}{{\mbox{\boldmath$\sigma$}}}
\newcommand{\seeta}{{\mbox{\scriptsize\boldmath$\eta$}}}
\newcommand{\smmu}{{\mbox{\scriptsize\boldmath$\mu$}}}
\newcommand{\sfd}{{\sf d}}
\newcommand{\sfe}{{\sf e}}
\newcommand{\sfu}{{\sf u}}
\newcommand{\sfw}{{\sf w}}
\newcommand{\sfF}{{\sf F}}
\newcommand{\sfH}{{\sf H}}
\newcommand{\sfL}{{\sf L}}
\newcommand{\sfY}{{\sf Y}}
\newcommand{\frm}{{\frak m}}
\newcommand{\rmA}{{\mathrm A}}
\newcommand{\rmC}{{\mathrm C}}
\newcommand{\rmD}{{\mathrm D}}
\newcommand{\rmE}{{\mathrm E}}
\newcommand{\rmL}{{\mathrm L}}
\newcommand{\Kliminf}{K\kern-3pt-\kern-2pt\mathop{\rm
lim\,inf}\limits}  
\newcommand{\Klimsup}{K\kern-3pt-\kern-2pt\mathop{\rm lim\,sup}\limits}  
\newcommand{\supp}{\mathop{\rm supp}\nolimits}   
\renewcommand{\d}{{\mathrm d}}
\newcommand{\dt}{{\d t}}
\newcommand{\dx}{{\d x}}
\newcommand{\restr}[1]{\lower3pt\hbox{$|_{#1}$}}
\newcommand{\topref}[2]{\stackrel{\eqref{#1}}#2}
\newcommand{\down}{\downarrow}              
\newcommand{\up}{\uparrow}
\newcommand{\weakto}{\rightharpoonup}
\newcommand{\eps}{\varepsilon}  
\newcommand{\nchi}{{\raise.3ex\hbox{$\chi$}}}
\newcommand{\forevery}{\text{for every }}
\def\qed{\ifmmode 
  \else \leavevmode\unskip\penalty9999 \hbox{}\nobreak\hfill
  \fi               
    \qquad           \hbox{\hskip.5em $\square$
                \hskip.1em}}
\def\endproofsym{\qed}
\newenvironment{proof}[1][Proof]{\def\endproofsym{\qed}\trivlist\item[\hskip\labelsep{%
\noindent{\normalfont\emph{#1}.}\hskip .321429\parindent}]\ignorespaces}
{\endproofsym\endtrivlist}
\newcommand{\nc}{\normalcolor}
\newcommand{\GGG}{\color{blue}}
\numberwithin{equation}{section}
\newtheorem{theorem}{Theorem}[section]
\newtheorem{proposition}[theorem]{Proposition}
\newtheorem {lemma}[theorem]{Lemma}
\newtheorem {definition}[theorem]{Definition}
\newtheorem{corollary}[theorem]{Corollary}
\newtheorem{remark}[theorem]{Remark}
\newtheorem{problem}[theorem]{Problem}
\newtheorem{hypothesis}[theorem]{Assumptions}
\newenvironment{system}
{\left\lbrace\begin{aligned}}
{\end{aligned}\right.}
\newcommand{\smallplus}{{\raise.3ex\hbox{$\scriptstyle+$}}}
\newcommand{\sign}{\operatorname{sign}}
\newcommand{\loc}{\operatorname{loc}}
\newcommand{\ac}{\operatorname{AC}}
\newcommand{\hj}{\operatorname{HJ}}
\newcommand{\weight}{\kappa}
\newcommand{\CE}[3]{\mathrm{CE}_{#1,#2}(#3)}
\newcommand{\KL}{K\kern-2pt L}
\newcommand{\K}{\mathbb{K}}
\newcommand{\apliminf}{\mathop{\operatorname{ap-liminf}}}  
\newcommand{\aplimsup}{\mathop{\operatorname{ap-limsup}}}  
\newcommand{\aplim}{\mathop{\operatorname{ap-lim}}}
\def\vep{\varepsilon}
\title{A variational approach to the mean field planning problem}
\author{Carlo Orrieri \thanks{Dipartimento di Matematica ``G. Castelnuovo'', Sapienza Universit\`a di Roma. Piazzale Aldo Moro 5, 00185 Roma, Italy. The author acknowledges the financial support provided by PRIN 20155PAWZB ``Large Scale Random Structures''.
Email: \texttt{orrieri@mat.uniroma1.it}} \and
Alessio Porretta \thanks{Dipartimento di Matematica, Universit\`a di Roma Tor Vergata. 
Via della Ricerca Scientifica 1, 00133 Roma, Italy. Partially supported by University of Tor Vergata (\lq\lq Consolidate The foundations 2015\rq\rq) project {\it Irreversibility in Dynamic Optimization}.  Email: \texttt{porretta@mat.uniroma2.it}} \and
Giuseppe Savar\'e \thanks{Dipartimento di Matematica ''F. Casorati'', Universit\`a di Pavia. 
Via Ferrata 5, 27100 Pavia, Italy. 
Partially supported by 
Cariplo foundation and Regione Lombardia via project \emph{Variational
evolution problems and optimal transport}, by MIUR PRIN 2015 project
\emph{Calculus of Variations}, and by IMATI-CNR. Email: \texttt{giuseppe.savare@unipv.it}}}
\date{\today}
\begin{document}

\maketitle
\begin{abstract}
We investigate a first-order mean field planning problem 
of the form
\begin{displaymath}
\begin{system}
-\partial_t u + H(x,Du) &= f(x,m) &&\text{in } (0,T)\times \R^d, \\
\partial_t m  - \nabla\cdot (m\,H_\spp(x,Du)) &= 0  &&\text{in }(0,T)\times \R^d,\\
m(0,\cdot) = m_0, \; m(T,\cdot) &= m_T &&\text{in } \R^d,
\end{system}
\end{displaymath}
associated to a convex Hamiltonian $H$ with quadratic growth
and a monotone interaction term $f$ with polynomial growth. 

We exploit the variational structure of the system, which encodes
the first order optimality condition of a convex dynamic optimal entropy-transport
problem with respect to the unknown density $m$ and of
its dual, involving the maximization of an integral functional among
all the subsolutions $u$ of an Hamilton-Jacobi equation.

Combining ideas from optimal transport, convex analysis and
renormalized solutions to the continuity equation, we will
prove existence and (at least partial) uniqueness of a weak solution
$(m,u)$.
A crucial step of our approach relies on a careful analysis of distributional subsolutions to Hamilton-Jacobi equations of the form $-\partial_t u + H(x,Du) \le \alpha$, 
under minimal summability conditions on $\alpha$, and to a
measure-theoretic description of the optimality via a suitable
contact-defect measure.
Finally, using the superposition principle, we are able to describe
the solution to the system 
by means of a measure on the path space encoding the local behavior of the players.
\end{abstract}

\tableofcontents

\section{Introduction}

Mean field games, as well as  mean field control problems, describe strategic interactions among large numbers of similar rational agents. Typically, the generic agent  aims at optimizing some functional depending both on its  own (controlled) dynamical state and on the average collective behavior, usually represented by the distribution law of the states. 
When the individual optimization and the collective evolution are consistent,  the system is described by  two PDEs satisfied, respectively,  by  the value function $u$ of the single agent and by the distribution law $m$ of the population. The simplest model is the following coupling of Hamilton-Jacobi-Bellman and Kolmogorov Fokker-Planck equations:
\begin{equation}\label{mfg-system}
\left\{
\begin{aligned}
-\partial_t u  - \vep \Delta u + H(x,Du) &= f(x,m),   \\
\partial_t m -\vep \Delta m - \nabla\cdot  (mH_p(x,Du)) &= 0\,,   
\end{aligned}
\right.
\end{equation}
where $\vep=0$ and  $\vep>0$ distinguish whether the agent's  dynamic is purely deterministic or, respectively, it  contains some Brownian noise.

Since the introduction of mean field game theory by J.-M. Lasry and P.-L. Lions, who derived \eqref{mfg-system} in connection with the limit of Nash equilibria of $N$-players games as $N\to \infty$ (see \cite{LL06cr1, LL06cr2}),  this kind of systems has been extensively studied, mainly under the stabilization condition of $f(x,m)$ being nondecreasing with respect to $m$.  
The system is usually closed with an initial condition for the density $m$ and a  terminal condition (a final pay-off) for the value function $u$. 

Otherwise, the {\it planning problem} consists in prescribing  both initial and terminal conditions for the density $m$. In this case,  the goal is to solve the following:
\begin{equation}\label{mfg-plan}
\left\{\begin{aligned}
-\partial_t u  - \vep \Delta u + H(x,Du) &= f(x,m),   \\
\partial_t m -\vep \Delta m - \nabla\cdot  (mH_p(x,Du)) &= 0,
\\
m(0)=m_0\,,\,\,m(T)&=m_1\,,   
\end{aligned}\right.
\end{equation}
in some finite horizon $T>0$. 

In the framework of mean field game  theory, the planning problem  was suggested and developed by P.-L. Lions in his courses at Coll\`{e}ge de France. In particular, when the problem is set on the flat torus and smooth initial-terminal densities $m_0,m_1$ are prescribed,  P.-L. Lions   proved the existence of smooth solutions both for the second order case ($\vep>0$ in \eqref{mfg-plan}) with  quadratic Hamiltonian $H(x,p)$ and for the first order case ($\vep=0$) when $f= f(m)$ is an increasing function (see \cite{L-coll}). Later, existence and uniqueness of weak solutions were proved for the second order case for more general Hamiltonians (see \cite{porretta2013planning, porrettaDGA}). 
Here the strategy used was to build solutions of the planning problem  by  penalizing the final pay-off $u(T)$   in a way to force the required density condition $m(T)$ to hold at the final time. This approach, which was also exploited for numerical schemes in \cite{achdou2012mean}, reminds that  exact controllability in finite time  can be  obtained as singular limit of optimal control problems through penalization of  the final pay-off, and therefore of the adjoint state at the final time. 
Indeed, J.-M. Lasry and P.-L. Lions already addressed, in their initial papers on mean field games (see \cite{LL06cr2}), that  system \eqref{mfg-plan} can be recast as the optimality system satisfied by state and adjoint state of an optimal control problem. Precisely, if $F(x,m)= \int_0^m f(x,s)ds$ and $L(x,\qq)$ is the Fenchel conjugate of the Hamiltonian $H(x,-\pp)$, system \eqref{mfg-system}  formally appears as the first order condition of the following minimization problem: 
 \begin{equation}\label{a-opt}
\min
\int_0^T\int_{\R^d} \big[L(x,\vv)\,m+
F(x,m)\big]\,\, \d x\,\d t \,:\quad \vv\in
L^2(m\,dxdt),\quad 
\begin{system} \partial_t m-\vep \Delta m+\nabla\cdot (m\,\vv )&=0  \\
m(0,\cdot)=m_0\,,m(T,\cdot)&=m_1  \end{system}
\end{equation}
It is well-known that, when $\vep=0$, $F=0$ and $H(x,\pp)=
\frac12|\pp|^2$, \eqref{a-opt} is the so-called fluid mechanics
formulation of the  Monge-Kantorovich mass transfer problem introduced
by Benamou and Brenier (see  \cite{BB})
and leading to the dynamic characterization of the
$L^2$-Kantorovich-Rubinstein-Wasserstein
distance $W_2$ between probability measures in $\R^d$ with
finite quadratic moment \cite{ambrosio2008gradient,
  villani2009optimal}. 
This approach 
has then been extended to general Lagrangian formulations
(see e.g.~\cite{Bernard-Buffoni05} and \cite[Chap.~7]{villani2009optimal}),
also exploiting the metric--Riemannian viewpoint intrinsic to the
dynamic approach \cite{Otto01,Otto-Villani00,ambrosio2008gradient}.

This way, the planning problem \eqref{mfg-plan} appears as a natural
generalization of optimal transport problems,
when the extra penalization term of entropic type (induced by 
the convex function $F$ in \eqref{a-opt}) prevents concentration of the 
transported density $m$. 

The study of various kind of entropic relaxation
of genuine optimal transport problems recently attracted a lot of
attention, due to the regularizing and convexification effect 
added by the entropic terms 
(see e.g.~\cite{Leonard12,CPVX18,LMS18}).
In the present case, since $F$ is nonlinear w.r.t.~$m$
(we will consider a typical power behavior of exponent $p\in
(1,+\infty)$),
the minimization of the dynamic cost functional 
cannot be reduced to a simpler and more explicit Kantorovich formulation 
involving transport plans, but it has to take into account
a complex interpolation dynamic interaction
between the transport and the local density terms.

In this article, our goal  is to use some ideas of optimal transport
theory, convex duality, and dynamic superposition principles
in order to study a suitable weak formulation of 
the deterministic mean field planning problem,
that in strong form and assuming $m>0$ everywhere can be
formally written as
\begin{equation}\label{eq:sMFPP}
\begin{system}
-\partial_t u + H(x,Du) &= f(x,m) &&\text{in } Q, \\
\partial_t m  - \nabla\cdot (m\,H_\spp(x,Du)) &= 0  &&\text{in } Q, \\
m(t,\cdot) = m_0, \; m(1,\cdot) &= m_1 &&\text{in } \R^d,
\end{system}
\tag{s-MFPP}
\end{equation}
where the final time has been normalized to $T=1$ and
$Q$ is the space-time cylinder $(0,1)\times \R^d$.

Compared to  previous results for mean field games systems, we
set the problem in the whole space, which seems more natural in the
viewpoint of optimal transport of probability measures. Otherwise, as
in previous results  we rely on two crucial structure conditions,
namely that $H(x,\pp)$ is convex with respect to $\pp$ and   
$f(x,m)$ is 
increasing with respect to $m$. 
The main result that we prove is the existence of weak solutions (and partial uniqueness for $m$ and $Du$) to the planning problem \eqref{eq:sMFPP} under fairly general growth conditions on $H$ and $f$, that will be discussed later in detail. The main effect of this generality is that no standard framework can be applied, in particular, to the Hamilton-Jacobi equation.

Let us stress that, in optimal transport theory, the duality between
the continuity equation and the Hamilton-Jacobi equation has  been
mostly exploited formally or 
under conditions of regularity which allow for the use of 
 explicit representation formula of Hopf-Lax type
(typical of the viscosity solutions' theory). 
Another source of difficulty lies on the possible vanishing of the 
density $m$ in large sets, so that 
the first equation of \eqref{eq:sMFPP} should be 
written in a relaxed form.

Unfortunately, the coupling appearing in  the system  \eqref{eq:sMFPP}
often leads outside the standard framework of continuous solutions to
Hamilton-Jacobi equations. By contrast, convex duality methods have
been successfully used in mean field game  theory under very general
growth conditions. In particular, existence and uniqueness results
were proved for the deterministic, or degenerate diffusion case,
using relaxed solutions of the Hamilton-Jacobi equations and weak
formulations of the system, see  \cite{cardaliaguet2015weak, cardaliaguet-graber, cardaliaguet2015second}. Those
relaxed formulations appear very naturally from the convex duality of
the variational problems, and we will follow a similar strategy here
when dealing with the planning problem.

\subsubsection*{A heuristic derivation of MFPP from the minimax principle}

Before describing the main contributions of our paper, let
us first briefly explain the heuristic derivation of the relaxed
formulation
of \eqref{eq:sMFPP} starting from the minimization of 
the functional
\begin{equation}\label{eq:Bintro}
  \begin{gathered}
    \cB(m,\vv):=
    \iint_Q \big[L(x,\vv)\,m+ F(x,m)\big]\,\, \d x\,\d t
    \quad \text{among all the solutions of}\\
    \partial_t m+\nabla\cdot(m\,\vv)=0\quad
    \text{in }\DD'((0,1)\times \R^d),\quad m(0,\cdot)=m_0,\
    m(1,\cdot)=m_1,\quad
    m\ge 0.
  \end{gathered}
\end{equation}
Arguing as in the formal discussion in \cite{Otto-Villani00}, 
we write the above constraint minimization as an inf-sup formulation
of
a saddle problem, where a new Lagrange multiplier $u\in C^1_c(\R\times
\R^d)$ is used to impose the continuity equation and the boundary conditions:
\begin{align*}
   \adjustlimits \inf_{m\ge0,\svv}\sup_u \iint_Q \big[L(x,\vv)\,m+
  F(x,m)\big]\,\d x\,\d t+
  \int_{\R^d}u_0m_0\,\d x-
  \int_{\R^d}u_1m_1\,\d x+
  \iint_Q \big[\partial_t u+Du\cdot\vv\big]\,m\,\d x \,\d t
\end{align*}
A standard trick to substitute $\vv$ with the new variable $\ww=m\vv$,
so that the saddle function is convex in $(m,\ww)$ and concave 
(in fact, linear) in $u$, suggests the possibility to interchange the
order of inf and sup, obtaining the dual problem
\begin{align*}
  \sup_u  & \int_{\R^d}u_0m_0\,\d x-
  \int_{\R^d}u_1m_1\,\d x+
  \inf_{m\ge 0,\svv}
  \iint_Q \big[L(x,\vv)\,m+
  F(x,m)+
  m\partial_t u+m Du\cdot\vv\big]\,\d x \,\d t
            \\&=
                \sup_u \int_{\R^d}u_0m_0\,\d x-
  \int_{\R^d}u_1m_1\,\d x+
                          \inf_{m\ge 0}
                          \iint_Q \Big[\big(-H(x,Du)
                +\partial_t u\big)\,m+F(x,m)\Big]\,\d x \,\d t
                \\&=
                \sup_u \int_{\R^d}u_0m_0\,\d x-
  \int_{\R^d}u_1m_1\,\d x-
                     \iint_Q F^*\Big(-\partial_t u+H(x,Du)\Big)\,\d x
                    \,\d t.
\end{align*}
where by Fenchel duality
\begin{displaymath}
  \inf_{\svv} L(x,\vv)+\pp\cdot\vv= - H(x,\pp),\quad
  \inf_{m\ge0} F(x,m)+\ell m=-F^*(x,\ell).
\end{displaymath}
The dual problem thus consists in the maximization of
\begin{equation}
  \label{eq:Aintro}
  \begin{gathered}
    \cA(u,\ell):= \int_{\R^d}u_0m_0\,\d x-
  \int_{\R^d}u_1m_1\,\d x-
                     \iint_Q F^*(\ell(t,x))\,\d x
                     \,\d t,\quad\\
                     \text{under the constraint}\quad
                     -\partial_t u+H(x,Du)=\ell\quad\text{in }Q.
  \end{gathered}
\end{equation}
Minimax principle yields $\cB(m,\vv)\ge \cA(u,\ell)$ 
whenever $m,\vv$ solve the continuity equation \eqref{eq:Bintro}
and $u,\ell$ are linked by the Hamilton-Jacobi equation
\eqref{eq:Aintro}.
The variational formulation of MFPP should then arise as
the optimality condition at minimizers $(m,\vv)$ of the primal problem
\eqref{eq:Bintro} and maximizers $(u,\ell)$ of the dual problem
\eqref{eq:Aintro}, if there is no duality gap.
Such conditions can be easily obtained by rearranging the 
(nonnegative) difference $\cB(m,\vv)-\cA(u,\ell)$
and assuming enough regularity in order to justify integration by
parts; we obtain the sum of two nonnegative terms:
\begin{align*}
  \cB(m,\vv)-\cA(u,\ell)
  &=\iint_Q \Big(L(x,\vv)+Du\cdot \vv+H(x,Du)\Big)m\,\d x\,\d t
+\iint_Q \Big(F(x,m)-\ell \,m+F^*(x,\ell)\Big)\,\d x\,\d t,
\end{align*}
so that, assuming differentiability of $H$ and continuity of $f$,
optimal pairs $(m,\vv)$ and $(u,\ell)$ 
with $\cB(m,\vv)=\cA(u,\ell)$ are characterized by 
\begin{align}
  \label{eq:285}
  \Big(L(x,\vv)+Du\cdot \vv+H(x,Du)\Big)m&=0\ \text{$\LL^{d+1}$ a.e.~in
  $Q$},&&\text{ i.e. }
  \vv=-H_\spp(x,Du)\quad \text{$m$ a.e.~in
  $Q$},\\
  \label{eq:282}
  F(x,m)-\ell \,m+F^*(x,\ell)&=0\
                               \text{$\LL^{d+1}$ a.e.~in
  $Q$},&&\text{ i.e. }
                               \begin{system}
                                 \ell&=f(x,m)&\text{if }m>0,\\
                                 \ell&\le f(x,0)&\text{if }m=0.
                               \end{system}
\end{align}
Combining \eqref{eq:285} and \eqref{eq:282} with the 
continuity equation of \eqref{eq:Bintro} and the Hamilton-Jacobi equation
of 
\eqref{eq:Aintro} we end up with the relaxed formulation of 
MFPP:
\begin{equation}\label{eq:wMFPP}
\begin{system}
-\partial_t u + H(x,Du) &\le f(x,m) &&\text{in } Q, \\
-\partial_t u + H(x,Du) &= f(x,m) &&  \text{if } m(x,t)>0,\\
\partial_t m  - \nabla\cdot (m\,H_\spp(x,Du)) &= 0  &&\text{in } Q,
\\
m(t,\cdot) = m_0, \; m(1,\cdot) &= m_1 &&\text{in } \R^d.
\end{system}
\tag{r-MFPP}
\end{equation}
One can notice that $u$ is just required to be a global
\emph{subsolution}
to the Hamilton-Jacobi equation with right-hand side $f(x,m)$;
the equality will be attained only in the set where $m$ is strictly positive.
\subsubsection*{The variational setting and the weak formulation of
  MFPP}

Since \eqref{eq:wMFPP} represents the optimality system of a saddle
point problem, there is a natural strategy to prove the existence of a solution
and to obtain a well posed weak formulation:
\begin{enumerate}[\rm S1.]
\item By the Direct method of the Calculus of Variations prove
  the existence of a minimizer for the primal problem 
  \eqref{eq:Bintro}: it can be formulated as the minimum
  of two convex and lower semicontinuous functions along curves of
  probability measures solving the continuity equation. 
  This part will be developed in Section
  \ref{subsec:variational-primal}; a preliminary discussion,
  related to the particular case when $H(x,\pp)=\frac 12|p|^2$ and
  $F(x,m)=\frac 12 m^p$
  (leading to the Kantorovich-Lebesgue $KL_{2,p}$ cost)
  is developed in Section \ref{subsec:KL}.  
\item 
  By (a suitably refined version of) the Von Neumann minimax
  principle (see \ref{subsec:minimax} in the Appendix) 
  prove that there is no duality gap and 
  $\min \cB(m,\vv)=\sup\cA(u,\ell)$ in a suitable class 
  of smooth functions (Section \ref{subsec:variational-dual}).
\item As it is typical in Optimal Transport problems, 
  existence of maximizers of the dual problem is a much subtler issue,
  due to the lack of compactness of the dual formulation
  in spaces of smooth functions.
  Here the first equation (in fact an inequality) of the relaxed
  formulation
  suggests to first study \emph{subsolutions} to the Hamilton-Jacobi 
  equation of the form
  \begin{equation}
    \label{eq:284}
    -\partial_t u+H(x,Du)\le \alpha
  \end{equation}
  just requiring minimal summability on $\alpha$ 
  (derived by an a priori estimate in $L^q(Q)$ 
  due to the
  growth
  of $F^*$)
  and minimal regularity of $u$, in order to give a distributional sense to
  \eqref{eq:284}.
  
  This preliminary study is the main topic of Section 
  \ref{sec:weak-subsolutions}: we will recover a suitable notion of 
  traces of $u$ at $t=0$ and $t=1$ in
  \S\ref{subsec:weak-subsol}, 
  we will prove that subsolutions exhibit a nice regularization
  effect,
  sufficient to gain upper semicontinuity of $\cA$ and
  enough compactness (Section \ref{subsec:stability}),
  to prove existence of a maximizer of 
  the relaxed formulation (Section \ref{subsec:existence-dual}):
  \begin{equation}
    \label{eq:288}
    \max\Big\{\cA(u,\alpha):\ -\partial_tu+H(x,Du)\le \alpha\text{ in }\DD'(Q)\Big\}.
  \end{equation}
  It is worth noticing that \eqref{eq:288} involves a
 \emph{convex}
  constraint on pairs $(u,\alpha)$, which is clearly more stable than  
  the condition $-\partial_t u+H(x,Du)=\ell$.
\item
  A crucial point concerns the duality 
  between the primal and the relaxed dual problem,
  in particular the fact that the optimal value of \eqref{eq:288}
  still coincides with the minimum of $\cB$.
  This fact will be addressed in 
  sections \ref{subsec:duality-transport}
  and \ref{subsec:variational-optimal}.
\item Having at our disposal minimizers of $\cB$ and maximizers of
  $\cA$,
  it is not difficult to check that the optimality condition yields
  $\vv=-H_\spp(x,Du)$ a.e.~on the set where $m>0$ and
  $\alpha=f(x,m)$. 
  The last technical question concerns 
  the ``contact'' condition $-\partial_t u+H(x,Du)=f(x,m)$ when $m>0$,
  since we have just a distributional subsolution to the
  Hamilton-Jacobi equation and $-\partial_t u$ 
  may have singular parts (see also \cite{carlier+al} for similar questions).

  In order to overcome this difficulty, we derive a distributional
  condition
  which can be formally obtained by combining the contact condition
  with the continuity equation satisfied by $m$: 
  assuming regular solutions, and multiplying the continuity equation
  by $u$, one can easily obtain
  \begin{align*}
    \partial_t(um)+\nabla\cdot(u\,m\vv)
    &=
      u\big(\partial_t m+\nabla\cdot(m\vv)\big)+
      \big(\partial_t u+Du\cdot \vv\big)m
    \\&=\big(\partial_t u-H(x,Du)+\alpha)m
        +\big(-\alpha+H(x,Du) +Du\cdot \vv\big)m
        \\&= \big(-\alpha+H(x,Du) +Du\cdot \vv\big)m
  \end{align*}
  so that we can in principle substitute the contact condition with
  a suitable (distributional and renormalized) version of
  \begin{equation}
    \label{eq:290}
    \partial_t(um)+\nabla\cdot(u\,m\vv)+
    \big(\alpha-H(x,Du) -Du\cdot \vv\big)m=0 \quad\text{ in }Q.
  \end{equation}
  In Section \ref{subsec:contact-defect} we will associate
  a nonnegative Radon ``contact-defect'' measure $\vartheta$ to every 
  pair of competitors $(m,\vv)$ for the primal problem and
  $(u,\alpha)$ for the dual problem and we will show
  that vanishing of $\vartheta$ is the right distributional way
  to impose the missing contact condition. 
\item With all the above tools at disposal,
  Section 
\ref{subsec:variational-optimal}
collects all the main result concerning the formulation,  the existence and the characterizations
of solutions to \eqref{eq:wMFPP}.
  \item
    A further analysis, carried out in the last Section
    \ref{sec:Lagrangian}, 
    concerns the Lagrangian viewpoint to 
    \eqref{eq:wMFPP}. Using the superposition principle,
    we are able to describe the solutions to 
    \eqref{eq:wMFPP} by means of a measure $\eeta$ 
    on the path space $\ac^2([0,1];\R^d)$ 
    (the characteristics associated to the velocity vector field
    $\vv$).
    $\eeta$ encodes the local behaviour of particles (or agents)
    which try to minimize a modified Lagrangian cost
    obtained by the sum of $L$ with 
    a potential induced by the mass distribution $\alpha=f(x,m)$.

    In this way, we can also recover a ``static'' description in terms
    of Optimal Transport, but where the transportation cost
    is affected by the density $m$ of the moving particles.
\end{enumerate}
Some preliminary material, concerning
Optimal transport, displacement interpolation, continuity equation 
weighted spaces, convergence in measure and precise representatives
of increasing functions is collected in Sections 
\ref{subsec:Wass} and \ref{sec:4}.

\section{Notation and assumptions}
\label{sec:notation}
In this section we collect some notions and results concerning convex functionals on measures, continuity equations and their connections with the theory of optimal transportation.
Moreover, we resume some useful properties of locally increasing functions which we will need in the sequel. 

\renewcommand{\GGG}{}
\subsection{Notation}
\label{subsec:notation}
\subsubsection*{List of main notation}\GGG
\halign{$#$\hfil\ &#\hfil\cr
I,Q&the interval $(0,1)$ and the space-time cylinder $(0,1)\times
\R^d$\cr
\LL^h,\ 
\lambda&the $h$-dimensional Lebesgue measure and 
its restriction to
$Q$\cr
L^p(\Omega,\frm)&Lebesgue space w.r.t.~the $\sigma$-finite Borel
measure $\frm$\cr
\kappa&the weight $1+|x|^2$ on $\R^d$ or on $Q$\cr
L^1_\kappa(\Omega)&weighted Lebesgue space of functions $f$ satisfying
$f\cdot \kappa\in L^1(\Omega)$\cr
L^\infty_{1/\kappa}(\Omega)&the dual space of $L^1_\kappa(\Omega)$: $f\cdot
1/\kappa\in L^\infty(\Omega)$\cr
\cX^q(\Omega) &the  space $L^q +L^\infty_{1/\weight}(\Omega)$
see Definition \ref{def:spaceX}\cr
H(x,\pp),\ L(x,\vv)&the Hamiltonian and the dual Lagrangian, see
\ref{h.1} and \eqref{eq:8}\cr
F(x,m),\ f(x,m),\ F^*(x,a)&the cost density function, its derivative, see
\ref{h.1} and \eqref{eq:def:F}\cr
c_H,\gamma^\pm_H,c_f,\gamma_f&structural constants and functions
related to $H,f$, see \ref{h.1}\cr
\cP(\R^d)&the set of Borel probability measure on $\R^d$\cr
\cP_2(\R^d)&Borel probability measures with finite quadratic moment\cr
\cP_2^r(\R^d), \cP_{2,p}^r(\R^d)&Absolutely continuous measures in
$\cP_2(\R^d)$ (with density in $L^p(\R^d)$)\cr
W_2(\mu_0,\mu_1)&the Kantorovich-Rubinstein-Wasserstein distance, see \eqref{wasserstein}\cr
T_\sharp\mu&push forward of a measure $\mu$ through the map $T$, see
Section \ref{subsec:Wass}\cr
\ac^2([0,1];X)&absolutely continuous curves with values in the metric
space $X$\cr
\tilde \mu&measure in $Q$ whose disintegration is $\mu\in
\ac^2([0,1];\cP_2(\R^d))$, see \eqref{eq:42}\cr
\CE 2p{Q},\ \CE2p{Q;\mu_0,\mu_1}&pairs $(m,\vv)$ solving the
continuity equation,
Def.~\ref{def:CE}\cr
KL_{2,p}^{(a)}&Kantorovich-Lebesgue costs, Definition \ref{def:KL}\cr
\ZZ,\ZZ_c&collections of pairs $(\zeta,Z)$ satisfying
\eqref{eq:239}\cr
\hj_q(Q,H)&subsolutions $(u,\alpha)$ of the Hamilton-Jacobi equations,
Def.~\ref{def:subsolutions}\cr
\cB(m,\vv),\ \cA(u,\alpha)&primal and dual functionals, \eqref{eq:85}
and \eqref{eq:98}\cr
L_\alpha(t,x,\vv),\ \cL_\alpha(m,\vv),\ c_\alpha(x_0,x_1)&modified
Lagrangians and induced transport cost
\eqref{eq:244}, \eqref{eq:245}, \eqref{eq:250}\cr
\Gamma=C^0([0,1];\R^d)&space of continuous curves with the uniform
topology\cr
\rmE_2[\gamma]&energy of a curve $\gamma\in \ac_2([0,1];\R^d)$,
\eqref{eq:212}\cr
\sfe,\sfe_t,\sfd&evaluation maps on $[0,1]\times \Gamma$,
\eqref{eq:49}, \eqref{eq:242}\cr
\hat\alpha,\ M\alpha&precise representative and maximal function
of $\alpha\in L^q(Q)$, \eqref{eq:274}, \eqref{eq:275}\cr
}
\nc
\bigskip
Throughout the paper  $I$ stands for the open interval $(0,1)$ and $Q:=
(0,1) \times \R^d $.
\GGG
\newcommand{\bOmega}{\Omega}

If $\Omega$ is a Polish topological space
(i.e.~its topology is induced by a complete and separable distance) 
we will denote by  $\BB$ its Borel $\sigma$-algebra
and by $\cP(\Omega)$ the set of Borel probability measures 
on $\Omega$, endowed with the topology of weak convergence, in duality
with the set of all continuous and bounded functions, denoted by  $C_b(\Omega)$. 
When $\Omega=\R^d$ we will also deal with the space $\cP_2(\R^d)$ of
measures
with finite quadratic moment, i.e.
\begin{equation}
  \label{eq:229}
  \mu\in \cP_2(\R^d)\quad
  \Leftrightarrow\quad
  \int_{\R^d}|x|^2\,\d\mu(x)<+\infty.
\end{equation}
If $\frm$ is a $\sigma$-finite measure on $\Omega$, 
$L^r(\bOmega,\frm;\R^d)$ ($L^r(\bOmega,\frm)$ if $d=1$) 
$r\in [1,\infty]$, will be
the usual Lebesgue space of (classes of) $\frm$-measurable and
$\R^d$-valued maps $r$-integrable w.r.t~$\frm$. 
We will also use the notation
$L^0(\bOmega)$ for the set of (classes of) measurable real
functions. 
Our typical examples of $\Omega$ consist in 
Borel subsets of some Euclidean
space $\R^h$ or in the space of continuous curves
$\Gamma:=\rmC^0([0,1];\R^d)$.

When $\Omega$ is a Borel subset of $\R^h$
(typically $I$, $\R^d$ or $Q$), 
we will denote by $\LL^h$ 
the (restriction of the) $h$-dimensional Lebesgue measure 
and when $\frm=\LL^h$ we will simply write
$L^r(\Omega;\R^d)$ ($L^r(\Omega)$ when $d=1$).
In the particular case of $Q$ we will also use the symbol
$\lambda:=\LL^{d+1}\restr Q$.
If $\Omega$ is open, the set of  $r$-integrable maps on 
compact subsets of $\Omega$ will be 
denoted by $L^r_{\loc}(\Omega)$. \GGG 

To every probability density
\begin{equation}
  \label{eq:13}
  m\in L^1(\Omega)\quad\text{with}\quad
  m\ge0\text{ a.e.~in $\Omega$}\quad\text{and}\quad
  \int_{\Omega}m(x)\,\d x=1
\end{equation}
we can associate a Borel probability measure
$\mu\in \cP(\Omega)$ by $\mu=m\LL^h$. Conversely, 
if $\mu\in \cP(\Omega)$ is absolutely continuous w.r.t.~$\LL^h$
then its Lebesgue density $m=\d\mu/\d\LL^h$ satisfies \eqref{eq:13}.
In this way, we will often switch between properties stated on
densities $m$ in the convex set of $L^1(\Omega)$
characterized by \eqref{eq:13} and analogous statements for 
measures $\mu=m\LL^h$ in the 
space $\cP^r(\Omega)$ of
Borel probability measures absolutely continuous w.r.t.~$\LL^h$.

If $\omega:\Omega\to (0,\infty)$ is a measurable weight, we
set 
\begin{equation}
  \label{eq:1}
  L^1_\omega(\Omega):=\{g\in L^0(\Omega):\omega g\in
  L^1(\Omega)\}\quad
  \text{endowed with the norm}\quad
  \|g\|_{L^1_\omega}:=\|\omega g\|_{L^1}.
\end{equation}
Our main example will be 
\begin{equation}
\text{the weight in $\R^d$ (or in $Q$)}\quad
\weight(x):=1+|x|^2.\label{eq:97}
\end{equation}
In this case, a function $m$ as in
\eqref{eq:13} belongs to
$L^1_\weight(\R^d)$ if and only if 
the corresponding measure $\mu=m\LL^d$ belongs to the
space $\cP^r_2(\R^d)$ of absolutely continuous, Borel probability
measures with finite quadratic momentum, i.e.
\begin{equation}
  \label{eq:11}
  \int_{\R^d} |x|^2m(x)\,\d x=
  \int_{\R^d}|x|^2\,\d\mu(x)<\infty,
\end{equation}
so that
\begin{equation}
  \label{eq:14}
  \|m\|_{L^1_\weight(\R^d)}=
  \int_{\R^d}(1+|x|^2)m(x)\,\d x=
  1+\int_{\R^d}|x|^2\,\d\mu(x)
  \quad\text{for every $\mu=m\LL^d\in \cP^r_2(\R^d)$.}
\end{equation}
The dual of the space $L^1_\omega(\Omega)$ can be naturally identified
with 
\begin{equation}
  \label{eq:3}
  L^\infty_{1/\omega}(\Omega):=\{h\in L^0(\Omega):\omega^{-1} h\in
  L^\infty(\Omega)\}\quad
  \text{endowed with the norm}\quad
  \|h\|_{L^\infty_{1/\omega}}:=\|\omega^{-1} h\|_{L^\infty}.
\end{equation}
In particular, functions $h\in L^\infty_{1/\weight}(\R^d)$ 
are naturally in duality with measures $\mu=m\LL^d\in \cP^r_2(\R^d)$ 
since $hm\in L^1(\R^d)$ so that
\begin{equation}
  \label{eq:4}
  \int_{\R^d}|h|\,\d \mu(x)=
  \int_{\R^d} |h|\,m\,\d x\le \|h\|_{L^\infty_{1/\weight}}\,\|m\|_{L^1_\weight}<\infty.
\end{equation}
Eventually we will set 
\begin{equation}
  \label{eq:84}
  \cP_{2,p}^r(\R^d):=\Big\{\mu=m\LL^d\in \cP^r_2(\R^d):m\in
  L^p(\R^d)\Big\}.
\end{equation}
A function 
$G:\R^d\times \R^h\to \R$ is a Carath\'eodory function if 
\begin{align*}
  \text{for a.e.~$x\in \R^d$}\quad&y\mapsto G(x,y)\text{ is continuous
  in }\R^h,\\
  \text{for every $y\in \R^h$} \quad&x\mapsto G(x,y)\text{ is Lebesgue measurable in }\R^d.
\end{align*}
If $z:Q\to \R^h$ is a measurable map, 
we adopt the convention to write $G(x,z)$
for the function $(t,x)\mapsto G(x,z(t,x))$ defined in $Q$.
The Carath\'eodory assumption on $G$ guarantees that such a composition
is also measurable.

We say that a real function $f$ defined on some interval $J$ of $\R$
is \emph{increasing} (resp.~\emph{strictly increasing}) if for every $r_1<r_2$ in $J$ it holds $f(r_1)\le
f(r_2)$ (resp.~$f(r_1)<f(r_2)$).

\subsection{Structural assumptions}
We will be mainly concerned with the following 
first order system
\begin{align}
 \label{eq:5}
&\left\{\begin{aligned}
-\partial_t u + H(x,Du) &= f(x,m) && \text{in } Q, \\
\partial_t m  - \nabla\cdot  (mH_\spp(x,Du)) &= 0  && \text{in } Q,
\end{aligned}
\right.
\intertext{with initial and final conditions}
&\quad \, m(0,\cdot) = m_0, \; m(1,\cdot) = m_1\qquad\qquad\ \text{in } \R^d.
\end{align}
We fix a pair of conjugate exponents $p,q \in (1,+\infty)$,
$p^{-1}+q^{-1}=1$ and the weight function $\weight(x):=1+|x|^2$ as in \eqref{eq:97}.
The following assumptions hold true throughout the paper.
\begin{hypothesis}\label{h.1}
\
\begin{enumerate}[\rm ({H}1)]
\item $m_0, m_1$ are nonnegative functions in
  $L^1_\weight(\R^d)$ with equal, normalized, mass:
  \begin{displaymath}
    \int_{\R^d}m_0(x)\,\d x=\int_{\R^d}m_1(x)\,\d x=1,\quad
        \int_{\R^d}|x|^2m_i(x)\,\d x=M_i<\infty;\quad
        \mu_i=m_i\LL^d\in \cP_2^r(\R^d).
  \end{displaymath}
  In most part of our analysis, we will also assume that $m_i\in L^p(\R^d)$.
\item $f: \R^d \times [0, +\infty) \to \R$ is 
  a   Caratheodory function,  
 increasing with respect to the second variable. 
There exist a constant $c_f \ge 1$ and a nonnegative function
$\gamma_f \in L^q(\R^d)$
such that 
\[ \frac{1}{c_f^p} |m|^{p-1} - \gamma_f(x) \leq f(x,m) \leq c_f^p
  |m|^{p-1} + \gamma_f(x) \qquad \forevery m\in [0,\infty)\text{ and
    a.e.~}x\in \R^d.\]
\item   The Hamiltonian $H: \R^d \times \R^d \to \R$ is a
  Caratheodory function  and it is convex and differentiable with respect to its second
variable,
with differential which will be denoted by $H_\spp:\R^d\times
\R^d\to\R^d$.

There exist  constants $c_H\ge 1$, $c_H^\pm>0$ 
with 
\begin{equation}
  \label{eq:128}
  \gamma_H^+(x):=c_H^+(1+|x|),\quad
  \gamma_H^-(x):c_H^-(1+|x|^2)
  \quad x\in \R^d,
\end{equation}
such that
\begin{equation}\label{H_least_quadratic2}
\frac{1}{2c_H}|\pp|^2 - \gamma^-_H(x) \leq H(x,\pp) \leq
\frac{c_H}{2}|\pp|^2 + \gamma^+_H(x)
\quad
\text{for every $\pp\in \R^d $ and a.e.~$x\in \R^d.$}
\end{equation}
\end{enumerate}
\end{hypothesis}
\nc
If we define  the function $F:\R^d\times [0,+\infty)\to \R$ 
and its extension $\tilde F:\R^d\times \R\to \R\cup\{+\infty\}$ by 
\begin{equation}\label{eq:def:F}
F(x,m) :=\int_0^m f(x,\tau)\,\d\tau,\quad
\tilde F(x,m):=
\begin{cases}
  F(x,m)&\text{ if } m\geq 0 \\
  +\infty&\text{ otherwise,}
\end{cases}
\end{equation}
then $F$ is (the restriction of a) Carath\'eodory 
function and \GGG for almost
every $x\in \R^d$ the map $m\mapsto F(x,m)$ 
is convex in $\R$ and differentiable in $(0,\infty)$.
$F$ is also \emph{strictly convex} if $f$ is strictly increasing
w.r.t.~$m$.

Due to Assumption (H2), $F$ satisfies 
\begin{equation}
  \label{eq:7}
 \frac{1}{p c_f^p} |m|^{p} - \gamma_f(x)\, m \leq F(x,m) \leq
 \frac{c_f^p}{p} |m|^{p} + \gamma_f(x)\, m \qquad \forevery m\in
 [0,\infty)\text{ and }x\in \R^d.
\end{equation}
We will denote by $F^*:\R^d\times \R\to \R$ the Fenchel conjugate of
$F$ with respect to the second variable
\begin{equation}
  \label{eq:2}
  F^*(x,a):=\sup_{m\in \R} [am-F(x,m)]=
  \sup_{m\ge 0} [am-F(x,m)].
\end{equation}
 Starting from \eqref{eq:7} it is not difficult to 
check that 
\begin{gather}
  \frac{1}{q c_f^q}\big(a-\gamma_f(x)\big)_+^q \leq F^*(x,a)
  \leq \frac{c_f^q}{q}\big(a+\gamma_f(x)\big)_+^q \qquad \forevery a\in
  \R\text{ and for a.e.~}x\in \R^d,\label{eq:6}\\
  a\mapsto F^*(x,a)\quad \text{is positive and increasing in $\R$
    for a.e.~$x\in \R^d$, $F^*(x,a)=0$ iff $a\le f(x,0)$.}
    \label{eq:6bis}
  \end{gather}
We will also consider the Lagrangian $L:\R^d\times \R^d\to \R$ 
obtained by evaluating the Fenchel conjugate of $H(x,\pp)$  in the variable $-\qq$:
\begin{equation}
  \label{eq:8}
  L(x,\qq)=H^*(x,-\qq):=
  \sup_{\spp\in \R^d} [-\qq \cdot \pp-H(x,\pp)]\quad
  x,\qq\in \R^d.
\end{equation}
In particular $L$ is a Carath\'eodory function, 
convex with respect to the second variable, and satisfies the growth conditions  
\begin{equation}
\frac{1}{2c_H}|\qq|^2 - \gamma^+_H(x) \leq L(x,\qq) \leq
\frac{c_H}{2}|\qq|^2 + \gamma^-_H(x)
\quad 
\forevery x,\qq\in \R^d.\label{eq:9}
\end{equation}
Notice that (H2) and \eqref{eq:7} allow for functions of the form
\begin{equation}
  \label{eq:10}
  f(x,m):= a(x)m^{p-1}+V_f(x),\quad 
  F(x,m):= \frac{1}p a(x)\, m^p+V_f(x)m
\end{equation}
where $a\in L^\infty(\R^d)$ satisfy
$\frac{1}{c_f^p}\le a(x)\le c_f^p$ a.e.~in $\R^d$ and $V_f\in
L^q(\R^d)$.

Similarly, (H3) and \eqref{eq:9} allow for Hamiltonians and Lagrangians
of the form
\begin{equation}
  \label{eq:12}
  \begin{aligned}
    H(x,\pp)&:=\frac12\sum_{i,j=1}^d g^{ij}(x)p_i p_j+\sum_{i=1}^d
    z_i(x)p_i-V_H(x),\\
    L(x,\qq)&:=\frac12\sum_{i,j=1}^d
    g_{ij}(x)(q_i-z_i(x))(q_j-z_j(x))+V_H(x)
  \end{aligned}
\end{equation}
where 
$(g^{ij}(x))_{i,j}$ are the coefficients of a symmetric elliptic
matrix $G(x)\in \mathbb M^{d\times d}$
satisfying $\frac 1{c_H}I\le G(x)\le c_H I$ (in the sense of quadratic
forms), $(g_{ij}(x))_{i,j}$ are the coefficients of the metric tensor $G^{-1}(x)$,
$(z_i(x))_{i}$ are the components of a measurable and bounded vector field
$\zz:\R^d\to\R^d$ and $V_H:\R^d\to \R$ is a measurable potential
satisfying
\begin{displaymath}
  -C(1+|x|)\le V_H(x)\le C(1+|x|^2)
  \quad\text{$\LL^d$-a.e.~in $\R^d$.}
\end{displaymath}

\section{Optimal transport distance, displacement interpolation, and continuity equation}
\label{subsec:Wass}
\subsection{Recaps on Optimal Transport and dynamic formulation}
The set $\cP_2(\R^d)$ of probability measures with finite quadratic
moment
\eqref{eq:11} can be naturally endowed
with the
so-called $L^2$-Kantorovich-Rubinstein-Wasserstein distance $W_2$. If $\mu_0, \mu_1 \in \cP_2(\R^d)$, we define
\begin{equation}\label{wasserstein}
W^2_2(\mu_0, \mu_1) := \min \left\lbrace \int_{\R^d \times \R^d}
  |x_0-x_1|^2\,\d \mmu(x_0,x_1) : \mmu \in \cP(\R^d \times \R^d), \
  \pi^0_\sharp \mmu= \mu_0, \ \pi^1_\sharp \mmu= \mu_1  \right\rbrace,
\end{equation} 
where $\pi^i: \R^d \times \R^d \to \R^d$, $i = 0,1$, stand for the
coordinate projections $\pi^i(x_0,x_1):=x_i$.

Recall that for every   Borel   map $T: X \to Y$ \GGG between two Borel subsets
$X,Y$ of some Euclidean space
and every Borel measure $\mu$ on $X$, $T_\sharp \mu$ denotes \nc the image
measure on $Y$ 
defined by $T_\sharp \mu (A) := \mu (T^{-1}(A))$, for every \GGG Borel
\nc set $A \in Y$. 

\GGG 
It is well known that 
$(\cP_2(\R^d),W_2)$ is a complete metric space  (see
e.g.~\cite[Prop.~7.1.5]{ambrosio2008gradient}) and
that a sequence $(\mu_n)_{n\in\N}$ in $\cP_2(\R^d)$ converges to $\mu$
w.r.t.~$W_2$ as $n\to\infty$ if and only if
\begin{equation}
  \label{eq:35}
  \lim_{n\to\infty}\int_{\R^d}\varphi(x)\,\d\mu_n(x)=
  \int_{\R^d}\varphi(x)\,\d\mu(x)
  \quad\text{for every $\varphi\in C(\R^d)\cap L^\infty_{1/\kappa}(\R^d)$,}
\end{equation}
where $\kappa$ is the usual weight $\kappa(x):=1+|x|^2$.

A plan $\mmu$ attaining the minimum in \eqref{wasserstein} 
is called optimal. 
If $\mmu$ is an optimal plan, McCann's displacement interpolation
\cite{McCann97}
\cite[Sect.~7.3]{ambrosio2008gradient}
\begin{equation}
  \label{eq:38}
  t\mapsto \mu_t:=((1-t)\pi^0+t\pi^1)_\sharp \mmu,\quad t\in [0,1],
\end{equation}
gives rise to 
a (mimimal, constant speed) geodesic curve $(\mu_t)_{t\in [0,1]}$
satisfying
\begin{equation}
  \label{eq:37}
  W_2(\mu_s,\mu_t)=|t-s|W_2(\mu_0,\mu_1)\quad \text{for every }s,t\in [0,1].
\end{equation}
The quadratic momentum is convex along a Wasserstein geodesic:
\begin{equation}
  \label{eq:65}
  \int_{\R^d} |x|^2\,\d\mu_t\le (1-t)
  \int_{\R^d} |x|^2\,\d\mu_0+t \int_{\R^d} |x|^2\,\d\mu_1.
\end{equation}
If 
$\mu_i=m_i\LL^d\in\cP_{2,p}^r(\R^d)$ 
then
all the measures $\mu_t$ given by \eqref{eq:38} admit a Lebesgue
density in $L^p(\R^d)$ satisfying the dispacement convexity inequality
\cite{McCann97}, \cite[Thm.~9.3.9]{ambrosio2008gradient}
\begin{equation}
  \label{eq:39}
  \mu_t=m_t\LL^d,\quad
  \|m_t\|_{L^p(\R^d)}^p\le (1-t) \|m_0\|_{L^p(\R^d)}^p
  +t\|m_1\|_{L^p(\R^d)}^p\quad\text{for every }t\in [0,1].
\end{equation}
\subsubsection*{Absolutely continuous curves in $\cP_2(\R^d)$ and the continuity equation}
It is clear from \eqref{eq:37} that every geodesic is a Lipschitz
curve 
from $[0,1]$ to $\cP_2(\R^d)$. 
Given a metric space $(Y,d_Y)$ (our main examples will be $\R^d$ and $\cP_2(\R^d)$), we will more generally consider
the class $\ac^2([a,b];Y)$ of
absolutely continuous curve with $L^2$ metric velocity: they are
maps $y:[a,b]\to Y$ satisfying
\begin{equation}
  \label{eq:40}
  d_Y(y(s),y(t))\le \int_s^t \rho(r)\,\d r\quad
  a\le s<t\le b\quad
  \text{for some }\rho\in L^2(a,b).
\end{equation}
Whenever $y\in \ac^2([a,b];Y)$, 
the minimal function $\rho$ 
providing the bound \eqref{eq:40}
is given by the metric velocity
\begin{equation}
  \label{eq:41}
  |\dot y|(t):=\limsup_{h\to0}\frac{d_Y\left(y(t+h),y(t)\right)}{|h|}.
\end{equation}
In order to clarify the connection between absolutely continuous
curves $\mu:[a,b]\to\cP_2(\R^d)$ and the continuity equation, let us first observe that 
if $(\mu_t)_{t\in [0,1]}$ is a Borel family of probability measures
(in particular a continuous curve), the formula
\begin{equation}
  \label{eq:42}
  \tilde\mu(h):=\int_0^1 \Big(\int_{\R^d}h(t,x)\,\d\mu_t(x)\Big)\,\d
  t,\quad
  \text{$h:Q\to \R$ bounded and Borel,}
\end{equation}
defines a probability
measure $\tilde\mu \in \cP(Q)$ such that 
\begin{equation}
  \label{eq:43}
  \pi^0_\sharp \tilde\mu=\LL^1\restr{[0,1]},\quad \text{where}\quad \pi^0:Q\to I,\ \pi^0(t,x):=t.
\end{equation}
Conversely, any measure $\tilde\mu \in \cP(Q)$ satisfying
\eqref{eq:43} can be associated to a Borel family $(\mu_t)_{t\in
  [0,1]}$ satisfying \eqref{eq:42} by disintegration
\cite[Sect.~5.3]{ambrosio2008gradient}.
We will occasionally identify $\tilde\mu$ with $\mu$ when  no risk of
ambiguity is possible.

If $\mu\in
\ac^2([0,1];\cP_2(\R^d))$ it is possible to find 
(\cite[Thm.~8.3.1]{ambrosio2008gradient})
a Borel vector field $\vv\in L^2(Q, {\tilde\mu };\R^d)$, i.e.
\begin{equation}
  \label{eq:175}
  \|\vv\|_{L^2(Q,{\tilde\mu };\R^d)}^2=\int_Q |\vv(t,x)|^2\,\d\tilde\mu(t,x)<+\infty,
\end{equation}
such that the pair $(\tilde\mu$, $\vv\,\tilde\mu)$ 
is a distributional solution to the continuity equation
\begin{equation}\label{cont_eq}
\partial_t \tilde\mu + \nabla \cdot (\vv\tilde\mu) = 0 \qquad \text{ in } \DD'(Q),
\end{equation}
even in duality with functions in $C^1_c(\R^{d+1})$:
\begin{equation}
  \label{eq:44}
  \int_Q \Big(\partial_t \phi+D \phi\cdot
  \vv\Big)\,\d\tilde\mu=
  \int_{\R^d}\phi(1,x)\,\d\mu_1(x)-
  \int_{\R^d}\phi(0,x)\,\d\mu_0(x)
  \quad\forevery \phi\in C^1_c(\R^{d+1}).
\end{equation}
It is interesting that any choice of vector field $\vv$ satisfying
\eqref{eq:175} 
induces a vector measure 
\begin{equation}
  \label{eq:176}
  \nnu=\vv\tilde\mu\ll\tilde\mu\text{ with finite total variation } 
  |\nnu|(Q)=\int_Q|\vv|\,\d\tilde\mu\le \|\vv\|_{L^2(Q, {\tilde\mu};\R^d)},
\end{equation}
and a function ($\LL^1$-a.e.~defined) $\rho(r):=\Big(\int_{\R^d}
|\vv(r,x)|^2\,\d\mu_r(x)\Big)^{1/2}$ 
in $L^2(I)$ satisfying \eqref{eq:40} for the distance $d_y:=W_2$ in $\cP_2(\R^d)$.
We can (uniquely) select a minimal one (called the \emph{minimal
  velocity field}) such that 
\begin{equation}
  \label{eq:45}
  \int_{\R^d}
  |\vv(r,x)|^2\,\d\mu_r(x)=\|\dot\mu_t\|_{W_2}^2\quad\text{for
    $\LL^1$-a.e.~$t\in I$.}
\end{equation}
Conversely if $\tilde\mu\in \cP_2(Q)$ 
satisfies \eqref{eq:43} and
together with $\vv\in L^2(Q, {\tilde\mu};\R^d)$ 
gives rise to a solution to the continuity equation
\eqref{cont_eq} then it is possible to prove 
\cite[Lemma 8.1.2, Theorem 8.3.1]{ambrosio2008gradient} that 
$\tilde\mu$ admits a unique disintegration $(\mu_t)_{t\in [0,1]}$ 
associated to a curve $\mu\in \ac^2([0,1];\cP_2(\R^d))$, so that
$\mu$ can be considered as the ``precise representative'' of
$\tilde\mu$.

Thanks to the previous results, there is a natural identification
between curves $\mu\in \ac^2([0,1];\cP_2(\R^d))$ and 
solutions $(\tilde\mu,\vv\tilde\mu)$ of the continuity equation
\eqref{cont_eq}
for some $\vv\in L^2(Q, {\tilde\mu};\R^d)$.
\subsection{$L^p$ probability densities with finite action and
  the Kantorovich-Lebesgue interpolation cost}
\label{subsec:KL}
In the setting of the mean field system \eqref{eq:5}, 
it will be more natural to start from 
a nonnegative density function $m\in L^1(Q)$
\newcommand{\tlm}{\tilde\mu}
associated to the measure $m\lambda$, $\lambda$ being the
restriction of $\LL^{d+1}$ to $Q$.
In order to express in an intrinsic way the regularity 
hidden in the continuity equation, we will introduce the following
definition.
\begin{definition}[Densities with finite $L^2$-action]
  \label{def:fla}
  We say that the probability density (see \eqref{eq:13})
  $m\in L^1_\weight(Q)$ has finite $L^2$-action if 
  there exists a constant $C>0$ such that
  \begin{equation}
    \label{eq:177}
    -\int_Q \partial_t\zeta(t,x)m(t,x)\,\d \lambda\le C
    \Big(\int_Q |D\zeta(t,x)|^2m(t,x)\,\d \lambda \Big)^{1/2}
    \quad\text{for every }\zeta\in C^1_c(Q).
  \end{equation}
  We will denote by $\rmA_2(Q)$ the convex subset of $L^1(Q)$
  of probability densities with finite $L^2$-action.
  \end{definition}
Notice that if $m$ is a probability density in $L^1_\weight(Q)$ and
there exists a measurable vector field $\vv:Q\to \R^d$ such that 
\begin{equation}
  \label{eq:178}
  \int_Q |\vv|^2m\,\d \lambda<+\infty,\quad
  \partial_t m+\nabla\cdot(m\vv)=0\quad\text{in }\DD'(Q),
\end{equation}
then $m\in \rmA_2(Q)$ and we can choose $C:=
\Big(\int_Q |\vv|^2m\,\d \lambda\Big)^{1/2}$ in \eqref{eq:177}. 
In the next lemma we show that the converse is also true.

\begin{lemma}[Precise representative and traces of densities with
  finite $L^2$-action]
  \label{le:precise-m}
  If $m\in \rmA_2(Q)$\\
  then there exists a unique curve
  $\mu\in \ac^2([0,1];\cP_2(\R^d))$ such that
  $m\lambda=\tilde\mu$.
  In particular:
  \begin{enumerate}[\rm 1.]
  \item The traces of $m$ at $t=0$ and at $t=1$ are well
    defined probability measures $\mu_0,\mu_1\in \cP_2(\R^d)$. 
  \item There exists a Borel velocity field $\vv:Q\to\R^d$ 
    such that \eqref{eq:178} holds, and for every 
    vector field satisfying \eqref{eq:178} we have 
    \eqref{eq:44}.
  \item If moreover $m\in L^p(Q)$ then 
    \begin{equation}
      \label{eq:47}
      D_p[\mu]:=\Big\{t\in [0,1]:\mu_t\ll \LL^d,\quad 
      \frac{\d\mu_t}{\d\LL^d}\in L^p(\R^d)\Big\},
    \end{equation}
    is a dense $F_\sigma$ subset in $[0,1]$ (a
countable union of closed sets) of full measure.    
  \end{enumerate}
\end{lemma}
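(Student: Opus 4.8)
The plan is to reduce the statement to the already-recalled machinery for absolutely continuous curves in $(\cP_2(\R^d),W_2)$ and the continuity equation (Theorem 8.3.1 of \cite{ambrosio2008gradient}), the only nontrivial point being that the definition of $\rmA_2(Q)$ gives us the continuity equation only in the time-interior duality \eqref{eq:177}, not yet with a vector field nor with prescribed traces. First I would fix a probability density $m\in \rmA_2(Q)$ and consider the linear functional $\zeta\mapsto -\int_Q \partial_t\zeta\,m\,\d\lambda$ on $C^1_c(Q)$. By the defining inequality \eqref{eq:177} this functional is bounded by $C\,\|D\zeta\|_{L^2(m\lambda;\R^d)}$, hence it extends to a continuous linear functional on the closure of $\{D\zeta:\zeta\in C^1_c(Q)\}$ inside $L^2(Q,m\lambda;\R^d)$; by Hahn–Banach and the Riesz representation theorem there exists $\vv\in L^2(Q,m\lambda;\R^d)$ with $\int_Q |\vv|^2 m\,\d\lambda\le C^2$ and $-\int_Q\partial_t\zeta\,m\,\d\lambda=\int_Q D\zeta\cdot\vv\,m\,\d\lambda$ for all $\zeta\in C^1_c(Q)$; that is, \eqref{eq:178} holds. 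At this point $\tilde\mu:=m\lambda\in\cP_2(Q)$ satisfies \eqref{eq:43} (since $m$ integrates to $1$ on each time slice after disintegration — this uses $\int_Q m\,\d\lambda=1$ together with the $t$-translation invariance coming from \eqref{eq:178} tested on product functions $\phi(t)$, or simply that $\pi^0_\sharp(m\lambda)$ has constant density) and, together with $\vv\in L^2(Q,\tilde\mu;\R^d)$, solves the continuity equation \eqref{cont_eq}.

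Next I would invoke the converse direction recalled after \eqref{eq:45}, i.e.~\cite[Lemma 8.1.2, Theorem 8.3.1]{ambrosio2008gradient}: a solution $(\tilde\mu,\vv\tilde\mu)$ of \eqref{cont_eq} with $\tilde\mu\in\cP_2(Q)$ satisfying \eqref{eq:43} and $\vv\in L^2(Q,\tilde\mu;\R^d)$ admits a unique disintegration $(\mu_t)_{t\in[0,1]}$ that is (the restriction to $[0,1]$ of) a curve $\mu\in\ac^2([0,1];\cP_2(\R^d))$ with $\tilde\mu$ recovered from $\mu$ via \eqref{eq:42}. Uniqueness of $\mu$ follows from the fact that two curves in $\ac^2([0,1];\cP_2(\R^d))$ inducing the same $\tilde\mu$ must agree at every $t$ (the narrowly continuous representative of a Borel family is unique). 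This immediately gives item 1: the traces $\mu_0=\mu(0)$ and $\mu_1=\mu(1)$ are well-defined elements of $\cP_2(\R^d)$, being the endpoints of the continuous curve $\mu$. For item 2, the vector field $\vv$ constructed above satisfies \eqref{eq:178}; and for \emph{any} vector field $\bar\vv$ satisfying \eqref{eq:178}, the validity of the continuity equation in $\DD'(Q)$ together with the $\cP_2$-integrability of $\tilde\mu$ and the established identification of $\mu$ with the precise representative forces, by the same Theorem 8.3.1, the enlarged test-function identity \eqref{eq:44} with the correct boundary terms $\int\phi(1,\cdot)\,\d\mu_1-\int\phi(0,\cdot)\,\d\mu_0$; concretely one writes $\phi\in C^1_c(\R^{d+1})$ as a time-cutoff of itself and passes to the limit in the interior identity, the boundary contributions being produced by the narrow continuity of $t\mapsto\mu_t$ up to the endpoints.

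For item 3, assume in addition $m\in L^p(Q)$. Write $m_t:=\d\mu_t/\d\LL^d$ whenever $\mu_t\ll\LL^d$. The function $t\mapsto m(t,\cdot)$ is, by Fubini, $\LL^1$-a.e.~a well-defined element of $L^p(\R^d)$, and for $\LL^1$-a.e.~$t$ the measure $\mu_t$ equals $m(t,\cdot)\LL^d$ (because both $(\mu_t)$ and $(m(t,\cdot)\LL^d)$ disintegrate $m\lambda$, and the disintegration is $\LL^1$-a.e.~unique). Hence $D_p[\mu]$ already has full $\LL^1$-measure in $[0,1]$; the remaining point is that it can be taken to be an $F_\sigma$ and dense. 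Density of a full-measure set is automatic. For the $F_\sigma$ structure I would observe that $t\mapsto\|m(t,\cdot)\|_{L^p(\R^d)}$ is a measurable function (Fubini) that is finite a.e., and that, thanks to the narrow continuity of $t\mapsto\mu_t$, the map $t\mapsto\|m(t,\cdot)\|_{L^p(\R^d)}$ is \emph{lower semicontinuous} on its natural domain (narrow lower semicontinuity of the $L^p$-norm, i.e.~of $\int \Phi(m)\,\d x$ for $\Phi(r)=r^p$, along narrowly convergent measures whose densities are controlled); consequently the sublevel sets $\{t:\mu_t=m(t,\cdot)\LL^d,\ \|m(t,\cdot)\|_{L^p}\le k\}$ are closed, and $D_p[\mu]=\bigcup_{k\in\N}\{t:\mu_t\ll\LL^d,\ \|m(t,\cdot)\|_{L^p}\le k\}$ exhibits $D_p[\mu]$ as an $F_\sigma$.

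\medskip
\textbf{Main obstacle.} The genuinely delicate step is item 3: promoting the a.e.-in-$t$ statement ``$\mu_t$ is absolutely continuous with $L^p$ density'' to a statement about a \emph{dense} $F_\sigma$ of full measure. This requires combining the narrow continuity of the curve $t\mapsto\mu_t$ with a semicontinuity property of the $L^p$-norm of the density along narrowly convergent measures — i.e.~identifying precisely the set of times at which $\mu_t$ is represented by the Fubini slice $m(t,\cdot)$, and showing the relevant superlevel/sublevel sets are open/closed. The Hahn–Banach recovery of $\vv$ in the first step and the appeal to Theorem 8.3.1 for items 1–2 are, by contrast, essentially routine once the bookkeeping of the boundary terms in \eqref{eq:44} is done carefully.
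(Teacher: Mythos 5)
Your proposal follows the same route as the paper's proof: the Riesz representation in $L^2(Q,m\lambda;\R^d)$ (applied to the bounded functional coming from the symmetrized form of \eqref{eq:177}) to recover $\vv$, the appeal to \cite[Lemma~8.1.2, Theorem~8.3.1]{ambrosio2008gradient} to obtain the narrowly continuous representative $\mu\in\ac^2([0,1];\cP_2(\R^d))$ and the enlarged duality \eqref{eq:44}, Fubini for the full-measure statement, and the lower semicontinuity of the $p$-entropy functional $\mathscr U_p$ (extended by $+\infty$ on singular measures) along the $W_2$-continuous curve $t\mapsto\mu_t$ to exhibit $D_p[\mu]$ as a countable union of closed sublevel sets. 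The few points at which you pause — checking the $t$-marginal of $m\lambda$ is $\LL^1$, identifying the disintegration $\mu_t$ with the Fubini slice $m(t,\cdot)\LL^d$ for a.e.~$t$, and the boundary bookkeeping in \eqref{eq:44} — are precisely what the paper leaves implicit, and your sketches of them are sound.
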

\begin{proof}
We consider the measure $\tilde\mu=m\lambda$ and we introduce the linear subspace 
$V:=\{D\zeta:\zeta\in C^1_c(Q)\}$ in $L^2(Q, \tilde \mu;\R^d)$.
Since the linear map $L:\zeta\mapsto -\int_Q \partial_t
\zeta m\,\d\lambda$ defines a bounded functional in $V$, 
it admits a continuous extension to $L^2(Q, \tilde\mu;\R^d)$ which can be
represented by a vector field $\vv \in L^2(Q, \tilde\mu;\R^d)$ by Riesz
Theorem:
\begin{displaymath}
  L(\zeta)=-\int_Q \partial_t
  \zeta m\,\d\lambda=\int_Q D\zeta\cdot \vv\, m\,\d\lambda\quad
  \text{for every }\zeta\in C^1_c(Q).
\end{displaymath}
We thus find that $(\tilde\mu,\vv\tilde\mu)$ satisfies the continuity
equation \eqref{cont_eq} so that $\tilde\mu$ admits 
a precise representative $\mu\in \ac^2([0,1];\cP_2(\R^d))$ according
to the above considerations.

Finally, if $m\in L^p(Q)$ then Fubini's theorem yields
\begin{equation}
  \label{eq:180}
  \int_Qm^p\,\d\lambda=\int_0^1\Big(\int_{\R^d}m^p(t,x)\,\d x\Big)\,\d
  t<+\infty,\quad
  \int_{\R^d}m^p(t,x)\,\d x \,\,< +\infty,
  \quad\text{for $\LL^1$-a.e.~$t\in (0,1)$},  
\end{equation}
so that $D_p[\mu]$ is of full measure in $[0,1]$ (in
particular, it is dense).
$D_p[\mu]$ can also be considered as the finiteness domain of the
$p$-entropy functional
\begin{equation}
  \label{eq:48}
  \mathscr U_p[\mu]:=
  \begin{cases}
    \int_{\R^d} \big(m(x)\big)^p\,\d x&\text{if }\mu=m\LL^d\ll\LL^d,\\
    +\infty&\text{otherwise},
  \end{cases}
\end{equation}
which is lower semicontinuous w.r.t.~weak convergence of measures. 
$D_p[\mu]$ is therefore an $F_\sigma$ subset of $[0,1]$.
\end{proof}
Notice that if $m\in \rmA_2(Q)$ then 
$m(t,\cdot)$ is a probability
density for $\LL^1$-a.e.~$t\in (0,1)$.
We conclude this discussion with two definitions which will play a
crucial role in Sections \ref{subsec:weak-subsol} and \ref{sec:variational-approach}. 
\begin{definition}[$L^p$ solutions to the continuity equation with
  $L^2$-velocity]\ \\%
  \label{def:CE}%
  Let $p\in (1,+\infty]$. 
  We say that a pair $(m,\vv)$ belongs to the set 
  $\CE 2pQ$ if 
  \begin{enumerate}[\rm (1)]
  \item $m\in L^1_\weight(Q)\cap L^p(Q)$ is a probability density
    and $\vv\in
  L^2(Q,\tilde\mu;\R^d)$ where $\tilde \mu=m\lambda$.
\item The pair $(m,\vv)$ is a solution to the continuity
  equation \eqref{eq:178}. 
\end{enumerate}
In particular $m\in \rmA_{2}(Q)$ and admits a continuous
representative $\mu\in \ac^2([0,1];\cP_2(\R^d))$ according to Lemma
\ref{le:precise-m}.
Whenever $\mu_0,\mu_1\in \cP_2(\R^d)$ are given, we will also set
\begin{equation}
  \label{eq:197}
  \CE2p{Q;\mu_0,\mu_1}:=
  \Big\{(m,\vv)\in \CE 2pQ:
  \mu\restr{t=0}=\mu_0,\ \mu\restr{t=1}=\mu_1\Big\}.
\end{equation}
\end{definition}
The class $\CE 2pQ$ naturally induces a dynamic transport cost between
probability measures, which results from the interaction of the 
Kantorovich action and the $L^p$-penalization of the densities.
\begin{definition}[The Kantorovich-Lebesgue $\KL_{2,p}$ cost]
  \label{def:KL}
  Let $p\in (1,+\infty)$.
  For every $\mu_0,\mu_1\in \cP_2(\R^d)$ and every parameter $a>0$ we set
  \begin{equation}
    \label{eq:184}
    \begin{aligned}
      \KL_{2,p}^{(a)}(\mu_0,\mu_1):= \inf\Big\{&
      \int_Q \Big(\frac a2|\vv|^2m+\frac 1{2a} (m+m^p)\Big)\,\d\lambda:
      (m,\vv)\in \CE 2p{Q;\mu_0,\mu_1} \Big\},
    \end{aligned}
  \end{equation}
  with the usual convention $\KL_{2,p}^{(a)}(\mu_0,\mu_1)=+\infty$ if
  $\CE 2p{Q;\mu_0,\mu_1}$ is empty.
  When $a=1$ we will just write $\KL_{2,p}(\mu_0,\mu_1)$.
  The induced Kantorovich-Lebesgue distance can be defined as
  \begin{equation}
    \label{eq:187}
    d_{\KL_{2,p}}(\mu_0,\mu_1):=\inf_{a>0} \KL_{2,p}^{(a)}(\mu_0,\mu_1).
  \end{equation}
\end{definition}
It is obvious that 
\begin{equation}
  \label{eq:188}
  \KL^{(a)}_{2,p}(\mu_0,\mu_1)\le 
  \max\Big(\frac ab,\frac ba\Big)\KL_{2,p}^{(b)}(\mu_0,\mu_1)
  \quad\text{for every }a,b>0.
\end{equation}
The rescaled costs $\KL^{(a)}_{2,p}$ just correspond to 
the cost $\KL_{2,p}$ but for a continuity equation in the
dilated cylinder 
$(0,a)\times \R^d$:
\begin{equation}
  \label{eq:227}
  \KL_{2,p}^{(a)}(\mu_0,\mu_1):= \inf\Big\{
  \int_0^a\int_{\R^d} \Big(\frac 12|\vv|^2m+\frac 1{2} (m+m^p)\Big)\,\d\lambda:
  (m,\vv)\in \CE 2p{(0,a)\times \R^d;\mu_0,\mu_1} \Big\}   .   
\end{equation}
By a standard rescaling argument
(see e.g.~\cite[Lemma 2.2 and A.5]{RSSU18}) we obtain 
  \begin{equation}
    \label{eq:184bis}
    \begin{aligned}
      d_{\KL_{2,p}}(\mu_0,\mu_1)= \inf\Big\{&
      \int_0^1 \Big(1+\|m_t\|^p_{L^p(\R^d)}\Big)^{1/2}
      \Big(\int_{\R^d}|\vv_t|^2m_t\,\d x\Big)^{1/2}\,\d t:
      (m,\vv)\in \CE 2p{Q;\mu_0,\mu_1}\Big\},
    \end{aligned}
  \end{equation}
so that $d_{KL_{2,p}}\ge W_2$; 
it is not difficult to show that the cost $\KL_{2,p}$ is finite if and only
if $d_{\KL_{2,p}}$ is finite.

Here we do not aim at characterizing the class of measures for which
$\KL_{2,p}$ is finite, we will just point out two important cases.
The first one consists in probability measures with $L^p$ densities 
$\mu\in \cP_{2,p}^r(\R^d)$,
which can also be identified with the subset of
probability densities in $L^p\cap L^1_\weight(\R^d)$.
\begin{lemma}
  \label{le:basic-Lp}
  If $\mu_i=m_i\LL^d\in \cP^r_{2,p}(\R^d)$, $i =1,2$, 
  then 
  $\CE2p{Q;\mu_0,\mu_1}$ is not empty and
  \begin{equation}
    \label{eq:185}
    \KL_{2,p}^{(a)}(\mu_0,\mu_1)\le 
    \frac 1{2a}+\int_{\R^d}\Big(a|x|^2(m_0+m_1)+\frac 1{4a}
    (m_0^p+m_1^p)\Big)\,\d x.
  \end{equation}
\end{lemma}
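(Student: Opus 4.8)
The plan is to construct an explicit competitor $(m,\vv)\in\CE2p{Q;\mu_0,\mu_1}$ by interpolating between $\mu_0$ and $\mu_1$ and to estimate the cost functional appearing in $\KL_{2,p}^{(a)}$. The natural choice is a linear (in time) interpolation of the densities themselves: set $m(t,x):=(1-t)m_0(x)+t\,m_1(x)$, which is a probability density for every $t$ since $m_0,m_1$ are. For the velocity, since $\partial_t m=m_1-m_0$, I need a vector field $\vv$ with $\nabla\cdot(m\vv)=m_0-m_1$; the simplest device is to write $m\vv=\ww$ where $\ww$ is any fixed vector field (independent of $t$) solving $\nabla\cdot\ww=m_0-m_1$ on $\R^d$, e.g.\ $\ww=D\varphi$ with $\varphi$ solving a Poisson equation, or more elementarily any bounded-energy flux transporting $m_0$ to $m_1$. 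Then $(m,\vv)$ with $\vv:=\ww/m$ solves the continuity equation in $\DD'(Q)$, and $m\in L^1_\weight(Q)\cap L^p(Q)$ follows from convexity of $|x|^2m$ and of $m\mapsto m^p$ together with (H1) and $m_i\in L^p(\R^d)$; in particular $\CE2p{Q;\mu_0,\mu_1}\ne\emptyset$, which is the first assertion.

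For the quantitative bound, a cleaner route avoids the Poisson flux and instead uses the straight-line (McCann) displacement interpolation at the level of a transport map. Let $\mmu$ be an optimal plan between $\mu_0$ and $\mu_1$, and let $(\mu_t)_{t\in[0,1]}$ be the displacement interpolation $\mu_t=((1-t)\pi^0+t\pi^1)_\sharp\mmu$, with its associated velocity field $\vv_t$ satisfying $\int_{\R^d}|\vv_t|^2\,\d\mu_t=W_2^2(\mu_0,\mu_1)$; however, $\mu_t$ need not have an $L^p$ density unless we use the displacement convexity inequality \eqref{eq:39}, which does give $\|m_t\|_{L^p}^p\le(1-t)\|m_0\|_{L^p}^p+t\|m_1\|_{L^p}^p$. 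Plugging this together with the constant-speed property and the convexity of the quadratic moment \eqref{eq:65} into the functional $\int_Q(\tfrac a2|\vv|^2m+\tfrac1{2a}(m+m^p))\,\d\lambda$, the three terms are bounded respectively by $\tfrac a2 W_2^2(\mu_0,\mu_1)$, $\tfrac1{2a}$, and $\tfrac1{2a}\cdot\tfrac12(\|m_0\|_{L^p}^p+\|m_1\|_{L^p}^p)$. Finally I bound $W_2^2(\mu_0,\mu_1)\le 2\int_{\R^d}|x|^2(\d\mu_0+\d\mu_1)=2\int_{\R^d}|x|^2(m_0+m_1)\,\d x$ (using the product plan $\mu_0\otimes\mu_1$ and $|x_0-x_1|^2\le 2|x_0|^2+2|x_1|^2$). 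Collecting the constants reproduces exactly the right-hand side of \eqref{eq:185}.

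The main technical obstacle is checking that the displacement interpolant is an admissible element of $\CE2p{Q;\mu_0,\mu_1}$ in the precise sense of Definition~\ref{def:CE}: namely that $m$ defined by $\mu_t=m(t,\cdot)\LL^d$ is genuinely an $L^p(Q)$ function (Fubini plus \eqref{eq:39}), that it lies in $L^1_\weight(Q)$ (integrate \eqref{eq:65} in $t$), and that the minimal velocity field $\vv$ is $L^2(Q,\tilde\mu;\R^d)$ and yields a distributional solution of \eqref{eq:178} with the correct traces — all of which is supplied by the identification between $\ac^2([0,1];\cP_2(\R^d))$ and solutions of the continuity equation recalled before Definition~\ref{def:fla}, once one notes that $t\mapsto\mu_t$ is a geodesic hence in $\ac^2$. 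A secondary point is measurability/integrability of $\vv$ on the set where $m=0$, but since $\vv\,\tilde\mu=\vv\,m\lambda$ only the product enters the functional and the continuity equation, so this causes no difficulty. With these verifications the inequality \eqref{eq:185} follows by simply evaluating $\KL_{2,p}^{(a)}$ on this competitor.
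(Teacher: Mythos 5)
Your second paragraph is exactly the paper's proof: choose McCann's displacement interpolation $\mu_t=((1-t)\pi^0+t\pi^1)_\sharp\mmu$, use constant speed to identify the action term with $W_2^2(\mu_0,\mu_1)$, bound $W_2^2(\mu_0,\mu_1)\le 2\int|x|^2(m_0+m_1)\,\d x$, use displacement convexity \eqref{eq:39} for the $L^p$ term, and sum. The only thing worth flagging is that your opening paragraph (linear-in-time interpolation $m_t=(1-t)m_0+tm_1$ with a static flux $\ww$ solving $\nabla\cdot\ww=m_0-m_1$) does not actually establish non-emptiness of $\CE2p{Q;\mu_0,\mu_1}$: you never check $\int_Q|\ww|^2/m\,\d\lambda<\infty$, i.e.\ $\vv=\ww/m\in L^2(Q,\tilde\mu;\R^d)$, and for a Poisson-type $\ww$ this can fail where $m$ vanishes; but since your displacement-interpolation argument yields non-emptiness as well, that paragraph is superfluous and the overall proof stands and coincides with the paper's.
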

\begin{proof}
  It is sufficient to choose the McCann's displacement interpolation
  \eqref{eq:38}, 
  yielding
  \begin{displaymath}
    \int_Q |\vv|^2m\,\d\lambda=
    W_2^2(\mu_0,\mu_1)\le 2\int_{\R^d}|x|^2\,\d\mu_0+2\int_{\R^d}|x|^2\,\d\mu_1
  \end{displaymath}
  and 
  \begin{displaymath}
    \int_Q m^p\,\d\lambda\le \frac 12 \big(\|m_0\|_{L^p(\R^d)}^p+\|m_1\|_{L^p(\R^d)}^p\big).
  \end{displaymath}
\end{proof}
The above Lemma shows that the $d_{\KL_{2,p}}$ is a distance on the
set
$\cP^r_{2,p}(\R^d)$. We can denote by $\cP_{2,p}(\R^d)$ its completion w.r.t.~$d_{\KL_{2,p}}$.
The next Lemma will show that this set can be considerably larger than $\cP_{2,p}^r(\R^d)$.
\begin{lemma}
  $\cP_{2,p}(\R^d)$ can be identified with the subset of $\cP_2(\R^d)$
  of measures at finite distance from $\cP_{2,p}^r(\R^d)$:
  \begin{equation}
    \label{eq:226}
    \cP_{2,p}(\R^d)=\Big\{\mu\in \cP_{2}(\R^d):
    \CE2p{Q;\mu,\mu'} \text{ is not empty for some }\mu'\in \cP_{2,p}^r(\R^d)\Big\}.
  \end{equation}
  If $p<1+2/d$ then 
  $\cP_{2,p}(\R^d)=\cP_2(\R^d)$ so that 
  $\KL_{2,p}(\mu_0,\mu_1)<\infty$ 
  for every pair $\mu_0,\mu_1\in \cP_2(\R^d)$.
  
  If $p\ge 1+2/d$ then $
  \cP_{2,p}(\R^d)\supset \cP_{2,p_\star}^r(\R^d)$ with $p_\star>p/(1+2/d)$, 
  so that if 
  $\mu_i=m_i\LL^d\in \cP^r_2(\R^d)$ for some $m_i\in
  L^{p_\star}(\R^d)$ then
  $\KL_{2,p}(\mu_0,\mu_1)<\infty$.
\end{lemma}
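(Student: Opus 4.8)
The plan is to derive all three assertions from one explicit path — mollification along a shrinking scale — together with the general facts already in hand: $d_{\KL_{2,p}}\ge W_2$, the equivalence
\[
  \CE 2p{Q;\mu,\mu'}\ne\emptyset\quad\Longleftrightarrow\quad d_{\KL_{2,p}}(\mu,\mu')<\infty
\]
(any admissible pair in $\CE 2p{Q;\mu,\mu'}$ has $m\in L^p(Q)\cap L^1_\weight(Q)$ and $\vv\in L^2(Q,m\lambda;\R^d)$, hence automatically finite action), and Lemmas \ref{le:precise-m} and \ref{le:basic-Lp}. Write $\mathcal R$ for the right-hand side of \eqref{eq:226}; by the equivalence above, $\mathcal R=\{\mu\in\cP_2(\R^d):d_{\KL_{2,p}}(\mu,\cP_{2,p}^r(\R^d))<\infty\}$. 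Since, by Lemma \ref{le:basic-Lp}, any two elements of $\cP_{2,p}^r(\R^d)$ are at finite $d_{\KL_{2,p}}$-distance, the triangle inequality shows $d_{\KL_{2,p}}$ is a finite-valued distance on $\mathcal R$ (it separates points because $d_{\KL_{2,p}}\ge W_2$).

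First I would prove that $\mathcal R$, with this distance, is the completion of $\cP_{2,p}^r(\R^d)$. \emph{Density}: given $\mu\in\mathcal R$, pick $(m,\vv)\in\CE 2p{Q;\mu,\mu'}$; since $m\in L^p(Q)$, Lemma \ref{le:precise-m} provides $t_n\downarrow 0$ with $\mu_{t_n}\in\cP_{2,p}^r(\R^d)$, and restricting $(m,\vv)$ to $(0,t_n)\times\R^d$ together with \eqref{eq:227} gives
\[
  d_{\KL_{2,p}}(\mu,\mu_{t_n})\le\KL_{2,p}^{(t_n)}(\mu,\mu_{t_n})\le\int_0^{t_n}\!\!\int_{\R^d}\Big(\tfrac12|\vv|^2m+\tfrac12(m+m^p)\Big)\,\d x\,\d t\;\longrightarrow\;0
\]
by absolute continuity of the integral. \emph{Completeness}: a $d_{\KL_{2,p}}$-Cauchy sequence $(\mu_n)$ in $\mathcal R$ is $W_2$-Cauchy, hence $W_2$-converges to some $\mu\in\cP_2(\R^d)$; lower semicontinuity of $d_{\KL_{2,p}}(\mu_n,\cdot)$ along $W_2$-convergence then gives $d_{\KL_{2,p}}(\mu_n,\mu)\le\liminf_k d_{\KL_{2,p}}(\mu_n,\mu_k)\le\sup_{k\ge n}d_{\KL_{2,p}}(\mu_n,\mu_k)\to 0$ as $n\to\infty$, and $\mu\in\mathcal R$ because it is at finite distance from some $\mu_n$, hence from $\cP_{2,p}^r(\R^d)$. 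By uniqueness of completions, $\mathcal R$ is identified with $\cP_{2,p}(\R^d)$, which is \eqref{eq:226}.

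For the two quantitative statements I would use the following competitor. Fix a Gaussian probability density $\rho$, set $\rho_\varepsilon(x):=\varepsilon^{-d}\rho(x/\varepsilon)$, choose $\beta>\tfrac12$, and for a measure $\mu$ to be specified put $m_t:=\mu*\rho_{t^\beta}$ on $Q$, $m_1:=\mu*\rho_1$. From $\partial_\varepsilon\rho_\varepsilon(x)=-\varepsilon^{-1}\nabla_x\!\cdot\!\big(x\,\rho_\varepsilon(x)\big)$ one gets, convolving with $\mu$, the continuity equation $\partial_t m_t+\nabla\cdot\ww_t=0$ with $\ww_t(z)=\beta t^{-1}\!\int(z-x)\,\rho_{t^\beta}(z-x)\,\d\mu(x)$; Cauchy--Schwarz in the measure $\rho_{t^\beta}(z-x)\,\d\mu(x)$, whose total mass is $m_t(z)$, yields $|\ww_t(z)|^2\le m_t(z)\,\beta^2t^{-2}\!\int|z-x|^2\rho_{t^\beta}(z-x)\,\d\mu(x)$, hence
\[
  \int_0^1\!\!\int_{\R^d}\frac{|\ww_t|^2}{m_t}\,\d x\,\d t\le\beta^2\Big(\int_{\R^d}|y|^2\rho(y)\,\d y\Big)\int_0^1 t^{2\beta-2}\,\d t<\infty\qquad\text{since }\beta>\tfrac12 .
\]
Since also $\int_{\R^d}(1+|x|^2)m_t\,\d x$ stays bounded on $(0,1)$, $m\in L^1_\weight(Q)$, and $t\mapsto m_t\LL^d$ has $W_2$-traces $\mu$ at $t=0$ and $m_1\LL^d\in\cP_{2,p}^r(\R^d)$ at $t=1$. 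It remains to pick $\beta$ ensuring $m\in L^p(Q)$. If $p<1+2/d$, then $d(p-1)<2$, so choose $\tfrac12<\beta<\tfrac1{d(p-1)}$; for any $\mu\in\cP_2(\R^d)$, Young's inequality gives $\|m_t\|_p^p\le\|\rho_{t^\beta}\|_p^p=t^{-\beta d(p-1)}\|\rho\|_p^p$, integrable on $(0,1)$ by the choice of $\beta$; thus $(m,\,\ww/m)\in\CE 2p{Q;\mu,m_1\LL^d}$ and $\mu\in\mathcal R=\cP_{2,p}(\R^d)$, i.e.\ $\cP_{2,p}(\R^d)=\cP_2(\R^d)$. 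If $p\ge1+2/d$ and $p_\star>p/(1+2/d)=dp/(d+2)$ (if $p_\star\ge p$ the inclusion $\cP_{2,p_\star}^r\subset\cP_{2,p}$ is immediate by interpolation, so assume $p_\star<p$), take $\mu=m_*\LL^d$ with $m_*\in L^{p_\star}(\R^d)$; Young's inequality with $\tfrac1p=\tfrac1{p_\star}+\tfrac1s-1$ gives $\|m_*\!*\rho_\varepsilon\|_p\le\|m_*\|_{p_\star}\|\rho_\varepsilon\|_s$ with $\|\rho_\varepsilon\|_s^p\lesssim\varepsilon^{-d(p/p_\star-1)}$, so $\|m_t\|_p^p\lesssim t^{-\beta d(p/p_\star-1)}$, integrable on $(0,1)$ as soon as $\beta<p_\star/\big(d(p-p_\star)\big)$; the hypothesis $p_\star>dp/(d+2)$ is precisely $p_\star/(d(p-p_\star))>\tfrac12$, so an admissible $\beta\in(\tfrac12,\,p_\star/(d(p-p_\star)))$ exists and, as above, $\mu\in\cP_{2,p}(\R^d)$. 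Hence $\cP_{2,p_\star}^r(\R^d)\subset\cP_{2,p}(\R^d)$, and if $\mu_i=m_i\LL^d$ with $m_i\in L^{p_\star}(\R^d)$ then the triangle inequality on $\mathcal R$ together with Lemma \ref{le:basic-Lp} gives $d_{\KL_{2,p}}(\mu_0,\mu_1)<\infty$, equivalently $\KL_{2,p}(\mu_0,\mu_1)<\infty$.

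The one genuinely non-routine ingredient is the lower semicontinuity of $d_{\KL_{2,p}}$ used in the completeness step. I would obtain it by the direct method: given $\mu_1^k\to\mu_1$ in $W_2$ with $d_{\KL_{2,p}}(\mu_0,\mu_1^k)\le C$, take near-optimal $(m^k,\vv^k)$ and dilation parameters $a_k$; these cannot escape to $0$ (the mass term $\tfrac1{2a_k}\int_Q m^k$ would blow up) or to $\infty$ (the kinetic term, bounded below by $W_2(\mu_0,\mu_1^k)^2$, would blow up), so along a subsequence $a_k\to a_\infty\in(0,\infty)$; the resulting uniform bounds on $m^k$ in $L^p(Q)\cap L^1_\weight(Q)$ and on $m^k\vv^k$ in total variation, combined with stability of the continuity equation and joint convexity and lower semicontinuity of $(m,w)\mapsto|w|^2/m$ and of $m\mapsto m^p$, produce a limiting competitor for $(\mu_0,\mu_1)$ with smaller cost. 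Everything else reduces to the explicit scaling bookkeeping above; the only care needed there is justifying the continuity equation for $m_t=\mu*\rho_{t^\beta}$ distributionally and noting that $\rho>0$ makes $m_t>0$ everywhere, so $|\ww_t|^2/m_t$ is well defined.
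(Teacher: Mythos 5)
Your proposal is correct and follows the same underlying strategy as the paper: connect a general measure to one with an $L^p$ density by mollifying at a shrinking spatial scale, and convert the Young-inequality bound for $\|m_t\|_{L^p}$ plus a kinetic-energy bound into finiteness of the cost. The differences are worth noting, though they are more in the bookkeeping than in the idea. First, you work directly with the scaled mollifier $\rho_{t^\beta}$ and need $\beta>1/2$ so that the kinetic term $\int_0^1 t^{2\beta-2}\,\d t$ is finite; the paper instead takes the heat semigroup $S_t\mu=\mu*g_t$ (effectively $\beta=1/2$, which is borderline for the $L^2$ kinetic bound) and compensates by passing through the reparametrization-invariant formula \eqref{eq:184bis}, for which $|\dot\mu_t|_{W_2}\lesssim t^{-1/2}$ need only be in $L^1(0,1)$. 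Second, your estimate on $|\ww_t|^2/m_t$ is a raw Cauchy--Schwarz in the conditional kernel, whereas the paper invokes the convexity of $(x,\yy)\mapsto|\yy|^2/x$ via \cite[Lemma 8.1.10]{ambrosio2008gradient}; these are the same computation in different clothing. Third, and this is a genuine improvement in rigor: to identify the abstract completion with the set $\mathcal R$ in \eqref{eq:226} one needs $\mathcal R$ itself to be complete, and that requires some lower semicontinuity of $d_{\KL_{2,p}}$ along $W_2$-convergent sequences; the paper dismisses this direction with ``it is obvious that a measure $\mu$ in the completion can be connected to measures in $\cP_{2,p}^r(\R^d)$ with finite cost'', whereas you supply the missing compactness/lower-semicontinuity argument (uniform bounds on $m^k$ in $L^p\cap L^1_\kappa(Q)$, on $m^k\vv^k$, and on the dilation parameters $a_k$, then joint convexity of $(m,w)\mapsto|w|^2/m$ and of $m\mapsto m^p$). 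That step is compressed but sound, and the rest of your bookkeeping (the exponents $\beta d(p-1)<1$ and $\beta d(p/p_\star-1)<1$ leading to $p<1+2/d$ and $p_\star>p/(1+2/d)$ respectively) matches the paper exactly.
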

\begin{proof}
  Since $d_{\KL_{2,p}}\ge W_2$ and $\cP_2(\R^d)$ is complete, 
  it is easy to check that $\cP_{2,p}(\R^d)\subset \cP_2(\R^d)$.
  It is obvious that a measure $\mu$ in the
  completion $\cP_{2,p}(\R^d)$ can be connected to measures in
  $\cP_{2,p}^r(\R^d)$ with finite cost; conversely, if there exists
  $\mu'=m'\LL^d\in \cP_{2,p}^r(\R^d)$ with
  $(m,\vv)\in \CE2p{Q;\mu,\mu'}$ 
  then $m_t\in \cP_{2,p}^r(\R^d)$ for all $t\in D_p[\mu]$, 
  in particular there exists a decreasing sequence $t_n\downarrow0$
  such that $m_{t_n}\in \cP_{2,p}^r(\R^d)$ and we have
  \begin{align*}
    d_{\KL_{2,p}}(\mu,m_{t_n})
    \le \KL^{t_n}_{2,p}(\mu,m_{t_n})
    \le 
    \frac 12 t_n+\int_0^{t_n}\int_{\R^d} \Big(\frac 12
    |\vv|^2m+m^p\Big)\,\d x\,\d t
    \to 0\text{ as }n\to\infty,
  \end{align*}
  so that $\mu$ belongs to the closure of $\cP_{2,p}^r(\R^d)$ with
  respect to $d_{\KL_{2,p}}$.
  
  Let us now check the last two statements, first considering  
  the case $p<1+2/d$. By the above argument, it is sufficient to show
  that any $\mu_0\in \cP_2(\R^d)$ can be connected to
  a measure $\mu_1\in \cP_{2,p}^r(\R^d)$ by a path $(\mu,\vv)\in \CE2p{Q;\mu_0,\mu_1}$.
  
  In order to find $\mu$ we introduce the Heat semigroup
  $(S_t)_{t\ge0}$ in $\R^d$
  \begin{equation}
    \label{eq:190}
    S_t \mu=\mu\ast g_t,\quad
    g_t(x):=\frac1{(4\pi t)^{d/2}}\mathrm e^{-|x|^2/4t},
  \end{equation}
  and set $\mu_t=m_t\LL^d:=S_t\mu_0$. 
  It is well known that the $L^p$ norm of $m_t$
  obeys the estimate
  \begin{equation}
    \label{eq:194}
    \|m_t\|_{L^p(\R^d)}\le C_p \frac 1{t^{(1-1/p)d/2}};
  \end{equation}
  morever, since $m$ satisfies the Heat equation
  \begin{displaymath}
    \partial_t m-\Delta m=0\quad\text{in }(0,\infty)\times \R^d
  \end{displaymath}
  we see that 
  \begin{equation}
    \label{eq:191}
    \partial_t m+\nabla\cdot (m\vv)=0\quad
    \text{with}
    \quad
    \vv=-Dm/m
  \end{equation}
  and the metric velocity of $m$ w.r.t.~the Wasserstein distance at
  time $t>0$
  coincides with the Fisher information
  \begin{equation}
    \label{eq:192}
    |\dot\mu_t|_{W_2}^2=\int_{\R^d}|\vv(t,x)|^2m(t,x)\,\d x=
    \int_{\R^d}\frac{|D m(t,x)|^2}{m(t,x)}\,\d x.
  \end{equation}
  The convexity of the integrand $(x,\yy)\mapsto |\yy|^2/x$ in
  $(0,\infty)\times \R^d$ and Jensen's inequality yield (see
  \cite[Lemma 8.1.10]{ambrosio2008gradient})
  \begin{equation}
    \label{eq:193}
    \int_{\R^d}\frac{|D m(t,x)|^2}{m(t,x)}\,\d x\le 
    \int_{\R^d}\frac{|D g_t(x)|^2}{g_t(x)}\,\d x= \frac{d}{8\pi t}.
  \end{equation}
  We deduce the upper bound 
  \begin{equation}
    \label{eq:196}
    (1+\|m_t\|^{p}_{L^p(\R^d)})^{1/2}|\dot\mu_t|_{W_2}\le
    Ct^{-\gamma},\quad t\in(0,1),\ 
    \gamma:=\frac 12+\frac d4(p-1),
  \end{equation}
  which is integrable in $(0,1)$ if $p<1+d/2$.

  When $\mu_0=m_0\LL^d$ with $m_0\in L^{p_\star}(\R^d)$, we use
  the same argument replacing \eqref{eq:194} with 
  \begin{equation}
    \label{eq:194bis}
    \|m_t\|_{L^p(\R^d)}\le C_{p_\star,p} \frac 1{t^{(1/p_\star-1/p)d/2}},
  \end{equation}
  and thus obtaining an estimate analogous to \eqref{eq:196} with the exponent
  $\gamma=\frac 12+\frac {pd}4(1/p_\star-1/p)$. The integrability
  condition near $0$ then yields the condition $p_\star>p/(1+2/d)$.
\end{proof}
\section{Convergence in measure, increasing
functions, weighted $L^p$ spaces and anisotropic convolution}
\label{sec:4}
\subsection{The space $L^0(\Omega;\frm)$ and the convergence in measure}
\label{subsec:conv-measure}
 
Let $\Omega$ be a Polish topological space
with its Borel $\sigma$-algebra $\BB$ and 
a $\sigma$-finite Borel measure $\frm$. 
Since $\frm$ is $\sigma$-finite, we can 
 find a 
 \begin{equation}
   \text{l.s.c.~density function $\rho:\Omega\to (0,1]$
 such that $\varrho:=\rho\frm\in \cP(\Omega)$.} 
\label{eq:228} 
\end{equation}
We denoted by $L^0(\Omega,\frm)$ the space of (equivalent classes of)
$\frm$-measurable functions $u: \Omega \to \R$.  
 $L^0(\Omega,\frm)$ is endowed with the topology of the convergence in
 measure
 (on every
 measurable set of finite
 measure):
 recall that a sequence $(u_n)_{n\in \N}$ in $L^0(\Omega,\frm)$ converges to $u\in
 L^0(\Omega,\frm)$ in measure if 
 \begin{equation}
   \label{eq:24}
   \text{for every $\varepsilon > 0, F \subset \BB$ with $\frm(F)
     <\infty$}:
   \quad
   \lim_{n \uparrow +\infty} \frm\left( \lbrace x \in F: |u_n(x) - u(x)| \geq \varepsilon \rbrace  \right) = 0.
 \end{equation} 
It is well known (see also Lemma \ref{l:conv_meas}) that 
this convergence is
equivalent to the convergence in measure w.r.t.~$\varrho$ and it is metrizable,
e.g.~by the distance 
\begin{equation}
\label{eq:78}
  d(u,v): = \int_{\Omega} \left( |u(x)-v(x)| \wedge 1 \right) \,\d\varrho(x).
\end{equation}
If we want to include also functions 
in $L^0(\Omega,\frm;\overline\R)$ with values in the 
extended real line $\overline \R=[-\infty,+\infty]$, 
we can observe that 
every increasing homeomorphism $\zeta:\overline\R\to[-1/2,1/2]$
(e.g.~$\zeta(x):= \frac{x}{2\sqrt{( 1+x^2)}}$ with $\zeta(\pm \infty):= \pm 1/2 $) 
induces a bijection with the set 
\begin{displaymath}
L^0(\Omega,\frm;[-1/2,1/2]):=\{f\in L^0(\Omega,\frm):f(x)\in [-1/2,1/2]\
\text{for $\frm$-a.e.~$x\in \Omega$}\},
\end{displaymath}
via the composition map
$f\mapsto \zeta\circ f$. This correspondence and the last statement 
of Lemma \ref{l:conv_meas} justifies the following definition.
\begin{definition}
We say that a sequence $f_n \in L^0(\Omega,\frm; \overline\R)$, $n\in \N$, converges in
measure to $f \in L^0(\Omega,\frm; \overline \R)$ as $n\to\infty$ 
if 
$\zeta \circ f_n\to \zeta \circ f$ in measure in $L^0(\Omega,\frm)$,
or, equivalently, in any $L^p(\Omega;\varrho)$, $1<p<\infty$. 
\end{definition}

Let us notice that the above definition is independent on the choice
of the map $\zeta$, of $\varrho$ and of $p$.
 We also notice that 
when $\Omega$ is an open subset of $\R^d$, convergence in $L^1_{\loc}(\Omega)$ 
implies convergence in measure, thanks to 
characterization (b) of Lemma \ref{l:conv_meas}.

\subsection{Properties of increasing functions}
\label{subsec:increasing}
 
\GGG
In this section we will deal with (class of) measurable functions
$u\in L^0(I\times \Omega,\tilde\frm)$, $\tilde\frm:=\LL^1\otimes \frm$, which are increasing
w.r.t.~time. In principle, they could be characterized by three
different properties:
\begin{enumerate}[(i)]
\item an integral inequality against $C^1$ function w.r.t.~time 
  (a distributional inequality $-\partial_t u\le 0$ in $\DD'(Q)$ when
  $\Omega=\R^d$):
  this is the most natural way to write a condition 
  invariant w.r.t.~modifications of $u$ in a $\tilde\frm$-negligible set.  
\item $u$ admits a Borel representative $\sfu:I\times \Omega\to \R$ such
  that $\sfu(s,\cdot)\le\sfu(t,\cdot)$ $\frm$-a.e.~in $\Omega$ for every
  $0<s<t<1$.
\item there exists a Borel representative $\sfu:I\times \Omega\to\R$ and 
  a $\frm$-negligible subset $N\subset \Omega$ such 
  that $\sfu(s,x)\le \sfu(t,x)$ for every $x\in \Omega\setminus N$ and
  $0<s<t<1$
  (differently from (ii), the exceptional set $N$ does not depend on
  $s$ and $t$).
\end{enumerate}
We will discuss the (well known) equivalence between (i), (ii), (iii), by
supplementing the analysis with the enucleation of a (suitable) right continuous representative,
the existence of the traces at $t=0$ and $t=1$ in $L^0(\R^d;\bar \R)$,
the approximation by time convolution, 
and the natural variant corresponding to the differential
inequality
$-\partial_t u\le \beta$.

Let us first discuss the case (ii): 
notice that $L^0(\Omega,\frm)$ can be endowed with a partial order by
\begin{equation}
  \label{eq:25}
  u\le v\quad\Leftrightarrow\quad
  u(x)\le v(x)\quad\text{for $\frm$-a.e.~$x\in \Omega$},
\end{equation}
so that we can consider \emph{increasing} maps $t\mapsto u_t\in L^0(\Omega,\frm)$
defined in some subset $J$ of the real line: they satisfy
$u_s\le u_t$ for every $s,t\in J$ with $s<t$. 
We will say that $u$ is \emph{right continuous} if for every right
accumulation point $t$ of $J$ $\lim_{s\downarrow t}u_s=u_t$ in $L^0(\Omega,\frm)$.\nc
\begin{lemma}\label{l:f_increasing}
\
\begin{enumerate}[\rm (1)]
\item Let $(u_n)_{n \in \N}$ be an increasing sequence in $L^0(\Omega,\frm)$.
Then there exists a unique $u \in L^0(\Omega,\frm; \bar \R)$ such that $u_n
\to u$ in measure as $n\to\infty$.
\item Let $t \mapsto u_t$ be an increasing  map from $D$ to
  $L^0(\Omega,\frm)$,
  where $D\subset I$ is a dense set.
Then there exist unique maps $u^-:I\to L^0(\Omega,\frm)$ (resp.~$u^+$) 
and $u_1^- \in
L^0(\Omega,\frm; \R\cup \{+\infty\})$ (resp.~$u_0^+\in L^0(\Omega,\frm;\R\cup\{-\infty\})$)
such that $\lim_{s\in D,\,s \uparrow t}u_s = u_t^-$ in measure for every $t \in
(0,1]$
(resp.~$\lim_{s\in D,\, \downarrow t}u_s = u_t^+$ for every $t\in [0,1)$).
\item If $D\ni t\mapsto u_t$, $D\subset I$ dense,
  is increasing then for every $t\in D$ it holds $u_t^- \leq u_t \leq u_t^+$ and there exists an at most countable set $J_u \subset D$ such that $u_t^- = u_t =
u_t^+$ for every $t \in D \setminus J_u$.
\item If $t\mapsto u_t$ is increasing then 
$t\mapsto u_t^+$ is increasing and right continuous with values in
$L^0(\Omega,\frm)$. $u_t=u_t^+$ for every $t\in I$ if and only if $u$ is
right continuous.
\end{enumerate}
\end{lemma}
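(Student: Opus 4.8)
The plan is to prove the four assertions in sequence, each building on the previous one, and to use the ``scalarization'' device of integrating the bounded composition $\zeta\circ u_t$ against the reference probability $\varrho=\rho\frm$ only for the countability statement in (3). For \textbf{(1)} I would first pick Borel representatives of the $u_n$ and discard the single $\frm$-negligible set $N:=\bigcup_n\{u_n>u_{n+1}\}$; on $\Omega\setminus N$ the sequence $n\mapsto u_n(x)$ is genuinely nondecreasing in $\R$, so $u(x):=\sup_n u_n(x)\in\overline\R$ (set arbitrarily on $N$) defines an element of $L^0(\Omega,\frm;\overline\R)$ with $u_n\to u$ pointwise $\frm$-a.e.; composing with $\zeta$ and using dominated convergence w.r.t.\ $\varrho$ (recall $|\zeta|\le 1/2$ and $\varrho$ is finite) gives $\zeta\circ u_n\to\zeta\circ u$ in $L^1(\Omega,\varrho)$, i.e.\ convergence in measure, and uniqueness is the uniqueness of limits in measure (Lemma~\ref{l:conv_meas}).

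For \textbf{(2)}, given $t\in(0,1]$ I would choose \emph{any} sequence $s_n\in D$ with $s_n\uparrow t$: monotonicity of $s\mapsto u_s$ makes $(u_{s_n})_n$ increasing, so (1) yields a limit $v\in L^0(\Omega,\frm;\overline\R)$. Since every $u_s$ with $s\in D\cap(0,t)$ is eventually dominated by $u_{s_n}$, one gets $u_s\le v$, and this forces any other increasing sequence in $D\cap(0,t)$ to have the same limit (compare both ways), so $u_t^-:=v$ is well defined and unique. Density of $D$ provides $s^\ast\in D\cap(t,1)$ with $u_t^-\le u_{s^\ast}<\infty$ and $s_\ast\in D\cap(0,t)$ with $u_t^-\ge u_{s_\ast}>-\infty$ whenever $t\in(0,1)$, so $u_t^-\in L^0(\Omega,\frm)$, while at $t=1$ only the lower bound remains and $u_1^-\in L^0(\Omega,\frm;\R\cup\{+\infty\})$. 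The right traces $u_t^+$ are built symmetrically from decreasing sequences $s_n\downarrow t$ in $D$ (applying (1) to $-u_{s_n}$), with finiteness at interior points for the same reason and $u_0^+\in L^0(\Omega,\frm;\R\cup\{-\infty\})$.

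For \textbf{(3)}, the inequalities $u_t^-\le u_t\le u_t^+$ on $D$ follow by passing to the limit in $u_s\le u_t$ ($s\uparrow t$) and in $u_t\le u_s$ ($s\downarrow t$), order being stable under convergence in measure. To count the exceptional times I set $g(t):=\int_\Omega(\zeta(u_t^+)-\zeta(u_t^-))\,\d\varrho$, which is nonnegative and vanishes exactly when $u_t^-=u_t^+$ in $L^0(\Omega,\frm)$, hence (with $u_t^-\le u_t\le u_t^+$) when $u_t^-=u_t=u_t^+$. The crucial observation is that for consecutive $t_i<t_{i+1}$ in $D$ one has $u_{t_i}^+\le u_{t_{i+1}}^-$ $\frm$-a.e.\ (pick $s'\in D\cap(t_i,t_{i+1})$ and use $u_{t_i}^+\le u_{s'}\le u_{t_{i+1}}^-$), so the values $\zeta(u_{t_1}^-)\le\zeta(u_{t_1}^+)\le\dots\le\zeta(u_{t_k}^+)$ form an increasing chain and, for any finite $t_1<\dots<t_k$ in $D$,
\begin{displaymath}
\sum_{i=1}^k g(t_i)=\int_\Omega\sum_{i=1}^k\big(\zeta(u_{t_i}^+)-\zeta(u_{t_i}^-)\big)\,\d\varrho\le\int_\Omega\big(\zeta(u_{t_k}^+)-\zeta(u_{t_1}^-)\big)\,\d\varrho\le 1.
\end{displaymath}
Hence $\{t\in D:g(t)>1/n\}$ is finite for every $n$, so $J_u:=\{t\in D:g(t)>0\}$ is at most countable.

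For \textbf{(4)} (taking $D=I$ in (2)), monotonicity of $t\mapsto u_t^+$ comes from $u_r\le u_{r'}$ for $s<r<t<r'$ by letting $r\downarrow s$, $r'\downarrow t$; right continuity follows since $s\mapsto u_s^+$ is increasing, so $w:=\lim_{s\downarrow t}u_s^+$ exists by (1) with $w\ge u_t^+$, while $u_s^+\le u_r$ for $t<s<r$ gives $w\le u_r$ for all $r>t$ and therefore $w\le u_t^+$, i.e.\ $w=u_t^+$. Finally $u_t=u_t^+$ for all $t\in I$ says precisely that $\lim_{s\downarrow t}u_s=u_t$, i.e.\ $u$ is right continuous, by uniqueness of limits in measure. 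The only genuinely delicate point is the countability in (3): one must reduce the comparison of the $L^0$-valued traces $u_t^\pm$ to a scalar monotone quantity and then telescope the jumps against the finite mass of $\varrho$ via the interlacing inequality $u_{t_i}^+\le u_{t_{i+1}}^-$, which is exactly where density of $D$ enters; everything else is bookkeeping with monotone sequences and the metrizable, order-preserving convergence in measure.
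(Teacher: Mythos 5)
Your proposal is correct and follows essentially the same route as the paper: choose representatives with pointwise monotonicity to build the a.e.\ limit, pass to the scalarization $t\mapsto\int_\Omega\zeta(u_t)\,\d\varrho$ against the auxiliary probability $\varrho$, and exploit that jump sets of monotone real functions are countable. The only cosmetic differences are that you reprove the countability of the jump set via the telescoping inequality $\sum_i\bigl(\zeta(u_{t_i}^+)-\zeta(u_{t_i}^-)\bigr)\le\zeta(u_{t_k}^+)-\zeta(u_{t_1}^-)$ where the paper simply cites it, and you write out claim (4) in full where the paper declares it immediate; conversely, in (2) the paper makes the passage from sequential to one-sided net convergence explicit through the monotonicity estimate $\int|\zeta(u_1^-)-\zeta(u_t)|\,\d\varrho\le\int[\zeta(u_1^-)-\zeta(u_{t_n})]\,\d\varrho$, which you leave implicit but which is recovered from the same monotone structure.
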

\begin{proof}
We will use the notation of Section \ref{subsec:conv-measure}, in
particular the probability measure $\varrho$ and the homemorphism
$\zeta$.

\noindent
(1) We can select a common $\frm$-negligible set $N \subset \Omega$ such that $u_n$ are defined in $\Omega \setminus N$ and 
\begin{equation}
u_m(x) \leq u_n(x) \qquad \forevery x \in \Omega\setminus N \quad \text{ if } m< n.
\end{equation} 
Now define $u(x):= \lim_{n\uparrow \infty} u_n(x) = \sup_{n \in\N}u_n(x)$ for every $x \in \Omega \setminus N$.
If we apply the Lebesgue dominated convergence theorem 
in $L^1(\Omega;\varrho)$ to the sequence $\zeta \circ u_n$, which is
pointwise a.e.~converging towards $\zeta \circ u$ as $n \to +\infty$,
we easily obtain that $u_n \to u$ in measure. 

\noindent
(2) It is not restrictive to consider the case $u_t^-$ and fix $t=1$.
We first choose a sequence $D\ni t_n \uparrow 1$ and define $u_1^-$ applying claim (1).
By monotonicity, 
if $t\ge t_{n}$ it holds \nc
\begin{displaymath}
\int_{\Omega} \left| \zeta(u^-_1(x)) - \zeta(u_t(x)) \right|\,\d
\varrho(x) \GGG =\nc 
\int_{\Omega} \left[ \zeta(u^-_1(x)) - \zeta(u_t(x)) \right]\, \d \varrho(x) 
\leq \int_{\Omega} \left[ \zeta(u^-_1(x)) - \zeta(u_{\GGG t_{n}}(x)) \right]\, \d \varrho(x),
\end{displaymath} 
so that $\lim_{t \uparrow 1} \int_{\Omega} \left| \zeta(u^-_1(x)) -
  \zeta(u_t(x)) \right| d \varrho(x)  =0$, yielding the 
convergence in measure.

\noindent
(3) By monotonicity it is immediate to check that for every $r<s<t$ in $D$
\begin{equation}
u_r \leq u_s^- \leq u_s \leq u_s^+ \leq u_t.
\end{equation}
The function $w(t):= \int_{\Omega} \zeta(u_t(x))\,\d\varrho(x)$ is increasing with $w^\pm (t) = \int_{\Omega} \zeta(u^\pm_t(x))\,\d\varrho(x)$. 
We know that the set $J_w = \lbrace t: w^+(t) > w^-(t) \rbrace$ is at most countable. On the other hand
\begin{equation}
w^+(t) = \int_{\Omega} \zeta(u^+_t(x))d\varrho(x) = \int_{\Omega} \zeta(u^-_t(x))d\varrho(x) = w^-(t), \qquad \forall t \notin J_w.
\end{equation}
Since $u_t^- \leq u_t^+$ and $\zeta$ is strictly monotone, we deduce
$u^-_t = u^+_t$
for every 
$t \in I \setminus J_w $
and we conclude.

\noindent
(4) It follows immediately by the previous claims.
\end{proof}

\GGG
Let us now consider the case of functions defined in $\Omega_I:=I\times \Omega$
endowed with 
the product measure
$\tilde\frm:=\LL^1\otimes \frm$.
We will call $B_{f}(\Omega)$ the space
of bounded Borel maps $\varphi:\Omega\to\R$ vanishing
outside a set of finite measure, i.e.~$\frm\{\varphi\neq 0\}<\infty$.
If $\beta\in L^0(I\times\Omega,\tilde\frm)$ satisfies the property
\begin{equation}
  \label{eq:237}
  \int_{(a,b)\times F}|\beta(t,x)|\,\d\frm(t,x)<\infty\quad
  \text{for every }0<a<b<1,\quad
  F\in \BB,\ \frm(F)<\infty,
\end{equation}
we may select a Borel representative $\tilde \beta$ of $\beta$ such that
\begin{equation}
  \label{eq:238}
  \text{$\tilde \beta(\cdot,x)\in L^1(a,b)$ for every $x\in \Omega$,
    $0<a<b<1$, and }\quad
  B(t,x):=
    \int_{1/2}^t \tilde \beta(s,x)\,\d s,
\end{equation}
is absolutely continuous w.r.t.~$t\in I$ for every $x\in
\Omega$.

We will make extensively use of the standard
regularization technique by convolution. Thus we 
fix a family $h_\tau$, $\tau>0$, of convolution kernels 
\begin{gather}
h
\in C_c^\infty(\R),\quad
h\ge0,\quad 
\supp h\subset (-1,0),\quad
\int_\R h\,\d t=1,
\quad 
h_\tau(t):= \tau^{-1} h({t}/{\tau}).
\label{eq:51}
\end{gather}
If $u$ is a Borel everywhere defined
representative, the set $\Omega_1(u):=\{x\in \Omega:u(\cdot,x)\in
L^1(a,b)\quad\text{for every }0<a<b<1\}$ 
is Borel and we can define the function
\begin{displaymath}
  u_\tau(t,x):=
  \begin{cases}
    \int_\R h_\tau(t-s)u(s,x)\,\d s&\text{if }x\in \Omega_1(u)\\
    0&\text{otherwise,}
  \end{cases}
\end{displaymath}
which is Borel, it is $C^\infty(0,1-\tau)$ w.r.t.~$t$ for every
$x$, and satisfies $u_\tau=u_\tau'$ $\tilde\frm$-a.e.~whenever 
$u=u'$ $\tilde\frm$-a.e.
Finally, we will consider pairs $(\zeta,Z)$ in the sets
\begin{equation}
  \label{eq:239}
  \ZZ:=\Big\{(\zeta,Z):\zeta\in C^0_b(\R),\ \zeta\ge0,\ Z(r)=\int_0^r
  \zeta(s)\,\d s\Big\},\quad
  \ZZ_c:=\Big\{(\zeta,Z)\in \ZZ:\zeta\in C^0_c(\R)\Big\}.
\end{equation}
\begin{lemma}
  \label{le:increasing-main}
  Let $u,\beta\in L^0(I\times \Omega)$ with $\tilde \beta$ and
  $B$ as in \eqref{eq:237} and \eqref{eq:238}.
  The following properties are
  equivalent:
  \begin{enumerate}[(i)]
  \item For every nonnegative $\eta\in C^1_c(I)$ and 
    $\varphi\in B_f(\Omega)$, and
    every pair $(\zeta,Z)\in \ZZ_c$
    \begin{equation}
      \label{eq:221}
      \int_{\Omega_I}\eta'(t)\varphi(x)Z(u(t,x))\,\d\tilde\frm(t,x)\le
      \int_{\Omega_I}\eta(t)\varphi(x)\zeta (u(t,x))\beta(t,x)\,\d\tilde\frm(t,x).
    \end{equation}
    \item There exists a Borel representative $\tilde u$ 
      and a Borel set $D$ of full $\LL^1$-measure in $(0,1)$ such that
      \begin{equation}
        \label{eq:222}
        \tilde u(s,\cdot)-B(s,\cdot)\le \tilde u(t,\cdot)-B(t,\cdot)\quad\text{$\frm$-a.e.~in $\Omega$}
        \text{ for every }s,t\in D,\ s<t.
      \end{equation}
      \item There exists a Borel representative $\sfu$ such that
        $\sfu(s,x)-B(s,x)\le \sfu(t,x)-B(t,x)$ for every $x\in \Omega$ and $s,t\in
        I$, $s<t$.
  \end{enumerate}
  Moreover, if one of the above conditions holds, then
  \begin{enumerate}[{\rm ({I}.1)}]
  \item $\sfu$ can be chosen so that $t\mapsto \sfu(t,x)$ is right
    continuous for every $x\in \Omega$ 
    (this will be called a \emph{precise representative of $u$});
    if $\sfu_1,\sfu_2$ are two precise representatives of $u$ then
    for $\frm$-a.e.~$x\in \Omega$ 
    they satisfy $\sfu_1(\cdot,x)=\sfu_2(\cdot,x)$ 
    everywhere on $I$. In particular $\sfu(\cdot,x)$ do not depend 
    on $\beta$, up to $\frm$-negligible sets.
  \item If $\sfu^-(t,x):= \lim_{s\up t}\sfu(s,x)$, $x\in \Omega$, we have 
    \begin{equation}
      \label{eq:230}
      \sfu^-(t,\cdot)\le \tilde u(t,\cdot)\le \sfu(t,\cdot)\quad
      \text{$\frm$-a.e.~in $\Omega$, for every }t\in D,
    \end{equation}
    and equality holds in \eqref{eq:230} with at most countable exceptions.
  \item $\Omega_1(\sfu)=\Omega$ and 
    $\sfu(t,x)=\lim_{\tau\down0}\sfu_\tau(t,x)$ for every $t\in I$,
    $x\in \Omega$.
  \item
    If $u\in L^1((a,b)\times\Omega,\tilde\frm)$ for every $0<a<b<1$,
    then \eqref{eq:221} holds if and only if 
    it is satisfied with the choice $Z(u)\equiv u$, $\zeta(u)\equiv1$.
  \item
    If $u,b \in L^r((a,b)\times\Omega,\tilde\frm)$ for every $0<a<b<1$
    and some $r\in [1,\infty)$
    then
    \begin{equation}
      \label{eq:231}
      \sfu(t,\cdot)\in L^r(\Omega),\quad
      \lim_{\tau\down0}\|\sfu_\tau(t,\cdot)-\sfu(t,\cdot)\|_{L^r(\Omega,\frm)}=0
      \quad\text{for every }t\in
      (0,1).
    \end{equation}
  \end{enumerate}
\end{lemma}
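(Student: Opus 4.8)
Up to subtracting the absolutely continuous primitive $B$ of \eqref{eq:238}, the inequality \eqref{eq:221} should say exactly that $u-B$ is \emph{increasing in time}; so the plan is to reduce everything to Lemma \ref{l:f_increasing}, which already contains the $L^0(\Omega,\frm)$-valued calculus of increasing maps (one-sided limits, a canonical right-continuous version, a countable set of exceptional times), and then transfer the information back to $u$. Two points require care: removing the nonlinear renormalizers $(\zeta,Z)\in\ZZ_c$, which are unavoidable because $u$ is only finite-valued and not necessarily locally integrable; and upgrading the ``for $\frm$-a.e.\ $x$'' statements to a single \emph{jointly Borel} representative $\sfu$ whose monotonicity holds for \emph{every} $x$. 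Concretely I would prove the chain $\mathrm{(i)}\Leftrightarrow(\star)\Leftrightarrow\mathrm{(iii)}\Leftrightarrow\mathrm{(ii)}$, where $(\star)$ is the statement: \emph{for $\frm$-a.e.\ $x$ the function $u(\cdot,x)-B(\cdot,x)$ coincides $\LL^1$-a.e.\ with an increasing function.}

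\textbf{$\mathrm{(i)}\Leftrightarrow(\star)$.} Testing \eqref{eq:221} with products $\eta\varphi$, $\eta\in C^1_c(I)$ and $\varphi\in B_f(\Omega)$ nonnegative, and letting $\varphi$ range over a countable family separating points of $L^1(\Omega,\varrho)$, one disintegrates \eqref{eq:221} into: for $\frm$-a.e.\ $x$, $\int_I\eta'Z(u(\cdot,x))\le\int_I\eta\,\zeta(u(\cdot,x))\,\beta(\cdot,x)$ for all nonnegative $\eta\in C^1_c(I)$ and all $(\zeta,Z)$ in a countable dense subfamily of $\ZZ_c$ (hence, by approximation, all of $\ZZ_c$). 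Fix such an $x$; since $Z$ is bounded and $\zeta(u(\cdot,x))\beta(\cdot,x)\in L^1_{\loc}(I)$, this says precisely that $t\mapsto Z(u(t,x))+\int_{1/2}^t\zeta(u(s,x))\beta(s,x)\,\d s$ is $\LL^1$-a.e.\ equal to an increasing function. Choosing $(\zeta_k,Z_k)\in\ZZ_c$ with $0\le\zeta_k\uparrow1$ and $Z_k(r)\to r$, we get $Z_k(u(\cdot,x))\to u(\cdot,x)$ a.e.\ and $\int_{1/2}^t\zeta_k(u)\beta\to B(t,x)$ by dominated convergence; since an a.e.-pointwise limit of increasing functions is a.e.\ equal to an increasing function, $(\star)$ follows. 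For the converse $(\star)\Rightarrow\mathrm{(i)}$ (which also gives $\mathrm{(iii)}\Rightarrow\mathrm{(i)}$), fix a good $x$: then $u(\cdot,x)$ agrees a.e.\ with the sum of an increasing function and $B(\cdot,x)$, so the time-mollification $u_\tau(\cdot,x)$ is smooth on $(0,1-\tau)$ with $\partial_t u_\tau\ge h_\tau\!*\!\beta(\cdot,x)$; multiplying the chain rule $\partial_tZ(u_\tau)=\zeta(u_\tau)\partial_t u_\tau\ge\zeta(u_\tau)(h_\tau\!*\!\beta)$ (using $\zeta\ge0$) by a nonnegative $\eta\varphi$, integrating by parts in $t$ and letting $\tau\downarrow0$ — licit since $\zeta,Z$ are bounded and $h_\tau\!*\!\beta\to\beta$ in $L^1_{\loc}$ — yields \eqref{eq:221}.

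\textbf{$(\star)\Rightarrow\mathrm{(iii)}\Rightarrow\mathrm{(ii)}\Rightarrow(\star)$ and the precise representative.} By Fubini, $(\star)$ makes $t\mapsto u_t-B_t$ a well-defined increasing map from a dense full-measure set $D\subset I$ into $L^0(\Omega,\frm)$. Apply Lemma \ref{l:f_increasing}: it produces the one-sided limits $(u_t-B_t)^\pm$, the right-continuous version, and a countable exceptional set. To obtain a jointly Borel object, fix a countable dense $D_0\subset D$; by countability there is a Borel $\frm$-negligible $N_0$ and a Borel representative $\hat u$ of $u$ such that $r\mapsto\hat u(r,x)-B(r,x)$ is increasing on $D_0$ for every $x\notin N_0$; set $\mathsf w(t,x):=\inf_{r\in D_0,\ r>t}\big(\hat u(r,x)-B(r,x)\big)$ for $x\notin N_0$ and $\mathsf w(t,x):=0$ otherwise. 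Then $\mathsf w$ is Borel and $t\mapsto\mathsf w(t,x)$ is increasing and right-continuous for \emph{every} $x$; by Lemma \ref{l:f_increasing} it represents $(u-B)^+$, so $\mathsf w=u-B$ $\tilde\frm$-a.e., and $\sfu:=\mathsf w+B$ (with $B$ the Borel primitive of \eqref{eq:238}) is a Borel representative of $u$ with $\sfu(\cdot,x)-B(\cdot,x)$ increasing in $t$ for every $x$ and right-continuous in $t$ for every $x$: this is (iii), and $\sfu$ is the precise representative. The implication $\mathrm{(iii)}\Rightarrow\mathrm{(ii)}$ is immediate, and $\mathrm{(ii)}\Rightarrow(\star)$ follows again by Fubini, closing the equivalence.

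\textbf{Additional properties and main obstacle.} $(I.1)$: two representatives of $u$ which are right-continuous in $t$ for every $x$ and satisfy (iii) coincide $\tilde\frm$-a.e., hence, for $\frm$-a.e.\ $x$, everywhere in $t$ by right-continuity; since the construction uses only $u$, the precise representative is independent of $\beta$. $(I.2)$ is Lemma \ref{l:f_increasing}(3) applied to $t\mapsto u_t-B_t$, shifted by $B$. $(I.3)$: $\mathsf w(\cdot,x)$ is an increasing real function, hence in $L^1_{\loc}(I)$, and $B(\cdot,x)$ is absolutely continuous, so $\Omega_1(\sfu)=\Omega$; moreover $\sfu_\tau(t,x)=\int h_\tau(t-s)\sfu(s,x)\,\d s\to\sfu(t,x)$ since $\supp h_\tau\subset(-\tau,0)$ averages over $s\downarrow t$, where $\mathsf w$ is right-continuous and $B$ continuous. $(I.4)$: when $u\in L^1((a,b)\times\Omega,\tilde\frm)$ for all $0<a<b<1$, one passes to the limit $Z_k\to\mathrm{id}$, $\zeta_k\to1$ directly in \eqref{eq:221} by dominated convergence, dominating $|Z_k(u)|\le\|\zeta_k\|_\infty|u|\le C|u|\in L^1_{\loc}$. $(I.5)$: writing $\sfu=\mathsf w+B$, Jensen's inequality over a short time interval together with $u\in L^r_{\loc}$ gives $\mathsf w(t,\cdot),B(t,\cdot)\in L^r(\Omega,\frm)$; then $\mathsf w(t,\cdot)\le\mathsf w_\tau(t,\cdot)\le\mathsf w(t+\tau,\cdot)$ by monotonicity yields $\mathsf w_\tau(t,\cdot)\to\mathsf w(t,\cdot)$ in $L^r$ by right-continuity and dominated convergence, while $\|B_\tau(t,\cdot)-B(t,\cdot)\|_{L^r}\le\int_t^{t+\tau}\|\beta(s,\cdot)\|_{L^r(\Omega,\frm)}\,\d s\to0$, and summing gives \eqref{eq:231}. \emph{The main obstacle} is not a single estimate but the interlocking of the two reductions: eliminating the renormalizers $(\zeta,Z)$ — forced by the mere finiteness, not integrability, of $u$ — and converting the ``$\frm$-a.e.\ $x$'' monotonicity, where in (ii) the null set is even allowed to depend on the time pair $s<t$, into a jointly Borel $\sfu$ increasing for every $x$; it is precisely the $L^0(\Omega,\frm)$-valued form of Lemma \ref{l:f_increasing}, used along a fixed countable dense set of times, that resolves this.
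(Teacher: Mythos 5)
Your approach is close in spirit to the paper's (both mollify in time and reduce to Lemma \ref{l:f_increasing}), but the construction of the precise representative takes a genuinely different route. The paper proves $\partial_t w_\tau\ge 0$ and the monotonicity $w_{\tau'}\le w_{\tau''}$ for $\tau'<\tau''$, so that $\lim_{\tau\downarrow 0}w_\tau$ exists pointwise and furnishes the right-continuous representative; the truncations $\zeta$ are then removed by a stability argument showing that the truncated precise representatives are eventually constant on every compact subinterval. You disintegrate at the outset, apply Lemma \ref{l:f_increasing} in $L^0(\Omega,\frm)$, and define the representative directly as a right-limit over a fixed countable dense set $D_0$ of good times; $\zeta$ is removed by the simpler observation that a pointwise a.e.\ limit of increasing functions is a.e.\ increasing. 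Both routes are valid; yours avoids the $\tau$-monotonicity step, at the cost that the initial disintegration (from \eqref{eq:221} to a single full-measure set $D$) has to be organized over a countable separating family of $\varphi$'s and $\eta$'s, which is worth making explicit.

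Two points need repair. First, there is a sign inconsistency -- present also in the paper's formulation of (ii), (iii) and in its proof, where $w_\zeta:=Z\circ u-B_\zeta$ should read $Z\circ u+B_\zeta$. Since \eqref{eq:221} says $\int\eta'Z(u)\le\int\eta\,\zeta(u)\beta$, i.e.\ $\partial_t Z(u)+\zeta(u)\beta\ge 0$ distributionally, the increasing quantity is $Z(u)+\int_{1/2}^t\zeta(u)\beta$, hence $u+B$ in the limit, not $u-B$. Indeed your $(i)\Rightarrow(\star)$ step correctly exhibits $Z(u)+\int_{1/2}^t\zeta(u)\beta$ as the increasing function, which proves ``$u+B$ increasing,'' whereas your $(\star)$ is recorded as ``$u-B$ increasing''; and your $(\star)\Rightarrow(i)$, starting from $\partial_t u_\tau\ge h_\tau\ast\beta$, yields $\int\eta'Z(u_\tau)\le-\int\eta\,\zeta(u_\tau)(h_\tau\ast\beta)$, the wrong sign for \eqref{eq:221}. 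Replacing $u-B$ by $u+B$ throughout (a convention the paper's later use of $u_n+B_n$ in the proof of Theorem \ref{t:stability} confirms) closes both directions. Second, for (I.4) the nontrivial direction is that the single choice $Z=\mathrm{id}$, $\zeta\equiv 1$ already implies \eqref{eq:221} for all $(\zeta,Z)\in\ZZ_c$; you only argue the easy direction by dominated convergence. The converse follows by running your $(\star)\Rightarrow(i)$ chain starting from the $\zeta\equiv 1$ test alone, as the paper's proof observes, and should be stated.
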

\begin{proof}
  Let us first consider the equivalence of properties $(i),\, (ii)$, and
  $(iii). $
  It is easy to check that $(iii)\Rightarrow
  (ii)\Rightarrow (i)$. 
  Let us show the implication $(i)\Rightarrow (iii)$.
    
  We consider a pair $(\zeta,Z)\in \ZZ$ so that $Z\circ u$ satisfies
  the integrability condition 
  \begin{equation}
  \label{eq:237u}
  \int_{(a,b)\times F}|Z(u(t,x))|\,\d\frm(t,x)<\infty\quad
  \text{for every }0<a<b<1,\quad
  F\in \BB,\ \frm(F)<\infty,
\end{equation}
which is always satisfied if $(\zeta,Z)\in \ZZ_c$.
We select a Borel representative $u$ such that $Z\circ u\in L^1_{\rm
  loc}(I)$ for every $x\in \Omega$,
  we call 
    $  B_\zeta(t,x):=\int_{1/2}^t \zeta(u(s,x))\tilde
  \beta(s,x)\,\d s$,
  and $w_\zeta(t,x):=Z\circ u-B_\zeta$. 
  We fix $\zeta$ and we initially 
  omit the dependence on $\zeta$ by writing
  $w:=w_\zeta$.
    It is easy to check that 
  for every nonnegative $\eta\in C^1_c(I)$ and $\varphi\in B_f(\Omega)$
  \begin{equation}
    \label{eq:221w}
    \int_{\Omega_I}\eta'(t)\varphi(x)w(t,x)\,\d\tilde\frm(t,x)\le0.
  \end{equation}
  Since $w$ is Borel and
  $\Omega_1(w)=\Omega$,
  Fubini's theorem shows that for every nonnegative $\eta\in C^1_c(I)$
  and $\varphi\in B_f(\Omega)$
  \begin{align*}
    &-\int_{\Omega_I}\eta'(t)\varphi(x)w_\tau(t,x)\,\d\tilde\frm(t,x)=
    -\int_{\Omega}\Big(\int_0^1 \eta'(t)w_\tau(t,x)\,\d t\Big)
    \varphi(x)\,\d\frm(x)\\
    &=\int_{\Omega}\Big(\int_0^1 \eta(t)\partial_t w_\tau(t,x)\,\d t\Big)
    \varphi(x)\,\d\frm(x)=
    \int_{\Omega_I} \eta(t)\varphi(x)\partial_t w_\tau(t,x)\,
    \,\d\tilde\frm(t,x)
    \ge0
  \end{align*}
  We deduce that $\partial_t w_\tau\ge0$ $\tilde\frm$-a.e.~in
  $\Omega_I$ and therefore there exists a $\frm$-negligible set
  $N(\tau)\subset \Omega$ such that $\partial_t w_\tau(\cdot,x)\ge0$
  $\LL^1$-a.e.~in $\R$ for every $x\in \Omega\setminus N(\tau)$.

  Since $\partial_t w_\tau(\cdot,x)$ is a continuous function, 
  we conclude that $\partial_t w_\tau(\cdot,x)\ge0$
  in $I$ for every $x\in \Omega\setminus N(\tau)$.
  Setting $z(r):=r\,h(r)$, a simple calculation shows that for $0<\tau'<\tau''$
  \begin{displaymath}
    h_{\tau''}(r)-h_{\tau'}(r)=
    -
    \int_{\tau'}^{\tau''} \frac 1{\tau^2}z(r/\tau)\,\d \tau
  \end{displaymath}
  so that 
  \begin{displaymath}
    w_{\tau''}(t,x)-w_{\tau'}(t,x)=
     -
     \int_{\tau'}^{\tau''} \frac 1{\tau^2}
     \int_\R z((t-s)/\tau)w(s,x)\,\d s\,\d\tau
  \end{displaymath}
  and a further integration w.r.t.~$\frm$ with weight $\varphi$
  yields
  \begin{displaymath}
    \int_{\Omega}\big(w_{\tau''}(t,x)-w_{\tau'}(t,x)\big)\,\varphi(x)\,\d\frm(x)=
    -
    \int_{\tau'}^{\tau''} \frac 1{\tau^2}
    \Big(\int_{\Omega_I} z((t-s)/\tau)\varphi(x)w(s,x)\,\d \tilde\frm\Big)\,\d\tau\ge0
  \end{displaymath}
  since $z\le 0$. 
  It follows that there exists a $\frm$-negligible set 
  $N(\tau',\tau'')\supset N(\tau')\cup N(\tau'')$ such that
  $w_{\tau'}(t,x)\le w_{\tau''}(t,x)$ 
  for every rational $t\in I\cap \Q$ and $x\in \Omega\setminus
  N(\tau',\tau'')$. Since $w_{\tau'}$ and $w_{\tau''}$ are also
  continuous
  w.r.t.~$t$ if $x\in \Omega\setminus
  N(\tau',\tau'')$ we conclude that 
  $w_{\tau'}(t,x)\le w_{\tau''}(t,x)$ for every $t\in (0,1)$ and $x\in
  \Omega
  \setminus
  N(\tau',\tau'')$.

  By an induction argument, 
  we can therefore select a decreasing sequence $\tau_n\downarrow0$
  and $\frm$-negligible set $N$ such that 
  $t\mapsto w_{\tau_n}(t,x)$ is continuous and increasing
  w.r.t.~$t$ in $I$ and
  $n\mapsto w_{\tau_n}(t,x)$ is decreasing  
  for every $x\in \Omega\setminus N$.
  We can thus define
  \begin{displaymath}
    \sfw(t,x):=\lim_{n\to\infty}w_{\tau_n}(t,x)\quad\text{if }x\in
    \Omega\setminus N; \quad
    \sfw(t,x)=0\text{ otherwise.}
  \end{displaymath}
  Clearly $\sfw$ is increasing for every $x$; moreover, 
  Lebesgue theorem shows that $\sfw(t,x)=w(t,x)$ $\LL^1$-a.e.~in $(0,1)$
  for every $x\in \Omega\setminus N$, so that 
  $\sfw$ is a Borel representative of $w$.
  Finally, 
  $\sfw\ast h_\tau(\cdot,x)=w\ast h_\tau(\cdot,x)=w_\tau(\cdot,x)$
  in $I$ for every $x\in \Omega\setminus N$ so that
  $\sfw$ is right continuous. 

  Let us now turn back to write $w=w_\zeta$, $\sfw=\sfw_\zeta$ and 
  try to remove the dependence on $\zeta$. We consider an increasing sequence 
  $\zeta_n(r)=\nchi(|2^{-n} r|)$ where $\nchi:[0,\infty)\to [0,1]$ is 
  a smooth decreasing function such that $\nchi\restr{[0,1]}\equiv 1$
  and $\nchi\restr{[2,\infty)}\equiv 0$.
  Clearly $Z_n(r)\equiv r$ if $r\in [-2^n,2^n]$ and $Z_n\circ
  Z_{n+1}=Z_n$.
  We set $w_n:=w_{\zeta_n}$, $B_n:=B_{\zeta_n}$, and we call $N(n)$
  the exceptional set $N(\zeta_n)$ we have previously found with $\bar
  N:=\cup_{n\in \N}N(n)$.
  It is easy to check that $B_n(\cdot,x)\to B(\cdot,x)$ locally uniformly in
  $I$ for every $x\in \Omega$ so that 
  $S_n(x,a,b):=\sup_{x\in (a,b)}|B_n(\cdot,x)-B(\cdot,x)|\to 0$ as $n\to\infty$. Setting $\sfu_n:=\sfw_n+B_n$ 
  we clearly have
  $\sfu_n(\cdot,x)=Z_n(u(\cdot,x))=Z_n(Z_{n+1}(u(\cdot,x)))$ 
  $\LL^1$-a.e.~in $I$ for every $x\in \Omega\setminus \bar N$,
  so that the right continuity of $\sfu_n$ yields
  $\sfu_n(\cdot,x)=Z_n(\sfu_{n+1}(\cdot,x))$ everywhere in $I$ for
  every $x\in \Omega\setminus \bar N$.
  For every $0<a<b<1$ and $x\in \Omega\setminus \bar N$
  there exist $a'\in (0,a)$, $b'\in (b,1)$ and
  $\bar n$ sufficiently big so that 
  $\sfu_n(a',x)=Z_n(u(a',x)),\ \sfu_n(b',x)=Z_n(u(b,x)),\ S_n(x,a,b)\le 1$
  for every $n\ge\bar n$.
  We obtain the estimate
  \begin{displaymath}
    (u(a',x)\land 0)-B(a',x)-1\le \sfu_n(t,x)-B(t,x)\le
    (u(b',x)\lor 0)-B(b',x)+1
    \quad\text{if }t\in (a,b),\ 
    n\ge\bar n.
  \end{displaymath}
  We deduce that the sequence $n\mapsto \sfu_n(\cdot,x)$ is definitely constant 
  in every interval $(a,b)$ and therefore the limits
  $\sfu(t,x)=\lim_{n\to\infty}\sfu_n(t,x)$,
  $\sfw(t,x)=\lim_{n\to\infty}\sfw_n(t,x)$ exist for every $t\in I$
  and define right continuous functions for every $x\in
  \Omega\setminus\bar N$
  which satisfy $\sfu(t,x)=\sfw(t,x)+B(t,x)$. Since $\sfw$
  is increasing w.r.t.~$t$ we conclude.
  
  (I.1) and (I.3) follow by the above construction. 
    If now $\tilde u$ satisfies \eqref{eq:222}, since $\sfu=\tilde u$
  $\tilde\frm$-a.e.~in $I\times \Omega$, we may find 
  a subset $D_1\subset D$ of full measure such that $\tilde
  u(t,\cdot)=\sfu(t,\cdot)$ $\frm$-a.e.~in $\Omega$ for every $t\in
  D_1$.
  By Lemma \ref{l:f_increasing} we conclude that $\tilde
  u^+(t,\cdot)=\sfu(t,\cdot)$
  for every $t\in I$, so that \eqref{eq:230} holds.

  Concerning (I.4) it is clear by a limit procedure
  as we did in the previous claim that 
  if \eqref{eq:221} holds for every $(\zeta,Z)\in \ZZ$ 
  and $u\in L^1((a,b)\times \Omega)$ for every $0<a<b<1$, then
  \eqref{eq:221} holds also choosing $\zeta\equiv1$ and the identity
  map as $Z$. To prove the converse implication, we simply observe
  that in the above proof $(i)\Rightarrow (iii)$ 
  we are allowed to choose at the beginning $\zeta=1$ so that 
  the argument directly shows $(iii)$ and therefore $(i)$ for
  arbitrary $(\zeta,Z)\in \ZZ_c$.

  If eventually $u\in L^p((a,b)\times \Omega,\tilde\frm)$ for every $0<a<b<1$ we
  deduce by Fubini's theorem that the set $t\in I: \sfu(t,\cdot)\in
  L^p(\Omega,\frm)$ is of full measure in $I$, in particular there are
  sequences $a_n\downarrow 0$ and $b_n\uparrow 1$ such that
  $\sfu(a_n,\cdot),\sfu(b_n,\cdot)\in L^p(\Omega;\frm)$.
  It is also easy to check that $B(t,\cdot)\in L^p(\R^d)$ for every
  $t\in I$, so that 
  we can then use the obvious bounds
  \begin{equation}
    \label{eq:232}
    \Big(\sfu(a_n,\cdot)-B(a_n,\cdot)\Big)\land0\le \sfu(t,\cdot)
    -B(t,\cdot)\le \Big(\sfu(b_n,\cdot)-B(b_n,\cdot)\Big)\lor
    0\quad
    \text{if }a_n\le t\le b_n
  \end{equation}
  to show that the negative and the positive part of $\sfu(t,\cdot)$
  are uniformly bounded by fixed functions in $L^p(\Omega,\frm)$ 
  in every compact interval of $(0,1)$. \eqref{eq:231} then follows by
  Lebesgue's Dominated Convergence Theorem.
\end{proof}
\begin{remark}
  \label{rem:extension-to-D'}
  \upshape
  It is clear that when $\Omega=\R^d$ and $\frm=\LL^d$ 
  \eqref{eq:221} can be equivalently formulated as
  \begin{equation}
    \label{eq:233z}
    -\partial_t Z(u)\le \zeta(u)\beta\quad\text{in }\DD'(Q)\quad
    \text{for every }(\zeta,Z)\in \ZZ_c.
  \end{equation}
  If moreover $u\in L^1_{\rm loc}(Q)$ then 
  claim (I.4) shows that we can equivalently write
  \begin{equation}
    \label{eq:233}
    -\partial_t u\le \beta\quad\text{in }\DD'(Q).
  \end{equation}
  Notice eventually that if $(\zeta_i,Z_i)\in \ZZ_c$ with 
  $\zeta_1\le \zeta_2\le 1$ then \eqref{eq:233} yields
  \begin{equation}
    \label{eq:103}
    0\le \partial_t Z_1(u)+\zeta_1(u)\beta\le 
    \partial_t Z_2(u)+\zeta_2'(u)\beta\le \partial_t u+\beta\quad\text{in }\DD'(Q).
  \end{equation}
Indeed,  \eqref{eq:103} follows immediately from \eqref{eq:233z} with the
  choices $\zeta(r):=\zeta_1(r)$, $\zeta(r):=\zeta_2(r)-\zeta_1(r)$ and
  $\zeta(r):=r-\zeta_2(r)$.  
\end{remark}
We now study the traces of $u$ at $t=0$ and $t=1$, by assuming that 
$\beta$ satisfies
\begin{equation}
  \label{eq:237strong}
  \int_{I\times F}|\beta(t,x)|\,\d\tilde \frm(t,x)<\infty\quad
  \text{for every }
  F\in \BB,\ \frm(F)<\infty,
\end{equation}
so that we may select a Borel representative $\tilde \beta$ of $\beta$ such that
\begin{equation}
  \label{eq:238strong}
  \text{$\tilde \beta(\cdot,x)\in L^1(I)$},
  \quad
  B(t,x):=
    \int_{1/2}^t\tilde \beta(s,x)\,\d s,\ B(\cdot,x)\in \ac([0,1]) 
    \quad\text{for every $x\in \Omega$}.
\end{equation}

\begin{lemma}[Traces at $t=0$ and $t=1$]
  \label{le:traces}
  Let $u,\beta\in L^0(I\times \Omega,\tilde\frm)$ 
  be satisfying \eqref{eq:237strong} and one of the equivalent conditions of Lemma
  \ref{le:increasing-main} 
  and let $w:=u-B$.
  Let $w_0^+,w_1^-$ be defined as in Lemma \ref{l:f_increasing}
    starting from a Borel representative $\tilde w(t,\cdot)=\tilde u(t,\cdot)-B(t,\cdot)$ satisfying
    \eqref{eq:222} and let
    \begin{equation}
      \label{eq:240}
      u^+_0:=w_0^++B(0,\cdot) \in L^0(\Omega;\R\cup\{-\infty\})\quad
      u_1^-:=w_1^-+B(1,\cdot)\in L^0(\Omega;\R\cup\{+\infty\}),
    \end{equation}
    \begin{equation}
      \label{eq:234}
      \sfu^+(0,x):=\lim_{t\down0}\sfu(t,x)\in [-\infty,+\infty),\quad
      \sfu^-(1,x):=\lim_{t\up1}\sfu(t,x)\in (-\infty,+\infty]
      \quad\text{for every }x\in \Omega.
    \end{equation}
    \begin{enumerate}[(i)]
    \item 
      We have
      \begin{equation}
        \label{eq:189}
        u_0^+=\sfu^+(0,\cdot),\quad
        u_1^-=\sfu^-(1,\cdot)\quad
        \text{$\frm$-a.e.~in }\Omega,
      \end{equation}
    \item
      If there exists $u_0,u_1\in L^0(\Omega,\frm;\bar
  \R)$ such that
  for every nonnegative $\eta\in C^1_c(\R)$ and $\varphi\in
  B_f(\Omega)$ and $(\zeta,Z)\in \ZZ_c$
    \begin{equation}
      \label{eq:221bis}
      \begin{aligned}
        \eta(0)\int_\Omega Z(u_0)\varphi\,\d\frm&-
        \eta(1)\int_\Omega Z(u_1)\varphi\,\d\frm\\&\le
        \int_{\Omega_I}\Big(\eta(t)\varphi(x)\zeta(u(t,x))\beta(t,x)-
        \eta'(t)\varphi(x)Z(u(t,x))\Big)\,\d\tilde\frm(t,x),
      \end{aligned}
    \end{equation}
    then
      \begin{equation}
        \label{eq:189bis}
        u_0\le u_0^+=\sfu^+(0,\cdot),\quad
        u_1\ge u_1^-=\sfu^-(1,\cdot)\quad
        \text{$\frm$-a.e.~in }\Omega.
      \end{equation}
      \item 
        \eqref{eq:221bis} always holds with $u_0,u_1$ replaced by
      $u_0^+$, $u_1^-$ respectively.  
      \item If \eqref{eq:221bis} holds with
      $u_0\land 0,\ u_1\lor 0\in L^1(\Omega,\frm)$ then
      $u\in L^1(I\times \Omega;\tilde\frm)$ and
      \begin{equation}
        \label{eq:221tris}
        \eta(0)\int_\Omega u_0\varphi\,\d\frm-
        \eta(1)\int_\Omega u_1\varphi\,\d\frm\le 
        \int_{\Omega_I}
        \Big(\eta(t)\varphi(x)\beta(t,x)-\eta'(t)\varphi(x) u(t,x)\Big)\,\d\tilde\frm(t,x).
      \end{equation}
    \end{enumerate}
\end{lemma}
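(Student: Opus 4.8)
The plan is to pass immediately to the precise representative $\sfu$ of $u$ furnished by Lemma~\ref{le:increasing-main}\,(I.1), so that $\sfw:=\sfu-B$ is non-decreasing and right-continuous in $t$ for \emph{every} $x\in\Omega$, and the one-sided pointwise limits in \eqref{eq:234} genuinely exist (finite by monotonicity, possibly $-\infty$ at $t=0$ and $+\infty$ at $t=1$). The decisive elementary remark is that for every $(\zeta,Z)\in\ZZ_c$ the primitive $Z$ is bounded (since $\zeta\ge0$ has compact support) and $\zeta(\sfu(\cdot,x))\tilde\beta(\cdot,x)\in L^1(I)$ for every $x$ (since $\tilde\beta(\cdot,x)\in L^1(I)$ by \eqref{eq:238strong}); disintegrating the distributional inequality \eqref{eq:221} in the $x$-variable, for $\frm$-a.e.\ $x$ the function
\[
h_\zeta(t,x):=Z(\sfu(t,x))+\int_{1/2}^{t}\zeta(\sfu(s,x))\,\tilde\beta(s,x)\,\d s
\]
is non-decreasing, right-continuous and \emph{bounded} on $(0,1)$, with finite one-sided limits $h_\zeta(0^+,x),\,h_\zeta(1^-,x)$.

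For part (i), I would first identify the class $w_0^+$ of Lemma~\ref{l:f_increasing} with the pointwise limit $\sfw^+(0,\cdot):=\lim_{t\downarrow0}\sfw(t,\cdot)$: choosing a sequence $t_n\downarrow0$ inside the dense set $D$ of \eqref{eq:222} that avoids the at most countable exceptional set of Lemma~\ref{le:increasing-main}\,(I.2), one has $\tilde w(t_n,\cdot)=\sfw(t_n,\cdot)$ $\frm$-a.e., while $\sfw(t_n,\cdot)\downarrow\sfw^+(0,\cdot)$ pointwise, hence in measure; comparing with $\tilde w(t_n,\cdot)\to w_0^+$ in measure (Lemma~\ref{l:f_increasing}\,(2)) and using uniqueness of limits in measure gives $w_0^+=\sfw^+(0,\cdot)$ $\frm$-a.e. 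Adding $B(t_n,\cdot)\to B(0,\cdot)$ (pointwise, since $B(\cdot,x)\in\ac([0,1])$) yields $u_0^+=\sfu^+(0,\cdot)$ $\frm$-a.e., and the case $t=1$ is symmetric. In particular $h_\zeta(0^+,x)=Z(u_0^+(x))-\int_0^{1/2}\zeta(\sfu(s,x))\tilde\beta(s,x)\,\d s$ and $h_\zeta(1^-,x)=Z(u_1^-(x))+\int_{1/2}^1\zeta(\sfu(s,x))\tilde\beta(s,x)\,\d s$, using that $Z$ extends continuously to $[-\infty,+\infty]$.

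For parts (iii) and (ii), fix $(\zeta,Z)\in\ZZ_c$, $\varphi\in B_f(\Omega)$ with $\varphi\ge0$, and $\eta\in C^1_c(\R)$ with $\eta\ge0$. For $\frm$-a.e.\ $x$ the Lebesgue--Stieltjes measure $\d h_\zeta(\cdot,x)$ is non-negative, so $\int_{(0,1)}\eta\,\d h_\zeta(\cdot,x)\ge0$; expanding by parts (admissible since $h_\zeta(\cdot,x)$ is bounded BV and its source primitive is absolutely continuous) and inserting the endpoint values from (i) gives
\[
\eta(0)Z(u_0^+(x))-\eta(1)Z(u_1^-(x))\le-\int_0^1\eta'(t)Z(\sfu(t,x))\,\d t+\int_0^1\eta(t)\,\zeta(\sfu(t,x))\,\tilde\beta(t,x)\,\d t .
\]
Multiplying by $\varphi(x)$ and integrating in $x$ (all terms dominated, using $|Z|\le\|\zeta\|_{L^1}$, $|\zeta(\sfu)\tilde\beta|\le\|\zeta\|_\infty|\tilde\beta|$ and \eqref{eq:237strong}) is exactly \eqref{eq:221bis} with $(u_0^+,u_1^-)$, which proves (iii). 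For (ii), assume \eqref{eq:221bis} holds with some $(u_0,u_1)$ and test with $\eta=\eta_\varepsilon\in C^1_c(\R)$, $\eta_\varepsilon\ge0$, $\eta_\varepsilon(0)=1$, $\supp\eta_\varepsilon\subset(-\varepsilon,\varepsilon)$ (so $\eta_\varepsilon(1)=0$). The same integration-by-parts identity rewrites the right-hand side of \eqref{eq:221bis} as $\int_\Omega Z(u_0^+)\varphi\,\d\frm+\int_\Omega\varphi(x)\bigl(\int_{(0,1)}\eta_\varepsilon\,\d h_\zeta(\cdot,x)\bigr)\d\frm(x)$, while the left-hand side equals $\int_\Omega Z(u_0)\varphi\,\d\frm$; since $0\le\int_{(0,1)}\eta_\varepsilon\,\d h_\zeta(\cdot,x)\le h_\zeta(\varepsilon^-,x)-h_\zeta(0^+,x)\to0$ pointwise and is bounded, uniformly for $\varepsilon\in(0,1/2)$, by the $L^1(\frm)$-function $x\mapsto\varphi(x)\bigl(h_\zeta((1/2)^-,x)-h_\zeta(0^+,x)\bigr)$, letting $\varepsilon\downarrow0$ gives $\int_\Omega Z(u_0)\varphi\,\d\frm\le\int_\Omega Z(u_0^+)\varphi\,\d\frm$ for all such $\varphi$ and $(\zeta,Z)$. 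Varying $\varphi$ and then letting $Z$ run through the sequence $(Z_n)$ of the proof of Lemma~\ref{le:increasing-main} (which converges pointwise to the identity with $|Z_n(r)|\le|r|$) yields $u_0\le u_0^+$ $\frm$-a.e., equal to $\sfu^+(0,\cdot)$ by (i); the symmetric choice of $\eta_\varepsilon$ concentrated at $t=1$ gives $u_1\ge u_1^-$.

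For part (iv), combining (ii) with the monotonicity of $\sfw$ yields, for every $t$, $u_0(x)-\Lambda(x)\le\sfu(t,x)\le u_1(x)+\Lambda(x)$ with $\Lambda(x):=\int_I|\tilde\beta(s,x)|\,\d s$ (indeed $\sfu(t,x)-B(t,x)=\sfw(t,x)\ge\sfw^+(0,x)=u_0^+(x)-B(0,x)\ge u_0(x)-B(0,x)$ and $|B(t,\cdot)-B(0,\cdot)|\le\Lambda$, symmetrically at $t=1$). Hence $|\sfu(t,x)|\le(u_0(x))_-+(u_1(x))_++2\Lambda(x)$, which is $\tilde\frm$-integrable on $I\times\Omega$ by the hypotheses $u_0\wedge0,u_1\vee0\in L^1(\Omega,\frm)$ and \eqref{eq:237strong}, so $u\in L^1(I\times\Omega,\tilde\frm)$; then \eqref{eq:221tris} follows by letting $n\to\infty$ in \eqref{eq:221bis} written with $(\zeta_n,Z_n)$ and the given $(u_0,u_1)$, dominated convergence applying because $|Z_n(r)|\le|r|$, $0\le\zeta_n\le1$ and the dominating functions are integrable by what has just been proved. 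I expect the main obstacle to be the endpoint analysis: everything hinges on $h_\zeta(\cdot,x)$ being a \emph{bounded} monotone function --- so that the integration by parts produces genuine trace values and the ``endpoint defect'' $\int_{(0,1)}\eta_\varepsilon\,\d h_\zeta(\cdot,x)$ vanishes --- and this is exactly why one must pass to the precise representative and invoke the stronger summability \eqref{eq:237strong} (not merely the local one \eqref{eq:237}), which makes $B(\cdot,x)$ absolutely continuous up to $t=0,1$ and the traces well defined in $L^0(\Omega;\bar\R)$.
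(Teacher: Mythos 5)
Your proof is correct but takes a genuinely different route from the paper's. The paper proves (iii) and (ii) by a direct cutoff-and-limit argument: it fixes a smooth decreasing $\nchi\in C^\infty_c(\R)$ equal to $1$ near $0$, sets $\nchi_\tau(t):=\nchi(t/\tau)$, and for (iii) splits $\eta=\eta\nchi_\tau+\eta(1-\nchi_\tau)$ so that the interior piece lies in $C^1_c(I)$ and is admissible in \eqref{eq:221}, while the boundary piece is computed explicitly in the limit $\tau\downarrow0$ using the right-continuity of $\sfu$; (ii) is handled with $\nchi_\tau$ alone. You instead encode everything in the per-$x$ bounded, right-continuous, monotone BV function $h_\zeta(t,x)=Z(\sfu(t,x))+\int_{1/2}^{t}\zeta(\sfu(s,x))\tilde\beta(s,x)\,\d s$ and use Lebesgue--Stieltjes integration by parts, so (iii) drops out with no approximation at all, and (ii) follows by localizing the nonnegative measure $\d h_\zeta(\cdot,x)$ near the endpoints; this is conceptually cleaner and makes transparent why \eqref{eq:237strong} (rather than the merely local \eqref{eq:237}) is needed, namely so that $B(\cdot,x)$ is absolutely continuous up to $t=0,1$ and $h_\zeta(\cdot,x)$ is bounded. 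Two points deserve tightening. First, the claim that the \emph{single} precise representative $\sfu$ of Lemma~\ref{le:increasing-main} makes $h_\zeta(\cdot,x)$ non-decreasing for $\frm$-a.e.\ $x$ is not a literal conclusion of that lemma (whose exceptional set is built from a specific sequence $\zeta_n$); it does hold, since $Z\circ\sfu$ is a right-continuous representative of $Z\circ u$ and so, by claim (I.1) of Lemma~\ref{le:increasing-main}, coincides $\frm$-a.e.\ in $x$ (everywhere in $t$) with the precise representative of $Z\circ u$, but you should say this. Second, your proposal mixes sign conventions: you write $h_\zeta$ with a $+\int\zeta(\sfu)\tilde\beta$ term (the sign forced by $-\partial_t Z(u)\le\zeta(u)\beta$), yet in part (iv) you use $\sfw=\sfu-B$ (the paper's stated convention, cf.\ \eqref{eq:222}); these are incompatible and reflect a sign slip between \eqref{eq:238} and \eqref{eq:222} in the paper itself, so fix one sign for $B$ and carry it through. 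Parts (i) and (iv) of your argument match the paper's terse treatment in substance, with the per-$x$ bound $|\sfu(t,x)|\le(u_0)_-(x)+(u_1)_+(x)+2\Lambda(x)$ being a pleasant way to make the integrability in (iv) explicit.
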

\begin{proof}
  \eqref{eq:189} is an immediate consequence of \eqref{eq:230} and of
  the definition of convergence in measure.

  In order to check $(ii)$, 
  we take a smooth function $\nchi\in C^\infty_c(\R)$ taking values in
  $[0,1]$, supported in $(-1,1)$, satisfying 
  $\nchi\equiv 1$ in $[-1/2,1/2]$, and decreasing in $(0,1)$.
  We set $\nchi_\tau(t):=\nchi(t/\tau)$ and observe that as $\tau\down0$
  \begin{align*}
    -\int_0^1\nchi_\tau'(t)Z(\sfu(t,x))\,\d t
    &
      =
      -\int_0^1 Z(\sfu(\tau s,x))
         \nchi'(s)\,\d s\to Z(\sfu^+(0,x))\int_0^1\nchi'(s)\,\d
      s=Z(\sfu^+(0,x)),\\
    \int_0^1\nchi_\tau(t) \zeta(\sfu(t,x))\tilde\beta(t,x)\,\d t&\to 0
  \end{align*}
  a further integration w.r.t.~$x$ after a multiplication by $\varphi$
  and the Lebesgue's Dominated Convergence Theorem
  yield
  \begin{align*}
    \int_\Omega Z(u_0)\varphi\,\d\frm
    \le - 
    \int_\Omega\Big( \int_0^1\nchi_\tau'(t)Z(\sfu(t,x))\,\d
    t-
    \int_0^1\nchi_\tau(t) \zeta(\sfu(t,x))\tilde\beta(t,x)\,\d t\Big)\varphi\,\d\frm(x)
    \to
    \int_{\Omega}Z(\sfu^+(0,\cdot))\varphi\,\d\frm
  \end{align*}
  Since $\varphi$ is arbitrary, we get \eqref{eq:189bis}.

  In order to prove $(iii) $
  it is not restrictive to consider the case
  when $\eta(0)=1$ and $\supp(\eta)\subset (-\infty,1)$. If $\nchi$ is
  as in the previous step, we can define
  $\eta^0_\tau:=\eta\nchi_\tau$, $\eta^1_\tau:=\eta(1-\nchi_\tau)$, 
  observing that $\eta^1_\tau\in C^1_c(I)$ and therefore
  \begin{align*}
    &\int_0^1\Big(\eta(t)\zeta(\sfu(t,x))\tilde\beta(t,x)-\eta'(t)Z(\sfu(t,x))\Big)\,\d
    t
    \ge
    \int_0^1\Big(\eta^0_\tau(t)\zeta(\sfu(t,x))\tilde\beta(t,x)
      -(\eta^0_\tau)'(t)Z(\sfu(t,x))\Big)\,\d
    t\\
    &=
      \int_0^\tau\nchi(t/\tau)\Big(\eta(t)\zeta(\sfu(t,x))\tilde\beta(u(t,x))-\eta'(t)Z(\sfu(t,x))\Big)\,\d t-
      \int_0^1 \eta(\tau s)Z(\sfu(\tau s,x))
         \nchi'(s)\,\d s\to 
             \eta(0)Z(\sfu^+(0,x))
  \end{align*}
  An integration w.r.t.~$\varphi\frm$ and another application
  of Lebesgue's Dominated Convergence Theorem yield
  \begin{align*}
    \int_{I\times\Omega}\Big(\eta(t)\zeta(u(t,x))\beta(t,x)-
    \eta'(t)Z(\sfu(t,x))\Big)\varphi(x)\,\d \tilde\frm(t,x)
    \ge \eta(0)\int_\Omega 
             Z(\sfu^+(0,x))\varphi(x)\,\d\frm(x).
  \end{align*}
  Finally, for what concerns (iv), arguing as in the proof of the last statement of 
  Lemma \ref{le:increasing-main},
  if $u_0 \wedge 0,u_1\vee 0$ are integrable w.r.t.~$\frm$ we deduce 
  the integrability of $u$ in $I\times \Omega$.
  We can then write \eqref{eq:221bis}
  for a sequence $Z_n(t)=\int_0^t \zeta_n(r)\,\d r$ with
  $\zeta_n\uparrow1 $ as $n\to\infty$; passing to the limit as
  $n\to\infty$ we get $Z_n(u)\to u$ pointwise $\frm$-almost everywhere and 
  \eqref{eq:221tris}.
\end{proof}

\subsection{Convolution by anisotropic kernels}
\label{subsec:kernels}
\newcommand{\wu}u
When we deal with functions defined in the cylinder $Q$ 
we will also use space convolutions, induced by 
\begin{gather}
k \in C_c^\infty(\R^d),\quad
k\ge0,\quad \supp k\subset B_1(0),\quad
\int_{\R^d} k\,\d x=1,
\label{eq:50}
\intertext{setting for $\eps,\tau>0$ (recall \eqref{eq:51})}
k_\varepsilon (x):= \varepsilon^{-d}k({x}/{\varepsilon}),\quad
\eta_{\tau,\varepsilon}(t,x) := h_\tau(t)  k_{\varepsilon}(x).
\label{eq:51bis}
\end{gather}
If $\wu \in L^1_{\loc}(Q)$ then the space-time convolution in $\R^{d+1}$
\begin{equation}
  \label{eq:56}
  \wu _{\tau,\eps}:=\wu \ast \eta_{\tau,\eps}\quad\text{is well defined and
    smooth in }(0,1-\tau)\times \R^d.
\end{equation}
We already defined 
the partial convolution $\wu _\tau:=\wu \ast_t\eta_\tau$ by using an
everywhere defined Borel representative $\tilde \wu $ of $\wu $; 
the next Lemma shows that we can select an even better representative
of $\wu _\tau$
which behaves nicely w.r.t.~$\wu _{\tau,\eps}$.
\begin{lemma}
  \label{le:convolution}
  Let $\wu \in L^1_{\loc}(Q)$. 
  There exists a $\LL^d$-negligible set $N\subset \R^d$ and, for every $\tau>0$,
  a measurable map $\sfu_\tau:(0,1-\tau)\times
  \R^d\to \R$ which coincides with $\wu _\tau$ $\lambda$-a.e.~in $Q$ and 
  satisfies the following properties:
  \begin{enumerate}[\rm (1)]
    \item
      The maps $t\mapsto \sfu_\tau(t,x)$ belong to
      $C^\infty(0,1-\tau)$ for every $x\in \R^d\setminus N$ 
with
\begin{equation}
  \label{eq:59}
  \int_F \|\sfu_\tau(\cdot,x)\|_{L^\infty(a,b)}\,\d x<\infty
  \quad\text{for every bounded $F\subset
    \R^d$ and for every $0<a<b<1-\tau$,}
\end{equation}
and the function 
\begin{equation}
\text{$t\mapsto \sfu_\tau(t,\cdot)$ defined in
$(0,1-\tau)$ with values in $L^1_{\loc}(\R^d)$ is
continuous.}
\label{eq:61}
\end{equation}
    \item
      For every bounded measurable $F\in \R^d$ and every 
      $0<a<b<1-\tau$ 
      \begin{equation}
       \label{eq:62}
       \lim_{\eps\down0} \int_F 
       \Big(\sup_{s\in [a,b]}
       |\wu _{\tau,\eps}(s,x)-\sfu_\tau(s,x)|\Big)\,\d x=0.
     \end{equation}
     In particular 
     \begin{equation}
     \lim_{\eps\down0} \sup_{s\in [a,b]}\int_F 
       \Big(
       |\wu _{\tau,\eps}(s,x)-\sfu_\tau(s,x)|\Big)\,\d x=0,\quad
       \lim_{\eps\down0} \int_{a}^{b} \int_F 
       |\wu _{\tau,\eps}(s,x)-\sfu_\tau(s,x)|\,\d s\d x=0,
       \label{eq:63}
     \end{equation}
     and
     \begin{equation}
     \text{$\wu _{\tau,\eps}(t,\cdot)\to
     \sfu_\tau(t,\cdot)$ in $L^1_{\loc}(\R^d)$ as $\eps\down0$ 
     for every $t\in (0,1-\tau)$.}
   \label{eq:64}  
   \end{equation}
   \item
       $\lim_{\eps\down0}\wu _{\tau,\eps}(\cdot,x)=\sfu_\tau(\cdot,x)$ locally uniformly in $C(0,1-\tau)$
       for every $x\in \R^d\setminus N$.
     \item $\lim_{\tau\down0}\sfu_\tau=\wu $ in $L^1_{\loc}(Q)$. 
   \end{enumerate}
   If moreover $\wu $ satisfies one of 
   the equivalent properties of Lemma \ref{le:increasing-main}
   then we may assume that $\sfu_\tau(\cdot, x)$ 
   coincides with the precise representative of Lemma
   \ref{le:increasing-main} for every $x\in \R^d\setminus N.$
\end{lemma}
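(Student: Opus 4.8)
The plan is to fix once and for all a jointly Borel, everywhere-defined representative $\tilde u$ of $u$ and to set $\sfu_\tau:=h_\tau\ast_t\tilde u$, i.e.
\[
  \sfu_\tau(t,x):=\int_\R h_\tau(t-s)\,\tilde u(s,x)\,\d s\ \ \text{if }x\in\Omega_1(\tilde u),\qquad \sfu_\tau(t,x):=0\ \ \text{otherwise,}
\]
a Borel function which, being a time-mollification of a representative of $u$, coincides with $u_\tau$ $\lambda$-a.e. Two elementary remarks drive the whole proof. First, by Fubini $\Omega_1(\tilde u)$ has full $\LL^d$-measure, and for $x\in\Omega_1(\tilde u)$ the map $t\mapsto\sfu_\tau(t,x)$ is $C^\infty$ on $(0,1-\tau)$ (differentiation under the integral sign). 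Second, again by Fubini and $\supp h\subset(-1,0)$,
\[
  u_{\tau,\eps}=u\ast\eta_{\tau,\eps}=k_\eps\ast_x(h_\tau\ast_t u)=k_\eps\ast_x\sfu_\tau\qquad\text{on }(0,1-\tau)\times\R^d.
\]
The pointwise bound $\sup_{t\in[a,b]}|\sfu_\tau(t,x)|\le\|h_\tau\|_{L^\infty}\int_a^{b+\tau}|\tilde u(s,x)|\,\d s=:g_{a,b}(x)$ (valid for $0<a<b<1-\tau$, with $g_{a,b}\in L^1_{\loc}(\R^d)$ since $u\in L^1_{\loc}(Q)$) yields \eqref{eq:59}, and combined with dominated convergence gives the $L^1_{\loc}(\R^d)$-continuity of $t\mapsto\sfu_\tau(t,\cdot)$, i.e.\ \eqref{eq:61}. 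This settles item~(1).

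For item~(2) I would regard $x\mapsto\sfu_\tau(\cdot,x)$ as a Bochner-measurable map into the separable Banach space $C([a,b])$; the bound above shows $\sfu_\tau\in L^1_{\loc}(\R^d;C([a,b]))$, so from $u_{\tau,\eps}=k_\eps\ast_x\sfu_\tau$ and the classical convergence of mollifications in a vector-valued $L^1_{\loc}$-space one obtains
\[
  \int_F\sup_{s\in[a,b]}|u_{\tau,\eps}(s,x)-\sfu_\tau(s,x)|\,\d x\to0\quad\text{as }\eps\downarrow0
\]
for every bounded $F\subset\R^d$, which is exactly \eqref{eq:62}. The estimates \eqref{eq:63} then follow from \eqref{eq:62} by $\sup_s\int_F\le\int_F\sup_s$ and Fubini on $[a,b]\times F$, and \eqref{eq:64} from $\int_F|u_{\tau,\eps}(t,x)-\sfu_\tau(t,x)|\,\d x\le\int_F\sup_{s\in[a,b]}|u_{\tau,\eps}(s,x)-\sfu_\tau(s,x)|\,\d x$ for fixed $t$.

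The delicate point, which I expect to be the main obstacle, is item~(3): passing to the \emph{full} limit $\eps\downarrow0$ (not merely along subsequences), locally uniformly in $C(0,1-\tau)$, with a \emph{single} $\LL^d$-negligible set $N$ serving all $\tau>0$ at once. Here I would invoke the Lebesgue differentiation theorem for maps valued in the separable Fréchet space $L^1_{\loc}(0,1)$ (reducing, if one prefers, to the Banach spaces $L^1(1/n,1-1/n)$): viewing $u$ as an element of $L^1_{\loc}(\R^d;L^1_{\loc}(0,1))$, one has $(u\ast_x k_\eps)(\cdot,x)\to u(\cdot,x)$ in $L^1_{\loc}(0,1)$ for every $x$ outside a negligible set $N_0$ depending only on $u$, neither on $\eps$ nor on $\tau$. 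Setting $N:=N_0\cup(\R^d\setminus\Omega_1(\tilde u))$, for every $x\notin N$, $\tau>0$ and $[a,b]\subset(0,1-\tau)$ (so that $b+\tau<1$) the identity $u_{\tau,\eps}(\cdot,x)-\sfu_\tau(\cdot,x)=h_\tau\ast_t((u\ast_x k_\eps)(\cdot,x)-u(\cdot,x))$ gives
\[
  \sup_{t\in[a,b]}|u_{\tau,\eps}(t,x)-\sfu_\tau(t,x)|\le\|h_\tau\|_{L^\infty}\,\|(u\ast_x k_\eps)(\cdot,x)-u(\cdot,x)\|_{L^1(a,b+\tau)}\to0\quad\text{as }\eps\downarrow0,
\]
which is item~(3). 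Item~(4) is the classical $L^1_{\loc}(Q)$-convergence of a time-mollification, via Fubini and continuity of translations in $L^1$. Finally, if $u$ satisfies one of the equivalent conditions of Lemma~\ref{le:increasing-main}, we take $\tilde u$ to be the precise, everywhere-defined, right-continuous representative $\sfu$ of that lemma, for which $\Omega_1(\sfu)=\R^d$; then $\sfu_\tau=h_\tau\ast_t\sfu$ coincides on all of $(0,1-\tau)\times\R^d$ — a fortiori for $x\notin N$ — with the time-mollification of the precise representative appearing in Lemma~\ref{le:increasing-main}\,(I.3).
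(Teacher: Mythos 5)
Your proposal is correct and follows essentially the same route as the paper: the work is done by the Lebesgue differentiation theorem for Banach-valued $L^1_{\loc}$ maps, applied once to $u$ viewed as an $L^1_{\loc}(0,1)$-valued function so that the exceptional set is fixed independently of $\tau$ and $\eps$, together with the convergence of space-mollifications of $C([a,b])$-valued functions. The only difference is bookkeeping: the paper obtains the $\tau$-uniformity by noting that Lebesgue points of $x\mapsto u(\cdot,x)\in L^1(a_n,b_n)$ remain Lebesgue points of $x\mapsto u(\cdot,x)\ast h_\tau\in C([a,b])$ via the boundedness of $\zeta\mapsto\zeta\ast h_\tau$, whereas you make the same fact explicit through the estimate $\|h_\tau\ast_t g\|_{C([a,b])}\le\|h_\tau\|_{L^\infty}\|g\|_{L^1(a,b+\tau)}$.
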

\begin{proof}
  Let us consider an increasing sequence of intervals $[a_n,b_n]\uparrow
  (0,1)$ as $n\to\infty$
  and an increasing sequence of bounded open balls $B_k$
  centered at $0$ and of radious $k$, so that $B_k\uparrow \R^d$
  as $k\to\infty$. By selecting a Borel representative $\tilde
  w:Q\to\R$ of $\wu $ and applying Fubini's theorem 
  to the restrictions $\wu _{n,k}$ of $\tilde \wu $ to $(a_n,b_n)\times
  B_{k+1}$, 
  we obtain a countable family of strongly measurable maps
  $\wu _{n,k}:B_{k+1}\to L^1(a_n,b_n)$ which belong to the Lebesgue-Bochner space
  $L^1(B_{k+1};L^1(a_n,b_n))$. We can thus consider the set of Lebesgue
  points $L_{n,k}$ of $\wu _{n,k}$ in $B_k$: we know that 
  $\LL^d(B_k\setminus L_{n,k})=0$ and
  each $\bar x\in L_{n,k}$ satisfies
  \begin{equation}
    \label{eq:182}
    \wu _n(\cdot,\bar x)\in L^1(a_n,b_n),\quad
    \lim_{r\down0}\Big\|\int_{B_r(\bar x)}\wu _n(\cdot,x)\,\d x-\wu _n(\cdot,\bar
    x)\Big\|_{L^1(a_n,b_n)}=0.
  \end{equation}
  We thus set $N:=\bigcup_{k,n\in \N}(B_k\setminus L_{n,k})$ and
  \begin{equation}
  \sfu_\tau(t,x):=\int_\R \tilde \wu (s,x)h_\tau(t-s)\,\d s\quad
  t\in (0,1-\tau),\quad
  x\in \R^d\setminus N;\quad
  \sfu_\tau(t,x)=0\text{ if }x\in N.
\end{equation}
Let us now fix $\tau>0$, an interval $(a,b)$ with $0<a<b<1-\tau$, 
a bounded measurable set $F\subset \R^d$, and integers $n,k$
sufficiently big so that $(a,b+\tau)\subset (a_n,b_n)$ and $F\subset
B_k$ and therefore the restriction of $\tilde \wu $ to
$(a,b)\times F$ coincides with the restriction of $\bar \wu :=\wu _{n,k}$;
in particular
 \begin{equation}
  \sfu_\tau(t,x)=\bar \wu _\tau(t,x)=\int_{a_n}^{b_n}\bar \wu (s,x)h_\tau(t-s)\,\d s\quad
  \text{if }t\in (a,b),\quad
  x\in F\setminus N.
\end{equation}
By Fubini's theorem we know that 
$\int_F \| \bar \wu \|_{L^1(a,b)}\,\d x<\infty$,
so that the maps $t\mapsto \sfu_\tau(t,x)$ belong to
$C^\infty(a,b)$ for every $x\in F\setminus N$ 
and satisfies \eqref{eq:59}.
In order to check \eqref{eq:61} 
we can use the Lebesgue Dominated Convergence theorem
and the estimate \eqref{eq:59}, since
for every $x\in F\setminus N$ and $t\in
(a,b)$ we have
$\lim_{s\to t}\sfu_\tau(s,x)=\sfu_\tau(t,x)$ and if $s\in (a,b)$
$|\sfu_\tau(s,x)|\le \|\sfu(\cdot,x)\|_{L^\infty(a,b)}$.
Being $F$ and $(a,b)$ arbitrary we conclude the proof of the first
claim.

Let us now consider the claim (2).
Since the linear map $\zeta\mapsto \zeta\ast h_\tau$ is
well defined (and continuous) from $L^1(a_n,b_n)$ to
$C([a,b])$, the map 
$\bar \wu ^x_\tau:=\bar \wu (\cdot,x)\ast h_\tau$ 
is strongly measurable from $B_{k+1}$ to $C([a,b])$ 
and the set of its Lebesgue points surely contains $F\setminus N$.
Performing a convolution
w.r.t.~$x\in \R^d$ with the kernel $k_\varepsilon$, $\eps<1$, and values in the
Banach space $C([a,b])$ we thus obtain a continuous map
$x\mapsto \bar \wu ^x_{\tau,\eps}$ from $B_k$ to $C([a,b])$;
$\bar \wu ^x_{\tau,\eps}$ is defined by the Bochner integral
$\bar \wu ^x_{\tau,\eps}(t):=\int_{\R^d} \bar \wu^y_\tau(t) \, k_\eps(x-y)\,\d
y$
and it
is not difficult to check that
$\bar \wu ^x_{\tau,\eps}(t)=\wu _{\tau,\eps}(t,x)$ for every $x\in B_k$ and
$t\in [a,b]$.

\eqref{eq:62} then follows by general results on convolutions for
Banach-valued functions. \eqref{eq:63} are obvious consequences of
\eqref{eq:62}
and \eqref{eq:64} follows by the first limit of \eqref{eq:63}.

In order to check the third claim, we apply Lebesgue theorem for the Bochner integral in the Banach
space $C([a,b])$: at every Lebesgue point $x\in B_k\setminus N$ 
  \begin{displaymath}
    \lim_{\eps\down 0}\bar \wu ^x_{\tau,\eps}=\bar \wu ^x_\tau\quad
    \text{uniformly in }C([a,b]).
  \end{displaymath}
  
  Claim (4) can be proved by adapting the well known result for
  convolution with compactly supported kernels.

  Finally, if $\sfu_\tau'$ is the precise representative of Lemma
  \ref{le:increasing-main},
  up to modifying $N$ by a further $\LL^d$-negligible set, 
  we know that $\sfu_\tau(\cdot,x)=\sfu_\tau'(\cdot,x)$ 
  $\LL^1$-a.e.~in $(0,1)$ for every $x\in N$. Since both the functions
  are continuous w.r.t.~$t\in (0,1)$ they should coincide.
\end{proof}

\nc
\subsection{The space $L^q +L^\infty_{1/\weight}(\Omega)$}
\label{subsec:weight}
\GGG Let us consider an open set $\Omega\subset \R^h$ 
and a measurable weight $\omega:\Omega\to [1,\infty)$. 
As we said in Section \ref{subsec:notation} 
our main examples will be $\Omega=\R^d$ or $\Omega=Q$ with the weight
$\kappa(x):=1+|x|^2$ of \eqref{eq:97}.

The space $L^p\cap L^1_\omega(\Omega)$ can be endowed with the Banach norm
\begin{equation}
  \label{eq:15}
  \|g\|_{L^p\cap L^1_\omega(\Omega)}:=\max\Big(\|g\|_{L^1_\omega(\Omega)},\|g\|_{L^p(\Omega)}\Big).
\end{equation}
Its dual admits a sum representation.
\begin{definition}
\label{def:spaceX}
Let $\omega:\Omega\to [1,\infty)$ a measurable weight. 
We call  $\cX_\omega^q(\Omega):=L^q(\Omega)+L^\infty_{1/\omega}(\Omega)$ the space
of functions 
$v \in L^0(\Omega)$ admitting a decomposition 
\begin{equation}
  \label{eq:18}
  v=w+z,\quad w \in L^\infty_{1/\omega}(\Omega), z \in L^q(\Omega),\quad
\end{equation}
with norm $\| v \|_{\cX_\omega^q(\Omega)} := 
\inf \lbrace \| w \|_{L^\infty_{1/\omega}(\Omega)} + \| z\|_{L^q(\Omega)} :  v = w + z \rbrace$.

We simply denote by  $\cX^q(\Omega)$ the space defined when $\omega$ is the   standard weight $\kappa= 1+ |x|^2$. 
\end{definition}

We can equivalently characterize the norm of $\cX_\omega^q(\Omega)$
as the inf-convolution
\begin{equation}
  \label{eq:16}
  \| v \|_{\cX_\omega^q(\Omega)}=\inf_{z\in L^q(\Omega)}\|v-z\|_{L^\infty_{1/\omega}(\Omega)}+\|z\|_{L^q(\Omega)};
\end{equation}
\eqref{eq:18} is also equivalent to 
\begin{equation}
  \label{eq:19}
  v = \omega\,\tilde w + z, \qquad \tilde w \in L^\infty(\Omega),\ z \in L^q(\Omega).
\end{equation}

We collect in the next Lemma a list of useful properties of
$\cX_\omega^q(\Omega)$.
 
\begin{lemma}
\label{le:Xq}
$\cX_\omega^q(\Omega)$ is a Banach space which can be (isometrically)
identified with the dual of $L^p\cap L^1_\omega(\Omega)$;
in particular it holds
\begin{equation}
  \label{eq:17}
  \|v\|_{\cX_\omega^q(\Omega)}=\sup\Big\{\int_{\Omega} vg\,\d x:g\in L^p\cap
  L^1_\omega(\Omega),\ \|g\|_{L^p\cap L^1_\omega(\Omega)}\le 1\Big\}.
\end{equation}
Moreover
\begin{itemize}
\item[(a)] The infimum in the defnition of the norm of $\cX_\omega^q(\Omega)$
  given by Definition \ref{def:spaceX} is attained and the minimizer is unique.
\item[(b)] If $v \geq 0$ we can restrict the infimum to nonegative pairs $w,z$.
\item[(c)] Similarly, it is not restrictive to assume that $w$ and $z$ share the same sign of $v$, i.e. $w,z \geq 0$ in $\lbrace x \in \Omega: v \geq 0 \rbrace$ and $w,z \leq 0$ in $\lbrace x \in \Omega: v \leq 0 \rbrace$.
\item[(d)] For every $v\in \cX_\omega^q(\Omega)$ the function $|v|$ belongs to 
  $\cX_\omega^q(\Omega)$ and $\| v \|_{\cX_\omega^q} = \| |v| \|_{\cX_\omega^q}$.
\item[(e)] if $0 \leq v_1 \leq v_2$ pointwise a.e.~in $\Omega$ and
  $v_2\in \cX_\omega^q(\Omega)$ then 
  $v_1\in \cX_\omega^q(\Omega)$ and 
  $\| v_1 \|_{\cX_\omega^q} \leq \| v_2 \|_{\cX_\omega^q}$. 
  In particular $ \| v \wedge 0\|_{\cX_\omega^q}, \| v \vee 0
  \|_{\cX_\omega^q} \leq \| v\|_{\cX_\omega^q}
  \leq \|v \vee 0 \|_{\cX_\omega^q} + \| v \wedge 0 \|_{\cX_\omega^q}$.
\item[(f)]
  If $v \geq 0$ and there exists a constant $C\ge0$ such that 
  \begin{equation}
    \label{eq:20}
    \int_{\Omega} vf\,\d x \leq C \max\left( \int_{\Omega}\omega 
      f \,\d x, \| f\|_{L^p(\Omega)} \right)
    ,
\end{equation}
for every nonnegative $f\in L^p\cap L^1_\omega(\Omega)$
      with
      bounded support, 
      then $v \in \cX_\omega^q(\Omega)$ and $\| v \|_{\cX_\omega^q} \leq C$. 
\end{itemize}	
\end{lemma}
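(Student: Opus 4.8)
The backbone is the isometric identification of $\cX_\omega^q(\Omega)$ with the dual of $E:=L^p\cap L^1_\omega(\Omega)$; every other assertion is then either a soft consequence of this or follows from an explicit truncation. So the first step I would carry out is the duality: $E$ with the norm \eqref{eq:15} is Banach (an intersection of Banach spaces), and $g\mapsto(g,\omega g)$ embeds it isometrically onto a closed subspace of $L^p(\Omega)\oplus_\infty L^1(\Omega)$, whose dual is $L^q(\Omega)\oplus_1 L^\infty(\Omega)$. Hence any $\Lambda\in E^*$ extends by Hahn--Banach, with the same norm, to a pair $(z,\tilde w)\in L^q\times L^\infty$; writing $w:=\omega\tilde w\in L^\infty_{1/\omega}$ gives $\Lambda(g)=\int_\Omega(w+z)g$, so $v:=w+z$ represents $\Lambda$ and $\|v\|_{\cX_\omega^q}\le\|w\|_{L^\infty_{1/\omega}}+\|z\|_{L^q}=\|\Lambda\|$. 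Conversely each decomposition $v=w+z$ makes $g\mapsto\int_\Omega vg$ bounded on $E$ by $\|w\|_{L^\infty_{1/\omega}}+\|z\|_{L^q}$ via the H\"older pairings $L^\infty_{1/\omega}$--$L^1_\omega$ and $L^q$--$L^p$; since indicators of bounded sets of finite $\omega$-measure lie in $E$ and separate $L^0(\Omega)$, the map $v\mapsto\Lambda_v$ is injective. This yields $\cX_\omega^q(\Omega)\cong E^*$ isometrically, hence completeness, the variational formula \eqref{eq:17}, and (by the same separation, $\|v\|_{\cX_\omega^q}=0\Rightarrow v=0$) that $\|\cdot\|_{\cX_\omega^q}$ is a genuine norm.

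For (a), existence of an optimal decomposition is the Direct Method: along a minimizing sequence $v=w_n+z_n$, the $z_n$ are bounded in the reflexive space $L^q$ and the $\tilde w_n:=w_n/\omega$ are bounded in $L^\infty=(L^1)^*$; extracting weak and weak-$\ast$ limits, testing the constraint $z_n+\omega\tilde w_n=v$ against the separating functions above, and using lower semicontinuity of the two norms together with $\liminf a_n+\liminf b_n\le\liminf(a_n+b_n)$ produces an optimal pair. Uniqueness I would try to derive from strict convexity of $\|\cdot\|_{L^q}$ (valid since $q\in(1,\infty)$): if two optimal pairs existed, their midpoint would be optimal, which forces $t\mapsto\|(1-t)z_0+tz_1\|_{L^q}$ and $t\mapsto\|(1-t)w_0+tw_1\|_{L^\infty_{1/\omega}}$ to be affine on $[0,1]$, and the equality case of Minkowski's inequality then reduces matters to the proportional case, which has to be excluded using the weight. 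I expect this uniqueness step to be the most delicate point: the $L^\infty_{1/\omega}$-part of the functional is not strictly convex, so one genuinely needs the structure of $\omega$.

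The order-theoretic statements (b)--(e) I would prove by pointwise manipulations of decompositions. For (d), from $v=w+z$ set $w':=\sign(v)\,w$, $z':=\sign(v)\,z$, so $|v|=w'+z'$ with $|w'|=|w|$, $|z'|=|z|$ pointwise, and apply the same trick back to a decomposition of $|v|$. For (b), given $v\ge0$ and $v=w+z$, put $w'':=0\vee(w\wedge v)$ and $z'':=v-w''$; a short case distinction on the signs of $w$ and of $w-v$ gives $0\le w''\le v$, $z''\ge0$, $|w''|\le|w|$ and $|z''|\le|z|$ pointwise. Statement (c) follows by applying (b) to $v_+$ and $v_-$ (whose decompositions are automatically supported on $\{v>0\}$ and $\{v<0\}$) and subtracting, the disjoint supports keeping both norms under control. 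For (e), starting from a nearly optimal nonnegative decomposition $v_2=w_2+z_2$ (by (b)), clip: $w_1:=w_2\wedge v_1$, $z_1:=(v_1-w_2)_+$, so $0\le w_1\le w_2$ and $0\le z_1\le z_2$ by $0\le v_1\le v_2$ and $z_2=v_2-w_2\ge0$, whence $\|v_1\|_{\cX_\omega^q}\le\|w_2\|_{L^\infty_{1/\omega}}+\|z_2\|_{L^q}$; the $v\vee0$, $v\wedge0$ corollaries then follow from (d), (e) and the triangle inequality.

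Finally, (f) is an application of the duality. Given $v\ge0$ satisfying \eqref{eq:20} and a nonnegative $g\in E$, the truncations $g_n:=(g\wedge n)\mathbf 1_{\Omega_n}$ (with $\Omega_n\uparrow\Omega$ bounded of finite $\omega$-measure) are admissible test functions with $\|g_n\|_{L^p}\le\|g\|_{L^p}$, $\int_\Omega\omega g_n\le\|g\|_{L^1_\omega}$ and $g_n\uparrow g$, so \eqref{eq:20} and monotone convergence give $\int_\Omega vg\le C\max(\|g\|_{L^1_\omega},\|g\|_{L^p})$; splitting a general $g\in E$ into its positive and negative parts, $g\mapsto\int_\Omega vg$ is a bounded linear functional on $E$ of norm $\le C$, hence represented by some $\bar v\in\cX_\omega^q(\Omega)$ with $\|\bar v\|_{\cX_\omega^q}\le C$, and $v=\bar v$ a.e.\ by the usual separation, so $v\in\cX_\omega^q(\Omega)$ with $\|v\|_{\cX_\omega^q}\le C$.
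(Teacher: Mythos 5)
Your proposal follows essentially the same route as the paper's proof: identify $\cX_\omega^q(\Omega)$ with $(L^p\cap L^1_\omega(\Omega))^*$ (you prove this by hand via the isometric embedding $g\mapsto(g,\omega g)$ into $L^p\oplus_\infty L^1$ and Hahn--Banach, whereas the paper invokes a general interpolation-space fact, citing Bergh--L\"ofstr\"om; both are fine), then obtain existence in (a) by the direct method with weak/weak$^*$ compactness of balls, prove (b)--(e) by pointwise truncation, and (f) by the duality plus density of compactly supported functions. For (b) and (e) you clip $w$ where the paper clips $z$; the two constructions are mirror images of each other and both give the required pointwise dominations $|w'|\le|w|$, $|z'|\le|z|$. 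Your sketch of (c) (split as $v_\pm$ and apply (b) on each piece) and of (d) (conjugate a decomposition by $\sign(v)$) also matches the paper in substance.

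The one genuine gap is the uniqueness claim in (a), and here your instinct to be suspicious is in fact correct, more so than you let on. You observe that equality in the convexity step forces $\|(z_0+z_1)/2\|_{L^q}=\tfrac12(\|z_0\|_{L^q}+\|z_1\|_{L^q})$, and that the equality case in Minkowski's inequality (for $1<q<\infty$) only yields that $z_0$ and $z_1$ are nonnegatively \emph{proportional}, not equal; you then say you hope to ``exclude the proportional case using the weight''. That cannot work for a general weight $\omega\colon\Omega\to[1,\infty)$: take $\Omega=(0,1)$, $\omega\equiv 1$, $v\equiv 1$; then $w\equiv a$, $z\equiv 1-a$ gives $\|w\|_{L^\infty_{1/\omega}}+\|z\|_{L^q}=a+(1-a)=1=\|v\|_{\cX_\omega^q}$ for \emph{every} $a\in[0,1]$, so there is a continuum of optimal decompositions. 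This shows that the paper's own one-line justification (``since the $L^q$ norm is strictly convex, the minimizer is unique'') is not sufficient and that the uniqueness part of (a), as stated for arbitrary weights $\omega\ge 1$, is false. You should therefore drop the attempt to salvage uniqueness rather than defer it; the existence part of (a) and the remaining claims (b)--(f) are correct and do not rely on uniqueness.
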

\begin{proof}
  The duality with $L^p\cap L^1_\omega(\Omega)$ and \eqref{eq:17}
  follows by a general result on the dual of the intersection of
  Banach spaces, see e.g.~\cite[Theorem 2.7.1]{Bergh-Lofstrom76}.
  It is also easy to check that the infimum is attained, since bounded
  sets in $L^q$ (resp.~$L^\infty_{1/\omega}$) are weakly
  (resp.~weakly$^*$) relatively compact and the norm is weakly
  (resp.~weakly$^*$) lower semicontinuous. Since the $L^q$ norm is
  strictly convex, the minimizer is unique.
  
  In order to check (b) it is sufficient to notice that if $z,w$
  satisfy \eqref{eq:18} and $v\ge 0$, then 
  $z_1:=0\lor z\land v$ and $w_1 := v-z_1$ provide a pair of
  functions, still satisfying \eqref{eq:18} with $0\le
  z_1\le z\vee 0$ and $0\le w_1\le w \vee 0$ so that 
  $\|z_1\|_{L^q(\Omega)}\le \|z\|_{L^q(\Omega)}$ and 
  $\|w_1\|_{L^\infty_{1/\omega}(\Omega)}\le 
  \|w\|_{L^\infty_{1/\omega}(\Omega)}.$
  A similar argument, localized to the sets 
  $\lbrace x \in \Omega: v \geq 0 \rbrace $ and $\lbrace x \in \Omega: v
  \leq 0 \rbrace $, yields (c).
  
  Let us now consider statement (d); first of all, by the previous
  claim  (c),
  if $v = w+z$ is the optimal decomposition of $v$ we have
  $|v| = |w| + |z|$ so that $\| |v| \|_{\cX_\omega^q} \leq \|  v \|_{\cX_\omega^q}$. On the other hand, if $|v| = w+z$
  is the optimal decomposition of $|v|$, we have 
  $ v = w \sign(v)+z\sign(v)$ so that $\| v \|_{\cX_\omega^q} \leq \| |v| \|_{\cX_\omega^q}$.
  
  Claim (e) follows by the following remark: 
  if $v_2 = w_2 + z_2$ is the optimal decomposition of $v_2$
  (so that $z_2,w_2$ are nonnegative by (b))
  we
  may set $z_1:=z_2\land v_1$, $w_1:=v_1-z_1\le w_2$, obtaining an admissible
  decomposition for $v_1$ with $\|z_1\|_{L^q(\Omega)}\le
  \|z_2\|_{L^q(\Omega)}$ and $\|w_1\|_{L^\infty_{1/\omega}(\Omega)}\le
  \|w_2\|_{L^\infty_{1/\omega}(\Omega)}$ which shows
  $\|v_1\|_{\cX_\omega^q}\le \|v_2\|_{\cX_\omega^q}$.
  
  Finally, since functions with bounded support are dense in $L^p\cap
  L^1_\omega(\Omega)$
  it is easy to see that \eqref{eq:20} yields $v\in \big(L^p\cap
  L^1_\omega(\Omega)\big)'=\cX_\omega^q(\Omega)$; we
  can then apply \eqref{eq:17}.
\end{proof}

\section{Weak subsolutions to Hamilton-Jacobi equations}
\label{sec:weak-subsolutions}

In this section we study some properties of weak subsolutions to
Hamilton-Jacobi equations in $Q=(0,1)\times \R^d$. 
\subsection{Weak subsolutions, precise representative and truncations}
\label{subsec:weak-subsol}

We start with the following
\begin{definition}
Given an Hamiltonian $H$ satisfying assumptions \ref{h.1} and
a function $\alpha\in L^1_{\loc}(Q)$, 
a \emph{weak subsolution} 
to the equation
\begin{equation}\label{HJ_alpha}
-\partial_t u + H(x,Du) \leq \alpha,
\end{equation}
is a function  $u \in L^1_{\loc}(Q)$ with distributional gradient
$Du\in L^2_{\loc}(Q;\R^d)$ 
satisfying the inequality \eqref{HJ_alpha}
in the sense of distributions, i.e.
\begin{equation}\label{wHJ_sub}
\int_{Q}\Big( u  \, \partial_t \xi + \big( H(x, Du) - \alpha \big)
\xi\Big)\,\d x\,\d t   \leq 0
\end{equation}
for every non-negative $\xi \in C_c^\infty(Q)$.
\end{definition}

It is easy to check that any weak subsolution, according to the above
definition, actually satisfies  \eqref{wHJ_sub}   for every $\xi \in
C_c^0(Q)$ with $\partial_t \xi \in C_c^0(Q)$.

\begin{remark}\label{r:Hto2}
Thanks to the growth condition {\em (H3)} on the Hamiltonian
(see Assumptions \ref{h.1}), many estimates can be derived by 
looking at the model case 
\GGG associated to the Hamiltonian $H(x,\pp):=\frac 1{2c_H}|\pp|^2$
and to the equation 
\begin{equation}\label{HJ_alpha_2}
-\partial_t u + \frac1{2c_H} |Du|^2 \leq \beta.
\end{equation}  
Notice indeed that if $u$ is a weak subsolution to \eqref{HJ_alpha}
then 
$
u $ is a subsolution to \eqref{HJ_alpha_2}, with 
$\beta= 
\alpha + \gamma^-_H 
$. 
In particular, if $\alpha\in \cX^q(Q)$  then also $\beta\in \cX^q(Q)$. 
\end{remark}
Weak subsolutions exhibit a nice behaviour with respect to truncations.
\GGG
\begin{lemma}[Truncation]\label{l:truncation1}
Let $u_i$, $i=1,2$, be subsolutions of \eqref{HJ_alpha}
with respect to $\alpha_i\in L^1_{\loc}(Q)$, and let $\nchi_i$ be the
characteristic functions
\begin{equation}
  \label{eq:29}
  \nchi_1:=\nchi_{\{u_1\ge u_2\}},\quad
  \nchi_2:=\nchi_{\{u_2>u_1\}}.
\end{equation}
Then 
\begin{align}
  \label{eq:30}
  u_1\lor u_2\quad&\text{is a subsolution of \eqref{HJ_alpha} w.r.t.}\quad
                    \nchi_1\alpha_1+\nchi_2\alpha_2,
                    \\
\label{eq:32}  u_1\land u_2\quad&\text{is a subsolution of
                                  \eqref{HJ_alpha} w.r.t.}\quad 
                                  \nchi_2\alpha_1+\nchi_1\alpha_2.
\end{align}
\end{lemma}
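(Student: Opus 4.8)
\noindent\emph{Plan of the proof.} The plan is to reduce both assertions to a purely ``vertical'' inequality for $-\partial_t$ and then to invoke the chain rule for $\max$ and $\min$. First I would set $\beta_i:=\alpha_i-H(x,Du_i)$; since $Du_i\in L^2_{\loc}(Q;\R^d)$, the growth bound in (H3) gives $H(x,Du_i)\in L^1_{\loc}(Q)$, hence $\beta_i\in L^1_{\loc}(Q)$, and the subsolution property of $u_i$ reads $-\partial_t u_i\le\beta_i$ in $\DD'(Q)$. As a nonnegative distribution is a locally finite positive Radon measure, $\partial_t u_i$ is itself a signed Radon measure on $Q$ with $-\partial_t u_i\le\beta_i\,\lambda$ as measures; concretely, applying Lemma~\ref{le:increasing-main} (locally in $x$, e.g.\ on $(0,1)\times B_R$, where $\beta_i$ is integrable) we obtain a precise representative $\sfu_i=\sfw_i+B_i$ with $t\mapsto\sfw_i(t,x)$ increasing and right continuous and $B_i(\cdot,x)$ absolutely continuous, so that $t\mapsto\sfu_i(t,x)$ lies in $\bv_{\loc}(0,1)$ for every $x$ off an $\LL^d$-negligible set. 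I would work throughout with these representatives, so that $\nchi_1=\nchi_{\{\sfu_1\ge\sfu_2\}}$ and $\nchi_2=\nchi_{\{\sfu_2>\sfu_1\}}$ are well-defined Borel functions partitioning $Q$, with $\nchi_1+\nchi_2=1$.

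Next I would record two chain-rule identities. For the spatial gradient, the locality of weak derivatives (for a.e.\ $t$, $u_1(t,\cdot),u_2(t,\cdot)\in W^{1,2}_{\loc}(\R^d)$ with $Du_1=Du_2$ $\LL^d$-a.e.\ on $\{u_1(t,\cdot)=u_2(t,\cdot)\}$), together with an integration in $t$, yields
\[
D(u_1\vee u_2)=\nchi_1\,Du_1+\nchi_2\,Du_2,\qquad D(u_1\wedge u_2)=\nchi_2\,Du_1+\nchi_1\,Du_2
\]
$\lambda$-a.e.\ in $Q$; in particular $u_1\vee u_2$ and $u_1\wedge u_2$ are in $L^1_{\loc}(Q)$ with distributional gradient in $L^2_{\loc}(Q;\R^d)$, hence admissible, and evaluating $H$ separately on the disjoint pieces $\{u_1>u_2\}$, $\{u_2>u_1\}$, $\{u_1=u_2\}$ gives the pointwise identities $H(x,D(u_1\vee u_2))=\nchi_1H(x,Du_1)+\nchi_2H(x,Du_2)$ and $H(x,D(u_1\wedge u_2))=\nchi_2H(x,Du_1)+\nchi_1H(x,Du_2)$. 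For the time derivative I would apply, slice by slice in $x$, the one-dimensional $\bv$ chain rule for $\sfu_1(\cdot,x)\vee\sfu_2(\cdot,x)$ and $\sfu_1(\cdot,x)\wedge\sfu_2(\cdot,x)$ — whose core is the locality $D_t\sfu_1(\cdot,x)=D_t\sfu_2(\cdot,x)$ on $\{\sfu_1(\cdot,x)=\sfu_2(\cdot,x)\}$ — and integrate in $x$ to get the identities of signed Radon measures
\[
\partial_t(u_1\vee u_2)=\nchi_1\,\partial_t u_1+\nchi_2\,\partial_t u_2,\qquad \partial_t(u_1\wedge u_2)=\nchi_2\,\partial_t u_1+\nchi_1\,\partial_t u_2 .
\]

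To conclude, since $\nchi_1,\nchi_2\ge0$, multiplying $-\partial_t u_i\le\beta_i\,\lambda$ by $\nchi_i$ and adding gives $-\partial_t(u_1\vee u_2)\le(\nchi_1\beta_1+\nchi_2\beta_2)\,\lambda$; combining with the gradient identity,
\[
-\partial_t(u_1\vee u_2)+H(x,D(u_1\vee u_2))\le\nchi_1(\beta_1+H(x,Du_1))+\nchi_2(\beta_2+H(x,Du_2))=\nchi_1\alpha_1+\nchi_2\alpha_2
\]
in $\DD'(Q)$, which is \eqref{eq:30}; exchanging the roles of $\nchi_1$ and $\nchi_2$ gives \eqref{eq:32}. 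The step I expect to be the real obstacle is the passage from the one-dimensional $\bv$ chain rule along $t$-slices to the measure identities on $Q$, i.e.\ handling the contact set $\{u_1=u_2\}$, which may have positive Lebesgue measure and on which both $\partial_t u_i$ may carry jump or Cantor parts: here the explicit decomposition $\sfu_i=\sfw_i+B_i$ of Lemma~\ref{le:increasing-main} is exactly what makes the locality rigorous. A plain time-mollification $u_i^\tau:=u_i\ast_t h_\tau$ is available as a partial alternative — Jensen's inequality and the convexity of $H(x,\cdot)$ readily show that $u_i^\tau$ is a subsolution relative to $\alpha_i^\tau:=\alpha_i\ast_t h_\tau$ with $\partial_t u_i^\tau\in L^1_{\loc}$, and the elementary Lipschitz chain rule handles $u_1^\tau\vee u_2^\tau$ and $u_1^\tau\wedge u_2^\tau$ — but it only passes cleanly to the limit $\tau\downarrow0$ away from the contact set and, by itself, would yield a strictly weaker bound there; it is therefore the slice-wise $\bv$ structure, not mollification, that delivers the precise right-hand sides $\nchi_1\alpha_1+\nchi_2\alpha_2$ and $\nchi_2\alpha_1+\nchi_1\alpha_2$.
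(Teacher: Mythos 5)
Your Step 1 (the time–derivative inequality) has a genuine gap. You claim the exact measure identity
\begin{equation*}
\partial_t(u_1\vee u_2)=\nchi_1\,\partial_t u_1+\nchi_2\,\partial_t u_2,
\end{equation*}
but this is false, even with the precise right--continuous representatives and the resulting Borel $\nchi_i$. A one--dimensional counterexample (read $u_i$ as constant in $x$): take $\sfu_1(t)=\nchi_{[1/2,1)}(t)$ and $\sfu_2(t)\equiv 1/2$. Both are subsolutions with $\beta_i\equiv 0$; one has $\sfu_1\vee\sfu_2 = 1/2 + \tfrac12\nchi_{[1/2,1)}$, so $\partial_t(u_1\vee u_2)(\{1/2\})=1/2$, while $\nchi_1(1/2)=1$, $\nchi_2(1/2)=0$, hence $\nchi_1\partial_t u_1+\nchi_2\partial_t u_2$ has mass $1$ at $t=1/2$. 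The two measures disagree, and in the direction opposite to what your argument would require if one tried to weaken the identity to an inequality. More fundamentally, $\partial_t u_i$ can carry singular parts concentrated on a $\lambda$-null set, where the equivalence classes $\nchi_i$ have no canonical values; the slice--wise $\bv$ chain rule you invoke does not decompose the jump part of $D_t(\sfu_1\vee\sfu_2)$ in the stated way at contact points of the two curves.

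Your dismissal of mollification is also misplaced: the paper's proof \emph{is} a mollification proof, but of a different object. Instead of the $\bv$ chain rule, one replaces the non-smooth truncation $r_1\vee r_2$ by the $C^1$ family $S_\eps(r_1,r_2)=r_1+T_\eps(r_2-r_1)$ with partial derivatives $S_{\eps,i}\in[0,1]$, then mollifies $u_i,\beta_i$ by $\eta^\delta$ (space--time). The classical chain rule and $S_{\eps,i}\ge 0$ give a pointwise inequality for $\partial_t S_\eps(u_1^\delta,u_2^\delta)$; one passes first to $\delta\downarrow 0$ (strong $L^1_{\loc}$ convergence), then to $\eps\downarrow 0$ using the monotone pointwise convergences $S_{\eps,1}\downarrow\nchi_{\{r_1\ge r_2\}}$, $S_{\eps,2}\uparrow\nchi_{\{r_1<r_2\}}$. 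Since every integral in the limit pairs against $\beta_i\zeta\in L^1_{\loc}$, only the $\lambda$-a.e.\ classes of $\nchi_i$ appear and the delicate null sets are never an issue. Your spatial chain--rule step (via locality of weak derivatives on level sets, hence the splitting of $H(x,D(u_1\vee u_2))$) is fine, and the paper derives the same identity as a by-product of the same $S_\eps$ machinery applied to $\pm\partial_{x_k}$; it is the measure identity for $\partial_t$ that needs to be abandoned in favour of the regularized-truncation inequality.
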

\begin{proof}
\GGG
We prove only \eqref{eq:30}, by adapting the classical Stampacchia's
truncation argument to distributional inequalities. The proof of
\eqref{eq:32} is completely analogous. 

\noindent \emph{Step 1: if $u_i,\beta_i\in L^1_{\loc}(Q)$ satisfy
  $-\partial_t u_i\le \beta_i$ in $\DD'(Q)$ then 
  $-\partial_t(u_1\lor u_2)\le \beta$ where
  $\beta:=\nchi_1\beta_1+\nchi_2\beta_2
  $
  and $\nchi_i$ are defined as in \eqref{eq:29}.}

Recalling that for every $r_1,r_2\in \R$ $r_1\lor r_2=r_1+(r_2-r_1) \vee 0$,
we may consider the regularized truncations depending on $\eps>0$
\begin{displaymath}
  S_\eps(r_1,r_2):=r_1+T_\eps(r_2-r_1),\quad
  T_\eps(r):=
  \begin{cases}
    0&\text{if }r\le 0,\\
    \sqrt{\eps^2+r^2}-\eps&\text{if }r>0,
  \end{cases}
\end{displaymath}
whose derivatives $S_{\eps,i}:=\frac\partial{\partial r_i}S_\eps$ satisfy
\begin{displaymath}
  S_{\eps,1}(r_1,r_2)=
  \begin{cases}
    1&\text{if }r_1\ge r_2,\\
    1-\frac{r_2-r_1}{\sqrt{\eps^2+(r_2-r_1)^2}}&\text{if }r_2>r_1;
  \end{cases}
  \qquad
  S_{\eps,2}(r_1,r_2)=
  \begin{cases}
    0&\text{if }r_1\ge r_2,\\
    \frac{r_2-r_1}{\sqrt{\eps^2+(r_2-r_1)^2}}&\text{if }r_2>r_1.
  \end{cases}
\end{displaymath}
Notice that $S_\eps$ are of class $C^1$, Lipschitz, their derivatives satisfy 
$0\le S_{\eps,i}\le 1$, and 
\begin{equation}
  \label{eq:33}
  S_\eps(r_1,r_2)\up r_1\lor r_2,
  \quad
  S_{\eps,1}(r_1,r_2)\downarrow \nchi_{\{r_1\ge
    r_2\}},\quad
  S_{\eps,1}(r_1,r_2)\uparrow \nchi_{\{r_1<
    r_2\}}
  \qquad\text{as $\eps\down0$}.
\end{equation}
Let $\eta^\delta$, $\delta>0$, be a usual family of nonnegative, smooth
regularization kernels with compact support in $\R^{d+1}$. 

Let us choose a nonnegative test function $\zeta\in C^\infty_c(Q)$ and an open
subset $G$ with compact support in $Q$ such that $\supp(\zeta)\subset
G$;
choosing
$\delta$ sufficiently small, $u_i^\delta:=u_i\ast \eta^\delta$ and
$\beta_i^\delta:=\beta_i\ast \eta^\delta$ are well defined and smooth on
$G$ and satisfy $-\partial_t u_i^\delta\le \beta_i^\delta$ in the
classical sense. Since $S_{\eps,i}\ge0$ we thus get
\begin{equation}
  \label{eq:34}
  -\partial_t(S_\eps(u_1^\delta,u_2^\delta))=
  -S_{\eps,1}(u_1^\delta,u_2^\delta)\partial_tu_1^\delta-
  S_{\eps,2}(u_1^\delta,u_2^\delta)\partial_tu_2^\delta
  \le 
  S_{\eps,1}(u_1^\delta,u_2^\delta)\beta^\delta_1+
  S_{\eps,2}(u_1^\delta,u_2^\delta)\beta^\delta_2
\end{equation}
pointwise in $G$, so that 
\begin{displaymath}
  \int_Q S_\eps(u_1^\delta,u_2^\delta)\,\partial_t\zeta \,\d t \d
  x\le 
  \int_Q \Big(S_{\eps,1}(u_1^\delta,u_2^\delta)\beta^\delta_1+
  S_{\eps,2}(u_1^\delta,u_2^\delta)\beta^\delta_2\Big)\zeta\,\dt \d x.
\end{displaymath}
We can first pass to the limit as $\delta\down0$, using the fact that
$u_i^\delta$ (resp.~$\beta_i^\delta$) converges to $u_i$
(resp.~$\beta_i$)
pointwise a.e.~and strongly in $L^1(G)$, obtaining
\begin{displaymath}
  \int_Q S_\eps(u_1,u_2)\,\partial_t\zeta \,\d t \d
  x\le 
  \int_Q \Big(S_{\eps,1}(u_1,u_2)\beta_1+
  S_{\eps,2}(u_1,u_2)\beta_2\Big)\zeta\,\dt \d x.
\end{displaymath}
We can eventually pass to the limit as $\eps\down0$ using
\eqref{eq:33} and the uniform bounds $0\le S_{\eps,i}\le 1$.

\noindent{\em Step 2: if $u_i,v_i\in L^1_{\loc}(Q)$ satisfy
  $\partial_{x_k}u_i=v_i$ in $\DD'(Q)$ then
  $\partial_{x_k}(u_1\lor u_2)=\nchi_1v_1+\nchi_2 v_2$ 
  and $v_1=v_2$ a.e.~on the set $\{u_1=u_2\}.$}

This is  a well-known property and, in any case, it follows from  the previous claim
(for $\partial_{x_k}$ instead of $\partial_t$), observing that a
corresponding statement
holds also for the inequalities $-\partial_{x_k}u_i\ge v_i$.
The fact that $v_1=v_2$ a.e.~on the set $\{u_1=u_2\}$ follows by 
interchanging the order of $u_1$ and $u_2$ in the formula.

\noindent{\em Conclusion.}
We first apply claim 1, choosing $\beta_i:=\alpha_i-H(x,Du_i)$
obtaining
\begin{displaymath}
  -\partial_t(u_1\lor u_2)\le
  \nchi_1\alpha_1+\nchi_2\alpha_2-\Big(\nchi_1H(x,Du_1)+\nchi_2H(x,Du_2)\Big);
\end{displaymath}
we eventually observe that $\nchi_1$ and $\nchi_2$ are characteristic
functions of a partition, so that 
$\nchi_1 H(x,Du_1)+\nchi_2H(x,Du_2)
=H(x,D(u_1\lor u_2))$ by the second claim.
\end{proof}
\begin{corollary}
  \label{cor:truncation}
  Let $u$ be a subsolution of \eqref{HJ_alpha}, $\alpha,u\in
  L^1_{\loc}(Q)$.
  Then for every choice of $\sigma_-<\sigma_+$ in $\R$ 
  the function 
  $u_\sigma:= \sigma_- \vee u \wedge \sigma_+,$ 
  is a weak subsolution to \eqref{HJ_alpha} with respect to the right
  hand side $\alpha_\sigma$ given by
  \begin{equation}
    \label{eq:31}
    \alpha_\sigma:=
    \begin{cases}
      \alpha&\text{if }\sigma_-\le u\le \sigma_+,\\
      \gamma^+_H&\text{if }u<\sigma_-\text{ or }u>\sigma_+
    \end{cases}
  \end{equation}
\end{corollary}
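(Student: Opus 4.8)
The plan is to obtain the statement as a direct consequence of the Truncation Lemma~\ref{l:truncation1}, applied twice, with the arguments ordered so that the combined right-hand side matches \eqref{eq:31} on the nose, including on the level sets $\{u=\sigma_\pm\}$.

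First I would note that every constant $c\in\R$ is a weak subsolution of \eqref{HJ_alpha} with respect to $\gamma^+_H$: indeed $Dc=0$ and $\partial_t c=0$, so by {\em (H3)} one has $-\partial_t c+H(x,Dc)=H(x,0)\le\gamma^+_H(x)$, and $\gamma^+_H(x)=c^+_H(1+|x|)$ is continuous, hence in $L^1_{\loc}(Q)$. In particular $\sigma_+$ and $\sigma_-$ are subsolutions with respect to $\gamma^+_H$. Since $u_\sigma=\sigma_-\vee u\wedge\sigma_+$ is bounded (hence in $L^1_{\loc}(Q)$) with $Du_\sigma=Du\,\nchi_{\{\sigma_-<u<\sigma_+\}}\in L^2_{\loc}(Q;\R^d)$, it is a legitimate candidate, and $\alpha_\sigma\in L^1_{\loc}(Q)$ because $|\alpha_\sigma|\le|\alpha|+\gamma^+_H$.

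Next I would apply \eqref{eq:32} with $u_1:=\sigma_+$ (w.r.t.\ $\gamma^+_H$) and $u_2:=u$ (w.r.t.\ $\alpha$): here $\nchi_{\{u_1\ge u_2\}}=\nchi_{\{u\le\sigma_+\}}$ and $\nchi_{\{u_2>u_1\}}=\nchi_{\{u>\sigma_+\}}$, so the lemma gives that $v:=u\wedge\sigma_+$ is a subsolution with respect to $\beta_1:=\nchi_{\{u>\sigma_+\}}\gamma^+_H+\nchi_{\{u\le\sigma_+\}}\alpha$. Then I would apply \eqref{eq:30} with $u_1:=v$ (w.r.t.\ $\beta_1$) and $u_2:=\sigma_-$ (w.r.t.\ $\gamma^+_H$): since $\sigma_-<\sigma_+$ one has $\{v\ge\sigma_-\}=\{u\ge\sigma_-\}$ and $\{v<\sigma_-\}=\{u<\sigma_-\}$, whence $u_\sigma=v\vee\sigma_-$ is a subsolution with respect to $\nchi_{\{u\ge\sigma_-\}}\beta_1+\nchi_{\{u<\sigma_-\}}\gamma^+_H$. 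Substituting $\beta_1$ and splitting into the three regions $\{u<\sigma_-\}$, $\{\sigma_-\le u\le\sigma_+\}$, $\{u>\sigma_+\}$ identifies this right-hand side with $\alpha_\sigma$ from \eqref{eq:31}.

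The one point that needs care — and the reason the two applications must be arranged exactly in this order — is the behaviour on the level sets $\{u=\sigma_+\}$ and $\{u=\sigma_-\}$, which need not be $\LL^{d+1}$-negligible: taking $\sigma_+$ as the \emph{first} slot of $\wedge$ places $\{u=\sigma_+\}$ inside $\{u_1\ge u_2\}$ and therefore assigns it the value $\alpha$ (not $\gamma^+_H$) in the combined right-hand side, in accordance with \eqref{eq:31}, and symmetrically for $\sigma_-$ in the subsequent $\vee$. Apart from this labelling subtlety, the proof is pure bookkeeping of the characteristic functions already produced by \eqref{eq:30}--\eqref{eq:32}, so no real obstacle is expected.
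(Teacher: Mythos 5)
Your proof is correct and follows the same approach as the paper's one-line argument, which simply observes that constants are subsolutions with right-hand side $\gamma_H^+$ and then (implicitly) invokes Lemma~\ref{l:truncation1} twice. Your careful tracking of which characteristic function lands on the level sets $\{u=\sigma_\pm\}$ is a genuine subtlety that the paper glosses over; with your ordering of the two truncations the combined right-hand side matches \eqref{eq:31} exactly, including at the endpoints.
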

\begin{proof}
  It is sufficient to observe that every constant function is a
  subsolution to \eqref{HJ_alpha} with right hand side $\gamma_H^+$.
\end{proof}

The next lemma shows that
partial time integration of functions   $\beta\in\cX^q(Q)$
naturally yields
functions $B$ satisfying the assumptions 
used in Lemma \ref{le:increasing-main} and \ref{le:traces}.
\begin{lemma}\label{l:APAC}
  If \GGG $\beta \in \cX^q(Q)$ then $\beta$ satisfies 
  \eqref{eq:237strong} and \eqref{eq:238} holds.
  Moreover, the map $t\mapsto B(t,\cdot)$ is $1/p$-H\"older 
  continuous with values in $\cX^q(\R^d)$.
\end{lemma}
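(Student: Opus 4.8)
The proof rests on the product structure $Q=I\times\R^d$ and on the sum decomposition defining $\cX^q$; no genuinely hard estimate is involved, the core of the argument being an application of Minkowski's integral inequality and of H\"older's inequality in the time variable.

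The plan is to start from an optimal decomposition $\beta=w+z$, with $w\in L^\infty_{1/\weight}(Q)$, $z\in L^q(Q)$ and $\|w\|_{L^\infty_{1/\weight}(Q)}+\|z\|_{L^q(Q)}=\|\beta\|_{\cX^q(Q)}$, whose existence is guaranteed by Lemma~\ref{le:Xq}. I would then pick Borel representatives $\tilde w$ of $w$ satisfying the pointwise bound $|\tilde w(t,x)|\le\|w\|_{L^\infty_{1/\weight}(Q)}\,\weight(x)$ for \emph{every} $(t,x)\in Q$, and $\tilde z$ of $z$ vanishing identically on the set $\{x\in\R^d:z(\cdot,x)\notin L^q(I)\}$, which is $\LL^d$-negligible by Fubini's theorem. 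Then $\tilde\beta:=\tilde w+\tilde z$ satisfies $\tilde\beta(\cdot,x)\in L^\infty(I)+L^q(I)\subset L^1(I)$ for every $x\in\R^d$, which is exactly \eqref{eq:238strong}, hence \eqref{eq:238}, and $t\mapsto B(t,x)=\int_{1/2}^{t}\tilde\beta(s,x)\,\d s$ belongs to $\ac([0,1])$ for every $x$. The summability condition \eqref{eq:237strong} then follows by splitting the integral and using H\"older's inequality on the $z$-part:
\[
  \int_{I\times F}|\beta|\,\d t\,\d x\le\|w\|_{L^\infty_{1/\weight}(Q)}\int_F\weight\,\d x+\|z\|_{L^q(Q)}\,\LL^d(F)^{1/p}<\infty .
\]

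For the H\"older continuity, fix $0\le s<t\le1$ and split $B(t,\cdot)-B(s,\cdot)=\int_s^t\tilde\beta(r,\cdot)\,\d r=W+Z$, where $W:=\int_s^t\tilde w(r,\cdot)\,\d r$ and $Z:=\int_s^t\tilde z(r,\cdot)\,\d r$, estimating each summand in the corresponding factor of $\cX^q(\R^d)=L^q(\R^d)+L^\infty_{1/\weight}(\R^d)$. The pointwise bound on $\tilde w$ yields $|W(x)|\le(t-s)\,\|w\|_{L^\infty_{1/\weight}(Q)}\,\weight(x)$, so that
\[
  \|W\|_{L^\infty_{1/\weight}(\R^d)}\le(t-s)\,\|w\|_{L^\infty_{1/\weight}(Q)}\le(t-s)^{1/p}\,\|w\|_{L^\infty_{1/\weight}(Q)}
\]
(using $t-s\le1$ and $p>1$). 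For $Z$, Minkowski's integral inequality followed by H\"older's inequality in $r$ (with exponents $p$ and $q$) gives
\[
  \|Z\|_{L^q(\R^d)}\le\int_s^t\|z(r,\cdot)\|_{L^q(\R^d)}\,\d r\le(t-s)^{1/p}\Big(\int_s^t\|z(r,\cdot)\|_{L^q(\R^d)}^q\,\d r\Big)^{1/q}\le(t-s)^{1/p}\,\|z\|_{L^q(Q)} .
\]
Adding these and recalling the definition of the $\cX^q(\R^d)$-norm,
\[
  \|B(t,\cdot)-B(s,\cdot)\|_{\cX^q(\R^d)}\le(t-s)^{1/p}\big(\|w\|_{L^\infty_{1/\weight}(Q)}+\|z\|_{L^q(Q)}\big)=(t-s)^{1/p}\,\|\beta\|_{\cX^q(Q)} ,
\]
which is the claimed $1/p$-H\"older estimate; taking $s=1/2$, where $B(1/2,\cdot)=0$, also gives $B(t,\cdot)\in\cX^q(\R^d)$ for every $t\in[0,1]$.

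I do not expect a substantial obstacle in this lemma: apart from the two elementary inequalities above, the only point requiring care is the measure-theoretic bookkeeping, i.e.~choosing the representative $\tilde\beta$ so that $\tilde\beta(\cdot,x)\in L^1(I)$ for \emph{every} $x\in\R^d$ rather than for a.e.~$x$ (which is precisely why one uses the pointwise, not merely a.e., bound $|\tilde w|\le\|w\|_{L^\infty_{1/\weight}(Q)}\weight$), since the conclusions of \eqref{eq:238} and their later use in Lemmas~\ref{le:increasing-main} and~\ref{le:traces} are all phrased pointwise in $x$.
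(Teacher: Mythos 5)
Your proof is correct and follows essentially the same route as the paper: decompose $\beta$ as a weighted-$L^\infty$ part plus an $L^q$ part (your $w+z$ is the paper's $\kappa\beta_1+\beta_2$), use Fubini to pick a Borel representative with $\tilde\beta(\cdot,x)\in L^1(I)$ for every $x$, and then split $B(t,\cdot)-B(s,\cdot)$ into the two corresponding pieces, bounding the first trivially and the second by H\"older in time. You are a bit more explicit than the paper on the measure-theoretic bookkeeping and on the elementary inequality $t-s\le(t-s)^{1/p}$, but the argument is the same.
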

\begin{proof}
\GGG
Let us decompose $\beta=\kappa \beta_1+\beta_2$ with
$\beta_1\in L^\infty(Q)$ and $\beta_2\in L^q(Q)$.
Since $\kappa$ does not depend on $t$, 
by Fubini's theorem we can find a Borel representative $\tilde\beta_i$
such that the maps $t \to \beta_i(t,x)$ belong  to $L^q(I
)$ for every $x \in \R^d$ and 
\begin{equation}
  \sup_{(t,x)\in Q}|\tilde\beta_1(t,x)|<+\infty,\quad
  \int_{\R^d} \left(\int_I |\tilde \beta_2(t,x)|^q \,\d t \right)\,\d x < +\infty.
\end{equation}
The map $B$ is then well defined, it belongs to $L^q(I\times F)$
for every $F$ with finite Lebesgue measure, and it is easy to check that $\partial_t B = \beta $.

Finally we have
\begin{displaymath}
  B(t,x)-B(s,x)=\weight(x)B_1(s,t,x)+B_2(s,t,x),\quad
  B_1(s,t,x):=\int_s^t \tilde\beta_1(r,x)\,\d r,\
  B_2(s,t,x):=\int_s^t\tilde\beta_2(r,x)\,\d r
\end{displaymath}
with
$\|B_1(s,t,\cdot)\|_{L^\infty(\R^d)}\le (t-s)\|\beta_1\|_{L^\infty(Q)}$ and
$\|B_2(s,t,\cdot)\|_{L^q(\R^d)}\le (t-s)^{1/p} \|\beta_2\|_{L^q(Q)}$,
which shows that 
\begin{equation}
  \label{eq:72}
  \|B(t,\cdot)-B(s,\cdot)\|_{\cX^q(\R^d)}\le (t-s)^{1/p} \|\beta\|_{\cX^q(Q)}.
\end{equation}
\end{proof}
Whenever 
\GGG $\alpha\in \cX^q(Q)$, 
\nc
functions satisfying \eqref{HJ_alpha} enjoy the properties analyzed in the previous section.

\begin{proposition}[Regularity and precise representatives of weak subsolutions]
\label{hj2_piacq}
Let $u$ be a weak subsolution to \eqref{HJ_alpha} with $\alpha \in \cX^q(Q)$ and let
$\beta:=\alpha+\gamma_H^-$,
$B$ as \eqref{eq:238strong}. Then $u,\beta$ satisfy the assumptions of Lemma
\ref{le:increasing-main}
and Lemma \ref{le:traces} (with $\Omega:=\R^d$), 
in particular $u$ admits a \GGG precise representative 
$\sfu$ such that $t\mapsto \sfu(t,x)$ is right continuous for every $x\in
\R^d$, 
$u=\sfu$
$\lambda$-a.e.~in $Q$ and \eqref{eq:222} holds.
Moreover, $u$ has traces $u^+_0\in
L^0(\R^d;\R\cup\{-\infty\})$ and $u^-_1\in
L^0(\R^d;\R\cup\{+\infty\})$ at $t=0$ and $t=1$ respectively, 
which are the pointwise limit of $\sfu(t,\cdot)$ as $t\down0$ and $t\up1$ in
$L^0(\R^d;\bar \R)$ and satisfy the properties of Lemma \ref{le:traces}.
\end{proposition}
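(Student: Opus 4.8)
The plan is to reduce the statement entirely to the monotonicity machinery of Section~\ref{subsec:increasing}, the only genuine input being the lower growth bound on $H$.

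First I would record that $\beta := \alpha + \gamma_H^-$ belongs to $\cX^q(Q)$: indeed $\gamma_H^-(x) = c_H^-(1+|x|^2) = c_H^-\kappa(x)$ lies in $L^\infty_{1/\kappa}(\R^d)$, hence — as a function independent of $t$ — in $L^\infty_{1/\kappa}(Q)\subset\cX^q(Q)$, and $\alpha\in\cX^q(Q)$ by hypothesis. Lemma~\ref{l:APAC} then gives at once that $\beta$ satisfies \eqref{eq:237strong} (a fortiori \eqref{eq:237}), that \eqref{eq:238} and \eqref{eq:238strong} hold for a suitable Borel representative $\tilde\beta$ and its primitive $B$, and that $t\mapsto B(t,\cdot)$ is $1/p$-H\"older continuous with values in $\cX^q(\R^d)$; this last continuity is what will make the traces meaningful.

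The key step is the observation that (H3) forces $H(x,Du)\ge\frac1{2c_H}|Du|^2-\gamma_H^-(x)\ge-\gamma_H^-(x)$, so that \eqref{HJ_alpha} degrades into the purely ``time-monotone'' inequality
\[
  -\partial_t u \le \alpha+\gamma_H^- = \beta \qquad\text{in }\DD'(Q).
\]
To justify this I would first note that $Du\in L^2_{\loc}(Q;\R^d)$, together with the upper bound in (H3) and the local boundedness of $\gamma_H^\pm$, yields $H(x,Du)\in L^1_{\loc}(Q)$, so that \eqref{wHJ_sub} is meaningful; then, testing \eqref{wHJ_sub} against an arbitrary nonnegative $\xi\in C^\infty_c(Q)$ and using $-H(x,Du)\le\gamma_H^-$ pointwise, one gets $\int_Q u\,\partial_t\xi\,\d\lambda\le\int_Q\beta\,\xi\,\d\lambda$. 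Since $u\in L^1_{\loc}(Q)$, Remark~\ref{rem:extension-to-D'} upgrades this distributional inequality \eqref{eq:233} to the equivalent forms \eqref{eq:233z} and \eqref{eq:221}, i.e.\ to condition (i) of Lemma~\ref{le:increasing-main} with $\Omega=\R^d$, $\frm=\LL^d$. Hence $u,\beta$ satisfy the hypotheses of that lemma, and applying it produces the precise representative $\sfu$, right continuous in $t$ for every $x\in\R^d$, with $u=\sfu$ $\lambda$-a.e.\ (property~(iii) and consequence~(I.1)) and \eqref{eq:222} (property~(ii)).

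Finally I would feed the same data into Lemma~\ref{le:traces}, whose hypotheses are precisely \eqref{eq:237strong} for $\beta$ and one of the equivalent conditions of Lemma~\ref{le:increasing-main}, both now available. It yields the traces $u_0^+\in L^0(\R^d;\R\cup\{-\infty\})$ and $u_1^-\in L^0(\R^d;\R\cup\{+\infty\})$, which by \eqref{eq:189} coincide $\LL^d$-a.e.\ with $\sfu^+(0,\cdot)=\lim_{t\down0}\sfu(t,\cdot)$ and $\sfu^-(1,\cdot)=\lim_{t\up1}\sfu(t,\cdot)$, the pointwise limits existing in $[-\infty,+\infty)$ and $(-\infty,+\infty]$ respectively for every $x$, and the convergence holding in $L^0(\R^d;\bar\R)$ (convergence in measure) because $t\mapsto\sfu(t,\cdot)-B(t,\cdot)$ is monotone, $t\mapsto B(t,\cdot)$ is continuous, and Lemma~\ref{l:f_increasing}(2) applies; all remaining assertions of Lemma~\ref{le:traces} then transfer verbatim. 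I do not expect any serious obstacle: the proposition is essentially a bookkeeping statement, and the only point needing a little attention is the reduction above — checking that $H(x,Du)\in L^1_{\loc}(Q)$ so that \eqref{wHJ_sub} can legitimately be tested, and that the degraded right-hand side $\beta$ still lies in the class $\cX^q(Q)$ to which Lemma~\ref{l:APAC} applies.
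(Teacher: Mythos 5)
Your proof is correct and follows essentially the same route as the paper: discard $H(x,Du)\ge-\gamma_H^-$ to reduce the Hamilton–Jacobi subsolution inequality to the pure monotonicity inequality $-\partial_t u\le\beta$ in $\DD'(Q)$, verify $\beta\in\cX^q(Q)$ so Lemma~\ref{l:APAC} applies, and then invoke Remark~\ref{rem:extension-to-D'} together with Lemmas~\ref{le:increasing-main} and~\ref{le:traces}. The paper's proof is a three-line version of exactly this reduction; your version merely makes the intermediate verifications (local integrability of $H(x,Du)$, membership of $\gamma_H^-$ in $L^\infty_{1/\kappa}$, the passage from the distributional test to condition~(i) of Lemma~\ref{le:increasing-main}) explicit.
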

\begin{proof}
  Defining $\beta:= \alpha+\gamma^-_H$ 
  we immediately see that $-\partial_t u\le \beta$ in $\DD'(Q)$  and we
  can apply 
  Lemma \ref{le:increasing-main} and \ref{le:traces},
  see also Remark \ref{rem:extension-to-D'}.
\end{proof}
Motivated by the previous result we introduce the set 
$\hj_q(Q,H)$ of pairs $(u,\alpha)$ solving \eqref{HJ_alpha}, where
the right-hand side belongs to $\cX^q(Q)$.
\begin{definition}[The convex set $\hj_q(Q,H)$]
\label{def:subsolutions}
We will denote by $\hj_q(Q,H)$ the collection of pairs 
$(u,\alpha)\in 
L^1_{\rm loc}(Q)\times
\cX^q(Q)$ solving \eqref{HJ_alpha}. 
We will always use the symbol $\sfu$ to denote a precise
representative of $u$ according to Proposition \ref{hj2_piacq}.
\end{definition}    
Since the Hamiltonian $H$ is convex with respect to its second
variable, it is easy to check that the set $\hj_q(Q,H)$ is a convex
subset of $L^1_{\rm loc}(Q)\times \cX^q(Q)$.
\subsection{A priori estimates and stability for weak subsolutions}
\label{subsec:stability}
We derive now some regularity properties and a priori estimates for
weak subsolutions of Hamilton Jacobi equations, by studying their
duality with suitable classes of solutions to the continuity
equation, and in particular with Wasserstein geodesics. 
We will always denote by 
$\sfu$ a precise representative associated to a pair
$(u,\alpha)\in \hj_q(Q,H)$ and 
by $\mu$ the precise representative associated to a pair $(m,\vv)\in
\CE 2pQ$. We start with an estimate in the case that the measure $m$ is bounded and with compact support.

\begin{proposition}\label{p:crucial_estimate}
Let $(u,\alpha)\in \hj_q(Q,H)$ 
and let $(m,\vv)\in \CE2\infty Q$. We also suppose that 
there exists a compact set $K\subset \R^d$ such that 
$\supp(\mu_t) \subset K$ for every $t\in [0,1]$.

Setting $\beta:=\alpha+\gamma^-_H$, for every $0 < s< t<1$ we have
\newcommand{\pmn}{{}}
\begin{equation}\label{basic}
  \int_{\R^d} \sfu\pmn_t \,\d\mu_t -
  \int_{\R^d}\sfu\pmn_s\, \d\mu_s \ge
-\frac{c_H}2 \int_s^t \int_{\R^d} |\vv(r,x)|^2 \,m(r,x)\,\d x \,\d r -
\int_s^t \int_{\R^d} \beta(r,x)m(r,x)\, \d x \,\d r.
\end{equation}
\end{proposition}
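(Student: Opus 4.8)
The plan is to test the distributional subsolution inequality \eqref{wHJ_sub} against a test function built from the density $m$, then pass to the limit in suitable regularizations. First I would work with the precise representative $\sfu$ and with the convolutions $\sfu_\tau$ of Lemma~\ref{le:convolution}, which are $C^\infty$ in time and converge nicely to $\sfu$; since $-\partial_t u + H(x,Du)\le\alpha$ in $\DD'(Q)$, convolving in time against the kernel $h_\tau$ (whose support lies in $(-1,0)$, so $\partial_t\sfu_\tau$ makes sense on $(0,1-\tau)$) gives, pointwise a.e., $-\partial_t\sfu_\tau + (H(x,Du))_\tau \le \alpha_\tau$, and by Jensen's inequality and convexity of $H(x,\cdot)$ we have $H(x,D\sfu_\tau)\le (H(x,Du))_\tau$, hence $-\partial_t\sfu_\tau + H(x,D\sfu_\tau)\le\alpha_\tau$ a.e.\ on $(s,t)\times\R^d$ for $0<s<t<1-\tau$. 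Using the lower bound $H(x,\pp)\ge \tfrac1{2c_H}|\pp|^2-\gamma_H^-(x)$ from (H3), this yields $-\partial_t\sfu_\tau + \tfrac1{2c_H}|D\sfu_\tau|^2 \le \beta_\tau$ with $\beta=\alpha+\gamma_H^-$.

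Next I would pair this inequality with $m$. Since $(m,\vv)\in\CE2\infty Q$ solves the continuity equation, the natural move is to differentiate $r\mapsto\int_{\R^d}\sfu_\tau(r,x)\,\d\mu_r(x)$: formally,
\[
\frac{\d}{\d r}\int_{\R^d}\sfu_\tau\,\d\mu_r = \int_{\R^d}\big(\partial_r\sfu_\tau + D\sfu_\tau\cdot\vv\big)\,\d\mu_r
\ge \int_{\R^d}\Big(\tfrac1{2c_H}|D\sfu_\tau|^2 - \beta_\tau + D\sfu_\tau\cdot\vv\Big)\,\d\mu_r,
\]
and then Young's inequality $D\sfu_\tau\cdot\vv \ge -\tfrac1{2c_H}|D\sfu_\tau|^2 - \tfrac{c_H}{2}|\vv|^2$ kills the gradient term, leaving $\frac{\d}{\d r}\int\sfu_\tau\,\d\mu_r \ge -\tfrac{c_H}{2}\int|\vv|^2\,\d\mu_r - \int\beta_\tau\,\d\mu_r$. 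Integrating in $r$ over $(s,t)$ (with $t<1-\tau$) gives the desired inequality for $\sfu_\tau$ with $\beta_\tau$ in place of $\beta$. To make the differentiation rigorous one uses that $\mu\in\ac^2([0,1];\cP_2(\R^d))$, that $m$ is bounded with $\supp\mu_r\subset K$ uniformly, so all integrals are over the fixed compact $K$ and the integrands are controlled; the identity $\frac{\d}{\d r}\int\phi\,\d\mu_r=\int(\partial_r\phi+D\phi\cdot\vv)\,\d\mu_r$ holds for $\phi\in C^1((0,1)\times\R^d)$ by \eqref{eq:44} localized by cutoff functions in $x$ (which we may drop since the supports sit in $K$), applied here to $\phi=\sfu_\tau$ smoothed also in $x$ if necessary and then passing $\eps\to0$ via Lemma~\ref{le:convolution}.

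Finally I would pass to the limit $\tau\downarrow0$. On the right-hand side, $\beta_\tau\to\beta$ in $L^q_{\loc}$ hence $\int_s^t\int_K\beta_\tau m\to\int_s^t\int_K\beta m$ since $m\in L^\infty$ with compact time-space support; the $|\vv|^2 m$ term does not involve $\tau$. On the left-hand side, for \emph{fixed} $s,t$ in the full-measure set where Lemma~\ref{le:increasing-main}(I.3)/\eqref{eq:231} gives $\sfu_\tau(s,\cdot)\to\sfu(s,\cdot)$ and $\sfu_\tau(t,\cdot)\to\sfu(t,\cdot)$ — say in $L^1_{\loc}(\R^d)$, and after truncation also uniformly — we get $\int_K\sfu_\tau(s,\cdot)\,\d\mu_s\to\int_K\sfu(s,\cdot)\,\d\mu_s$ because $\mu_s$ has a bounded density on $K$; then I would remove the restriction to the exceptional set by right-continuity of $r\mapsto\sfu(r,x)$ and of $r\mapsto\mu_r$ (dominated convergence on $K$, using that $\sfu(\cdot,x)$ is monotone up to the absolutely continuous correction $B$, so its traces are controlled). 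The main obstacle is precisely this last point: $\sfu$ is only defined up to a right-continuous monotone-in-time representative with possibly infinite traces and no pointwise integrability a priori, so justifying that $r\mapsto\int_K\sfu(r,\cdot)\,\d\mu_r$ is well-defined, finite, and the correct limit of $\int_K\sfu_\tau(r,\cdot)\,\d\mu_r$ requires care — one leans on the monotonicity bound \eqref{eq:232}, the $L^q$-integrability of $B$ coming from Lemma~\ref{l:APAC}, and the uniform density bound for $\mu_r$ on $K$ to dominate, and on the fact that \eqref{basic} is claimed only for $0<s<t<1$, i.e.\ away from the endpoints where the traces could blow up.
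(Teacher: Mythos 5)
Your proposal matches the paper's proof in all essentials: space--time mollification to produce a classical subsolution, reduction to the model quadratic Hamiltonian via Remark~\ref{r:Hto2} together with Jensen's inequality for the convolution, testing against the continuity equation via the identity $\frac{\d}{\d r}\int\sfu_\tau\,\d\mu_r=\int(\partial_r\sfu_\tau+D\sfu_\tau\cdot\vv)\,\d\mu_r$, Young's inequality to absorb the gradient term, and passage to the limit first in $\eps$ and then in $\tau$ using Lemma~\ref{le:convolution} and Lemma~\ref{le:increasing-main}, with the compact support and $L^\infty$ bound on $m$ controlling the boundary terms. One small correction: Lemma~\ref{le:increasing-main}(I.3) and \eqref{eq:231} already give $\sfu_\tau(t,\cdot)\to\sfu(t,\cdot)$ for \emph{every} $t\in(0,1)$, not merely a.e.~$t$, so the extra step you sketch of upgrading from a full-measure set of times by right-continuity is unnecessary (though harmless).
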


\begin{proof}
\GGG 
We introduce a pair of convolution kernels $h,k$ as in
\eqref{eq:51}, \eqref{eq:50}, \eqref{eq:51bis}, and we keep the notation of Lemma \ref{le:convolution}, setting
$u_{\tau,\eps} = u \star \eta_{\tau,\varepsilon}$ and choosing $\tau < 1-t$.

 Since by Remark \ref{r:Hto2} $u$ is a weak subsolution to 
\eqref{HJ_alpha_2} with $\beta:=\alpha+\gamma^-_H$, 
then $u_{\tau,\eps}$ is a classical subsolution to 
\begin{equation}
-\partial_t u_{\tau,\eps} +\frac1{2 c_H} |Du_{\tau,\eps}|^2 \leq \beta_{\tau,\eps},
\end{equation}
where $\beta_{\tau,\eps} := \beta \star \eta_{\tau,\eps}$.
Whence
\begin{align}
\label{eq:55}
\int_{\R^d} u_{\tau,\eps} (t,x) \,\d \mu_t(x) 
&- \int_{\R^d} u_{\tau,\eps}(s,x) \,\d \mu_s(x) = 
\int_s^t \int_{\R^d} \left( \partial_t u_{\tau,\eps} +
  Du_{\tau,\eps} \cdot \vv \right) \,\d\mu_r \,\d r \\
&\notag\geq  \int_s^t \left( \int_{\R^d} \left(\frac
    1{2c_H}|Du_{\tau,\eps}|^2  + 
    Du_{\tau,\eps} \vv -  \beta_{\tau,\eps} \right)\, \d\mu_r \right)\,\d r \\
&
\label{eq:55bis}\geq  
  - \frac{c_H}2 \int_s^t \int_{\R^d} |\vv|^2 m\,\d x \,\d r 
  -\int_s^t  \int_{\R^d}
  \beta_{\tau,\eps} 
  m\,\d x\,\d r.
\end{align}
We first pass to the limit as $\eps\down0$ and then as $\tau\down0$.
Standard results for convolutions (see also Lemma
\ref{le:convolution})
imply that 
$\beta_{\tau,\eps}\to \beta$ in $L^1_{\loc}(Q)$, so that we can easily
pass to the limit in \eqref{eq:55bis}.

Concerning the first term of \eqref{eq:55}, thanks to the compact
support and the bounded density of $\mu_t$ and $\mu_s$, 
we can first use \eqref{eq:64} and
then \eqref{eq:231} of Lemma \ref{le:increasing-main}
to pass to the limit, 
and deduce \eqref{basic} thanks to the choice of the precise right
continuous representative.
\end{proof}

\begin{corollary}\label{c:stima_wasserstein}
\GGG
Let us suppose that $(u,\alpha)\in \hj_q(Q,H)$.
For every pair of probability measures $\mu', \mu'' \in \cP(\R^d)$
with compact support and $L^\infty$-densities
$m',m''$, and for every $0<s<t<1$ we have 
\begin{equation}\label{stimawas}
  \begin{aligned}
    \int_{\R^d} \sfu_t \,\d\mu'' - \int_{\R^d}\sfu_s \,\d\mu' \ge
    &- \frac{c_H}{2(t-s)}W^2_2(\mu',\mu'') 
    \\&-
    (t-s)^{1/p}\|\beta\|_{\cX^q(Q)} \max \left(\|m'\|_{L^p\cap
        L^1_\weight(\R^d)}, \| m'' \|_{L^p\cap L^1_\weight(\R^d)}
    \right),
  \end{aligned}
\end{equation}
where $ \beta:=\alpha+\gamma^-_H.$
\end{corollary}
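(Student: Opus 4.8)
The plan is to test the weak subsolution $u$ against the Wasserstein geodesic joining $\mu'$ to $\mu''$ and then to invoke Proposition~\ref{p:crucial_estimate}, estimating the two terms on the right-hand side of \eqref{basic}. Throughout, $\sfu$ denotes the precise representative of $u$ provided by Proposition~\ref{hj2_piacq}.

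First I would build the competitor $(m,\vv)$. Let $(\rho_\theta)_{\theta\in[0,1]}$ be McCann's displacement interpolation \eqref{eq:38} between $\mu'$ and $\mu''$, reparametrise it onto $[s,t]$, and extend it by the constant value $\mu'$ on $(0,s]$ and $\mu''$ on $[t,1)$; call the resulting curve $\mu$, let $\vv$ be its minimal velocity field \eqref{eq:45} and $m$ the density of $\tilde\mu$. By \eqref{eq:37} this is a constant-speed geodesic on $[s,t]$ with metric velocity $W_2(\mu',\mu'')/(t-s)$ and is constant elsewhere, so $\mu\in\ac^2([0,1];\cP_2(\R^d))$, and $(m,\vv)$ solves the continuity equation on $Q$ with $\vv=0$ outside $(s,t)\times\R^d$ and $\mu_s=\mu'$, $\mu_t=\mu''$. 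All $\rho_\theta$ are supported in $\mathrm{conv}\big(\supp \mu'\cup \supp \mu''\big)$, and by \eqref{eq:39} (together with its $L^\infty$ version, obtained as $p\to\infty$ since $m',m''\in L^\infty$) and \eqref{eq:65} the densities $m_r$ are uniformly bounded in $L^\infty(\R^d)$ and satisfy
\[
\|m_r\|_{L^p\cap L^1_\weight(\R^d)}\le M:=\max\Big(\|m'\|_{L^p\cap L^1_\weight(\R^d)},\ \|m''\|_{L^p\cap L^1_\weight(\R^d)}\Big)\qquad\text{for every }r\in(0,1).
\]
Hence $(m,\vv)\in\CE2\infty Q$, it has uniformly compact support, and the hypotheses of Proposition~\ref{p:crucial_estimate} are satisfied.

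Next I would plug this into \eqref{basic}. Since $\mu_s=\mu'$ and $\mu_t=\mu''$, the constant-speed property and \eqref{eq:45} give $\int_s^t\int_{\R^d}|\vv|^2 m\,\d x\,\d r=\int_s^t|\dot\mu_r|_{W_2}^2\,\d r=\tfrac1{t-s}W_2^2(\mu',\mu'')$, which produces the first term of \eqref{stimawas}. For the second term I would estimate $\big|\int_s^t\int_{\R^d}\beta\,m\big|\le\int_s^t\int_{\R^d}|\beta|\,m$, and for an arbitrary splitting $\beta=w+z$ with $w\in L^\infty_{1/\weight}(Q)$, $z\in L^q(Q)$, bound the $w$-part by $\int_s^t\int_{\R^d}|w|\,m\le\|w\|_{L^\infty_{1/\weight}(Q)}\int_s^t\|m_r\|_{L^1_\weight(\R^d)}\,\d r\le(t-s)\,\|w\|_{L^\infty_{1/\weight}(Q)}\,M$ and the $z$-part, by H\"older's inequality in space-time with exponents $q$ and $p$ and $\int_s^t\|m_r\|_{L^p(\R^d)}^p\,\d r\le(t-s)\,M^p$, by $\int_s^t\int_{\R^d}|z|\,m\le\|z\|_{L^q(Q)}\,(t-s)^{1/p}\,M$. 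Using $t-s<1$ and $p>1$, so that $t-s\le(t-s)^{1/p}$, and then taking the infimum over all admissible splittings, Definition~\ref{def:spaceX} gives $\int_s^t\int_{\R^d}|\beta|\,m\le(t-s)^{1/p}\|\beta\|_{\cX^q(Q)}\,M$. Combining the two bounds in \eqref{basic} yields exactly \eqref{stimawas}.

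The only genuinely delicate step is the admissibility of the competitor, i.e.\ the uniform $L^\infty$ bound and the uniform compactness in space of the interpolating densities needed to apply Proposition~\ref{p:crucial_estimate}; both follow from the displacement-convexity inequalities \eqref{eq:39} and \eqref{eq:65} recalled in Section~\ref{subsec:Wass}. Everything else is routine H\"older-type bookkeeping, the one small subtlety being the use of $t-s<1$ to absorb the factor $(t-s)$ into $(t-s)^{1/p}$.
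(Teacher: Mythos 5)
Your proof is correct and follows essentially the same route as the paper: test the estimate of Proposition~\ref{p:crucial_estimate} against the Wasserstein geodesic connecting $\mu'$ to $\mu''$, exploit displacement convexity to control $\|m_r\|_{L^p\cap L^1_\kappa}$, and bound $\int_s^t\int\beta\,m$ via the defining decomposition of $\cX^q(Q)$, absorbing $(t-s)$ into $(t-s)^{1/p}$. You are somewhat more explicit than the paper on two minor points -- constructing a genuine $\CE2\infty Q$ competitor on all of $Q$ by constant extension, and passing to the infimum over $\cX^q$-splittings -- but these are exactly the steps the paper's proof leaves implicit.
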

\begin{proof}
\GGG
Let $(\mu_r)_{r\in [s,t]}$ be 
the (rescaled) Wasserstein geodesic satisfying $\mu_s:=\mu'$ and
$\mu_t:=\mu''$. Then $\mu$ satisfies the continuity equation with
minimal velocity field $\vv$ satisfying 
$$
\int_s^t \int_{\R^d} |\vv|^2 \,\d\mu_r \d r  
=\frac{1}{(t-s)}W^2_2(\mu',\mu'').
$$
By writing the decomposition 
$\beta=\weight \beta_0+\beta_1$ with $\beta_0\in L^\infty(Q)$ and
$\beta_1\in L^q(Q)$ we get
\begin{align*}
\int_{(s,t)\times \R^d} \beta \,\d\tilde\mu&=
\int_{(s,t)\times \R^d} \beta_0\weight \,\d\tilde\mu+
\int_{(s,t)\times \R^d} \beta_1 \,\d\tilde\mu\\&\le
(t-s)\|\beta_0\|_{L^\infty(Q)}\max_{r\in[s,t]}\int_{\R^d}\weight\,\d\mu_r+
(t-s)^{1/p}\|\beta_1\|_{L^q(Q)}\max_{r\in[s,t]}\|m_r\|_{L^p(\R^d)}
\\&\le 
(t-s)^{1/p}\Big(\|\beta_0\|_{L^\infty(Q)}+\|\beta_1\|_{L^q(Q)}\Big)
\max\Big(\|m'\|_{L^p\cap
        L^1_\weight(\R^d)}, \| m'' \|_{L^p\cap L^1_\weight(\R^d)}
    \Big).
\end{align*}
Therefore, the conclusion follows from Proposition \ref{p:crucial_estimate}.
\nc
\end{proof}

\begin{theorem}[Interior regularity and traces of precise representatives]\label{thm:precise_repr}
\GGG 
If $(u,\alpha)\in \hj_q(Q,H)$ 
\nc
then the precise representatives $\sfu_t = \sfu(t, \cdot)$ belong
to $\cX^q(\R^d)$ for every $t \in (0,1)$.
For every $\mu=m\LL^d  \in \cP_{2,p}^r(\R^d)$ 
 the map $t \mapsto \int_{\R^d} \sfu(t,x) \,\d\mu$ (respectively $t\mapsto
 \int_{\R^d} \sfu^-(t,x) \,\d\mu$) is right (respectively left)
 continuous in $I$,
$u^-(1,\cdot)$, $u^+(0,\cdot)$ are semiintegrable with respect to
$\mu$   and 
\begin{equation}\label{exlimits}
\begin{split}
\exists \, \lim_{t \uparrow 1} \int_{\R^d} \sfu(t,x) \,\d\mu &= \int_{\R^d} u^-(1,x) \,\d\mu \quad \in (-\infty, + \infty]; \\
\exists \, \lim_{t \downarrow 0} \int_{\R^d} \sfu(t,x) \,\d\mu &= \int_{\R^d} u^+(0,x) \,\d\mu \quad \in [-\infty, + \infty).
\end{split}
\end{equation}
Moreover, for every $0\le a<r<b\le 1$ and every pair of nonnegative measures
$\mu_{a,b}=m_{a,b}\LL^d\in \cP_{2,p}^r(\R^d)$ 
such that $\sfu_a\in L^1(\mu_a),\ \sfu_b\in L^1(\mu_b)$ 
with $I_a=\int_{\R^d}\sfu_a\,\d\mu_a$, $I_b:=\int_{\R^d}\sfu_b\,\d\mu_b$
we have 
\begin{equation}
  \label{eq:75}
  \|\sfu_r\|_{\cX^q(\R^d)}\le (I_a)_- +(I_b)_+ +\Big(
  \frac{c_H(b-a)}{(b-r)(r-a)}+2\|\beta\|_{\cX^q(Q)}\Big)\max\big(\|m_a\|_{L^p\cap
    L^1_\weight(\R^d)}
  ,\|m_b\|_{L^p\cap L^1_\weight(\R^d)}\big).
\end{equation} 
\end{theorem}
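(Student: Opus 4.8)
The plan is to prove the quantitative bound \eqref{eq:75} for interior times $0<a<r<b<1$ first, and then to bootstrap the other assertions. By Proposition~\ref{hj2_piacq} we write $\sfu=w+B$ with $B$ associated as in \eqref{eq:238strong} to $\beta=\alpha+\gamma^-_H\in\cX^q(Q)$, so that for every $x$ the map $t\mapsto w(t,x)=\sfu(t,x)-B(t,x)$ is increasing and right continuous (Lemma~\ref{le:increasing-main}), while Lemma~\ref{l:APAC} gives $\|B_t-B_s\|_{\cX^q(\R^d)}\le|t-s|^{1/p}\|\beta\|_{\cX^q(Q)}$ and, since $B_{1/2}\equiv0$, $\|B_t\|_{\cX^q(\R^d)}\le\|\beta\|_{\cX^q(Q)}$ for all $t\in[0,1]$. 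The monotonicity of $w$ shows, by sandwiching $w_r$ between $w_s$, $w_{s'}$ with $s<r<s'$ in the (full-measure, hence dense) set where $\sfu_s=u(s,\cdot)\in L^1_{\loc}(\R^d)$, that $\sfu_t\in L^1_{\loc}(\R^d)$ for \emph{every} $t\in(0,1)$. Since bounded functions with bounded support are dense in $L^p\cap L^1_\weight(\R^d)$, \eqref{eq:17} reduces \eqref{eq:75} to bounding $\int_{\R^d}\sfu_r g\,\d x$ for such $g$ with $\|g\|_{L^p\cap L^1_\weight}\le1$; writing $g=g_+-g_-$, it suffices to estimate $A_+:=\sup\int_{\R^d}\sfu_r h\,\d x$ and $A_-:=\sup\big(-\int_{\R^d}\sfu_r h\,\d x\big)$, the suprema over nonnegative bounded $h$ with bounded support and $\|h\|_{L^p\cap L^1_\weight}\le1$, so that $\|\sfu_r\|_{\cX^q(\R^d)}\le A_++A_-$ (finiteness of the right-hand side then giving $\sfu_r\in\cX^q(\R^d)$).

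To bound $A_+$ fix such an $h$, put $c:=\int_{\R^d}h\,\d x\in(0,1]$ (if $c=0$ then $h\equiv0$), and note $\tfrac1c h\LL^d\in\cP_{2,p}^r(\R^d)$ has bounded density and compact support. Pick bounded, compactly supported probability densities $m_b^{(n)}\to m_b$ in $L^p(\R^d)$, $L^1_\weight(\R^d)$ and $W_2$, each dominated by a fixed multiple of $m_b$ (so that $\int\sfu_b m_b^{(n)}\,\d x\to I_b$ by dominated convergence, $\sfu_b$ being $\mu_b$-integrable by hypothesis). Applying Corollary~\ref{c:stima_wasserstein} on $(r,b)$ to the pair $(\tfrac1c h\LL^d,m_b^{(n)}\LL^d)$, multiplying by $c$, and letting $n\to\infty$ gives
\[
\int_{\R^d}\sfu_r h\,\d x\ \le\ cI_b+\frac{c\,c_H}{2(b-r)}W^2_2\big(\tfrac1c h\LL^d,m_b\LL^d\big)+c\,(b-r)^{1/p}\|\beta\|_{\cX^q(Q)}\max\big(\tfrac1c\|h\|_{L^p\cap L^1_\weight},\|m_b\|_{L^p\cap L^1_\weight}\big).
\]
Since $c\le\|h\|_{L^1_\weight}\le1\le\|m_b\|_{L^1_\weight}\le\|m_b\|_{L^p\cap L^1_\weight}$ and $b-r\le1$, we get $cI_b\le(I_b)_+$ and the last term $\le\|\beta\|_{\cX^q(Q)}\|m_b\|_{L^p\cap L^1_\weight}$; transporting through the origin, $c\,W^2_2(\tfrac1c h\LL^d,m_b\LL^d)\le 2\int|x|^2h\,\d x+2c\int|x|^2m_b\,\d x\le 2(1-c)+2c(\|m_b\|_{L^1_\weight}-1)\le2\|m_b\|_{L^p\cap L^1_\weight}$, so the middle term is $\le\tfrac{c_H}{b-r}\|m_b\|_{L^p\cap L^1_\weight}$. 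Hence $A_+\le(I_b)_++\big(\tfrac{c_H}{b-r}+\|\beta\|_{\cX^q(Q)}\big)\max(\|m_a\|_{L^p\cap L^1_\weight},\|m_b\|_{L^p\cap L^1_\weight})$, and the symmetric argument on $(a,r)$ gives $A_-\le(I_a)_-+\big(\tfrac{c_H}{r-a}+\|\beta\|_{\cX^q(Q)}\big)\max(\|m_a\|_{L^p\cap L^1_\weight},\|m_b\|_{L^p\cap L^1_\weight})$. Adding, and using $\tfrac{c_H}{b-r}+\tfrac{c_H}{r-a}=\tfrac{c_H(b-a)}{(b-r)(r-a)}$, yields \eqref{eq:75}. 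This step — the bookkeeping of constants through the rescaling to a probability measure, the approximation of the merely $L^p$-integrable data $m_a,m_b$ by compactly supported bounded densities, and the lossy-but-sufficient transport-through-the-origin bound on the Wasserstein term — is the technical core of the proof.

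For the first assertion, given $r\in(0,1)$ choose $a\in(0,r)$, $b\in(r,1)$ in the full-measure set where $\sfu_a,\sfu_b\in L^1_{\loc}$ and apply \eqref{eq:75} with $\mu_a,\mu_b$ the normalized indicators of a fixed ball: the right-hand side is finite, so $\sfu_t\in\cX^q(\R^d)$ for every $t\in(0,1)$; hence also $w_t=\sfu_t-B_t\in\cX^q(\R^d)$, and for every $\mu=m\LL^d\in\cP_{2,p}^r(\R^d)$ one has $\int_{\R^d}|\sfu_t|\,\d\mu\le\|\sfu_t\|_{\cX^q}\|m\|_{L^p\cap L^1_\weight}<\infty$. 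The endpoint cases $a=0$ (with $\sfu_0:=u^+(0,\cdot)$) and $b=1$ (with $\sfu_1:=u^-(1,\cdot)$) of \eqref{eq:75} follow by letting $a'\downarrow0$ (resp.\ $b'\uparrow1$) in the interior estimate with $\mu_{a'}:=\mu_0$ kept fixed and using the convergence $I_{a'}=\int_{\R^d}\sfu_{a'}\,\d\mu_0\to\int_{\R^d}u^+(0,\cdot)\,\d\mu_0=I_0$ established next.

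Finally fix $\mu=m\LL^d\in\cP_{2,p}^r(\R^d)$. As $s\downarrow t$ one has $w_s\downarrow w_t$ pointwise with $0\le w_s-w_t\le w_{t'}-w_t\in\cX^q(\R^d)\subset L^1(\mu)$ for $s\le t'$, so dominated convergence together with $\|B_s-B_t\|_{\cX^q}\to0$ gives $\int\sfu_s\,\d\mu\to\int\sfu_t\,\d\mu$; the left continuity of $t\mapsto\int_{\R^d}\sfu^-(t,\cdot)\,\d\mu=\int_{\R^d}(w^-_t+B_t)\,\d\mu$ follows likewise, using $w^-_\tau\uparrow w^-_t$ as $\tau\uparrow t$ and that $w^-_t$ lies between two functions of $\cX^q(\R^d)$. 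At the endpoints, $u^-(1,\cdot)=w^-_1+B_1$ with $w^-_1=\sup_{t<1}w_t\ge w_{t_0}$, so its negative part is bounded above by $|w_{t_0}|+|B_1|\in L^1(\mu)$, whence $\int_{\R^d}u^-(1,\cdot)\,\d\mu\in(-\infty,+\infty]$; since $w_t\uparrow w^-_1$ pointwise, monotone convergence applied to $w_t-w_{t_0}\ge0$ gives $\int\sfu_t\,\d\mu=\int w_t\,\d\mu+\int B_t\,\d\mu\to\int u^-(1,\cdot)\,\d\mu$, which is \eqref{exlimits} at $t=1$; the statement at $t=0$ is symmetric (with $w^+_0=\inf_{t>0}w_t$, decreasing limits, and $\int u^+(0,\cdot)\,\d\mu\in[-\infty,+\infty)$).
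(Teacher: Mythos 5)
Your argument is correct and is essentially the paper's own proof: you apply Corollary~\ref{c:stima_wasserstein} to normalized test functions and bounded, compactly supported approximations of $m_a,m_b$ (playing the role of the paper's explicit truncations $\mu_{b,R,K}$, with the same $W_2$-through-$\delta_0$ bound), split into positive and negative contributions, and then bootstrap the remaining assertions via monotonicity of $\sfu\mp B$ and Beppo Levi. The only step that should be tightened is the inference $\|\sfu_r\|_{\cX^q(\R^d)}\le A_++A_-$: formula~\eqref{eq:17} presupposes that $\sfu_r$ already lies in $\cX^q(\R^d)$, so you should instead invoke Lemma~\ref{le:Xq}(f) separately for $(\sfu_r)_+$ and $(\sfu_r)_-$, noting that $\int_{\R^d}(\sfu_r)_+h\,\d x=\int_{\R^d}\sfu_r\,(h\,\nchi_{\{\sfu_r>0\}})\,\d x\le A_+\|h\|_{L^p\cap L^1_\weight(\R^d)}$ for nonnegative bounded compactly supported $h$ (and likewise for the negative part); this is precisely what the paper does by replacing $\varphi$ with $\varphi\,\nchi_{\{u(r,\cdot)>0\}}$, and it yields the same final bound.
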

\begin{proof}
Let us 
first choose $0<a<b<1$, $r\in (a,b)$, 
$\mu_{a,b}=m_{a,b}\LL^d\in \cP_2(\R^d)$,
$m_{a,b}\in L^p\cap L^1_\weight(\R^d)$ such that
$\sfu_a\in L^1(\mu_a),\ \sfu_b\in L^1(\mu_b)$; since $\sfu_a,\sfu_b$
belong
to $L^1_{\loc}(\R^d)$ the collection of such measures is surely not empty.

For $R,K>0$ sufficiently big we consider the probability measures
$\mu_{b,R,K}=m_{b,R,K}\LL^d\in \cP^r_2(\R^d)$
\begin{equation}
  \label{eq:73}
  m_{b,R,K}:=c_{R,K}(m_b\land K)\nchi_{B_R(0)}\LL^d,\quad
  c_{R,K}:=\Big(\int_{B_R(0)}(m_b\land K)\,\d x\Big)^{-1}\in [1,+\infty),
\end{equation}
where the truncation is needed in order to apply Corollary \ref{c:stima_wasserstein}.
We choose a bounded measurable
$\varphi:\R^d\to [0,\infty)$ with compact support such that $\|\varphi\|_{L^1(\R^d)}>0$;
we can apply \eqref{stimawas} with the choices $s:=r$, $t:=b$, 
$m'= \frac{\varphi(x) \LL^d}{\|\varphi\|_{L^1(\R^d)}}, $
$m''= m_{b,R,K}$. 
The triangle inequality for $W_2$ and the fact that
$W_2^2(\mu,\delta_0)=\int_{\R^d}|x|^2\,\d\mu$ yield
\begin{displaymath}
  W_2^2(\mu',\mu'')\le 2\Big(\int_{\R^d}|x|^2\,\d\mu'(x)+c_{R,K}\int_{\R^d}|x|^2\,\d\mu_b\Big);
\end{displaymath}
on the other hand
\begin{displaymath}
   \|m'\|_{L^p\cap L^1_{\weight}(\R^d)}\le 
   \frac{1}{\|\varphi\|_{L^1(\R^d)}}\|\varphi\|_{L^p\cap L^1_\weight(\R^d)}
     ,\quad
     \|m''\|_{L^p\cap L^1_{\weight}(\R^d)}\le c_{R,K}\|m_b\|_{L^p\cap L^1_\weight(\R^d)}
\end{displaymath}
so that \eqref{stimawas} yields
\begin{equation}\label{prec1}
\begin{split}
\int_{\R^d} \sfu(r,x)\varphi(x) \,\d x 
&\leq \|\varphi \|_{L^1(\R^d)} \int_{\R^d} \sfu_b\,\d\mu_{b,R,K}+ \frac{c_H}{(b
  - r)} c_{R,K}\|\varphi\|_{L^1_\weight(\R^d)}
  \|m_b\|_{L^1_\weight(\R^d)}
  \\
& \quad + c_{R,K}\|\beta\|_{\cX^q(Q)}  \| \varphi \|_{L^p\cap
    L^1_\weight(\R^d)} \| m_b\|_{L^p\cap
    L^1_\weight(\R^d)}.
\end{split}
\end{equation}
We can now pass to the limit as $R,K\up\infty$ observing that
$c_{R,K}\to 1$, obtaining
\begin{equation}
  \label{eq:74}
  \int_{\R^d} \sfu(r,x)\varphi(x) \,\d x \le {C_b}
  \, \|\varphi\|_{L^p\cap L^1_\weight(\R^d)},\quad
  C_b:=(I_b)_++\Big(\frac{c_H}{(b
  - r)}+\|\beta\|_{\cX^q(Q)}\Big) \| m_b\|_{L^p\cap
    L^1_\weight(\R^d)}.
\end{equation}
Replacing $\varphi$ with $\varphi\nchi_{\{u(r,\cdot)>0\}}$ we see that 
the same estimate holds for the positive part of $u$; \eqref{eq:20}
then yields
\begin{equation}
  \label{eq:68}
  \|\sfu(r,\cdot)\lor 0\|_{\cX^q(\R^d)}\le C_b.
\end{equation}
A similar argument, replacing now $b$ with $a$ 
yields
\begin{equation}\label{prec2}
\begin{split}
  \int_{\R^d} \sfu(r,x)\varphi(x) \,\d x \geq  -C_a\| \varphi \|_{L^p\cap
    L^1_\weight(\R^d)},\quad
    C_a:=(I_a)_-+\Big(\frac{c_H}{(r-a)}+\|\beta\|_{\cX^q(Q)}\Big) \| m_a\|_{L^p\cap
    L^1_\weight(\R^d)}.
\end{split}
\end{equation}
We deduce that $\|\sfu(r,\cdot)\land 0\|_{\cX^q(Q)}\le C_a$
and 
\begin{equation}
  \label{eq:70}
  \|\sfu(r,\cdot)\|_{\cX^q(\R^d)}\le C_a+C_b,
\end{equation}
which yields \eqref{eq:75}.

Since $\cX^q(\R^d)$ is the dual space of $L^p\cap L^1_\weight(\R^d)$
(see Lemma \ref{le:Xq}),
the right continuity of the curves $t\mapsto
\int_{\R^d}\sfu_t\,\d\mu$ for every $\mu=m\LL^d\in \cP_2(\R^d)$ with
$m\in L^p(\R^d)$ is equivalent to the right continuity
of $t\mapsto \sfu_t$ with respect to the weak$^*$ topology of $\cX^q(\R^d)$.
Thanks to the uniform bound \eqref{eq:75} and the strong density of 
bounded functions with compact support in $L^p\cap L^1_\weight(\R^d)$,
this property follows by the right continuity in $L^0(\R^d)$. 

Finally, \eqref{exlimits} follows by decomposing $\sfu$ as $\sfw-B$ as in
Proposition \ref{hj2_piacq}; by Lemma \ref{l:APAC}
it is sufficient to study the behaviour of $w$. 
By monotonicity, since the right trace 
$w^-(1,\cdot)\ge \sfw(r,\cdot)$ for every $r<1$
and 
$\sfw(r,\cdot)\in \cX^q(\R^d)$, we immediately get that the negative part
of $w^-(1,\cdot)$ is $\mu$-integrable. 
The first property of \eqref{exlimits} then follows by Beppo Levi's Monotone
Convergence Theorem. A similar argument works for $u^+(0,\cdot)$.

Eventually passing to the limit in \eqref{eq:75} as $b\up 1$ and
$a\down0$, we extend its validity to the case $a=0$ and $b=1$.
\end{proof}

We conclude this section with  a stability argument for the 
weak subsolutions of \eqref{HJ_alpha}.

\begin{theorem}[Stability and semicontinuity]\label{t:stability}
Suppose that the sequence $(u_n,\alpha_n)\in \hj_q(Q,H)$, $n\in \N$, 
satisfies the \GGG local uniform bound
\begin{equation}
  \label{eq:76}
  \sup_{n\in \N}\sup_{a\le t\le
    b}\|\sfu_n(t,\cdot)\|_{\cX^q(\R^d)}<\infty\quad
  \text{for every }0<a<b<1,
\end{equation}
\nc
and that  
$\alpha_n \weakto^*
\alpha$  weakly$^*$  
in $\cX^q(Q)$. 
Then there exist  a subsequence $k \mapsto n(k)$ and a limit function $u\in L^1_{\rm loc}(Q)$ such that
\begin{enumerate}[(a)]
\item $(u,\alpha)\in \hj_q(Q,H)$ (thus in particular $Du\in L^2_{\loc}(Q;\R^d)$);
\item 
\begin{align}
\label{eq:77}
  \sfu_{n(k)}(t,\cdot) \weakto^* \tilde u\quad  &\text{in }
\cX^q(\R^d) && \text{with }\sfu^-(t,\cdot)\le \tilde \sfu_t\le \sfu(t,\cdot)
\text{ for \GGG every }\, \, t \in \GGG
                                               (0,1),\\
\label{eq:36}
u_{n(k)} \weakto u \quad&\text{in } L^q(K) &&\hbox{for all compact sets $K\subset Q$}
,\\
\label{eq:58}
Du_{n(k)} \weakto Du \quad &\text{in } L^2(K;\R^d)
&&\hbox{for all compact sets $K\subset Q$}.
\end{align}
\item 
  \GGG For every $\mu_i=m_i\LL^d$ with $m_i\in L^p\cap
  L^1_\weight(\R^d)$ the quantities 
  \begin{equation}
    \label{eq:67}
    \cA(u_n,\alpha_n):=\int_{\R^d}
  u_n^+(0,\cdot)\,\d\mu_0  - 
  \int_{\R^d} u_n^-(1,\cdot) \,\d\mu_1  - \int_{Q} F^*(x,\alpha_n)\, \d
  x \d t 
  \end{equation}
  and the corresponding one $\cA(u,\alpha)$ defined on the limit
  pair $(u,\alpha)$ are well defined in $[-\infty,+\infty)$ and
  satisfy
  \nc
\begin{equation}\label{eq:semicontinuity}
\begin{split}
\limsup_{n \uparrow + \infty} \cA(u_n,\alpha_n)\le \cA(u,\alpha).
\end{split}
\end{equation}
\end{enumerate}
\end{theorem}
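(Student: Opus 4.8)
The plan is to combine the uniform estimate \eqref{eq:75} of Theorem \ref{thm:precise_repr} with a diagonal compactness argument for $u_n$ and $Du_n$, then handle the traces and the entropy term $F^*$ separately to get upper semicontinuity of $\cA$.

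\textbf{Step 1: Interior compactness and identification of the limit.} First I would fix an exhaustion $Q = \bigcup_k K_k$ by compact sets $K_k\subset (a_k,b_k)\times B_{R_k}$ with $a_k\downarrow 0$, $b_k\uparrow 1$, $R_k\uparrow\infty$. The hypothesis \eqref{eq:76} gives $\sup_n\|\sfu_n(t,\cdot)\|_{\cX^q(\R^d)}<\infty$ uniformly for $t$ in compact subintervals, hence (integrating in $t$ and using the continuous embedding $\cX^q(\R^d)\hookrightarrow$ the space of distributions, together with $\cX^q = L^q + L^\infty_{1/\weight}$) a uniform bound for $u_n$ in $L^q(K_k) + L^\infty_{1/\weight}(K_k)$, which since $K_k$ is bounded gives a uniform $L^q(K_k)$ bound, hence weak compactness in $L^q(K_k)$. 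For the gradients, from the distributional inequality \eqref{wHJ_sub} tested against a fixed nonnegative $\xi\in C^\infty_c(Q)$ with $\xi\equiv 1$ on $K_k$, together with the lower bound $H(x,Du_n)\ge \frac1{2c_H}|Du_n|^2 - \gamma_H^-(x)$ from (H3) and the uniform $L^q(Q)$-type bound on $\alpha_n$ (weak$^*$ convergence in $\cX^q(Q)$ implies boundedness), I obtain $\int_{K_k}|Du_n|^2\,\d x\,\d t \le C_k$ uniformly in $n$; this is a standard Caccioppoli-type estimate. By a diagonal argument extract a subsequence $n(k)$ with $u_{n(k)}\weakto u$ in $L^q(K)$ and $Du_{n(k)}\weakto Du$ in $L^2(K;\R^d)$ for every compact $K\subset Q$, which gives \eqref{eq:36} and \eqref{eq:58}. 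To pass to the limit in the inequality and conclude $(u,\alpha)\in\hj_q(Q,H)$, the one delicate point is lower semicontinuity of $\int_K H(x,Du_n)\,\xi\,\d x\,\d t$ under weak $L^2$ convergence of $Du_n$: this follows from convexity of $p\mapsto H(x,p)$ (Tonelli–Serrin lower semicontinuity, using the Carathéodory property and the lower bound in (H3) to control the negative part). Thus $\int_Q(u\,\partial_t\xi + (H(x,Du)-\alpha)\xi)\le 0$, i.e. $(u,\alpha)\in\hj_q(Q,H)$, proving (a).

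\textbf{Step 2: Weak$^*$ convergence of the precise representatives.} For fixed $t\in(0,1)$, \eqref{eq:76} makes $(\sfu_n(t,\cdot))_n$ bounded in $\cX^q(\R^d) = (L^p\cap L^1_\weight(\R^d))^*$ (Lemma \ref{le:Xq}), so along a further subsequence $\sfu_{n(k)}(t,\cdot)\weakto^* v_t$ for each $t$ in a countable dense set, and then for all $t$ by the uniform bound and an equicontinuity-type argument (or simply extract for each $t$ and note uniqueness of the limit below). To identify $v_t$ with $\tilde u(t,\cdot)$ between $\sfu^-(t,\cdot)$ and $\sfu(t,\cdot)$: test the monotonicity relation \eqref{eq:222} for $u_{n(k)}$ — i.e. $\sfu_{n(k)}(s,\cdot) - B_{n(k)}(s,\cdot) \le \sfu_{n(k)}(t,\cdot) - B_{n(k)}(t,\cdot)$ for $s<t$ in a common full-measure set — pass to the weak$^*$ limit (using that $B_{n(k)}(t,\cdot)\to B(t,\cdot)$ in $\cX^q(\R^d)$, which follows from $\alpha_n\weakto^*\alpha$ in $\cX^q(Q)$ and the Hölder-in-time bound of Lemma \ref{l:APAC}), and combine with the $L^q_{\loc}$-convergence $u_{n(k)}\to u$ which forces $v_t = \tilde u(t,\cdot)$ for a.e. $t$. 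Monotone limits from the left/right then squeeze $v_t$ between $\sfu^-(t,\cdot)$ and $\sfu(t,\cdot)$ by Lemma \ref{le:increasing-main}(I.2). This gives \eqref{eq:77}.

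\textbf{Step 3: Upper semicontinuity of $\cA$.} Write $\cA(u_n,\alpha_n) = \int_{\R^d}u_n^+(0,\cdot)\,\d\mu_0 - \int_{\R^d}u_n^-(1,\cdot)\,\d\mu_1 - \int_Q F^*(x,\alpha_n)\,\d x\,\d t$. For the entropy term: $\alpha_n\weakto^*\alpha$ in $\cX^q(Q)$, and by the growth bound \eqref{eq:6} the functional $\alpha\mapsto \int_Q F^*(x,\alpha)\,\d x\,\d t$ is convex and strongly lower semicontinuous on $\cX^q(Q)$ (coercive on the $L^q$ part via $\tfrac1{qc_f^q}(a-\gamma_f)_+^q$), hence sequentially weakly$^*$ lower semicontinuous, so $\int_Q F^*(x,\alpha)\le\liminf_n\int_Q F^*(x,\alpha_n)$, i.e. $-\int_Q F^*(x,\alpha_n)$ is upper semicontinuous. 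The \textbf{main obstacle} is the boundary terms: I must show $\limsup_n\int u_n^+(0,\cdot)\,\d\mu_0 \le \int u^+(0,\cdot)\,\d\mu_0$ and $\liminf_n\int u_n^-(1,\cdot)\,\d\mu_1\ge \int u^-(1,\cdot)\,\d\mu_1$. The trace $u^+(0,\cdot)$ is a limit from above of $\sfu(t,\cdot)$ as $t\downarrow0$, and it is not upper semicontinuous under weak convergence; the trick is to use the variational/monotone characterization of the trace — fix small $t>0$, use \eqref{eq:189bis}/\eqref{eq:221bis} to bound $\int u_n^+(0,\cdot)\,\d\mu_0$ by a quantity involving $\int_0^t$ of the regularized $\sfu_n$ and $\beta_n$ (as in the proof of Lemma \ref{le:traces}(ii)), pass to the limit in $n$ using Step 1–2 for fixed small $t$, and then let $t\downarrow 0$ invoking the monotone convergence established in Theorem \ref{thm:precise_repr} (the negative part of $u^+(0,\cdot)$ is $\mu_0$-integrable, so Beppo Levi applies to one side and Fatou to the other). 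Concretely: $\int_{\R^d}u_n^+(0,x)\varphi(x)\,\d x \le -\int_{\R^d}\varphi(x)\int_0^1\nchi'_\tau(t)\sfu_n(t,x)\,\d t\,\d x + \int_{\R^d}\varphi(x)\int_0^1\nchi_\tau(t)\beta_n(t,x)\,\d t\,\d x$ for suitable $\varphi = m_0$; the first integral converges in $n$ by the weak$^*$ convergence of $\sfu_n(t,\cdot)$ for each $t$ plus dominated convergence in $t$, the second by $\alpha_n\weakto^*\alpha$; then send $\tau\downarrow0$. Symmetrically for the $t=1$ trace (with sign reversed, using lower semicontinuity). Combining the three pieces gives \eqref{eq:semicontinuity}, proving (c) and hence the theorem.
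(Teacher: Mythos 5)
Your proposal is correct and follows essentially the same route as the paper: the uniform $\cX^q$-bound plus the monotonicity structure of $\sfu_n$ in time gives pointwise weak$^*$ compactness of the slices (the paper packages this via a refined Helly theorem for BV curves in the weak$^*$-compact metrizable ball of $\cX^q(\R^d)$, whereas you extract on a countable dense set and invoke monotonicity to extend — both amount to the same mechanism), the $L^2_{\loc}$ gradient bound and convexity of $H$ give (a), and upper semicontinuity of the boundary terms is handled by pushing the trace estimate to a fixed time $\eps>0$ before sending $n\to\infty$ and then $\eps\downarrow0$. The only stylistic difference is in Step 3, where you detour through the distributional trace inequality \eqref{eq:221bis} with a test kernel $\nchi_\tau$, while the paper uses the more direct pointwise monotonicity bound $u_n^+(0,\cdot)\le\sfu_n(\eps,\cdot)+B_n(\eps,\cdot)$ with $\|B_n(\eps,\cdot)\|_{\cX^q}\lesssim\eps^{1/p}$ uniformly in $n$; both are valid, but the paper's is shorter.
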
 

\begin{proof}

\GGG
(b)
Let $\beta_n:=\alpha_n+\gamma^-_H$, let $B_n$ be correspondly defined as
in Lemma \ref{l:APAC}, and let $\sfw_n:=\sfu_n+B_n$,
so that $\sfw_n$ is increasing w.r.t.~time and right continuous. 

It is easy to see that $B_n$ satisfies \eqref{eq:77}, so that it is
sufficient to prove this convergence property for $\sfw_n$, which 
still obeys to 
\begin{equation}
  \label{eq:76bis}
  \sup_{a\le t\le
    b}\|\sfw_n(t,\cdot)\|_{\cX^q(\R^d)}\le C(a,b)<\infty
  \quad
  \text{for every }0<a<b<1,\ n\in \N.
\end{equation}

We fix an interval $[a,b]\subset
(0,1)$ and we consider 
the set $K:=\{z\in \cX^q(\R^d):\|z\|_{\cX^q(\R^d)}\le C(a,b)\}$
endowed 
with the 
weak$^*$-topology $\sigma$ of $\cX^q(\R^d)$.
$K$ is compact w.r.t.~$\sigma$. We introduce the distance
\begin{equation}
  \label{eq:79}
  \delta(z_1,z_2):=\int_{\R^d}|z_1(x)-z_2(x)|\,\rho(x)\,\d x\quad
  \text{for every }z_1,z_2\in K,
\end{equation}
where $\rho(x)$ is the density of the measure $\varrho$ we used to
define the distance $d$ in $L^0(\R^d)$, see \eqref{eq:78}.
Clearly $\delta\ge d$ and it is also $\sigma$ lower semicontinuous,
since the weak$^*$ topology of $\cX^q(\R^d)$ is clearly stronger 
than the weak topology in $L^1(\R^d,\varrho)$.

For every $a\le s\le t\le b$ we have
\begin{align*}
  \delta(\sfw_n(s),\sfw_n(t)) &
                            \le \int_{\R^d} |\sfw_n(t,x) -  \sfw_n(s,x)| \,\d\varrho(x) 
                            = \int_{\R^d} \Big(\sfw_n(t,x)-  \sfw_n(s,x) \Big)\,\d\varrho(x)
  \\&=
      \int_{\R^d} \sfw_n(t,x) \,\d\varrho(x) - \int_{\R^d} \sfw_n(s,x) d\varrho(x)
\end{align*}
so that the $\delta$-total variation of $\sfw_n$ in the interval $[a,b]$ can be
estimated by 
\begin{equation}
\operatorname{Var}_\delta\left( \sfw_n; {[a,b]} \right) \leq 
\int_{\R^d} \sfw_n(b,x) \,\d\varrho(x) - \int_{\R^d} \sfw_n(a,x)
\,\d\varrho(x) \leq 
C(a,b)\|\rho\|_{L^p\cap L^1_\weight(\R^d)},\quad
\varrho=\rho\LL^d.
\end{equation}
Since the sequence of functions
$\sfw_n$ takes value in the $\sigma$-compact set $K$ 
and their $\delta$-total variation is uniformly bounded,
a refined version of Helly's Theorem 
see e.g. \citep[Prop.~3.3.1]{ambrosio2008gradient},   
shows that $\sfw_n$  weakly$^*$ converges in $\cX^q(\R^d)$, for every $t
\in [a,b]$, 
to a function $\tilde \sfw$. A standard diagonal argument yields the same
convergence for every $t\in I$.
Since $t\mapsto \tilde \sfw_t$ is increasing, we can set
$\sfw_t:=\lim_{s\downarrow t}\tilde \sfw_s$ thus obtaining \eqref{eq:77}.
Notice that if $J$ denotes the (at most) countable set
$J:=\{t\in (0,1):\sfw^-_t\neq \sfw_t\}$, we have $\tilde \sfw_t=\sfw_t$ for every
$t\in (0,1)\setminus J$.

If $\zeta\in L^p(Q)$ has compact support contained in $[a,b]\times F$,
where $F$ is a compact subset of $\R^d$, we get
\begin{align*}
  \lim_{n\to\infty}\int_{Q} \sfw_n \zeta\,\d t\,\d x
  &=
  \lim_{n\to\infty}\int_a^b \Big(\int_F \sfw_n(t,x)\zeta(t,x)\,\d
  x\Big)\,\d t  
  =
    \int_a^b\Big(\lim_{n\to\infty}
\int_F \sfw_n(t,x)\zeta(t,x)\,\d
  x\Big)\,\d t  
  \\&=\int_a^b \Big(\int_F \sfw(t,x)\zeta(t,x)\,\d
  x\Big)\,\d t  
\end{align*}
where we have applied 
the above convergence result and the Lebesgue Dominated Convergence 
Theorem; notice that for a.e.~$t\in [a,b]$
\begin{displaymath}
  \Big|\int_F \sfw_n(t,x)\zeta(t,x)\,\d
  x\Big|\le C(a,b)g(t),\quad
  g(t):=\|\zeta(t,\cdot)\|_{L^p\cap L^1_\weight(\R^d)}
  \le \|\zeta(t,\cdot)\|_{L^p(\R^d)}+\|\zeta(t,\cdot)\|_{L^1_\weight(\R^d)}
\end{displaymath}
so that $\int_a^b g(t)\,\d t\le \|\zeta\|_{L^p(Q)}+\|\zeta\|_{L^1_\weight(Q)}$.

Concerning \eqref{eq:58}, it is sufficient to show that for every
compact set $K\subset Q$ there exists a constant $C(K)$
such that
\begin{equation}
  \label{eq:60}
  \int_{K}|Du_n|^2\,\d x\,\d t\le C(K)\quad
  \text{for every }n\in \N.
\end{equation}
We select a nonnegative function $\xi \in C_c^\infty(Q)$ 
such that $\xi \equiv 1$ on $K$; in particular the support of $\xi$ 
will be contained in $[a,b]\times \R^d$ for some interval
$[a,b]\subset (0,1)$. 
The distributional
formulation \eqref{wHJ_sub} of \eqref{HJ_alpha} yields
\begin{align*}
\int_{K}  |Du_n|^2 \,\d x\,\d t&\leq
\int_Q  |Du_n|^2 \xi\,\d x\,\d t\leq
 \int_Q \big(\alpha_n \xi+u_n\partial_t \xi\big)\,\d x\,\d t
 \\& \le \big(\sup_{n\in \N}\|\alpha_n\|_{\cX^q}\big)\|\xi\|_{L^p\cap L^1_\weight(Q)}+
    \big(\sup_{n\in \N}\sup_{t\in [a,b]}
     \|u_n(t,\cdot)\|_{\cX^q}\big) \|\partial_t \xi\|_{L^p\cap L^1_\weight(Q)},
\end{align*}
which yields \eqref{eq:60}.

\noindent
(a) 
\GGG For every nonnegative test function $\xi\in C^\infty_c(Q)$
we have to pass to the limit in the inequality \eqref{wHJ_sub} written
for $u_n,\alpha_n$. By the previous claim, it is sufficient to prove
that
\begin{displaymath}
  \liminf_{n\to\infty}\int_Q H(x,Du_n)\xi\,\d x\, \d t\ge 
  \int_Q H(x,Du)\xi\,\d x.
\end{displaymath}
This property follows by the weak $L^2$ lower semicontinuity of the integral
functional associated to $H$ (and weighted by $\xi$),
relying on the structural properties 
(convexity and lower bound) of
$H$ stated in Assumptions \ref{h.1}(H3), see e.g.
\cite[Theorem 6.54]{Fonseca-Leoni07}.

\noindent
(c) \eqref{eq:67} is well defined (possibly taking the value
$-\infty$)
thanks to 
Theorem \ref{thm:precise_repr} and the fact that all the three terms
belong to $[-\infty,+\infty)$.

Since $F^*$ is nonnegative, measurable, convex and lower
semicontinuous w.r.t.~its second argument, 
the last integral functional
$\int_Q F^*(x,\alpha)\,\d x\,\d t$ is lower semicontinuous with
respect to weak $L^1$ convergence on compact subsets, so that 
\begin{displaymath}
  \limsup_{n\to\infty}-\int_QF^*(x,\alpha_n)\,\d x\,\d t\le -\int_QF^*(x,\alpha)\,\d x\,\d t.
\end{displaymath}
Let us now consider the behaviour of the first integral defining $\cA(u_n,\alpha_n)$.
\nc In order to prove inequality \eqref{eq:semicontinuity} 
\GGG we observe that for $\eps>0$ 
$$u_n^+(0,x)=w_n^+(0,x)\le
\sfw_n(\eps,x)=\sfu_n(\eps,x)+B_n(\eps,x)$$
and, by \eqref{eq:72},
\begin{displaymath}
  \Big|\int_{\R^d}B_n(\eps,x)\,\d\mu_0(x)\Big|\le
  (t-s)\|\beta_n\|_{\cX^q(Q)}
  \|m_0\|_{L^p\cap L^1_\weight(\R^d)},
\end{displaymath}
so that \nc
\begin{equation}\label{eq:u_n:o_piccolo}
\int_{\R^d} u_n^+(0,x) \,\d\mu_0 \leq \int_{\R^d} \sfu_n(\varepsilon,
x) \,\d\mu_0 
+ o(1), \qquad \text{ as } \varepsilon \downarrow 0.
\end{equation}
where $o(1)$ is uniform with respect to $n$.
Sending $n \to \infty$, thanks to the weak* convergence of
$\sfu_n(\varepsilon, \cdot)$ in
$\cX^q(\R^d)$,  we get
\begin{equation}
\limsup_{n \to\infty} \int_{\R^d} u_n^+(0,x) \,\d\mu_0 \leq \limsup_{n \to\infty} \int_{\R^d} \sfu_n(\varepsilon, x) \,\d\mu_0 + o(1) 
= \int_{\R^d} \sfu(\varepsilon, x) \,\d\mu_0 + o(1).
\end{equation}
Thanks to Theorem \ref{thm:precise_repr}, passing to the limit as $\varepsilon \downarrow 0$ we get the first part of \eqref{eq:semicontinuity}. A similar argument yields
\begin{equation}
\liminf_{n \to\infty} \int_{\R^d} u_n^-(1,x) \,\d\mu_1 \geq \int_{\R^d} u^-(1,x) \,\d\mu_1\,.
\end{equation}
\end{proof}

\subsection{A general duality-transport result}
\label{subsec:duality-transport}

The following result is fundamental for the development of a variational approach to MFPPs. 
We derive a \GGG transport and a \nc 
duality relation between weak subsolutions to 
\eqref{HJ_alpha} with $\alpha\in \cX^q(Q)$ and $L^p$-solutions to the
continuity equation.

\GGG In the next Theorem we will use an arbitrary test function $\zeta$ and
its primitive $Z$ in $\ZZ_c$, see \eqref{eq:239}.

\begin{theorem}\label{l:duality_hj_cont}
  Let $(u,\alpha)\in \hj_q(Q,H)$ be a weak subsolution to 
  \eqref{HJ_alpha} according to Definition \ref{def:subsolutions} 
and let $(m,\vv)\in \CE 2p Q$ be a distributional solution to
the continuity equation according to Definition \ref{def:CE}.
Then  we have: 
\begin{enumerate}[\rm (1)]
\item for every $0<s<t<1$
\begin{equation}
  \label{eq:71}
  \int_s^t \int_{\R^d} |Du(r,x)|^2 m(r,x)\,\d x\,\d r < +\infty;
\end{equation}
\item 
\GGG for every pair $(\zeta,Z)\in \ZZ_c$
  \begin{equation}
    \label{eq:104}
    \partial_t\big(Z(u)m\big)+
    \nabla\cdot\big(Z(u)m\,
    \vv\big)+\zeta(u)\Big(\alpha-H(x,Du)-Du\cdot\vv\Big)m\ge0
    \quad \text{in }\DD'(Q),
  \end{equation}
  also in duality with nonnegative functions $\varphi\in
  C^1_c(\overline Q)$ if $\mu_0,\mu_1\ll\LL^d$:
  \begin{equation}
    \label{eq:109a}
    \begin{aligned}
      -\int_Q Z(u)m&\Big(\partial_t\varphi+D\varphi\cdot\vv\Big)\,\d
      x\d t + \int_Q \zeta(u)\Big(\alpha-H(x,Du)-Du\cdot\vv\Big)
      m\,\varphi\,\d x\d t
      \\&\ge\int_{\R^d}Z(u^+(0,x))\varphi(0,x)\,\d\mu_0(x)-
      \int_{\R^d}Z(u^-(1,x))\varphi(1,x)\,\d\mu_1(x),
    \end{aligned}
\end{equation}
\item 
\GGG for 
every $s,t\in D_p[\mu]$ with $0<s<t<1$ \nc it holds
\begin{align}
  \label{duality_formula}
  \int_{\R^d} \sfu_s \,\d\mu_s-
  \int_{\R^d} \sfu_t 
  \,\d\mu_t 
  &\le \int_s^t\int_{\R^d}
  \Big(\alpha-  H(x,Du) - Du \cdot \vv \Big)m \,\d x\,\d r.
\end{align}
\end{enumerate}
\GGG
Moreover, 
\eqref{eq:71}
and \eqref{duality_formula} 
also hold for $s=0$ 
(respectively, $t=1$) provided 
$0\in D_p[\mu]$ and $\int_{\R^d} u_0^{+} \,\d\mu_0>-\infty$ 
(respectively, $1\in D_p[\mu]$, $\int_{\R^d} u_1^{-}
\,\d\mu_1<+\infty$).
\end{theorem}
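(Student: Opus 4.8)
Here is how I would approach the proof.

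\emph{Set-up.} The plan is to derive all three assertions by testing the continuity equation satisfied by $(m,\vv)$ against space--time mollifications of suitable functions of $u$, using the subsolution inequality in mollified form, and then letting the mollification parameters tend to $0$. Throughout I would work with $u_{\tau,\eps}:=u\ast\eta_{\tau,\eps}$ (the anisotropic convolution of Section~\ref{subsec:kernels}) and the representatives $\sfu_\tau$ of Lemma~\ref{le:convolution}. Since convolution with the nonnegative kernel $\eta_{\tau,\eps}$ commutes with the distributional inequality \eqref{wHJ_sub}, the smooth function $u_{\tau,\eps}$ is a \emph{classical} subsolution,
\begin{equation*}
-\partial_t u_{\tau,\eps}+\big(H(x,Du)\big)_{\tau,\eps}\le\alpha_{\tau,\eps}\qquad\text{on }(0,1-\tau)\times\R^d,
\end{equation*}
and Jensen's inequality for $|\cdot|^2$ (which carries no $x$-dependence, cf.\ Remark~\ref{r:Hto2}) upgrades this to $\partial_t u_{\tau,\eps}\ge\tfrac1{2c_H}|Du_{\tau,\eps}|^2-(\gamma_H^-+\alpha)_{\tau,\eps}$. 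No convexity of $H$ in the momentum variable will be needed beyond this.

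\emph{Proof of (1).} Fix $0<s<t<1$, $R>0$, and a nonnegative $\psi\in C^1_c(Q)$ with $\psi\equiv1$ on $[s,t]\times B_R$. Testing the continuity equation against the smooth function $u_{\tau,\eps}\psi$, inserting the mollified lower bound for $\partial_t u_{\tau,\eps}$, and absorbing the cross term by Young's inequality, $|Du_{\tau,\eps}\cdot\vv|\,m\,\psi\le\tfrac1{4c_H}|Du_{\tau,\eps}|^2m\,\psi+c_H|\vv|^2m\,\psi$, one obtains
\begin{equation*}
\frac1{4c_H}\int_Q|Du_{\tau,\eps}|^2m\,\psi\,\d\lambda\le c_H\int_Q|\vv|^2m\,\psi\,\d\lambda+\int_Q(\gamma_H^-+\alpha)_{\tau,\eps}\,m\,\psi\,\d\lambda-\int_Q u_{\tau,\eps}\big(\partial_t\psi+D\psi\cdot\vv\big)m\,\d\lambda.
\end{equation*}
The right-hand side is bounded uniformly in $\tau,\eps,R$: the first term is finite since $(m,\vv)\in\CE 2pQ$; the second because $\alpha\in\cX^q(Q)=\big(L^p\cap L^1_\weight(Q)\big)'$ while $m\,\psi\in L^p\cap L^1_\weight(Q)$; the last is controlled by the $\cX^q(\R^d)$-bound on $\sfu_\tau(r,\cdot)$ of Theorem~\ref{thm:precise_repr} together with $\int_Q|\vv|m\,\d\lambda<\infty$ and $m\in L^1_\weight(Q)$, the annular contribution of $D\psi$ being $o(1)$ as $R\to\infty$ by dominated convergence. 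Passing to an a.e.-convergent subsequence and applying Fatou's lemma to the nonnegative integrand $|Du_{\tau,\eps}|^2m$ yields \eqref{eq:71}; keeping the full cut-off $\psi$ also records the a priori bound $\sup_{\tau,\eps}\int_{K}|Du_{\tau,\eps}|^2m\,\d\lambda<\infty$ on any fixed compact $K\subset Q$, which is used below.

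\emph{Proof of (2) and (3), and the extensions to $s=0$, $t=1$.} For \eqref{eq:104} I would test the continuity equation against $Z(u_{\tau,\eps})\varphi$, $(\zeta,Z)\in\ZZ_c$ and $0\le\varphi\in C^1_c(Q)$, use $\zeta,\varphi\ge0$ and $\partial_t u_{\tau,\eps}\ge(H(x,Du))_{\tau,\eps}-\alpha_{\tau,\eps}$ to bound $\int\zeta(u_{\tau,\eps})\varphi\,\partial_t u_{\tau,\eps}\,m$ from below, reaching the pre-limit form of \eqref{eq:104}. To pass to the limit I would combine: dominated convergence for the terms containing $\partial_t\varphi$ and $D\varphi\cdot\vv$ (note $Z$ is bounded since $\zeta\in C^0_c$, $Z(u_{\tau,\eps})\to Z(u)$ $\lambda$-a.e., and $\int_Q|\vv|m\,\d\lambda<\infty$); Fatou's lemma on the nonnegative quantity $\zeta(u_{\tau,\eps})\varphi\big(H(x,Du)+\gamma_H^-\big)_{\tau,\eps}m$, giving $\liminf\int\zeta(u_{\tau,\eps})\varphi\,(H(x,Du))_{\tau,\eps}m\ge\int\zeta(u)\varphi\,H(x,Du)m$; and, thanks to the a priori bound of (1), the fact that $\zeta(u_{\tau,\eps})\varphi\,Du_{\tau,\eps}\sqrt m$ is bounded in $L^2(\lambda;\R^d)$ and converges $\lambda$-a.e.\ to $\zeta(u)\varphi\,Du\sqrt m$, hence weakly in $L^2$, so its pairing with the fixed field $\vv\sqrt m\in L^2(\lambda;\R^d)$ converges to $\int\zeta(u)\varphi\,Du\cdot\vv\,m$. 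Since the pre-limit quantity is nonnegative, these (genuine limits for the first and third groups, an upper bound via Fatou for the $H$-term) give \eqref{eq:104}; the boundary version \eqref{eq:109a} with $\varphi\in C^1_c(\overline Q)$ and $\mu_0,\mu_1\ll\LL^d$ then follows by approximating $\varphi$ near $\{t=0\}$ and $\{t=1\}$ and invoking the trace statement of Lemma~\ref{le:traces}, applied to the time-increasing function $\sfw=\sfu+B$, which produces exactly the boundary contributions $Z(u^+(0,\cdot))$, $Z(u^-(1,\cdot))$ with the right sign. Formula \eqref{duality_formula} is proved analogously, testing now against $u_{\tau,\eps}$ itself up to a spatial cut-off removed as in (1): for $s,t\in D_p[\mu]$ one has $\mu_s,\mu_t\in\cP^r_{2,p}(\R^d)$, so Lemma~\ref{le:convolution}(3), Lemma~\ref{le:increasing-main}(I.3) and the $\cX^q(\R^d)$-bounds yield $u_{\tau,\eps}(r,\cdot)\weakto^*\sfu(r,\cdot)$ in $\cX^q(\R^d)=\big(L^p\cap L^1_\weight(\R^d)\big)'$ at $r=s,t$, hence $\int u_{\tau,\eps}(r,\cdot)\,\d\mu_r\to\int\sfu_r\,\d\mu_r$, while the right-hand side is handled exactly as for \eqref{eq:104}; Cauchy--Schwarz and \eqref{eq:71} make that right-hand side finite. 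Finally, letting $s\downarrow0$ (resp.\ $t\uparrow1$) along $D_p[\mu]$ in \eqref{duality_formula}, the hypothesis $\int_{\R^d}u_0^+\,\d\mu_0>-\infty$ (resp.\ $\int_{\R^d}u_1^-\,\d\mu_1<+\infty$) rules out an indeterminate form, monotone convergence for $\sfw$ together with the one-sided continuity of $t\mapsto\int\sfu_t\,\d\mu$ from Theorem~\ref{thm:precise_repr} identifies the limit of the boundary term, and the lower bound $H(x,Du)+Du\cdot\vv\ge\tfrac1{4c_H}|Du|^2-\gamma_H^--c_H|\vv|^2$ transfers \eqref{eq:71} to $s=0$ (resp.\ $t=1$).

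\emph{Main obstacle.} The delicate point is the passage to the limit in the quantities quadratic in $Du_{\tau,\eps}$ and weighted by $m\in L^p(Q)$, a density that may concentrate precisely where the mollified gradients still oscillate---the usual difficulty behind renormalized solutions and DiPerna--Lions commutators. It is defused here by the one-sided (subsolution) structure of the problem: one never needs an exact commutator identity, only the inequality in the right direction, which is supplied by Fatou's lemma on the nonnegative pieces and by weak $L^2(\lambda)$ lower semicontinuity for the cross term, once the a priori bound of step (1) is in hand. The remaining bookkeeping---spatial decay of $u_{\tau,\eps}$ when integrated against $\mu_t$, and the identification of the boundary traces---is routine given the growth control $\sfu(t,\cdot)\in\cX^q(\R^d)$ of Theorem~\ref{thm:precise_repr} and the trace machinery of Lemma~\ref{le:traces}.
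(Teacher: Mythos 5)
Your plan proves (1) first by testing the continuity equation against $u_{\tau,\eps}\psi$ with a spatial cutoff $\psi$, and then feeds the resulting a~priori $L^2(m\lambda)$-bound on $Du_{\tau,\eps}$ into (2) and (3) via weak $L^2$ convergence of $\zeta(u_{\tau,\eps})\varphi\,Du_{\tau,\eps}\sqrt m$. The decisive obstruction is the term
\[
\int_Q u_{\tau,\eps}\,\big(D\psi\cdot\vv\big)\,m\,\d\lambda,
\]
which you claim is controlled by the $\cX^q(\R^d)$-bound on $\sfu_\tau(r,\cdot)$, $\int_Q|\vv|m<\infty$ and $m\in L^1_\weight$. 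This does not close. Writing the $\cX^q$-decomposition $u_{\tau,\eps}(r,\cdot)=z(r,\cdot)+\kappa\,\tilde w(r,\cdot)$, the $L^\infty_{1/\kappa}$-part is indeed $o(1)$ on the annulus $\{R\le|x|\le2R\}$, but for the $L^q$-part you need $z\cdot\vv m\in L^1$, and the best available exponents are $z\in L^q$, $\vv m\in L^{2p/(p+1)}$, which satisfy $\tfrac1q+\tfrac{p+1}{2p}=\tfrac{3p-1}{2p}>1$ for every $p>1$; no admissible Hölder (or three-factor Young) split $z\cdot\vv\sqrt m\cdot\sqrt m$ is conjugate either, since $\tfrac1q+\tfrac12+\tfrac1{2p}=\tfrac32-\tfrac1{2p}>1$. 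So testing the continuity equation against $u_{\tau,\eps}$ (even with a cutoff) is not justified for an unbounded subsolution, and the RHS of your estimate for (1) is not known to be finite, let alone uniformly bounded as the cutoff is removed. Since your proofs of (2) and (3) explicitly invoke the a~priori $L^2(m\lambda)$ bound from (1) for the cross term, the gap propagates through the whole argument.

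The paper sidesteps exactly this difficulty in two ways that your proposal bypasses. First, it reduces to \emph{bounded} $u$ via the truncation $u_k:=-k\vee u\wedge k$ from Corollary~\ref{cor:truncation}; for bounded $u$ the annular error is controlled by $\|u\|_\infty\,R^{-1}\int_{A_R}|\vv|m\to0$, and the boundary terms are harmless. Second, the order of the assertions is reversed: \eqref{duality_formula} is derived first (for bounded $u_k$), and \eqref{eq:71} then follows a~posteriori from \eqref{eq:108}, using the structural lower bound $H(x,Du)+Du\cdot\vv\ge\tfrac1{4c_H}|Du|^2-\gamma^-_H-c_H|\vv|^2$ and the $\cX^q(\R^d)$-integrability of $\sfu_s,\sfu_t$ against $\mu_s,\mu_t$ for $s,t\in D_p[\mu]$. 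Moreover, the paper does not need any a~priori $L^2(m)$ gradient bound to pass to the limit in the quadratic term: it uses Fatou on the \emph{combined} nonnegative quantity $\tilde H_{\tau,\eps}+Du_{\tau,\eps}\cdot\vv+\gamma^-_{H,\eps}+c_H|\vv|^2\ge0$ from \eqref{dabbasso}, which makes the one-sided inequality go through without separating $\tilde H_{\tau,\eps}$ from $Du_{\tau,\eps}\cdot\vv$. If you want to salvage your organization, you could either (a) adopt this Fatou-on-the-sum device so that (2) and (3) no longer depend on (1), and then derive (1) from (3) as the paper does; or (b) restore the truncation step and restrict the cutoff argument to bounded subsolutions.
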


\begin{proof}

{\it Step 1.} Let us first suppose that $u$ is bounded.
\GGG By Theorem \ref{thm:precise_repr} the traces $u^-_1$ and $u^+_0$
are well defined and belong to $L^\infty(\R^d)$.

Operating a regularization by convolution according to 
the notation of Section \ref{subsec:kernels}, \nc
we know that $u_{\tau,\eps}$ is a classical subsolution (with
uniformly bounded derivatives) to 
\begin{equation}
-\partial_t u_{\tau,\eps} + \tilde H_{\tau,\eps} \leq
\alpha_{\tau,\eps}\quad\text{in }[0,1-\tau]\times \R^d,
\end{equation}
where $\tilde H_{\tau,\eps}(x):=  \big(H(x, Du)\big)
\ast\eta_{\tau,\eps}$. 
\GGG 
Similarly, by multiplying the previous
inequality by $\zeta(u_{\tau,\eps})$, we also obtain
\begin{equation}
  \partial_t Z(u_{\tau,\eps})+
  \zeta(u_{\tau,\eps})\big(\alpha_{\tau,\eps}- \tilde
  H_{\tau,\eps}\big)\ge 0\quad\text{in }[0,1-\tau]\times \R^d.
\end{equation}

\GGG
By selecting a nonnegative function $\varphi\in
C^\infty_c([0,1-\tau)\times \R^d)$ 
and testing the continuity equation for $(m,\vv)$ with $\varphi\cdot
Z(u_{\eps,\tau})$, we get for $\theta_{\tau,\eps}:=
\alpha_{\tau,\eps}- \tilde
  H_{\tau,\eps}-Du_{\tau,\eps}\cdot\vv$
\begin{align}
  \int_{\R^d} &Z(u_{\eps,\tau}(0,x))\varphi(0,x)m_0(x)\,\d x=
  -\int_Q \Big(\partial_t(Z(u_{\eps,\tau})\varphi)+D(Z(u_{\eps,\tau})\varphi)\cdot\vv\Big)m\,\d
    x\d t\notag
     \\&=
         -\int_Q Z(u_{\eps,\tau})m\Big(\partial_t\varphi+D\varphi\cdot\vv\Big)\,\d
  x\d t
   -
    \int_Q
    \Big(\partial_tZ(u_{\eps,\tau})+DZ(u_{\eps,\tau})\cdot\vv\Big)\varphi
         m\,\d
    x\d t\notag
         \\&\le -\int_Q Z(u_{\eps,\tau})m\Big(\partial_t\varphi+D\varphi\cdot\vv\Big)\,\d
  x\d t
   +
    \int_Q
    \zeta(u_{\eps,\tau})\theta_{\tau,\eps}
    m\,\varphi\,\d
    x\d t\label{eq:105}
\end{align}
Similarly, using  the distributional formulation of the continuity equation we
get
\GGG for every $0\le s<t\le 1-\tau$
\begin{align}
0&=\int_{\R^d}  u_{\tau,\eps}(s,\cdot) \,\d\mu_s - \int_{\R^d} u_{\tau,\eps}(t,\cdot) \,\d\mu_t 
+\int_s^t \int_{\R^d} \left( \partial_t u_{\tau,\eps} +
  Du_{\tau,\eps} \cdot \vv \right) \,\d\mu_r \,\d r 
\notag\\\label{eq:duality_epsilon_tau}
&\ge 
\int_{\R^d}  u_{\tau,\eps}(s,\cdot) \,\d\mu_s - \int_{\R^d}
u_{\tau,\eps}(t,\cdot) \,\d\mu_t 
+\int_s^t \int_{\R^d} \left( \tilde H_{\tau,\eps} +
  Du_{\tau,\eps} \cdot \vv -\alpha_{\tau,\eps}\right)m \,d x\,\d r
\end{align}
Due to the growth condition from below which is assumed for $H$  (see \eqref{H_least_quadratic2}), we have
$$
\GGG
 \tilde H_{\tau,\eps}\ge
 \big(\frac1{2c_H}|Du|^2-\gamma^-_H\Big)\ast\eta_{\tau,\eps} 
 \geq \frac1{2c_H}|Du_{\tau,\eps}|^2 - \gamma^-_{H,\eps},\quad
 \gamma^-_{H,\eps}(x) :=\big(\gamma^-_H\ast\eta_{\tau,\eps}\big)(x)=
 \big(\gamma^-_H\ast k_\eps\big)(x).
$$
Hence we estimate
\begin{equation}\label{dabbasso}
\GGG
\tilde H_{\tau,\eps} +Du_{\tau,\eps} \cdot \vv \geq  \frac1{4c_H}
|Du_{\tau,\eps}|^2 -  \gamma^-_{H,\eps}- c_H|\vv|^2
\ge -\gamma^-_{H,\eps}- c_H|\vv|^2.
\end{equation}
If we choose $s,t\in D_p[\mu]$ we can pass to the limit 
\GGG first as $\eps\down0$ and then as  $\tau \downarrow 0$
in \eqref{eq:duality_epsilon_tau}, by invoking Fatou's Lemma and
the lower bound \eqref{dabbasso} for the integral involving 
$\tilde H_{\tau,\eps} +Du_{\tau,\eps} \cdot \vv$, standard convolution
estimates for 
$\alpha_{\tau,\eps}$ in $\cX^q(Q)$, and
Lemma \ref{le:convolution} 
for the
first two integrals; we thus obtain
\begin{equation}\label{set}
\int_{\R^d} \sfu(s,x) \,\d\mu_s(x) - \int_{\R^d} \sfu(t,x) \,\d\mu_t (x)
\leq \int_s^t \int_{\R^d} \left( \alpha -H(x,Du) - Du  \cdot \vv
  \right)m\,\d x\,\d t
\end{equation}
for every $s,t\in [0,1)\cap D_p[\mu]$, $s<t$.
The very same argument, using the convolution kernel $\hat
h(t):=h(-t)$ and choosing a point $s_1>s$ where $u^-_{s_1}=u_{s_1}$,
yields the inequality \eqref{set} for $t=1$ with $s=s_1$. Adding the
same inequality between $s$ and $s_1$ we conclude the proof of \eqref{duality_formula}.

\eqref{set} and the lower bound $H(x,Du)-Du\cdot\vv\ge \frac
1{4c_H}|Du|^2-\gamma^-_H-
c_H|\vv|^2$ yields
\begin{equation}
  \label{eq:108}
  \frac 1{4 c_H}\int_s^t \int_{\R^d} |Du(r,x)|^2\, \d\mu_r
  \,\d r \le \int_{\R^d}  \sfu(t,\cdot) \,\d\mu_t - \int_{\R^d}
  \sfu(s,\cdot) \,\d\mu_s
      +C
    \end{equation}
for the constant $C=(\|\alpha\|_{\cX^q(Q)}+\|\gamma^-_H\|_{\cX^q(\R^d)})\|m\|_{L^p\cap L^1_\weight(Q)}+
c_H\int_Q |\vv|^2\,\d\tilde\mu$.

Finaly, using the fact that $Z,\zeta$ are bounded and continuous, 
we can pass to the limit in \eqref{eq:105} obtaining
the weak formulation of \eqref{eq:104} supplemented with boundary terms:
\begin{equation}
  \label{eq:109}
  \begin{aligned}
    -\int_Q Z(u)m\Big(\partial_t\varphi+D\varphi\cdot\vv\Big)\,\d
    x\d t &+ \int_Q \zeta(u)\Big(\alpha-H(x,Du)-Du\cdot\vv\Big)
    m\,\varphi\,\d x\d t
    \\&\ge\int_{\R^d}Z(u^+(0,x))\varphi(0,x)\,\d\mu_0(x)-
    \int_{\R^d}Z(u^-(1,x))\varphi(1,x)\,\d\mu_1(x),
  \end{aligned}
\end{equation}
for every nonnegative $\varphi\in C^1_c(\overline Q).$

\noindent
{\it Step 2.} 
Let us now deal with the case of a general subsolution $u$. 
If we define $u_k:= -k \vee u \wedge k$, from Corollary 
\ref{cor:truncation} we know that $u_k$ is a bounded weak subsolution to 
\begin{equation}
-\partial_t u_k + H(x, Du_k) \leq \alpha_k,
\end{equation}
where $\alpha_k \to \alpha$ weakly$^*$ in $\cX^q(Q)$
and pointwise a.e., with the uniform domination $|\alpha_k|\le |\alpha|+\gamma^-_H$.
In particular \eqref{set}, \eqref{eq:108} and \eqref{eq:109}
hold with $u_k,\alpha_k$ in place of $u,\alpha$ respectively.

\eqref{eq:104} and \eqref{eq:109a} 
can be directly obtained by
choosing $k$ sufficiently big, so that $\supp(\zeta)\subset (-k,k)$,
since in this case
\begin{displaymath}
  Z(u)=Z(u_k),\quad \zeta(u)=\zeta(u_k),\quad 
  \zeta(u)\alpha=\zeta(u_k)\alpha_k.
\end{displaymath}
In order to prove \eqref{eq:71} and \eqref{duality_formula}, 
we recall that  $\sfu(t, \cdot)$ belongs to
$\cX^q(\R^d)$ for every $t \in (0,1)$ (with a uniform estimate in any
compact subset $[a,b]$).
\GGG If  we choose $s,t\in D_p[\mu]$ we thus have $\sfu(t,\cdot)\in
L^1_{\mu_t}(\R^d)$, $\sfu(s,\cdot)\in L^1_{\mu_s}(\R^d)$,
so that the right hand side of \eqref{eq:108} is uniformly bounded 
and \eqref{eq:71} follows from Fatou's lemma. 

\eqref{duality_formula} is a consequence of a similar limit
starting from
\begin{equation}\label{setk}
\begin{split}
\int_{\R^d} \sfu_k(s,x)\,\d\mu_s(x) - \int_{\R^d} \sfu_k(t,x)\,\d\mu_t (x)
\leq \int_s^t \int_{\R^d} \left(\alpha_k -H(x,Du_k) - Du_k  \cdot \vv \right)m
\,\dx \,\d r
\end{split} 
\end{equation}
and applying Lebesgue Dominated Convergence Theorem (and Fatou's lemma in the right-hand side if $t=1$ or $s=0$). 
\end{proof}
For our next purposes, we deduce the following consequence  
of inequality \eqref{duality_formula}, which is a natural extension of
\eqref{stimawas}.
The proof follows the same argument of Corollary
\ref{c:stima_wasserstein}, starting from \eqref{duality_formula}.
\begin{corollary}\label{c:duality:u_Lq}
Let $(u,\alpha)\in \hj_q(Q,H)$ and $H$ satisfying \eqref{H_least_quadratic2}. 
Let $\mu_i=m_i\LL^d\in \cP_{2,p}^r(\R^d)$. 
Then we have \GGG
\begin{align*}  
\int_{\R^d} u_0^{+} 
  \,\d\mu_0 
  -\int_{\R^d} u_1^{-} \,\d\mu_1 \le \frac{c_H}2W_2^2(\mu_0,\mu_1)
  +\|\beta\|_{\cX^q(Q)}\max \big(\|m_0\|_{L^p\cap L^1_\weight(\R^d)},
  \|m_1\|_{L^p\cap L^1_\weight(\R^d)}\big), 
\end{align*}
where $ \beta:=\alpha+\gamma^-_H.$
\end{corollary}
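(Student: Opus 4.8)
The plan is to deduce this corollary directly from the duality formula \eqref{duality_formula} of Theorem \ref{l:duality_hj_cont}, exactly as Corollary \ref{c:stima_wasserstein} was deduced from Proposition \ref{p:crucial_estimate}. First I would let $(\mu_r)_{r\in[0,1]}$ be the (constant speed) Wasserstein geodesic joining $\mu_0$ to $\mu_1$, given by McCann's displacement interpolation \eqref{eq:38}. Since $\mu_0,\mu_1\in\cP_{2,p}^r(\R^d)$, the displacement convexity inequality \eqref{eq:39} guarantees $m_r\in L^p(\R^d)$ for every $r\in[0,1]$ with $\|m_r\|_{L^p}^p\le(1-r)\|m_0\|_{L^p}^p+r\|m_1\|_{L^p}^p$, and likewise \eqref{eq:65} controls the quadratic moments; in particular $D_p[\mu]=[0,1]$, so $0,1\in D_p[\mu]$. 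The geodesic, paired with its minimal velocity field $\vv$, lies in $\CE 2p{Q;\mu_0,\mu_1}$ and satisfies $\int_Q|\vv|^2\,\d\tilde\mu=W_2^2(\mu_0,\mu_1)$, with $\int_{\R^d}|\vv(r,\cdot)|^2\,\d\mu_r=W_2^2(\mu_0,\mu_1)$ for a.e.\ $r$.

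Next I would apply \eqref{duality_formula} with $s=0$, $t=1$; this is legitimate by the last assertion of Theorem \ref{l:duality_hj_cont}, provided $\int_{\R^d}u_0^+\,\d\mu_0>-\infty$. If that integral is $-\infty$ the asserted inequality is trivially true (its left-hand side is $-\infty$, the right-hand side finite since $m_0,m_1\in L^p\cap L^1_\weight$ and $W_2(\mu_0,\mu_1)<\infty$), so we may assume it finite. Then \eqref{duality_formula} gives
\begin{equation*}
  \int_{\R^d}u_0^+\,\d\mu_0-\int_{\R^d}u_1^-\,\d\mu_1\le
  \int_0^1\int_{\R^d}\bigl(\alpha-H(x,Du)-Du\cdot\vv\bigr)m\,\d x\,\d r.
\end{equation*}
Using the lower growth bound \eqref{H_least_quadratic2} in the form $H(x,Du)\ge\frac1{2c_H}|Du|^2-\gamma_H^-$ together with the elementary Young-type inequality $-Du\cdot\vv\le\frac1{2c_H}|Du|^2+\frac{c_H}2|\vv|^2$, the $|Du|^2$ terms cancel and the integrand is bounded above by $\bigl(\alpha+\gamma_H^-\bigr)m+\frac{c_H}2|\vv|^2m=\beta\,m+\frac{c_H}2|\vv|^2m$.

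Finally I would integrate: the velocity term contributes $\frac{c_H}2\int_Q|\vv|^2\,\d\tilde\mu=\frac{c_H}2W_2^2(\mu_0,\mu_1)$, and the term $\iint_Q\beta\,m\,\d x\,\d r$ is estimated exactly as in the proof of Corollary \ref{c:stima_wasserstein}: writing $\beta=\weight\beta_0+\beta_1$ with $\beta_0\in L^\infty(Q)$, $\beta_1\in L^q(Q)$, one has $\iint_Q\beta\,m\le\|\beta_0\|_{L^\infty}\max_r\int_{\R^d}\weight\,\d\mu_r+\|\beta_1\|_{L^q}\bigl(\int_0^1\|m_r\|_{L^p}^q\,\d r\bigr)^{1/q}$, and the convexity bounds \eqref{eq:39}, \eqref{eq:65} along the geodesic make both maxima/integrals bounded by $\max\bigl(\|m_0\|_{L^p\cap L^1_\weight},\|m_1\|_{L^p\cap L^1_\weight}\bigr)$; taking the infimum over decompositions yields the factor $\|\beta\|_{\cX^q(Q)}$. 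Combining the two contributions gives precisely the claimed estimate. There is no real obstacle here — the only point requiring a little care is the convergence and sign-of-terms bookkeeping at the endpoints $s=0,t=1$, which is already handled by the final clause of Theorem \ref{l:duality_hj_cont} and the monotone/semi-integrability properties of $u_0^+$, $u_1^-$ from Theorem \ref{thm:precise_repr}.
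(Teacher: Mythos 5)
Your proposal is correct and takes essentially the same route as the paper, whose proof of this corollary is a single sentence deferring to the argument of Corollary \ref{c:stima_wasserstein} applied to \eqref{duality_formula} with $s=0$, $t=1$: your reconstruction (Wasserstein geodesic, displacement convexity for the $L^p$ and quadratic-moment bounds so that $D_p[\mu]=[0,1]$, the Young inequality $-Du\cdot\vv\le\frac1{2c_H}|Du|^2+\frac{c_H}{2}|\vv|^2$ to cancel $|Du|^2$, then the $\cX^q$-decomposition of $\beta$) is precisely that. One tiny slip in the bookkeeping: in the Hölder-in-time estimate of $\iint_Q\beta_1 m$ the exponent on $\|m_r\|_{L^p}$ should be $p$ (and the outer power $1/p$), not $q$; this does not affect the final bound.
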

\subsection{Contact-defect measures associated to weak subsolutions}
\label{subsec:contact-defect}
\GGG
In this last section we rewrite \eqref{duality_formula}, \eqref{eq:109a}, and 
\eqref{eq:104} in a
more expressive way. 

Let us fix a pair $(u,\alpha)\in \hj_q(Q,H)$ and 
a solution $(m,\vv)\in \CE 2p{Q;\mu_0,\mu_1}$ 
with $\mu_i\ll\LL^d$, $i=0,1$.
First of all, for every $\zeta\in C^0_c(\R)$ with $Z(r):=\int_0^r
\zeta(s)\,\d s$ we consider the
linear functional $T_\zeta\in \DD'(\R^{d+1})$
\begin{equation}
  \label{eq:154}
  \begin{aligned}
    T_\zeta(\varphi):=-\int_Q Z(u)m&\Big(\partial_t\varphi+D\varphi\cdot\vv\Big)\,\d
      x\d t + \int_Q \zeta(u)\Big(\alpha-H(x,Du)-Du\cdot\vv\Big)
      m\,\varphi\,\d x\d t\\&
      -\bigg(\int_{\R^d}Z(u^+(0,x))\varphi(0,x)\,\d\mu_0(x)-
      \int_{\R^d}Z(u^-(1,x))\varphi(1,x)\,\d\mu_1(x)\bigg)
    \end{aligned}
\end{equation}
for every $\varphi\in C^\infty_c(\R^{d+1})$. 
Since $\varphi,\zeta\ge0$ yield
$T_\zeta(\varphi)\ge0$ and $\supp(\varphi)\subset \R^{d+1}\setminus Q
\ \Rightarrow\ T_\zeta(\varphi)=0$, we know that there exists a
unique nonnegative Radon measure
$\vartheta_\zeta$ such that 
\begin{equation}
  \label{eq:163}
  T_\zeta(\varphi)=\int_{\R\times
    \R^d}\varphi(t,x)\,\d\vartheta_\zeta(t,x)
  \quad \text{for every }\varphi\in C^\infty_c(\R^{d+1}),\quad
  \supp(\vartheta_\zeta)=\overline Q.
\end{equation}
In this way, $T_\zeta$ can be extended to a linear positive functional
on $C^0_c(\R^{d+1})$.
In particular, choosing $\varphi$ with compact support in $Q$
\begin{equation}
  \label{eq:172}
  \partial_t\big(Z(u)m\big)+
    \nabla\cdot\big(Z(u)m\,
    \vv\big)+\zeta(u)\Big(\alpha-H(x,Du)-Du\cdot\vv\Big)m=\vartheta_\zeta
    \quad \text{in }\DD'(Q).
\end{equation}
We want now to associate a nonnegative Radon measure $\vartheta$ 
on $\R\times \R^{d+1}$ to $T$  so that 
\begin{equation}
  \label{eq:164}
  T_\zeta(\varphi)=\int_{\R\times \overline Q}\zeta(r)\varphi(t,x)\,\d
  \vartheta(r,t,x)
  \quad
  \text{for every }\zeta\in C^0_c(\R),\ \varphi\in C^{0}_c(\R^{d+1}).
\end{equation}
\begin{proposition}
  For every $(u,\alpha)\in \hj_q(Q,H)$ and 
  $(m,\vv)\in \CE 2p{Q;\mu_0,\mu_1}$ with $\mu_i\ll\LL^d$,
  there exists a unique positive \emph{contact-defect} Radon measure
  $\vartheta$ on $\R\times \R^{d+1}$ satisfying \eqref{eq:164}.
  Moreover,  
  $\vartheta$ is supported on $\R\times \overline Q$.
\end{proposition}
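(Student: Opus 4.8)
The plan is to realise $\vartheta$ as an ``inverse disintegration'' of the family $\{\vartheta_\zeta\}$ already produced in \eqref{eq:154}--\eqref{eq:163}. First I would extend the assignment $\zeta\mapsto\vartheta_\zeta$ from the cone of nonnegative $\zeta\in C^0_c(\R)$ to all of $C^0_c(\R)$ by linearity; this is consistent because $\zeta\mapsto T_\zeta$ is visibly linear in \eqref{eq:154}. The extended map is linear, \emph{positive} (nonnegative $\zeta$ give nonnegative $\vartheta_\zeta$), and every $\vartheta_\zeta$ is supported in $\overline Q$. Fix, for each $R>0$, a cut-off $\eta_R\in C^0_c(\R)$ with $0\le\eta_R\le1$ and $\eta_R\equiv1$ on $[-R,R]$: if $\operatorname{supp}\zeta\subseteq[-R,R]$ then $\|\zeta\|_\infty\eta_R\pm\zeta\ge0$, hence $\|\zeta\|_\infty\vartheta_{\eta_R}\pm\vartheta_\zeta\ge0$, so that $\vartheta_\zeta\ll\vartheta_{\eta_R}$ with total variation $|\vartheta_\zeta|\le\|\zeta\|_\infty\vartheta_{\eta_R}$. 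The statement then amounts to showing that the bilinear form $B(\zeta,\varphi):=T_\zeta(\varphi)$ on $C^0_c(\R)\times C^0_c(\R^{d+1})$, positive on the product of the positive cones, is represented by a nonnegative Radon measure on $\R\times\R^{d+1}$.

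For the construction, fix $R$ and restrict to $\zeta$ supported in $[-R,R]$. By Radon--Nikodym write $\vartheta_\zeta=g_\zeta\,\vartheta_{\eta_R}$ with $g_\zeta\ge0$ when $\zeta\ge0$, $|g_\zeta|\le\|\zeta\|_\infty$ $\vartheta_{\eta_R}$-a.e., and $\zeta\mapsto g_\zeta$ linear (uniqueness of densities). After removing a $\vartheta_{\eta_R}$-null set (chosen common to a countable dense family of $\zeta$'s and then propagated by continuity), for each remaining $y\in\overline Q$ the map $\zeta\mapsto g_\zeta(y)$ is a positive linear functional on $C^0_c(\R)$ of norm $\le1$; by the Riesz representation theorem it equals $\int_\R\zeta\,d\lambda_y$ for a Borel measure $\lambda_y$ on $[-R,R]$ of mass $\le1$, and a routine measurable-selection argument (as in the disintegration theorem, see \cite[Sect.~5.3]{ambrosio2008gradient}) makes $y\mapsto\lambda_y$ $\vartheta_{\eta_R}$-measurable. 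I would then define $\vartheta^{(R)}$ on $[-R,R]\times\overline Q$ by $\int f\,d\vartheta^{(R)}:=\int_{\overline Q}\bigl(\int_\R f(r,y)\,d\lambda_y(r)\bigr)\,d\vartheta_{\eta_R}(y)$: this is a nonnegative Radon measure (locally finite because $\lambda_y(\R)\le1$ and $\vartheta_{\eta_R}$ is Radon), and for $\zeta$ supported in $[-R,R]$ one computes $\int\zeta(r)\varphi(y)\,d\vartheta^{(R)}=\int_{\overline Q}\varphi\,g_\zeta\,d\vartheta_{\eta_R}=\int_{\overline Q}\varphi\,d\vartheta_\zeta=T_\zeta(\varphi)$, i.e.\ \eqref{eq:164} holds for such $\zeta$.

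It remains to glue in $R$ and conclude. The tensor functions $(r,t,x)\mapsto\zeta(r)\varphi(t,x)$, $\zeta\in C^0_c((-R,R))$, $\varphi\in C^0_c(\R^{d+1})$, form a subalgebra of $C^0_c((-R,R)\times\R^{d+1})$ which separates points and vanishes nowhere; hence, by Stone--Weierstrass, a Radon measure on $(-R,R)\times\R^{d+1}$ is determined by its values on them. Consequently $\vartheta^{(R)}$ and $\vartheta^{(R+1)}$ coincide on $(-R,R)\times\overline Q$, and the $\vartheta^{(R)}$ patch together into a single nonnegative Radon measure $\vartheta$ on $\R\times\overline Q$, which we view as a measure on $\R\times\R^{d+1}$ supported in $\R\times\overline Q$. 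Every $\zeta\in C^0_c(\R)$ is supported in some $[-R,R]$, so \eqref{eq:164} holds in full generality, and uniqueness follows once more from the Stone--Weierstrass density of the tensor functions in $C^0_c(\R\times\R^{d+1})$.

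The main obstacle is precisely the passage from the ``separately measure-valued, bilinear, positive'' datum $B$ to an honest joint measure: for a merely bilinear (not positive) assignment such a representation can fail, so positivity must be used in an essential way — here via the domination $|\vartheta_\zeta|\le\|\zeta\|_\infty\vartheta_{\eta_R}$, the ensuing absolute continuity, and the fibrewise Riesz representation. Inside this, the only genuinely technical point is the $\vartheta_{\eta_R}$-measurability of $y\mapsto\lambda_y$, handled exactly as in the classical disintegration theorem by checking measurability of $y\mapsto\int\zeta\,d\lambda_y=g_\zeta(y)$ for $\zeta$ in a countable dense subset of $C^0_c([-R,R])$ and extending by a monotone-class/approximation argument.
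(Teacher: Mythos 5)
Your proof is correct, but it takes a genuinely different route from the paper's. The paper reduces the statement, exactly as you do, to a Riesz-type representation of the positive bilinear form $B(\zeta,\varphi)=T_\zeta(\varphi)$ (this is its Theorem~\ref{thm:bilinear-Riesz}), but proves that theorem constructively: it extends $T$ by linearity to finite sums $\psi=\sum_i\zeta_i\otimes\varphi_i$ and then establishes the nontrivial positivity of the extension (i.e.\ $T(\psi)\ge0$ whenever the \emph{sum} $\psi$ is pointwise nonnegative, even if no individual term is) by discretizing each $\zeta_i$ as a finite grid combination $\sum_k\zeta_i(k/N)h_\tau(\cdot-k/N)$ of translated bumps, interchanging the two sums, and controlling the discretization error with a cut-off $\zeta_K$; once the extension is known to be a positive linear functional on the dense tensor algebra, Riesz gives $\vartheta$. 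Your argument instead never touches general tensor sums: you use positivity through the domination $\pm\vartheta_\zeta\le\|\zeta\|_\infty\vartheta_{\eta_R}$, pass to Radon--Nikodym densities $g_\zeta$, and build $\vartheta$ as a genuine disintegration $\d\vartheta^{(R)}=\d\lambda_y(r)\,\d\vartheta_{\eta_R}(y)$ via a fibrewise Riesz representation, gluing over $R$ by Stone--Weierstrass. What the paper's route buys is elementarity and self-containedness: it needs only the scalar Riesz theorem and a hands-on estimate, with no measurable-selection or disintegration machinery. What your route buys is structural information the paper's proof does not exhibit: it shows directly that the $\overline Q$-marginal of $\vartheta\restr{[-R,R]\times\overline Q}$ is dominated by $\vartheta_{\eta_R}$, and it localizes the use of positivity very cleanly as an absolute-continuity statement. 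Both are sound; the one place your write-up leans on ``routine'' is the common-null-set argument that makes $\zeta\mapsto g_\zeta(y)$ an honest positive functional for a.e.\ $y$ (one should take a countable $\Q$-linear dense set together with a countable dense set of nonnegative test functions, and then extend by uniform continuity), and the measurability of $y\mapsto\lambda_y$; both are standard disintegration steps and carry through here.
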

\begin{proof}
  The functional $(\zeta,\varphi)\mapsto T_\zeta(\varphi)$ 
  is bilinear on $C^0_c(\R)\times C^0_c(\R^{d+1})$ and it is positive
  on positive functions, so the representation formula \eqref{eq:164}
  follows
  by the extension of Riesz representation Theorem to positive
  bilinear functionals, see Theorem \ref{thm:bilinear-Riesz} in the Appendix.
\end{proof}
Let us now write the distributional definition of $T_\zeta$ in a
slightly different way.
We introduce the nonnegative functions $Y_H,Y_F$ related to Fenchel duality
between the pairs $H,L$ and $F,F^*$:
\begin{align}
  \label{eq:81}
  Y_H(x,\pp,\vv)&:=H(x,\pp)+\pp\cdot \vv+L(x,\vv),\quad x,\pp,\vv\in
                  \R^d;&&\\
  \label{eq:81bis}
  Y_F(x,m,\alpha)&:=F(x,m)-\alpha m+F^*(x,\alpha),\quad
                   x\in \R^d,\ \alpha\in \R,\ m\ge 0.
\end{align}
\begin{corollary}
  \label{cor:second_duality}
  Let $(u,\alpha)\in \hj_q(Q,H)$ 
  and let $(m,\vv)\in \CE 2p {Q;\mu_0,\mu_1}$,
  and let us consider the measurable functions
  \begin{equation}
    \label{eq:141}
    \sfY_H:=Y_H(\cdot,Du,\vv),\quad
    \sfY_F:=Y_F(\cdot,m,\alpha),\quad
    \sfL:=L(\cdot,\vv),\quad
    \sfF:=F(\cdot,m),\quad
    \sfF^*:=F^*(\cdot,\alpha).
  \end{equation}
  For every $\zeta\in C^0_c(\R)$ and $Z\in C^1_b(\R)$ with
    $Z'=\zeta$ 
    we have
    \begin{equation}
      \label{eq:111}
      \begin{aligned}
        \zeta(u)\big(\sfY_H\,m+\sfY_F\big)
        +\vartheta_\zeta
      &= 
        \zeta(u)\big(\sfL\,m+\sfF+\sfF^*\big)+
        \partial_t(Z(u)m)+\nabla\cdot(Z(u)m\vv)
      \end{aligned}\end{equation}
    in the sense of distributions of $\DD'(Q)$, and
      also in duality with functions $\varphi\in
  C^1_c(\overline Q)$ if $\mu_0,\mu_1\ll\LL^d$:
  \begin{equation}
    \label{eq:109b}
    \begin{aligned}
      &\int_Q 
      \zeta(u)\big(\sfY_H \, m+\sfY_F\big)
      \varphi\,\d
      x\,\d t+\int_{\R\times
        \R^{d+1}}\zeta(u)\varphi(t,x)\,\d\vartheta(u,t,x)
      \\&= \int_Q \zeta(u)\big(\sfL\,m+\sfF\big) 
      \varphi\,\d x\d
      t
      -\int_QZ(u)m\big(\partial_t\varphi+D\varphi\cdot\vv\big)\,\d
      x\d t
      \\&\qquad-
      \bigg(\int_{\R^d}Z(u^+(0,x))\varphi(0,x)\,\d\mu_0(x)-
      \int_{\R^d}Z(u^-(1,x))\varphi(1,x)\,\d\mu_1(x)-
      \int_Q \zeta(u)\sfF^* 
      \varphi\,\d x\,\d t\bigg).
    \end{aligned}
\end{equation}
\end{corollary}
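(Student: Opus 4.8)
The statement is a bookkeeping reformulation of Theorem \ref{l:duality_hj_cont}: both \eqref{eq:111} and its boundary version \eqref{eq:109b} will be deduced from the identities \eqref{eq:172} and \eqref{eq:109a} by a purely algebraic rearrangement based on the definitions \eqref{eq:81} and \eqref{eq:81bis} of $Y_H$ and $Y_F$. As a preliminary reduction I would pass from the $Z\in C^1_b(\R)$ in the statement to a pair $(\zeta,Z)\in\ZZ_c$: since $Z'=\zeta$ has compact support, $Z$ differs from $r\mapsto\int_0^r\zeta(s)\,\d s$ only by the constant $Z(0)$, and adding such a constant changes $\partial_t(Z(u)m)+\nabla\cdot(Z(u)m\vv)$ by $Z(0)\,(\partial_t m+\nabla\cdot(m\vv))=0$ in $\DD'(Q)$, the boundary contributions in \eqref{eq:109a} compensating via \eqref{eq:44}. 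Hence the identities \eqref{eq:172} and \eqref{eq:109a} apply verbatim.

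The next step is a routine but necessary integrability check, ensuring that the extra summands introduced on the two sides of \eqref{eq:111} are genuine elements of $L^1_{\loc}(Q)$ so that the distributional rearrangement is legitimate. Recalling that $\zeta(u)$ and $Z(u)$ are bounded, I would observe: $\sfL\,m=L(\cdot,\vv)m\le(\tfrac{c_H}2|\vv|^2+\gamma^-_H)m\in L^1(Q)$ by \eqref{eq:9} together with $m\in L^1_\weight(Q)$ and $\vv\in L^2(Q,\tilde\mu;\R^d)$; $\sfF=F(\cdot,m)\in L^1(Q)$ by \eqref{eq:7} and $m\in L^p\cap L^1_\weight(Q)$, $\gamma_f\in L^q(\R^d)$; $\sfF^*=F^*(\cdot,\alpha)\in L^1_{\loc}(Q)$ by \eqref{eq:6}, decomposing $\alpha=\weight\,w+z$ with $w\in L^\infty(Q)$, $z\in L^q(Q)$ and using that the polynomial weight $\weight^q$ is locally integrable; finally $\sfY_H\,m\in L^1_{\loc}(Q)$ follows from \eqref{eq:71}, the quadratic growth of $H$, the bound $|Du\cdot\vv|\le\tfrac12|Du|^2+\tfrac12|\vv|^2$ and the previous remark on $\sfL\,m$, while $\sfY_F=\sfF-\alpha m+\sfF^*\in L^1_{\loc}(Q)$ since also $\alpha m\in L^1(Q)$.

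With these facts at hand, \eqref{eq:111} is obtained by expanding $\zeta(u)\sfY_H\,m=\zeta(u)\big(H(x,Du)+Du\cdot\vv\big)m+\zeta(u)\sfL\,m$ and $\zeta(u)\sfY_F=\zeta(u)\sfF-\zeta(u)\alpha\,m+\zeta(u)\sfF^*$, plugging these into \eqref{eq:172}, and noticing that the terms $\zeta(u)\big(H(x,Du)+Du\cdot\vv\big)m$ and $\zeta(u)\alpha\,m$ cancel exactly the ones hidden in $\zeta(u)\big(\alpha-H(x,Du)-Du\cdot\vv\big)m$; what remains is precisely \eqref{eq:111}. For the boundary identity \eqref{eq:109b} I would first record that, by the Riesz representations \eqref{eq:163}--\eqref{eq:164} of the contact-defect measure, $\int_{\R\times\R^d}\varphi\,\d\vartheta_\zeta=\int_{\R\times\R^{d+1}}\zeta(u)\varphi(t,x)\,\d\vartheta(u,t,x)$ for every $\varphi\in C^1_c(\overline Q)$, and then repeat the same cancellation starting from \eqref{eq:109a}, the boundary terms at $t=0$ and $t=1$ being simply carried over unchanged.

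Since the whole argument is a rearrangement of already-proved identities, there is no genuine obstacle; if one has to single out a delicate point, it is the verification that $F^*(\cdot,\alpha)\in L^1_{\loc}(Q)$ for an arbitrary $\alpha\in\cX^q(Q)$, which is what allows one to move the $\sfF^*$ term freely to the right-hand side of \eqref{eq:111}, and which relies on the local integrability of the weight power $\weight^q$.
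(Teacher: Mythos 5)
Your proof is correct and follows essentially the same route as the paper: starting from \eqref{eq:172} (and \eqref{eq:109a} for the boundary version) and performing the algebraic substitution using the Fenchel-gap definitions \eqref{eq:81}--\eqref{eq:81bis} of $Y_H$ and $Y_F$. The additional reductions you supply (passing from $Z\in C^1_b(\R)$ to the normalized primitive in $\ZZ_c$ via the vanishing of $Z(0)(\partial_t m+\nabla\cdot(m\vv))$, and the local-integrability checks on $\sfL\,m$, $\sfF$, $\sfF^*$, $\sfY_H\,m$, $\sfY_F$) are correct and merely make explicit what the paper treats as immediate.
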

\begin{proof}
  The proof is a simple manipulation of \eqref{eq:172} and
  \eqref{eq:154}.
  Let us check e.g.~\eqref{eq:111}, starting directly from \eqref{eq:172}:
  \begin{align*}
     \zeta(u)
    &\big(\sfL\,m+\sfF+\sfF^*\big)+
        \partial_t(Z(u)m)+\nabla\cdot(Z(u)m\vv)
      \\&=\zeta(u)
    \big(\sfL\,m+\sfF+\sfF^*\big)
          -
      \zeta(u)\big(\alpha-\sfH-Du\cdot\vv\big)m+\vartheta_\zeta
    \\&=\zeta(u)\big(\sfL\,m +\sfH+Du\cdot\vv\big)
        +\zeta(u)\big(\sfF-\alpha m+\sfF^*\big)+\vartheta_\zeta
        =\zeta(u)\big(\sfY_H\,m+\sfY_F\big)+\vartheta_\zeta.
  \end{align*}
\end{proof}
We conclude this section by deriving
an important formula concerning the total mass of the measures 
$\vartheta_\zeta$ and $\vartheta$. 
\begin{theorem}
  \label{thm:duality-with-defect}
  Let $(u,\alpha)\in \hj_q(Q,H)$ 
  and let $(m,\vv)\in \CE 2p {Q;\mu_0,\mu_1}$ be a distributional solution to
  the continuity equation according to Definition \ref{def:CE},
  and let us keep the notation of \eqref{eq:81}, \eqref{eq:81bis},
  \eqref{eq:141}.
  If $\mu_0,\mu_1\ll \LL^d$, for every nonnegative $\zeta\in
  C^0_c(\R)$ $\vartheta_\zeta$ has finite total mass and 
  for every $\eta\in C^1_c(\R),\ \zeta\in C^0_c(\R)$ we have
  \begin{align}
    \label{eq:109cc}
    &\begin{aligned}
      &\int_Q 
      \zeta(u)\big(\sfY_H \, m+\sfY_F\big)\eta(t)
      \,\d
      x\,\d t+\int_{\overline Q}\eta\,\d\vartheta_\zeta
      +\int_Q Z(u)m\eta'(t)\,\d x\,\d t
      \\&\qquad= \int_Q \zeta(u)\big(\sfL\,m+\sfF\big) \eta(t)
      \,\d x\d
      t
      -
      \bigg(\int_{\R^d}Z(u^+_0)\eta_0\,\d\mu_0-
      \int_{\R^d}Z(u^-_1)\eta_1\,\d\mu_1-
      \int_Q \eta(t)\zeta(u)\sfF^* 
      \,\d x\,\d t\bigg),
    \end{aligned}\\
    \label{eq:109cold}
           &\begin{aligned}
      &\int_Q 
      \zeta(u)\big(\sfY_H \, m+\sfY_F\big)
      \,\d
      x\,\d t+\vartheta_\zeta(\overline Q)
      \\&\qquad= \int_Q \zeta(u)\big(\sfL\,m+\sfF\big) 
      \,\d x\d
      t
      -
      \bigg(\int_{\R^d}Z(u^+_0)\,\d\mu_0-
      \int_{\R^d}Z(u^-_1)\,\d\mu_1-
      \int_Q \zeta(u)\sfF^* 
      \,\d x\,\d t\bigg).
    \end{aligned}
\end{align}
Moreover, for every $0<s<t<1$ 
we have $\vartheta(\R\times \overline{Q_{s,t}})<\infty$ and there exists
an at most countable set $J\subset (0,1)$ such that
for every $s,t\in D_p[\mu] \setminus J$ we have
$\vartheta(\R\times\{s\}\times \R^d)=\vartheta(\R\times\{t\}\times\R^d)=0$ and
\begin{equation}
    \label{eq:109d}
    \begin{aligned}
      &\int_{Q_{s,t}}
      \big(\sfY_H \, m+\sfY_F\big)\,\d\lambda
      +\vartheta(\R\times\overline{Q_{s,t}})
      = \int_{Q_{s,t} } \big(\sfL\,m+\sfF\big) 
      \,\d x\,\d t 
      -
      \Big(\int_{\R^d}\sfu_s\,\d\mu_s-
      \int_{\R^d} \sfu_t\,\d\mu_t-
      \int_{Q_{s,t}} \sfF^* 
      \,\d x\,\d t
      \Big).
    \end{aligned}
\end{equation}
\eqref{eq:109d} also holds at $s=0$ (resp.~$t=1$) 
if 
$\int_\R (u_0^+\land 0)\,\d\mu_0>-\infty$ (resp.~$\int_\R (u_1^-\lor 0)\,\d\mu_1<+\infty$).
If both these integrals are finite then $\vartheta$ has finite total mass and we have
\begin{equation}
    \label{eq:109dbis}
    \begin{aligned}
      &\int_Q 
      \big(\sfY_H \, m+\sfY_F\big)
      \,\d
      x\,\d t+\vartheta(\R\times\overline Q)
      = \int_Q \big(\sfL\,m+\sfF\big) 
      \,\d x\d
      t
      -
      \Big(\int_{\R^d}u^+_0\,\d\mu_0-
      \int_{\R^d} u^-_1\,\d\mu_1-
      \int_Q \sfF^* 
      \,\d x\,\d t\Big).
    \end{aligned}
\end{equation}
\end{theorem}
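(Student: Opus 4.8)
The plan is to use the contact-defect identity \eqref{eq:172}, suitably localized in time, and then let the time-cutoff approach the whole interval $(0,1)$. Concretely, fix a test function $\eta\in C^1_c(0,1)$ and $\varphi\equiv 1$ (formally) in the $x$-variable: choosing $\varphi(t,x)=\eta(t)$ in \eqref{eq:109b} would give \eqref{eq:109cc} directly, but since $\eta$ alone is not compactly supported in $\R^{d+1}$ one must first take $\varphi(t,x)=\eta(t)\psi_R(x)$ with $\psi_R$ a smooth cutoff equal to $1$ on $B_R$, supported in $B_{2R}$, with $|D\psi_R|\le C/R$, and then let $R\to\infty$. The passage is justified because $Z(u)m\in L^1_\kappa(Q)$ (indeed $Z$ is bounded and $m\in L^1_\kappa(Q)$), $\vv\sqrt m\in L^2$ with $\sqrt m\in L^2_\kappa$, so $Z(u)m\,\vv\in L^1(Q;\R^d)$ by Cauchy–Schwarz, and $\zeta(u)(\sfY_H m+\sfY_F)$ together with the right-hand side terms are all in $L^1(Q)$ once we know $\vartheta_\zeta$ and the integrals are finite. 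The key point to first establish is precisely that \emph{$\vartheta_\zeta$ has finite total mass}: this we get from \eqref{eq:109b} by testing with $\varphi=\eta_k\psi_R$ where $\eta_k\uparrow 1$ pointwise on $(0,1)$ and controlling every term on the right uniformly, using Theorem \ref{thm:precise_repr} (the traces $u_0^+,u_1^-$ are semiintegrable against $\mu_0,\mu_1$) and the bound \eqref{eq:71} of Theorem \ref{l:duality_hj_cont} to control the $\sfY_H m$ and $Du\cdot\vv$ contributions on $Q$. Then \eqref{eq:109cc} is the identity with a general $\eta\in C^1_c(\R)$ (extending from $C^1_c(0,1)$ by the boundary-term analysis already performed in Lemma \ref{le:traces}), and \eqref{eq:109cold} follows by taking $\eta\uparrow\nchi_{(0,1)}$ in \eqref{eq:109cc} and using monotone/dominated convergence together with the integrability just obtained.

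For \eqref{eq:109d} the plan is to run the same argument but with $\eta$ approximating $\nchi_{(s,t)}$. First, note that for fixed $0<s<t<1$ the restriction of $\vartheta_\zeta$ to $\overline{Q_{s,t}}$ has finite mass (this is just \eqref{eq:109cc} with $\eta$ supported in a slightly larger interval, all terms being finite by Theorem \ref{thm:precise_repr}). Integrating the Riesz identity \eqref{eq:164} against $\zeta_n\uparrow 1$ we get $\vartheta(\R\times\overline{Q_{s,t}})<\infty$ and, by monotone convergence, the identity \eqref{eq:109d} in the form with $\vartheta(\R\times\overline{Q_{s,t}})$ replacing $\vartheta_\zeta(\overline{Q_{s,t}})$ — here one also uses that $Z_n(u)\to u$ pointwise and the truncation bounds to pass $\int Z_n(u)m\,(\cdots)\to\int u\,m\,(\cdots)$, exactly as in claim (iv) of Lemma \ref{le:traces}. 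To replace the smooth $\eta$ by the sharp indicator $\nchi_{(s,t)}$ one needs the slices $\{t'\}\times\R^d$ to be $\vartheta$-null: since $\vartheta(\R\times\overline{Q_{a,b}})<\infty$ for every $0<a<b<1$, the function $t'\mapsto \vartheta(\R\times(0,t')\times\R^d)$ is monotone, hence has at most countably many jumps; call $J$ this jump set. For $s,t\in D_p[\mu]\setminus J$ the slices at $s$ and $t$ carry no mass, and choosing $\eta=\eta_\delta$ a piecewise-linear approximation of $\nchi_{(s,t)}$ (smoothed), the boundary terms $\int Z(u)m\,\eta_\delta'$ converge to $\int_{\R^d}\sfu_s\,\d\mu_s-\int_{\R^d}\sfu_t\,\d\mu_t$ by the right-continuity of the precise representative and the fact that $s,t\in D_p[\mu]$ (so $\mu_s,\mu_t$ have $L^p$ densities, hence $\sfu_s,\sfu_t$ are $\mu_s,\mu_t$-integrable by Theorem \ref{thm:precise_repr}); the $\vartheta$-term converges to $\vartheta(\R\times\overline{Q_{s,t}})$ precisely because the endpoint slices are null. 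This yields \eqref{eq:109d}.

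The extension of \eqref{eq:109d} to $s=0$ (resp.\ $t=1$) under the semiintegrability hypothesis $\int (u_0^+\wedge 0)\,\d\mu_0>-\infty$ (resp.\ $\int(u_1^-\vee 0)\,\d\mu_1<+\infty$) is obtained by letting $s\downarrow 0$ along $D_p[\mu]\setminus J$ in \eqref{eq:109d}: the right-hand side converges by Theorem \ref{thm:precise_repr} (monotone convergence of $\int_{\R^d}\sfu_s\,\d\mu_s$ to $\int_{\R^d}u_0^+\,\d\mu_0$, the negative part being dominated by hypothesis, the positive part handled by Beppo Levi as $\sfu_s\vee 0$ is decreasing-ish in the relevant sense) and $\int_{Q_{s,t}}\sfF^*\,\d x\,\d t$, $\int_{Q_{s,t}}(\sfL m+\sfF)$, $\int_{Q_{s,t}}(\sfY_H m+\sfY_F)$ all converge by monotone/dominated convergence since their integrands are in $L^1(Q)$; correspondingly $\vartheta(\R\times\overline{Q_{s,t}})\uparrow\vartheta(\R\times\overline{Q_{0,t}})$. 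Doing both limits gives \eqref{eq:109dbis} and, in particular, $\vartheta(\R\times\overline Q)<\infty$.

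The main obstacle I expect is the careful control of the boundary/trace terms and of the slice-nullity: one must be sure that in the limit $\eta\to\nchi_{(s,t)}$ the term $\int Z(u)m\,\eta'$ genuinely produces the \emph{precise-representative} values $\int\sfu_s\,\d\mu_s$, $\int\sfu_t\,\d\mu_t$ and not the left limits, which is exactly why one restricts to $s,t\in D_p[\mu]$ (so the densities are $L^p$ and the duality $\cX^q$–$(L^p\cap L^1_\kappa)$ of Theorem \ref{thm:precise_repr} applies) and excludes the countable set $J$ of $\vartheta$-charged time slices. The interplay between the $F$-truncation $Z_n$ (needed to make sense of $Z(u)m$ and of $T_\zeta$) and the $H$-truncation $u_k$ (needed in Theorem \ref{l:duality_hj_cont} to get \eqref{eq:109b}) requires passing two limits in the right order, but this is routine given Corollary \ref{cor:truncation} and the uniform domination $|\alpha_k|\le|\alpha|+\gamma_H^-$ already established.
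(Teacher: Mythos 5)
Your overall strategy matches the paper's: test the localized identity \eqref{eq:109b} with $\varphi(t,x)=\eta(t)\psi_R(x)$, let $R\to\infty$ controlling the cross term $\int Z(u)m\,\eta\,D\psi_R\cdot\vv$ by the $O(1/R)$ gain, then deduce finiteness of the $\vartheta$-mass from the equality, untruncate $\zeta_n\uparrow 1$, and finally concentrate $\eta$ onto $\nchi_{[s,t]}$ using a countable exceptional set of time slices. However, there is a genuine gap in the boundary-term convergence for \eqref{eq:109d}, and the extension to $s=0,t=1$ is also handled via a route requiring more justification than you supply.

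The gap: you exclude only the (at most countable) set of times $s$ with $\vartheta(\R\times\{s\}\times\R^d)>0$ and then assert that, for $s,t\in D_p[\mu]$ outside this set, the boundary term $\int_Q u\,\eta_\delta'\,m$ converges to $\int_{\R^d}\sfu_s\,\d\mu_s-\int_{\R^d}\sfu_t\,\d\mu_t$ ``by the right-continuity of the precise representative.'' This does not follow. As $\eta_\delta'$ concentrates at the endpoints, what you pick up is the one-sided approximate limit of the function $r\mapsto\int_{\R^d}\sfu(r,\cdot)\,\d\mu_r$ from the appropriate side, and right-continuity of $r\mapsto\sfu_r$ in $\cX^q(\R^d)$-weak$^*$ does not control these limits: the pairing $\langle\sfu_r,m_r\rangle$ involves a varying measure $\mu_r$, and in any case the limit at $s$ is taken from the \emph{left} (if you approximate $\nchi_{[s,t]}$ from outside) or the limit at $t$ is taken from the \emph{left} (if you approximate from inside) — neither is reached by right-continuity alone. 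The paper resolves this by observing that, thanks to the duality inequality \eqref{duality_formula}, the map \eqref{eq:195} $t\mapsto \int_{\R^d}\sfu_t\,\d\mu_t+\int_{1/2}^t\int_{\R^d}\bigl(\alpha-H(x,Du)-Du\cdot\vv\bigr)m\,\d x\,\d r$ is finite and increasing on $D_p[\mu]$, hence has an at most countable jump set $J_2$; the exceptional set must be $J=J_1\cup J_2$, not $J_1$ alone. Without this second ingredient the boundary-term convergence fails at jump points of \eqref{eq:195}, and these jump points need not coincide with the $\vartheta$-charged slices.

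A secondary imprecision: for the extension to $s=0$ you propose letting $s\downarrow 0$ along $D_p[\mu]\setminus J$ in \eqref{eq:109d}, invoking Theorem \ref{thm:precise_repr} for the convergence $\int_{\R^d}\sfu_s\,\d\mu_s\to\int_{\R^d}u_0^+\,\d\mu_0$. But \eqref{exlimits} gives this for a \emph{fixed} reference measure $\mu$, not for the varying family $\mu_s$, so this step needs a separate argument (e.g.~again via the monotonicity of \eqref{eq:195} and the endpoint behaviour from Theorem \ref{l:duality_hj_cont}). The paper avoids the issue altogether by going back to the boundary-term identity \eqref{eq:109cc} and choosing $\eta$ with $\eta(0)=1$, so the trace $u_0^+$ enters directly through the term $\int Z(u_0^+)\eta_0\,\d\mu_0$ and the concentration in time acts only at the interior endpoint $t$. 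You should either adopt that route or supply the missing continuity argument at $s=0$.

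Two smaller remarks. First, deriving \eqref{eq:109cold} from \eqref{eq:109cc} needs no limit $\eta\uparrow\nchi_{(0,1)}$: taking any $\eta\in C^1_c(\R)$ with $\eta\equiv 1$ on $[0,1]$ gives $\eta'=0$ on $Q$, $\eta_0=\eta_1=1$, and $\int_{\overline Q}\eta\,\d\vartheta_\zeta=\vartheta_\zeta(\overline Q)$ directly. Second, in the finiteness step your appeal to \eqref{eq:71} to control $\sfY_H m$ is misplaced: \eqref{eq:71} only gives local-in-time integrability, while the needed global control of $\int_Q\zeta(u)\sfY_H m\,\eta$ comes \emph{a posteriori} from the equality in \eqref{eq:109b}, since for nonnegative $\zeta,\varphi$ the two left-hand terms are each nonnegative and the right-hand side stays bounded as the cutoff is removed. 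This is harmless but worth stating correctly.
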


\begin{proof}
  Let us fix a nonnegative radial test function
  $\xi:\R^d\to[0,1]$ of class $C^1$ 
  such that 
  \begin{displaymath}
    \xi(x)\equiv 1\ \text{if }|x|\le 1,\quad
    \xi(x)\equiv 0\ \text{if }|x|\ge 2;\quad
    \xi_k(x):=\xi(2^{-k}x).
  \end{displaymath}
  Notice that $\xi_k$ is increasing w.r.t.~$k$; we also set
  $\varphi_k(t,x):=\eta(t)\xi_k(x)$
  and we write \eqref{eq:109b}
  for a fixed nonnegative $\zeta$ and $\varphi:=\varphi_k$.
  We notice that
  \begin{displaymath}
    \text{$\partial_t\varphi_k\equiv \eta'\xi_k$ on $Q$},
  \end{displaymath}
  \begin{displaymath}
    \int_Q Z(u)m\eta|D\xi_k\cdot \vv|\,\d t\,\d x\le 
    2^{-k}\big(\sup_{\R^d}|D\xi|\big)\big(\sup_\R Z \big)
    \big(\sup_\R\eta)\int_Q|\vv|^2m\,\d x\,\d t\to
    0\quad\text{as }k\to\infty;
  \end{displaymath}
  since $\varphi_k\uparrow 1$, 
  applying Lebesgue Dominated Convergence Theorem we easily get
  \begin{displaymath}
    \lim_{k\to\infty}\int_Q \zeta(u)\big(\sfY_H \, m+\sfY_F\big)
      \varphi_k\,\d
      x\,\d t=
      \int_Q \zeta(u)\big(\sfY_H \, m+\sfY_F\big)\eta
      \,\d
      x\,\d t,
  \end{displaymath}
  \begin{displaymath}
         \lim_{k\to\infty} \int_Q Z(u)m\eta'(t)  \varphi_k\,\d x\,\d t=
     \int_Q Z(u)m\eta'(t) \,\d x\,\d t,
  \end{displaymath}
  \begin{displaymath}
    \lim_{k\to\infty}\int_Q \zeta(u)\big(\sfL\,m+\sfF\big) 
      \varphi_k\,\d x\d
      t=
      \int_Q \zeta(u)\big(\sfL\,m+\sfF\big) \eta
      \,\d x\d
      t,\quad
      \lim_{k\to\infty}
       \int_Q \zeta(u)\sfF^* 
      \varphi_k\,\d x\,\d t
      =
       \int_Q \zeta(u)\eta\sfF^* 
      \,\d x\,\d t
  \end{displaymath}
  \begin{displaymath}
    \lim_{k\to\infty}\int_{\R^d}Z(u^+_0)\varphi_k(0,\cdot)\,\d\mu_0=
    \int_{\R^d}Z(u^+_0)\eta_0\,\d\mu_0,\quad
    \lim_{k\to\infty}
    \int_{\R^d}Z(u^-_1)\varphi_k(1,\cdot)\,\d\mu_1
    =\int_{\R^d}Z(u^-_1)\eta_1\,\d\mu_1.
  \end{displaymath}
  Eventually, choosing $\zeta\ge0$ and 
  $\eta\ge 0$,
  Beppo Levi monotone convergence theorem yields
  \begin{displaymath}
    \lim_{k\to\infty}\int_{\R\times
        \R^{d+1}}\zeta(u)\varphi_k(t,x)\,\d\vartheta(u,t,x)
    = \int_{\R\times
        \R^{d+1}}\eta(t)\zeta(u)\,\d\vartheta(u,t,x)
    \end{displaymath}
  so that 
  \begin{equation}
    \label{eq:109c}
    \begin{aligned}
      &\int_Q 
      \zeta(u)\big(\sfY_H \, m+\sfY_F\big)\eta(t)
      \,\d
      x\,\d t+\int_{\R\times
        \R^{d+1}}\eta(t)\zeta(u)\,\d\vartheta(u,t,x)
      +\int_Q Z(u)\eta'(t)m\,\d x\,\d t
      \\&\qquad=
      \int_Q \zeta(u)\big(\sfL\,m+\sfF\big) \eta(t)
      \,\d x\d
      t
      -
      \bigg(\int_{\R^d}Z(u^+_0)\,\d\mu_0-
      \int_{\R^d}Z(u^-_1)\,\d\mu_1-
      \int_Q \zeta(u)\eta(t)\sfF^* 
      \,\d x\,\d t\bigg).
    \end{aligned}
\end{equation}
Writing the above identity for the positive and the negative part of
$\zeta$,
we obtain that it holds for every $\zeta\in C^0_c(\R)$,
together with its $\eta$-localized version \eqref{eq:109cc}.

We can now write  \eqref{eq:109c} by choosing 
$\eta\ge0$ with compact support in $(0,1)$ and $\eta\equiv 1$ on 
the interval $[s,t]$ and choosing an increasing
sequence
of nonnegative functions $\zeta_k\in C^0_c(\R)$ converging to $1$, so that
$(Z_k(u)\lor 0):=\int_0^{u\lor 0}\zeta_k(v)\,\d v$ is monotonically
increasing to $u\lor 0$ and 
$(Z_k(u)\land 0):=-\int_{u\land 0}^0\zeta_k(v)\,\d v$ is monotonically
decreasing to $u\land 0$. The application of Beppo Levi's and Lebesgue
dominated convergence theorem yields
  \begin{equation}
    \label{eq:109cbis}
    \begin{aligned}
      &\int_Q 
      \big(\sfY_H \, m+\sfY_F\big)\eta(t)
      \,\d
      x\,\d t+\int_{\R\times
        \R^{d+1}}\eta(t)\,\d\vartheta(u,t,x)
      +\int_Q u\eta'(t)m\,\d x\,\d t
      \\&\qquad=
      \int_Q \big(\sfL\,m+\sfF\big) \eta(t)
      \,\d x\d
      t+
      \int_Q \zeta(u)\eta(t)\sfF^* 
      \,\d x\,\d t
    \end{aligned}
\end{equation}
which in particular shows that $\vartheta(\R\times Q_{s,t})<\infty$ for every
$0<s<t<1$. The set $J_1:=\big\{s\in (0,1):\vartheta(\R\times\{s\}\times
\R^d)>0\big\}$ is therefore at most countable and $D_p[\mu]\setminus
J_1$ has full measure in $(0,1)$.
On the other hand, \eqref{duality_formula} shows that the map
\begin{equation}
  \label{eq:195}
  t\mapsto \int_{\R^d}\sfu_t\,\d\mu_t+\int_{1/2}^t
  \int_{\R^d}\big(\alpha-H(x,Du)-Du\cdot\vv\big)m
  \,\d x\,\d r,\quad
  t\in D_p[\mu]
\end{equation}
is finite and increasing in $D_p[\mu]$, so that it has an at most
countable
jump set $J_2$. Setting $J:=J_1\cup J_2$ and choosing
a decreasing sequence $\eta_h$ 
such that $\eta_h\equiv 1$ on $[s,t]$, $\eta_h\in [0,1]$, and
$\eta_h\downarrow 0$ in $\R\setminus [s,t]$, \eqref{eq:109cbis} yields
\eqref{eq:109d}
at every $s,t\in D_p[\mu]\setminus J$.

A similar argument holds for $s=0$ (resp.~$t=1$) by 
assuming
$\int_\R (u_0^+\land0)\,\d\mu_0>-\infty$
(resp.~$\int_\R (u_1^-\lor 0)\,\d\mu_1<+\infty$) and choosing 
a function $\eta$ identically $1$ in the interval $[-1,t]$ (resp.~$[s,2]$).
\end{proof}
We deduce from the previous Theorem a precise characterization of the
optimality conditions. In order to express in a convenient way the
trace conditions, we will use approximate limits: recall that 
for a measurable map $g:A\to \R$ (a.e.) defined in a measurable subset $A\subset \R$ 
and a point $t_0\in \R$ of positive density for $A$ we have
\begin{equation}
  \label{eq:142}
  \begin{aligned}
    \apliminf_{t\to t_0}g(t)&:= \sup\Big\{\ell\in \R:
    \lim_{r\down0}r^{-1}{\LL^1(\{t\in A:
      |t-t_0|<r, \ g(t)<\ell\})}=0\Big\},\\
    \aplimsup_{t\to t_0}g(t)&:= \inf\Big\{\ell\in \R:
    \lim_{r\down0}r^{-1}{\LL^1(\{t\in A:
      |t-t_0|<r, \ g(t)>\ell\})}=0\Big\},
  \end{aligned}
\end{equation}
and we call $\aplim_{t\to t_0}g(t)=\ell$ if $\ell=\apliminf_{t\to
  t_0}g(t)=\aplimsup_{t\to t_0}g(t)$.
\begin{corollary}
  \label{cor:optimality}
  Let $(u,\alpha)\in \hj_q(Q,H)$ 
  and let $(m,\vv)\in \CE 2p {Q;\mu_0,\mu_1}$ with $\mu_0,\mu_1\ll \LL^d$ 
  and $\int_{\R^d}(u_0^+\lor 0)\,\d\mu_0<+\infty$,
  $\int_{\R^d}(u_1^-\land 0)\,\d\mu_1>-\infty$.
  Let us set
  \begin{equation}
    \label{eq:174}
    \begin{aligned}
      \cA(u,\alpha)&:=\int_{\R^d}u^+_0\,\d\mu_0- \int_{\R^d}
      u^-_1\,\d\mu_1- \int_Q F^*(x,\alpha(t,x)) \,\d x\,\d t\in [-\infty,+\infty)\\
      \cB(m,\vv)&:=
      \int_{Q}
  \Big(L(x,\vv(t,x))m(t,x)+F(x,m(t,x))\Big)\,\d x\,\d t,
    \end{aligned}
  \end{equation}
  Then
  \begin{equation}
    \label{eq:170}
    \cA(u,\alpha)\le \cB(m,\vv)
  \end{equation}
  and
  the equality holds in \eqref{eq:170} if and only if
  $u_i\in L^1(\R^d,\mu_i)$ and
  \begin{equation}
    \label{eq:173}
    \begin{aligned}
      Y_H(x,Du,\vv)&=0&&\text{for $\tilde\mu$-a.e.~$(t,x)\in Q$},\\
      Y_F(x,\alpha,m)&=0&&\text{for $\LL^{d+1}$-a.e.~$(t,x)\in Q$},\\
      \vartheta(\R\times\overline Q)&=0.&&
    \end{aligned}
  \end{equation}
  Moreover, $\vartheta(\R\times \overline Q)=0$ if and only if
  for every nonnegative $\zeta\in C^0_c(\R)$ 
  \begin{equation}
    \label{eq:172bis}
    \partial_t\big(Z(u)m\big)+
    \nabla\cdot\big(Z(u)m\,
    \vv\big)+\zeta(u)\Big(\alpha-H(x,Du)-Du\cdot\vv\Big)m=0
    \quad \text{in }\DD'(Q),
  \end{equation}
  and the traces $u_0^+$ and $u_1^-$ are taken in the following weak sense
  \begin{equation}
    \label{eq:198}
    \aplim_{t\down0}
    \int_{\R^d} u(t,x)\,\d\mu_t=
    \int_{\R^d}u^+_0\,\d\mu_0,\quad
    \aplim_{t\up1}
    \int_{\R^d} u(t,x)\,\d\mu_t=
    \int_{\R^d}u^-_1\,\d\mu_1.
  \end{equation}
\end{corollary}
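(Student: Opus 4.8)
The plan is to derive everything from the master identity \eqref{eq:109dbis} of Theorem~\ref{thm:duality-with-defect}. With the functions $\sfY_H,\sfY_F,\sfL,\sfF,\sfF^*$ of \eqref{eq:141} one has $\int_Q(\sfL\,m+\sfF)\,\d\lambda=\cB(m,\vv)$ and $\int_{\R^d}u_0^+\,\d\mu_0-\int_{\R^d}u_1^-\,\d\mu_1-\int_Q\sfF^*\,\d\lambda=\cA(u,\alpha)$, so \eqref{eq:109dbis} reads, whenever $u_i\in L^1(\R^d,\mu_i)$ (equivalently, by Theorem~\ref{thm:precise_repr} the positive part of $u_0^+$ and the negative part of $u_1^-$ are already $\mu$-integrable, so this is precisely $\int(u_0^+\wedge0)\,\d\mu_0>-\infty$ and $\int(u_1^-\vee0)\,\d\mu_1<+\infty$),
\[
\int_Q\big(\sfY_H\,m+\sfY_F\big)\,\d\lambda+\vartheta(\R\times\overline Q)=\cB(m,\vv)-\cA(u,\alpha)
\]
as an identity with values in $[0,+\infty]$. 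Since $Y_H\ge0$ and $Y_F\ge0$ by Fenchel--Young and $\vartheta\ge0$, the left-hand side is nonnegative, which gives \eqref{eq:170}; and $\cB(m,\vv)$ is finite thanks to the growth bounds on $L,F$ and the integrability of $m$ (in $L^p\cap L^1_\weight(Q)$) and of $\sqrt{m}\,|\vv|$. If instead one of the traces fails to be $\mu_i$-integrable, then by the standing sign hypotheses $\cA(u,\alpha)=-\infty$ and \eqref{eq:170} is trivial. This proves (i).

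For the equality characterization: if $\cA(u,\alpha)=\cB(m,\vv)$ then $\cA$ is finite, which by the sign hypotheses forces $u_i\in L^1(\R^d,\mu_i)$ and $\int_Q F^*(x,\alpha)\,\d\lambda<\infty$; the displayed identity then yields $\int_Q(\sfY_H\,m+\sfY_F)\,\d\lambda+\vartheta(\R\times\overline Q)=0$, hence, each summand being nonnegative, $\sfY_H\,m=0$ $\lambda$-a.e.\ (equivalently $Y_H(x,Du,\vv)=0$ $\tilde\mu$-a.e., since $\tilde\mu=m\lambda$), $Y_F(x,\alpha,m)=0$ $\lambda$-a.e., and $\vartheta(\R\times\overline Q)=0$, i.e.~\eqref{eq:173} (and one may unpack $Y_H=0$ as $\vv=-H_\spp(x,Du)$ and $Y_F=0$ as $\alpha=f(x,m)$ where $m>0$, $\alpha\le f(x,0)$ where $m=0$, as in \eqref{eq:285}--\eqref{eq:282}). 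Conversely, if $u_i\in L^1(\R^d,\mu_i)$ and \eqref{eq:173} holds, then $\sfY_F=0$ a.e.\ forces $F^*(x,\alpha)=\alpha m-F(x,m)\in L^1(Q)$, so $\cA$ is finite and the identity gives $\cB(m,\vv)-\cA(u,\alpha)=0$.

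For the last equivalence, split $\vartheta(\R\times\overline Q)=\vartheta(\R\times Q)+\vartheta(\R\times(\{0\}\times\R^d))+\vartheta(\R\times(\{1\}\times\R^d))$, all three terms nonnegative. \emph{Interior part:} by \eqref{eq:172} the distribution $\partial_t(Z(u)m)+\nabla\cdot(Z(u)m\,\vv)+\zeta(u)(\alpha-H(x,Du)-Du\cdot\vv)m$ coincides with $\vartheta_\zeta\mres Q$, and by \eqref{eq:164} the family $\{\vartheta_\zeta\mres Q:\zeta\in C^0_c(\R),\ \zeta\ge0\}$ determines $\vartheta\mres(\R\times Q)$ (products $\zeta(r)\varphi(t,x)$ with $\zeta,\varphi\ge0$ span a dense cone in $C^0_c(\R\times Q)$); hence $\vartheta(\R\times Q)=0$ iff \eqref{eq:172bis} holds for every nonnegative $\zeta$. \emph{Boundary part:} fixing $s\in D_p[\mu]\setminus J$ and subtracting \eqref{eq:109d} on $Q_{s,1}$ from \eqref{eq:109dbis} (legitimate when $u_i\in L^1(\mu_i)$ and $\int_QF^*<\infty$; the degenerate cases are handled directly through \eqref{eq:109d} and the monotonicity of the map \eqref{eq:195}), and using $\vartheta(\R\times\{s\}\times\R^d)=0$, we get
\[
\int_{Q_{0,s}}\big(\sfY_H\,m+\sfY_F\big)\,\d\lambda+\vartheta\big(\R\times([0,s]\times\R^d)\big)=\int_{Q_{0,s}}\big(\sfL\,m+\sfF+\sfF^*\big)\,\d\lambda-\int_{\R^d}u_0^+\,\d\mu_0+\int_{\R^d}\sfu_s\,\d\mu_s .
\]
Letting $s\downarrow0$ along $D_p[\mu]\setminus J$: the three Lebesgue integrals over $Q_{0,s}$ vanish, $\vartheta(\R\times([0,s]\times\R^d))\downarrow\vartheta(\R\times(\{0\}\times\R^d))$, so $\int_{\R^d}\sfu_s\,\d\mu_s$ converges, and since $\int_{\R^d}\sfu_s\,\d\mu_s=\int_{\R^d}u(s,x)\,\d\mu_s$ for $s\in D_p[\mu]$ and $D_p[\mu]\setminus J$ has full measure in $(0,1)$, the limit is the approximate limit in the sense of \eqref{eq:142}, giving $\vartheta(\R\times(\{0\}\times\R^d))=\aplim_{t\downarrow0}\int_{\R^d}u(t,x)\,\d\mu_t-\int_{\R^d}u_0^+\,\d\mu_0\ge0$; symmetrically $\vartheta(\R\times(\{1\}\times\R^d))=\int_{\R^d}u_1^-\,\d\mu_1-\aplim_{t\uparrow1}\int_{\R^d}u(t,x)\,\d\mu_t\ge0$. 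Thus the two boundary masses vanish exactly when \eqref{eq:198} holds, which combined with the interior part proves the last equivalence.

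The main obstacle is this boundary analysis: identifying the time‑atomic part of $\vartheta$ over $\{0\}\times\R^d$ and $\{1\}\times\R^d$ with the ``defect'' between the pointwise traces $u_0^+,u_1^-$ and the approximate limits of $t\mapsto\int_{\R^d}u(t,\cdot)\,\d\mu_t$. This requires a careful passage to the limit in the one‑sided localized identity \eqref{eq:109d}, in particular justifying that $\int_{\R^d}\sfu_s\,\d\mu_s$ (with $\mu_s$ also varying) converges along $D_p[\mu]\setminus J$ and that this limit is the approximate limit, together with bookkeeping of the degenerate cases $\cA(u,\alpha)=-\infty$ (caused by non‑integrable traces or by $\int_QF^*(x,\alpha)\,\d\lambda=+\infty$), where \eqref{eq:109dbis} is read in $[0,+\infty]$ and one works instead from \eqref{eq:109d} and the monotone, finite quantity \eqref{eq:195}. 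By contrast, the reduction of the interior statement to the distributional family \eqref{eq:172bis} is routine once \eqref{eq:172} and \eqref{eq:164} are available.
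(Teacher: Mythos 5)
Your proof is correct and follows essentially the same route as the paper's: everything is read off from the master identity \eqref{eq:109dbis} of Theorem~\ref{thm:duality-with-defect} (and its localized version \eqref{eq:109d}), with $\cB-\cA$ equal to the nonnegative quantity $\int_Q(\sfY_Hm+\sfY_F)\,\d\lambda+\vartheta(\R\times\overline Q)$, and the last equivalence obtained by splitting $\vartheta$ into its interior and time-atomic boundary parts. You make the paper's terse argument for the boundary equivalence more explicit — in particular by identifying $\vartheta(\R\times\{0\}\times\R^d)=\aplim_{t\downarrow0}\int u\,\d\mu_t-\int u_0^+\,\d\mu_0$ (and symmetrically at $t=1$), whereas the paper phrases the two implications in a slightly asymmetric way — but the mathematical content and the key lemma invoked are identical.
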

\begin{proof}
  \eqref{eq:170} clearly follows from \eqref{eq:109d} since
  $\vartheta$ is a positive measure
  (notice that if the integral of the negative part of $u_0^+$
  w.r.t.~$\mu_0$ or 
  the integral of the positive part of $u_1^-$
  w.r.t.~$\mu_1$ are not finite, $\cA(u,\alpha)=-\infty$ and
  \eqref{eq:170} is trivially satisfied).
  The optimality conditions \eqref{eq:173} are also an immediate
  consequence of \eqref{eq:109d}, since $\sfY_H$ and $\sfY_F$ are
  nonnegative.

  Let us check the last statement. If $\vartheta(\R\times \overline Q)=0$ then
  \eqref{eq:172bis} is an immediate consequence of \eqref{eq:172}.
  \eqref{eq:109d} shows that the map 
  $t\mapsto \int_{\R^d}\sfu(t,x)\,\d\mu_t(x)$, $t\in D_p[\mu]$, is the restriction
  to $D_p[\mu]$ of an absolutely continuous map
  with limits $\int_{\R^d}u^+_0\,\d\mu_0$ 
  (resp.~$\int_{\R^d}u^-_1\,\d\mu_1$)  as $t\down0$ (resp.~as
  $t\up1$),
  so that \eqref{eq:198} holds.

  Conversely, \eqref{eq:172bis} shows that $\vartheta(\R\times Q)=0$.
  \eqref{eq:109d} then shows that there exists an at most countable
  set $J$ such that 
  \begin{align*}
    \vartheta(\R\times \{0\}\times \R^d)=
    I(t)+\int_{\R^d}\sfu_t\,\d\mu_t-
    \int_{\R^d}u_0^+\,\d\mu_0\quad
    \text{for every }t\in D_p[\mu]\setminus J,
  \end{align*}
  where
  \begin{displaymath}
    I(t):=\int_{Q_{0,t}}\Big(\big(\sfL-\sfY_H\big)m+\big(\sfF-\sfY_F-\sfF^*\big)\Big)\,\d\lambda
  \end{displaymath}
  Since $\lim_{t\down0}I(t)=0$ \eqref{eq:198} yields 
  $\vartheta(\R\times \{0\}\times \R^d)=0$. A similar argument 
  can be used to show $\vartheta(\R\times \{1\}\times \R^d)=0$.
\end{proof}
Notice that it always holds
\begin{equation}
  \label{eq:198bis}
  \apliminf_{t\down0}
  \int_{\R^d} u(t,x)\,\d\mu_t\ge
  \int_{\R^d}u^+_0\,\d\mu_0,\quad
  \aplimsup_{t\up1}
  \int_{\R^d} u(t,x)\,\d\mu_t\le
  \int_{\R^d}u^-_1\,\d\mu_1,
\end{equation}
so that \eqref{eq:198} is in fact equivalent to the property
  \begin{equation}
    \label{eq:198tris}
    \aplimsup_{t\down0}
    \int_{\R^d} u(t,x)\,\d\mu_t\le 
    \int_{\R^d}u^+_0\,\d\mu_0,\quad
    \apliminf_{t\up1}
    \int_{\R^d} u(t,x)\,\d\mu_t\ge
    \int_{\R^d}u^-_1\,\d\mu_1.
  \end{equation}

\nc

\section{A Variational approach to MFPP}
\label{sec:variational-approach}

It is known (see \cite{lasry2007mean}) that solutions to MFG systems can be, at least formally, characterized as minimizers of two optimal control problems.
Here we adopt the same strategy in the context of MFPPs. 
\GGG Following the standard approach of 
the dynamical formulation of Optimal Transport problems, we first
study
a primal formulation, consisting in the minimization of
a convex action-entropy functional among all the solutions 
$(m,\vv)\in \CE 2p{Q;\mu_0,\mu_1}$ to the continuity
equation
connecting two given measures 
$\mu_i\in \cP_2(\R^d)$.

The dual problem naturally arises as a supremum
of a concave cost functional, defined on subsolutions to the
Hamilton-Jacobi equation.

As usual, in all this section we will refer to the main structural
conditions stated in \ref{h.1}

\subsection{The variational structure of the primal problem}
\label{subsec:variational-primal}

We state the primal problem for an arbitrary pair of measures
$\mu_i\in\cP_2(\R^d)$;
at this stage, the minimal assumption requires that $\CE
2p{Q;\mu_0,\mu_1}$ is not empty (or, equivalently, that $\mu_i\in \cP_{2,p}(\R^d)$)
and
we do not need to add any other a priori regularity on $\mu_0,\mu_1$.
\begin{problem}\label{p:control_eq:cont}
Let $\mu_i\in \cP_2(\R^d)$ be given.
We look for the minimizer of the 
functional 
\begin{equation}
  \label{eq:85}
  \cB(m,\vv):=
  \int_{0}^1 \int_{\R^d}
  \Big(L(x,\vv(t,x))m(t,x)+F(x,m(t,x))\Big)\,\d x\,\d t,
\end{equation}
among all solutions $(m,\vv)\in \CE 2p{Q;\mu_0,\mu_1}$ of the continuity
equation connecting $\mu_0$ to $\mu_1$.
\end{problem}
In order to prove the existence of a solution to Problem 
\ref{p:control_eq:cont}, 
we first show that a control of $m$ in $L^1_{1+|x|}(Q)$ 
yields uniform bounds of the sublevels of $\cB$.
Notice that at this first stage we are not assuming that 
$(m,\vv)$ solves the continuity equation
nor that $m$ has integral $1$.
\begin{lemma}[A priori estimates I]
  \label{le:apriori1}
  Let $m\in L^1_\weight(Q)$ be nonnegative and let $\vv:Q\to \R^d$ be a Borel vector field
  satisfying
  \begin{align}
    \label{eq:143}
    \int_Q(1+|x|)\,m(t,x)\,\d x\,\d t&\le M,\\
    \label{eq:143bis}
    \cB(m,\vv)&\le B
  \end{align}
  for suitable constants $M,B>0$. 
  Then $\ww:=m\vv\in L^{2p/(p+1)}(Q)$ and there exists a constant
  $E>0$ only depending on 
  $B,$ $M$,
  and on
  the structural constants
  \begin{equation}
    \label{eq:107}
    c_H,\ c_H^\pm,\ c_f,\  C_f:=\|\gamma_f\|_{L^q(\R^d)},
  \end{equation}
  such that
  \begin{equation}
    \label{eq:144}
    \int_Q \Big(|\vv|^2m+m^p\Big)\,\d x\,\d t\le E,\quad
  \int_Q |\ww|^{\frac{2p}{p+1}}\,\d x\,\d t\le E.
  \end{equation}
\end{lemma}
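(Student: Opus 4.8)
The plan is to read off the two bounds directly from the coercivity (lower) bounds on $L$ and $F$ in \eqref{eq:9} and \eqref{eq:7}. Recalling $\gamma^+_H(x)=c_H^+(1+|x|)$, these give the pointwise inequality, valid $\LL^{d+1}$-a.e.~on $Q$,
\begin{equation*}
  \tfrac1{2c_H}|\vv|^2 m+\tfrac1{pc_f^p}m^p\ \le\ \big(L(x,\vv)m+F(x,m)\big)+\gamma^+_H(x)\,m+\gamma_f(x)\,m .
\end{equation*}
Integrating over $Q$, the left-hand side is a nonnegative integrand; using \eqref{eq:143bis} for the first term on the right ($\le B$) and \eqref{eq:143} for the second ($\int_Q\gamma^+_H m=c_H^+\int_Q(1+|x|)m\le c_H^+M$), the only term that still needs estimating is $\int_Q\gamma_f m$.

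For that term I would use Hölder's inequality, first in $x$ with exponents $q,p$ (since $\gamma_f\in L^q(\R^d)$) and then in $t$:
\begin{equation*}
  \int_Q\gamma_f\,m\ \le\ \int_0^1\|\gamma_f\|_{L^q(\R^d)}\,\|m(t,\cdot)\|_{L^p(\R^d)}\,\d t\ \le\ C_f\Big(\int_Q m^p\Big)^{1/p},\qquad C_f:=\|\gamma_f\|_{L^q(\R^d)},
\end{equation*}
followed by Young's inequality in the form $C_f X^{1/p}\le\frac1{2pc_f^p}X+\kappa_1$, where $X:=\int_Q m^p$ and $\kappa_1=\kappa_1(c_f,C_f,p)$ is a fixed constant. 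Substituting, half of the $m^p$-term gets absorbed into the left-hand side and one obtains
\begin{equation*}
  \tfrac1{2c_H}\int_Q|\vv|^2m+\tfrac1{2pc_f^p}\int_Q m^p\ \le\ B+c_H^+M+\kappa_1\ =:\ E_0 ,
\end{equation*}
which is the first inequality of \eqref{eq:144} with $E$ a fixed multiple of $E_0$ depending only on the structural constants in \eqref{eq:107}. The bound on $\ww=m\vv$ then follows from the pointwise factorization $|\ww|^{\frac{2p}{p+1}}=(m|\vv|^2)^{\frac{p}{p+1}}(m^p)^{\frac1{p+1}}$ together with Young's inequality with exponents $\frac{p+1}{p}$ and $p+1$, giving $|\ww|^{\frac{2p}{p+1}}\le\frac{p}{p+1}\,m|\vv|^2+\frac1{p+1}\,m^p$ a.e.; integrating and inserting the bounds just obtained shows $\ww\in L^{2p/(p+1)}(Q)$ with the same constant $E$.

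The main point requiring care is precisely the absorption step: a priori $X=\int_Q m^p$ (and $\int_Q\gamma_f m$, and $\int_Q|\vv|^2 m$) could be infinite, so one must first justify that these quantities are finite before absorbing. I would handle this by a standard truncation: replace $m$ by $m_k:=m\wedge k$, for which $\int_Q m_k^p\le k^{p-1}\int_Q m\le k^{p-1}\|m\|_{L^1_\weight(Q)}<\infty$, run the chain of inequalities above for $(m_k,\vv)$ — checking, by convexity of $s\mapsto L(x,\vv)s+F(x,s)$ and the integrability of $\gamma^+_H m$, that the effect of the truncation on $\cB$ is bounded uniformly in $k$ — and then let $k\to\infty$ by the Monotone Convergence Theorem, since $m_k\uparrow m$, $|\vv|^2m_k\uparrow|\vv|^2m$, $m_k^p\uparrow m^p$ and $|m_k\vv|^{\frac{2p}{p+1}}\uparrow|\ww|^{\frac{2p}{p+1}}$. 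Apart from this bookkeeping, the whole statement reduces to the two elementary inequalities above.
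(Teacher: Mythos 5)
Your main chain of inequalities is essentially the same as the paper's, and your bound on $\ww$ via the pointwise Young
$|\ww|^{2p/(p+1)}\le\tfrac{p}{p+1}m|\vv|^2+\tfrac1{p+1}m^p$ is fine (the paper uses H\"older instead, yielding the same type of estimate). The genuine difference — and the source of a gap — is \emph{where} you apply Young's inequality to absorb the $\gamma_f m$ term. You do it at the level of the global quantity $X=\int_Q m^p$, after H\"older, which makes the absorption step meaningless unless $X<\infty$ is known a priori; you rightly flag this. The paper instead applies Young \emph{pointwise}:
\begin{equation*}
  \gamma_f(x)\,m\ \le\ \frac{1}{p(2c_f)^p}\,m^p+\frac{(2c_f)^q}{q}\,\gamma_f(x)^q,
\end{equation*}
which, combined with \eqref{eq:7}, gives the pointwise lower bound $F(x,m)\ge\frac{1}{2pc_f^p}m^p-\frac{(2c_f)^q}{q}\gamma_f^q(x)$. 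Since $\gamma_f^q\in L^1(\R^d)$, the integrand $L(x,\vv)m+F(x,m)$ is now bounded below by an $L^1(Q)$ function independent of $m$, so $\cB$ is a well-defined extended-real integral, both $\int_Q L m$ and $\int_Q F$ are individually finite once $\cB\le B$, and the desired bound drops out of a single integration — no truncation and no a priori finiteness of $\int_Q m^p$ is required.

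The truncation as you sketch it does not close the circularity. Convexity of $g(s):=L(x,\vv)s+F(x,s)$ with $g(0)=0$ does give $g(m_k)\le g(m)_+$ pointwise (write $m_k=tm$ with $t\in[0,1]$), hence $\cB(m_k,\vv)\le\int_Q g(m)_+\,\d\lambda=\cB(m,\vv)+\int_Q g(m)_-\,\d\lambda$. But to bound $\int_Q g(m)_-$ you need a pointwise lower bound on $F(x,m)$ by an integrable function: the integrability of $\gamma_H^+m$ only controls the $(Lm)_-$ contribution, while $F(x,m)_-\le\gamma_f m$ puts you right back at the term you are trying to estimate. The only way to finish is to invoke a pointwise bound $F(x,m)\ge -c\,\gamma_f^q(x)$ — either the pointwise Young above, or equivalently $F(x,m)\ge F(x,m^\star(x))\ge -c_f^q\gamma_f^q(x)$ where $m^\star(x)$ is the minimizer of $F(x,\cdot)$ — at which point the truncation becomes superfluous and you have essentially reproduced the paper's argument. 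In short: the idea of truncating is not wrong, but its justification silently requires the very pointwise Young inequality that makes the truncation unnecessary, so the sketch as written leaves a real gap.
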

\begin{proof}
  Let us first observe that \eqref{eq:7} and
  the elementary inequalities
  \begin{displaymath}
    \gamma_f m\le \frac1{p(2c_f)^p}m^p+
    \frac{(2c_f)^q}{q}  \gamma_f^q,\quad
  \end{displaymath}
  yield 
  \begin{equation}
    \label{eq:146}
    \int_Q F(x,m)\,\d x\,\d t\ge 
    \frac1{2pc_f^p}\int_Qm^p\,\dx\,\d t-
    C_{F},\quad
    C_F:=\frac{(2c_f\,C_f)^q}q,
  \end{equation}
  and
  \begin{equation}
    \label{eq:124a}\cB(m,\vv)
   =\int_{Q}L(x,\vv)m+F(x,m)\,\d x\,\d t
      \ge 
    \int_{Q}\Big(\frac 1{2c_H}m|\vv|^2+\frac{1}{2p\,c_f^p}\,m^p\Big)\,\d x \d t -(C_F+c_H^+ M)
\end{equation}
Setting
$c:=2\max(c_H,p\,c_f^p)$, we conclude that 
\begin{equation}
  \label{eq:91}
  \int_Q \Big(
  m|\vv|^2+m^p\Big)\,\d x\,\d r
  \le E:=c(B+C_F+c_H^+ M).
\end{equation}
We eventually obtain a bound of $\ww=m\vv$ in $L^{2p/(p+1)}(Q)$ by
\begin{equation}
\|m\vv\|_{L^{2p/(p+1)}(Q)}\le \|m^{1/2}\|_{L^{2p}(Q)}
\|m^{1/2}\vv\|_{L^2(Q;\R^d)}=\|m\|_{L^p(Q)}^{1/2}\Big(\int_{Q}|\vv|^2m\,\d
x\,\d t\Big)^{1/2}\le E.\label{eq:92}
\end{equation}
\end{proof}
\eqref{eq:144} shows that 
for $(m,\vv)\in \CE 2p{Q;\mu_0,\mu_1}$ an upper bound of $\cB(m,\vv)$ 
provides an
upper bound for $\KL_{2,p}(\mu_0,\mu_1)$.
The converse property also holds.
\begin{lemma}[A priori estimates II]
\label{le:apriori}
Let us consider
\begin{equation}
  \label{eq:199}
  \mu_i\in \cP_2(\R^d)\quad
  \text{with }
  \quad
  \int_{\R^d}|x|^2\,\d\mu_i=M_i,\quad
  \KL_{2,p}(\mu_0,\mu_1)=K<\infty.
\end{equation}
Then the infimum of $\cB$ on $\CE 2p{Q;\mu_0,\mu_1}$ is finite
and there are constants $B$ and $M$ only depending on 
$K$, $M_i$ 
and on the structural constants 
of \eqref{eq:107} such that 
the infimum can be restricted to all the pairs $(m,\vv)\in \CE
2p{Q;\mu_0,\mu_1}$ 
satisfying the apriori bounds \eqref{eq:143} and \eqref{eq:143bis};
in particular, $(m,\vv)$ also satisfy \eqref{eq:144}.
\end{lemma}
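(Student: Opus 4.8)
\emph{Proof strategy.} The plan is to bracket the infimum of $\cB$ between an explicit lower bound (coercivity) and the value of $\cB$ on a well-chosen competitor. For the upper side, the hypothesis $\KL_{2,p}(\mu_0,\mu_1)=K<\infty$ means that $\CE 2p{Q;\mu_0,\mu_1}$ is nonempty and that for every $\delta>0$ there is $(m,\vv)\in\CE 2p{Q;\mu_0,\mu_1}$ with $\int_Q\big(\tfrac12|\vv|^2m+\tfrac12(m+m^p)\big)\,\d\lambda\le K+\delta$; since $\int_Qm\,\d\lambda=1$, choosing $\delta=1$ gives $\int_Q|\vv|^2m\,\d\lambda\le 2K+1$ and $\int_Qm^p\,\d\lambda\le 2K+1$. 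Using the upper growth bounds $L(x,\vv)\le\tfrac{c_H}2|\vv|^2+\gamma^-_H(x)$ from \eqref{eq:9}, $F(x,m)\le\tfrac{c_f^p}p m^p+\gamma_f(x)m$ from \eqref{eq:7}, a Young inequality to absorb $\int_Q\gamma_fm\,\d\lambda$ exactly as in the derivation of \eqref{eq:146}, and the moment control described in the next paragraph, one estimates $\cB(m,\vv)\le B$ for an explicit constant $B$ depending only on $K$, $M_0$ (or symmetrically $M_1$) and the structural constants \eqref{eq:107}. In particular $\inf_{\CE 2p{Q;\mu_0,\mu_1}}\cB\le B<\infty$.

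The key quantitative ingredient, valid for \emph{every} $(m,\vv)\in\CE 2p{Q;\mu_0,\mu_1}$, is the moment bound
\[
  \int_0^1\Big(\int_{\R^d}|x|^2\,\d\mu_t\Big)\,\d t\le 2M_0+2\int_Q|\vv|^2m\,\d\lambda
  \quad\text{and hence}\quad
  \int_Q(1+|x|)m\,\d\lambda\le 1+\Big(2M_0+2\int_Q|\vv|^2m\,\d\lambda\Big)^{1/2},
\]
the second inequality following from Cauchy--Schwarz. To prove the first, I use Lemma \ref{le:precise-m} and \eqref{eq:45}: $\mu\in\ac^2([0,1];\cP_2(\R^d))$ with $\|\dot\mu_r\|_{W_2}^2\le\int_{\R^d}|\vv(r,\cdot)|^2\,\d\mu_r$, so writing $q(t):=\int_{\R^d}|x|^2\,\d\mu_t=W_2^2(\mu_t,\delta_0)$ and combining the triangle inequality with $W_2(\mu_t,\mu_0)\le\int_0^t\|\dot\mu_r\|_{W_2}\,\d r\le\big(\int_Q|\vv|^2m\,\d\lambda\big)^{1/2}$ gives $q(t)^{1/2}\le M_0^{1/2}+\big(\int_Q|\vv|^2m\,\d\lambda\big)^{1/2}$; integrating the square in $t$ yields the claim. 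I then combine this with the coercivity estimate \eqref{eq:124a}, i.e. $\int_Q|\vv|^2m\,\d\lambda\le 2c_H\big(\cB(m,\vv)+c_H^+\int_Q(1+|x|)m\,\d\lambda+C_F\big)$: abbreviating $V:=\int_Q|\vv|^2m\,\d\lambda$ and substituting the moment bound for $\int_Q(1+|x|)m\,\d\lambda$ produces an inequality of the form $V\le C_1+C_2\sqrt V$, where $C_1$ depends affinely (and increasingly) on $\cB(m,\vv)$ and $C_1,C_2$ depend otherwise only on $M_0$ and the structural constants. Such a closed inequality forces $V\le\Psi\big(\cB(m,\vv)\big)$ for an explicit increasing function $\Psi$; reading it instead as a lower bound on $\cB(m,\vv)$ shows that $\cB$ is bounded below on $\CE 2p{Q;\mu_0,\mu_1}$ by a constant depending only on $M_0$ and the structural constants, so the infimum is finite.

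To conclude, by the first paragraph it is not restrictive to restrict the minimization to pairs satisfying \eqref{eq:143bis}, $\cB(m,\vv)\le B$. By the second paragraph, any such pair then satisfies $V\le\Psi(B)$ and hence $\int_Q(1+|x|)m\,\d\lambda\le M$ with $M$ depending only on $B$, $M_0$ and the structural constants, i.e. ultimately only on $K$, $M_i$ and the structural constants; this is precisely \eqref{eq:143}. Finally, applying Lemma \ref{le:apriori1} with these $M$ and $B$ gives the bounds \eqref{eq:144}. The only genuinely delicate point is the circular dependence in \eqref{eq:124a} between the kinetic energy $V$ and the first moment $\int_Q(1+|x|)m\,\d\lambda$; it is resolved exactly by the $W_2$-triangle-inequality moment estimate above, which controls the moment by $\sqrt V$ and thereby turns \eqref{eq:124a} into a genuine quadratic bound on $V$. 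Everything else reduces to Hölder's and Young's inequalities together with the growth conditions in Assumptions \ref{h.1}.
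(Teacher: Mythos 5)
Your proof is correct, and it reaches the conclusion by a route that is genuinely different from the paper's at the key step. For the upper bound $\cB\le B$ on a suitable competitor you both pick a near-minimizer of the $\KL_{2,p}$ cost and use the growth conditions together with the displacement bound $W_2(\mu_t,\mu_0)\le\bigl(\int_Q|\vv|^2m\,\d\lambda\bigr)^{1/2}$, so that part is essentially identical. The difference is in how you control the first moment $\int_Q(1+|x|)m\,\d\lambda$ for an \emph{arbitrary} pair $(m,\vv)$ with $\cB(m,\vv)\le B$: the paper introduces $M(t):=\bigl(\int|x|^2\,\d\mu_t\bigr)^{1/2}$, bounds $|\tfrac{\d}{\d t}M^2(t)|$ pointwise and closes a two-sided Gronwall/comparison argument in $t$ (yielding estimates \eqref{eq:120}--\eqref{eq:121}), while you sidestep the differential inequality entirely by using the global $W_2$-triangle bound $\bigl(\int|x|^2\,\d\mu_t\bigr)^{1/2}\le M_0^{1/2}+\bigl(\int_Q|\vv|^2m\,\d\lambda\bigr)^{1/2}$, substituting into the coercivity estimate \eqref{eq:124a}, and resolving the circularity through a single algebraic inequality of the form $V\le C_1+C_2\sqrt V$ (and its companion $\cB\ge\tfrac1{2c_H}V-C_2'\sqrt V-C_1'$ for the lower bound on the infimum). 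Your version is slightly more elementary, avoiding the regularity discussion needed to justify differentiating $M^2(t)$; the paper's Gronwall version gives a pointwise-in-$t$ bound $1+M(t)\le M$ on the entire curve rather than just the integrated moment bound \eqref{eq:143}, which is not needed here but is convenient elsewhere. One minor point worth making explicit in your write-up: the inequality $\|\dot\mu_r\|_{W_2}\le\bigl(\int_{\R^d}|\vv(r,\cdot)|^2\,\d\mu_r\bigr)^{1/2}$ holds for \emph{any} admissible velocity field, not only the minimal one of \eqref{eq:45}, which is exactly what you need since the competitor $\vv$ need not be minimal.
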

\begin{proof}
Let us pick a curve $(\bar m,\bar \vv)\in \CE2p{Q;\mu_0,\mu_1}$ with 
$\int_Q(|\bar\vv|^2\bar m+\bar m^p)\,\d\lambda\le 4K.$ 
We get the bound
\begin{equation}\label{eq:106}
  \int_Q F(x,\bar m)\,\d x\,\d t \le C_1:=4\frac
  {c_f^p}{p}K+
  C_f 
  (4K)^{1/p},
\end{equation}
thanks to the growth conditions \eqref{eq:7}. 
Since
\begin{equation}
  \label{eq:200}
  \Big(\int_{\R^d}|x|^2\,\d\bar\mu_t\Big)^{1/2}
  \le M_0^{1/2}+W_2(\mu_t,\mu_0)\le 
  M_0^{1/2}+(2K)^{1/2},
\end{equation}
we also get
\begin{equation}
  \int_0^1\int_{\R^d}L(x,\bar\vv)\bar m\,\d x\,\d t\le
  C_2:=c_HK+3c^-_H(1+M_0+2K)
\label{eq:117}
\end{equation}
thanks to 
\eqref{eq:128}, \eqref{eq:9}. In particular $\cB(\bar m,\bar \vv)\le B:=C_1+C_2$.

We now fix 
an element $(m,\vv)\in \CE 2p{Q;\mu_0,\mu_1}$ with $\ww=m\vv$ satisfying
$\cB(m,\vv)\le B$, and
we introduce the absolutely continuous function 
\begin{equation}
  \label{eq:119}
  t\mapsto M(t):=\Big(\int_{\R^d}|x|^2\,\d\mu_t\Big)^{1/2}\ge
  \int_{\R^d}|x|\,\d\mu_t,
  \quad
  t\in[0,1],
\end{equation}
whose derivative satisfies
\begin{equation}
  \label{eq:123}
  \left|\frac\d{\d t}M(t)\right|\le 
  \Big(\int_{\R^d}
  |\vv_t|^2 \,\d \mu_t\Big)^{1/2},\quad
  \left|\frac\d{\d t}M^2(t)\right|\le M^2(t)+\int_{\R^d}
  |\vv_t|^2 \,\d \mu_t\quad\text{a.e.~in }(0,1).
\end{equation}
Arguing as for \eqref{eq:124a} we obtain for a.e.~$t\in (0,1)$
\begin{align}
  B(t)&:=\int_{\R^d}L(x,\vv_t)m_t+F(x,m_t)\,\d x
        \ge 
    \int_{\R^d}\Big(\frac 1{2c_H}m_t|\vv_t|^2+\frac{1}{2p\,c_f^p}\,m_t^p\Big)\,\d x-
      C_F-c_H^+
    M(t).
\end{align}
By \eqref{eq:123} we deduce that 
\begin{align*}
  \left|\frac\d{\d t}M^2(t)\right|\le 2c_H (B(t)+C_3)+ 2M^2(t),\quad
  C_3:= C_F+\frac 12(c^+_H)^2
\end{align*}
so that a simple comparison argument yields the two estimates
\begin{equation}
  \label{eq:120}
  M^2(t)\le \Big(M_0+2c_HC_3+2c_H\int_0^t B(r)\,\d r\Big)\mathrm e^{2},
  \quad
  M^2(t)\le \Big(M_1+2c_HC_3+2c_H\int_t^1 B(r)\,\d r\Big)\mathrm e^{2}.
\end{equation}
Summing up the previous inequalities we end up with
\begin{equation}
  \label{eq:121}
  1+M(t)\le M:=1+\mathrm e\Big(M_0+M_1+2c_H(C_3+B)\Big)^{1/2}
  \quad\text{for every }t\in [0,1].
\end{equation}
\end{proof}

\begin{remark}
  \label{re:tedious}
  Notice that Lemma \ref{le:basic-Lp} yields a uniform estimate of the
  constants $B$ and $M$ of the previous Lemma in terms of
  \begin{equation}
    \label{eq:107bis}
    M_{01}:=\max(\|m_0\|_{L^p\cap
      L^1_\weight(\R^d)}, 
    \|m_1\|_{L^p\cap L^1_\weight(\R^d)}).
  \end{equation}
\end{remark}
We will now write Problem \ref{p:control_eq:cont}
in terms of a minimization of a \emph{convex} functional on a
\emph{convex set}. To this aim, 
for every $(m,\vv)\in \CE 2pQ$ we introduce the 
functions
\begin{equation}
  \label{eq:86}
  \ww:=m\vv\in 
  L^{\frac{2p}{p+1}}
  (Q;\R^d),\quad
  \tilde L(x,m,\ww):=
  \begin{cases}
    L(x,\ww/m)m&\text{if }m\neq0,\\
    +\infty&\text{if }m=0,\ \ww\neq 0,\\
    0&\text{if }m=0,\ \ww=0,
  \end{cases}
\end{equation}
and write $\cB$ as a functional of the pair $(m,\ww)$:
\begin{equation}
  \label{eq:87}
  \tilde\cB(m,\ww):=
  \int_{0}^1 \int_{\R^d}
  \Big(\tilde L\big(x,m(t,x),\ww(t,x)\big)+F(x,m(t,x))\Big)\,\d x\,\d t.
\end{equation}
Thanks to the coercivity conditions of 
$L$, if $m\in L^p\cap L^1_\weight(Q)$, $\ww\in L^1(Q;\R^d)$ and 
$\tilde \cB(m,\ww)<\infty$,
then we can write $\ww=m\vv$ for a vector field $\vv\in
L^2(Q,\tilde \mu;\R^d)$.
If moreover $(m,\ww)$ satisfy the conservation law
\begin{equation}
  \label{eq:89}
  \partial_t m+\nabla\cdot \ww=0\quad\text{in }\DD'(Q),
\end{equation}
then $(m,\vv)\in \CE 2pQ$ so that $m$ admits a continuous
representative
$\mu\in \ac^2([0,1];\cP_2(\R^d))$
and the initial and final conditions $\mu_{t=i}=\mu_i$, $i=0,1$,
make sense.

Therefore Problem \ref{p:control_eq:cont} is in fact equivalent to 
the minimization of the \emph{convex} functional $\tilde \cB$ 
on the convex set $\K$ depending on $\mu_0,\mu_1$:
\begin{equation}
  \label{eq:88}
  \begin{aligned}
    \K:=\Big\{&(m,\ww)\in
    \big(L^p\cap L^1_\weight(Q)\big)
    \times L^{2p/(p+1)}(Q;\R^d):
    \partial_t m+\nabla\cdot \ww=0\quad\text{in }\DD'(Q),
    \\
    &
    m\lambda=\tilde \mu\quad\text{for a curve }\mu\in
    \ac^2([0,1];\cP_2(\R^d)),
    \quad
    \mu_{t=0}=\mu_0,\ \mu_{t=1}=\mu_1\Big\}.
  \end{aligned}
\end{equation}
\begin{lemma}[Lower semicontinuity of $\tilde\cB$]
  \label{le:lsc}
  If $(m_n,\ww_n)$ is a sequence weakly converging to $(m,\ww)$ 
  in $L^p(Q)\times L^{2p/(p+1)}(Q;\R^d)$ and satisfyng 
  $m_n\ge 0$, $\int_Q(1+|x|^2)\,m_n\,\d x\,\d t\le M$, then 
  for every choice of $g,\gg,h$ 
  measurable functions 
  $  \gg:Q\to\R^d$, $g:Q\to \R$, $h:Q\to\R$ with
  \begin{equation}
  \label{eq:202}
  |g(t,x)|\le C_g(1+|x|),\quad
  \gg\in L^\infty(Q;\R^d),\quad
  h\in L^q(Q),
\end{equation}
we have
  \begin{equation}
    \label{eq:203}
    \liminf_{n\to\infty}\tilde\cB(m_n,\ww_n)
    +\int_Q \Big((g+h)m_n+\gg\cdot\ww_n\Big)\,\d\lambda
    \ge \tilde\cB(m,\ww)
    +\int_Q \Big((g+h)m+\gg\cdot\ww\Big)\,\d\lambda.
  \end{equation}
  In particular
  \begin{equation}
    \label{eq:149}
    \liminf_{n\to\infty}\tilde\cB(m_n,\ww_n)\ge \tilde\cB(m,\ww).
  \end{equation}  
\end{lemma}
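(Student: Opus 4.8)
The plan is to decouple \eqref{eq:203} into a linear part, which will be shown to actually \emph{converge}, and the unperturbed statement \eqref{eq:149}, which is the genuine lower semicontinuity inequality. The latter is then proved by localizing to cylinders of finite measure and invoking the classical weak lower semicontinuity theorem for convex integral functionals.

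For the reduction, I would first note that we may assume $\tilde\cB(m_n,\ww_n)<+\infty$ for every $n$, since otherwise the left-hand side of \eqref{eq:203} equals $+\infty$ along that index and nothing has to be proved; in that case $\ww_n=m_n\vv_n$ with $\vv_n\in L^2(Q,m_n\lambda;\R^d)$, so in particular $\ww_n\in L^1(Q;\R^d)$. Extracting a subsequence realizing the $\liminf$ in \eqref{eq:203} as a (finite) limit, and using that $\|m_n\|_{L^p(Q)}$, $\|\ww_n\|_{L^{2p/(p+1)}(Q)}$ are bounded (uniform boundedness) together with \eqref{eq:143} — which holds with $M$ replaced by $2M$ since $1+|x|\le2(1+|x|^2)$ — we argue as in Lemma \ref{le:apriori1}: after absorbing $\|\gg\|_{L^\infty}\int_Q|\ww_n|\,\d\lambda\le\|\gg\|_{L^\infty}M^{1/2}(\int_Q|\vv_n|^2m_n\,\d\lambda)^{1/2}$ into the quadratic term by Young's inequality we obtain a uniform bound $\int_Q(|\vv_n|^2m_n+m_n^p)\,\d\lambda\le E$, hence, via the upper growth bounds in \eqref{eq:7} and \eqref{eq:9}, a uniform bound on $\tilde\cB(m_n,\ww_n)$. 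Then the three linear terms pass to the limit: splitting $Q=Q_R\cup(Q\setminus Q_R)$ with $Q_R:=(0,1)\times B_R$, on $Q_R$ the weak convergences $m_n\rightharpoonup m$ in $L^p$ and $\ww_n\rightharpoonup\ww$ in $L^{2p/(p+1)}$ tested against $g\nchi_{Q_R},h\nchi_{Q_R}\in L^\infty(Q_R)\subset L^q(Q_R)$ and $\gg\nchi_{Q_R}\in L^\infty(Q_R)\subset L^{2p/(p-1)}(Q_R)$ give convergence, while on the complement the contributions are uniformly small, since $\int_{Q\setminus Q_R}|g|m_n\,\d\lambda\le\frac{2C_gM}{R}$, $\int_{Q\setminus Q_R}|h|m_n\,\d\lambda\le\|h\nchi_{B_R^c}\|_{L^q}E^{1/p}$ and $\int_{Q\setminus Q_R}|\ww_n|\,\d\lambda\le(ME)^{1/2}/R$ (using $1+|x|\le\frac2R(1+|x|^2)$ for $|x|\ge R\ge1$, $h\in L^q$, and the bound on $\int_Q|\vv_n|^2m_n\,\d\lambda$), all tending to $0$ as $R\to\infty$. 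Thus $\int_Q((g+h)m_n+\gg\cdot\ww_n)\,\d\lambda\to\int_Q((g+h)m+\gg\cdot\ww)\,\d\lambda$, and \eqref{eq:203} reduces to \eqref{eq:149}.

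To prove \eqref{eq:149}, I would again reduce to the case in which $\tilde\cB(m_n,\ww_n)$ is bounded, so that $\int_Q(|\vv_n|^2m_n+m_n^p)\,\d\lambda\le E$. The integrand $\Phi(x,m,\ww):=\tilde L(x,m,\ww)+F(x,m)$ of \eqref{eq:86} is a normal integrand, jointly convex and lower semicontinuous in $(m,\ww)\in[0,\infty)\times\R^d$: indeed $F(x,\cdot)$ is convex and $\tilde L$ is the perspective function of $L(x,\cdot)$, which admits the representation $\tilde L(x,m,\ww)=\sup_{\pp\in\Q^d}(-\pp\cdot\ww-mH(x,\pp))$, a supremum of affine functions. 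Fixing $R\ge1$ and restricting to $Q_R$, the bound $\gamma_H^+(x)=c_H^+(1+|x|)\le c_H^+(1+R)$ on $B_R$ and Young's inequality give $\Phi(x,m,\ww)\ge-\psi_R(x)$ for some $\psi_R\in L^1(Q_R)$; since $Q_R$ has finite measure, $m_n\rightharpoonup m$ in $L^1(Q_R)$ and $\ww_n\rightharpoonup\ww$ in $L^1(Q_R;\R^d)$ (these follow from the weak convergence in the stronger spaces $L^p,L^{2p/(p+1)}$ and the resulting equi-integrability), so the classical weak lower semicontinuity theorem for convex normal integrands, see e.g.\ \cite[Theorem 6.54]{Fonseca-Leoni07}, yields $\liminf_n\int_{Q_R}\Phi(x,m_n,\ww_n)\,\d\lambda\ge\int_{Q_R}\Phi(x,m,\ww)\,\d\lambda$.

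It remains to pass from $Q_R$ to $Q$. On $Q\setminus Q_R$ one has $\Phi(x,m_n,\ww_n)\ge-(\gamma_H^+(x)+\gamma_f(x))m_n$, so $\int_{Q\setminus Q_R}\Phi(x,m_n,\ww_n)\,\d\lambda\ge-\varepsilon(R)$ with $\varepsilon(R):=\sup_n\int_{Q\setminus Q_R}(\gamma_H^++\gamma_f)m_n\,\d\lambda\to0$ as $R\to\infty$ (the same tail estimates as before), hence $\liminf_n\tilde\cB(m_n,\ww_n)\ge\int_{Q_R}\Phi(x,m,\ww)\,\d\lambda-\varepsilon(R)$. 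Since $m$ has finite second moment and $m\in L^p(Q)$, by weak lower semicontinuity of the moment and of the norm, $\int_Q(\gamma_H^++\gamma_f)m\,\d\lambda<+\infty$; adding this nonnegative quantity to the integrand and applying monotone convergence gives $\int_{Q_R}\Phi(x,m,\ww)\,\d\lambda\to\tilde\cB(m,\ww)$ as $R\to\infty$, which closes the proof. I expect the main obstacle to be precisely the combination of two features: neither $m_n$ nor $\ww_n$ converges strongly, so that the joint convexity of the perspective integrand $\tilde L$ is essential rather than optional, and the domain $Q$ has infinite measure while the structural weights $\gamma_H^\pm$ fail to lie in $L^q$ — which is exactly why a finite-measure lower semicontinuity theorem has to be coupled with the uniform tail bound coming from the second-moment constraint $\int_Q(1+|x|^2)m_n\,\d\lambda\le M$.
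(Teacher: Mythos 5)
Your proof is correct, and it follows a genuinely different route from the paper's. The paper absorbs the structural weights into the integrand, writing $\tilde\cB=\tilde\cB_1-\int(\gamma_H^++\gamma_f)m$ where $\tilde\cB_1$ has nonnegative convex normal integrands $F_1,\tilde L_1$, and then invokes the weak lower semicontinuity theorem \emph{directly on all of $Q$} (infinite measure), after which it proves only a one-sided $\liminf$ bound for the remaining linear term $\int((g+h)m_n+\gg\cdot\ww_n)$ via a cut-off against the moment bound (the case $g\le0$ displayed in \eqref{eq:95}). You instead show the stronger fact that the full linear perturbation actually \emph{converges}, by a uniform tail estimate on $Q\setminus Q_R$ using the second-moment bound and the a priori estimates of Lemma \ref{le:apriori1}, and then establish \eqref{eq:149} by applying the convex l.s.c. theorem on the finite-measure cylinders $Q_R$ and controlling the tail $\int_{Q\setminus Q_R}\Phi(\cdot,m_n,\ww_n)\ge -\varepsilon(R)$ uniformly in $n$. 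A small advantage of your localized approach is that it avoids the weak $L^1(Q)$ convergence of $\ww_n$ on the whole infinite-measure cylinder, which the paper invokes for the $\gg$-term without proof and which really needs a Dunford--Pettis argument from precisely the uniform bounds you derive. One loose end you should close: for the $\gg$-term you implicitly use that $\int_{Q\setminus Q_R}|\ww|\,\d\lambda\to0$ as $R\to\infty$; this does not follow from $\ww\in L^{2p/(p+1)}(Q)$ alone, but it does follow by applying weak lower semicontinuity of the $L^1(Q\setminus Q_R)$-norm (with respect to weak $L^{2p/(p+1)}$ convergence) to the uniform tail bound $\int_{Q\setminus Q_R}|\ww_n|\le (2ME)^{1/2}/R$ that you already established, and this step deserves a sentence.
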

\begin{proof}
  We can decompose $\tilde \cB$ in the sum
\begin{displaymath}
  \tilde \cB(m,\ww)=\tilde\cB_1(m,\ww)-
  \int_Q \big(\gamma_H^+(x)+\gamma_f(x)\big)m\,\d x\,\d t,
\end{displaymath}
where $\tilde \cB_1$ is defined as $\tilde\cB$ starting from 
the integrands
$F_1(x,m):=F(x,m)+\gamma_f(x)m$ 
and $\tilde L_1(x,m,\ww):=\tilde L(x,m,\ww)+(\gamma_H^+(x))m$.
Since $F_1$ and $\tilde L_1$ are nonnegative, measurable, convex and lower semicontinuous with
respect to the variables $m$ and $(m,\ww)$ respectively,
$\tilde\cB_1$ is weakly lower semicontinuous in $L^p(Q)\times
L^{2p/(p+1)}(Q;\R^d)$ by the general result \cite[Theorem
6.54]{Fonseca-Leoni07}.

Since we can incorporate the contribution of $\gamma_H^+$ in $g$ 
and the contribution of $\gamma_f$ in $h$, it remains to check that 
\begin{equation}
  \label{eq:95}
  \liminf_{n\to\infty}
  \int_Q \Big((g+h)m_n+\gg\cdot\ww_n\Big)\,\d\lambda\ge
  \int_Q \Big((g+h)m+\gg\cdot\ww\Big)\,\d\lambda
\end{equation}
where $g\le 0$.
Weak convergence in $L^p(Q)$ yields 
$  \lim_{n\to\infty}\int_Q h\,m_n\,\d\lambda
=
\int_Q
h\,m\,\d \lambda.$
The contribution of $\gg$ can also be controlled since $\ww_n$ is
weakly converging to $\ww$ in $L^1(Q;\R^d)$.

Concerning the third term arising from the integral against $g$, we use the 
the uniform estimate 
$\int_{Q}(1+|x|^2)\,m_n\,\d x\,\d t
\le M$ and for every $\delta\in (0,1/2)$
\begin{equation}
  \label{eq:148}
  -g(t,x)\le C(\delta^{-1}+\delta |x|^2),\quad
  \LL^d\text{-a.e.~in }\R^d.
\end{equation}
For every $\delta>0$ choosing 
and $K_\delta:=\{x\in \R^d: |x|\le\delta^{-1}\}$ we get
\begin{align*}
  \liminf_{n\to\infty}\int_Q g\,m_n\,\d\lambda
  &
    \ge
    \liminf_{n\to\infty}\int_0^1\int_{\R^d\setminus K_\delta}
    g\,m_n\,\d x\,\d t
    +
    \liminf_{n\to\infty}\int_0^1\int_{K_\delta} g\,m_n\,\d
    x\,\d t
  \\&\ge
      -C\liminf_{n\to\infty}\int_0^1\int_{\R^d\setminus K_\delta}
      (\delta^{-1}+\delta|x|^2)\,m_n\,\d x\,\d t
    +
        \int_0^1\int_{K_\delta} g\,m\,\d x\,\d t
      \\&\ge
          -2C\delta M+
          \int_0^1\int_{K_\delta} g\,m\,\d x\,\d t.
\end{align*}
Since $\delta>0$ is arbitrary, we conclude. 
\end{proof}
\begin{theorem}\label{p:min_B}
For every choice of $\mu_i$ as in \eqref{eq:199}
Problem \ref{p:control_eq:cont} admits a solution 
$(m_\star,{\vv_\star})$, and $( m_\star, m_\star \vv_\star)$ is
a minimizer of $\tilde \cB$ on $\K$.

Moreover, if the map $m\mapsto f(x, m)$ is strictly increasing in $\R$ for
  a.e.~$x\in \R^d$, then the minimizer $m$ is unique and 
  the optimal vector field $\vv$ is unique $m\lambda$-a.e.~in $Q$.
\end{theorem}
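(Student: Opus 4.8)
The plan is to run the Direct Method on the equivalent convex formulation: minimize the convex functional $\tilde\cB$ over the convex set $\K$ of \eqref{eq:88}. By Lemma~\ref{le:apriori} the infimum of $\cB$ over $\CE 2p{Q;\mu_0,\mu_1}$ is finite and can be realized along a minimizing sequence $(m_n,\vv_n)\in\CE 2p{Q;\mu_0,\mu_1}$ obeying the a priori bounds \eqref{eq:143}, \eqref{eq:143bis}, hence \eqref{eq:144}; since $\mu_0$ is fixed, \eqref{eq:144} also controls the quadratic moments $\int_{\R^d}|x|^2m_n(t,\cdot)\,\d x$ uniformly in $t$ and $n$ (through the $W_2$--Lipschitz bound on the associated curve). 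Thus $\ww_n:=m_n\vv_n$ is bounded in $L^{2p/(p+1)}(Q;\R^d)$ and $m_n$ is bounded in $L^p(Q)\cap L^1_\weight(Q)$; passing to a subsequence, $m_n\weakto m$ in $L^p(Q)$ and $\ww_n\weakto\ww$ in $L^{2p/(p+1)}(Q;\R^d)$, with $m\ge0$ and (testing the moment bound against $(1+|x|^2)\nchi_{\{|x|\le R\}}\in L^{p'}(Q)$ and letting $R\uparrow\infty$) $m\in L^1_\weight(Q)$.

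Next I would show $(m,\ww)\in\K$. Lower semicontinuity (Lemma~\ref{le:lsc}) gives $\tilde\cB(m,\ww)\le\liminf_n\tilde\cB(m_n,\ww_n)=\inf_\K\tilde\cB<\infty$, so by the coercivity \eqref{eq:9} of $L$ one may write $\ww=m\vv$ with $\vv\in L^2(Q,\tilde\mu;\R^d)$ and $\ww=0$ a.e.\ on $\{m=0\}$. Linearity lets $\partial_tm+\nabla\cdot\ww=0$ in $\DD'(Q)$ pass to the limit. To recover the mass normalization and the endpoint constraints I would test the weak form \eqref{eq:44} for $(m_n,\vv_n)$ against $\phi(t,x)=\psi(t)\,\xi_R(x)$ with $\psi\in C^1_c(\R)$ and $\xi_R\in C^1_c(\R^d)$ a cut-off equal to $1$ on $B_R(0)$: one passes to the limit in $n$ (both $\partial_t\phi$ and $D\phi$ are bounded with compact support, hence in the relevant dual spaces) and then lets $R\uparrow\infty$; the flux term disappears because $\int_{\{R\le|x|\le 2R\}}|\ww|\le\big(\int_{\{|x|\ge R\}}m\,\d\lambda\big)^{1/2}\big(\int_Q|\vv|^2m\,\d\lambda\big)^{1/2}\to0$ by $m\in L^1_\weight(Q)$. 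This yields $\int_{\R^d}m(t,\cdot)\,\d x=1$ for a.e.\ $t$, so together with the continuity equation $m$ has finite $L^2$-action and, by Lemma~\ref{le:precise-m}, admits a precise representative $\mu\in\ac^2([0,1];\cP_2(\R^d))$; testing \eqref{eq:44} against arbitrary $\phi\in C^1_c(\R^{d+1})$ and passing to the limit then identifies $\mu_{t=0}=\mu_0$ and $\mu_{t=1}=\mu_1$. Hence $(m,\ww)\in\K$, and since $\tilde\cB(m,\ww)\le\inf_\K\tilde\cB$ it is a minimizer; setting $m_\star:=m$ and $\vv_\star:=\ww/m$ on $\{m>0\}$ (and $0$ elsewhere) gives $(m_\star,\vv_\star)\in\CE 2p{Q;\mu_0,\mu_1}$ solving Problem~\ref{p:control_eq:cont}.

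For the uniqueness statement, assume $f(x,\cdot)$ strictly increasing, so $F(x,\cdot)$ is strictly convex on $[0,\infty)$. Given two minimizers $(m^0,\ww^0),(m^1,\ww^1)$, their midpoint $(m^{1/2},\ww^{1/2})$ lies in the convex set $\K$ and satisfies $\tilde\cB(m^{1/2},\ww^{1/2})=\tfrac12\tilde\cB(m^0,\ww^0)+\tfrac12\tilde\cB(m^1,\ww^1)$. Rewriting $\tilde\cB$ via the nonnegative integrands $F(x,m)+\gamma_f(x)m$ and $\tilde L(x,m,\ww)+\gamma^+_H(x)m$ (recall \eqref{eq:7}, \eqref{eq:9}) and subtracting the linear corrections, I would deduce pointwise a.e.\ equality in each of the two convexity inequalities separately. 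Strict convexity of $F(x,\cdot)$ and finiteness of $m^0,m^1$ then force $m^0=m^1=:m_\star$ a.e.; on $\{m_\star>0\}$, equality in $\tilde L(x,m_\star,\cdot)=m_\star L(x,\cdot/m_\star)$ combined with strict convexity of $L(x,\cdot)$ --- which holds because $H(x,\cdot)$ is finite and everywhere differentiable, hence essentially smooth, so its Fenchel conjugate is strictly convex --- gives $\ww^0=\ww^1$ there, while on $\{m_\star=0\}$ finiteness of $\tilde\cB$ forces $\ww^0=\ww^1=0$. Hence $m_\star$ is unique and $\vv=\ww^0/m_\star=\ww^1/m_\star$ is unique $m_\star\lambda$-a.e.

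The main obstacle I expect is the second paragraph: weak $L^p\times L^{2p/(p+1)}$ convergence alone does not preserve the total mass or the boundary data because $Q$ is unbounded in $x$, and one must use precisely the weight integrability $m\in L^1_\weight(Q)$ together with the finite-energy bound on $\ww$ to kill the spatial flux at infinity. A minor point in the uniqueness part is translating differentiability of $H(x,\cdot)$ into strict convexity of $L(x,\cdot)$.
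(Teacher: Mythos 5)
Your proof is correct and follows the same Direct Method skeleton as the paper, but the two arguments differ in how they identify the endpoint constraints of the limit curve. The paper uses the bound $W_2(\mu_{n,t},\mu_{n,s})\le |t-s|^{1/2}\big(\int_Q|\vv_n|^2m_n\big)^{1/2}$ (\eqref{eq:93}) together with the uniform second-moment bound to conclude that the curves $t\mapsto\mu_{n,t}$ are equi-uniformly continuous in $(\cP_2(\R^d),W_2)$ and pointwise narrowly precompact, and then invokes an Ascoli-type argument to extract pointwise (in $t$) narrow convergence $\mu_{n,t}\weakto\mu_t$ for every $t\in[0,1]$; in particular the fixed endpoints $\mu_{n,0}=\mu_0$, $\mu_{n,1}=\mu_1$ pass trivially to the limit. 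You instead pass to the limit in the weak formulation \eqref{eq:44} against $\phi=\psi\xi_R$ and then send $R\to\infty$, using the Cauchy--Schwarz estimate $\int_{\{|x|\ge R\}}|\sww|\,\d\lambda\le\big(\int_{\{|x|\ge R\}}m\,\d\lambda\big)^{1/2}\big(\int_Q|\vv|^2m\,\d\lambda\big)^{1/2}$ and the second-moment control to kill the flux at spatial infinity, and then identify $\mu\restr{t=0},\mu\restr{t=1}$ from the resulting identity. This is a perfectly valid and slightly more self-contained route (it does not need the metric-space Arzelà--Ascoli theorem), at the cost of being somewhat longer. The paper's compactness-of-curves argument is conceptually slicker and also delivers, as a byproduct, the narrow convergence $\mu_{n,t}\weakto\mu_t$ at every $t$, which is reused elsewhere; yours only identifies the boundary data. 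You correctly note that the uniform second-moment bound comes from the $W_2$-Lipschitz estimate together with the fixed moment of $\mu_0$, not directly from \eqref{eq:143}. The uniqueness argument is the same in both: equality in the Jensen inequalities for the two convex pieces of $\tilde\cB$ along the midpoint, strict convexity of $F(x,\cdot)$ giving uniqueness of $m$, and strict convexity of $L(x,\cdot)$ --- a consequence of the everywhere differentiability of the finite convex function $H(x,\cdot)$ and duality --- giving uniqueness of $\vv$ where $m>0$.
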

\begin{proof}
By the previous Lemmas \ref{le:apriori} and \ref{le:apriori1}, we can minimize $\tilde\cB$ among all curves
$(m,\ww)\in \K$, $\ww=m\vv$, satisfying the apriori bounds
\eqref{eq:143}, \eqref{eq:143bis}
and therefore \eqref{eq:144}.
The existence of a minimizer then follows by the direct method of the
Calculus of Variations. 
\GGG If $(m_n,\vv_n)\in \CE 2p{Q;\mu_0,\mu_1}$ is a minimizing sequence for $\cB$, 
we easily see from \eqref{eq:144} that $m_n$ is uniformly bounded in
$L^p(Q)$ and $\ww_n=m_n\vv_n$ is uniformly bounded in
$L^{2p/(p+1)}(Q;\R^d)$.
The curves $t\mapsto \mu_{n,t}$ are equi uniformly continuous, since
\begin{equation}
  \label{eq:93}
  W_2(\mu_{n,t},\mu_{n,s})\le |t-s|^{1/2}\Big(\int_0^T
  |\vv_n|^2m_n\,\d x\,
  \d t\Big)^{1/2};
\end{equation}
up to extracting a suitable subsequence, we can thus assume that 
$m_n\weakto m$ in $L^p(Q)$, $\ww_n\weakto \ww=m\vv$ in
$L^{2p/(p+1)}(Q;\R^d)$ and $\mu_{n,t}\weakto \mu_t$ weakly in
$\cP(\R^d)$ for every $t\in [0,1]$, with $(m,\vv)\in \CE 2p{Q;\mu_0,\mu_1}$;
the pair $(m,\ww)$ thus belong to $\K$ (in particular it
satisfies
the initial and final conditions). 

We can then apply the lower semicontinuity of $\tilde\cB$ 
given by Lemma \ref{le:lsc}.
The uniqueness of minimizer easily follows by the strict convexity of
the map $m \mapsto F(x,m)$. Once $m$ is uniquely determined, 
the strict convexity of $L(x,\cdot)$ (guaranteed by the
$\pp$-differentiability of $H$) ensures the uniqueness of $\vv$.
\end{proof}

\subsection{The variational structure of the dual problem and the
  minimax principle}
\label{subsec:variational-dual}

In this section we will compute and study the dual formulation of
Problem \ref{p:control_eq:cont}.

\begin{problem}\label{p:control_HJ}
  Let $\mu_i=m_i\LL^d\in \cP_{2,p}^r(\R^d)$ be given. 
    We look for the maximizer of the functional
  \begin{equation}
    \label{eq:98}
    \cA(u,\alpha):=
    \int_{\R^d}u^+(0,x)\,\d\mu_0(x)-
    \int_{\R^d}u^-(1,x)\,\d\mu_1(x)-
    \int_0^1\int_{\R^d}F^*(x,\alpha(t,x))\,\d x\,\d t
  \end{equation}
  among all pairs $(u,\alpha)\in \hj_q(Q,H)$ 
  according to Definition \ref{def:subsolutions}.
\end{problem}
Notice that by Theorem \ref{thm:precise_repr} the functional $\cA$ is
well defined and takes values in $[-\infty,+\infty)$. 
Corollary \ref{cor:second_duality} also shows that 
\begin{equation}
  \label{eq:99}
  \cA(u,\alpha)\le \cB(m,\vv)\quad
  \text{for every }(u,\alpha)\in \hj_q(Q,H),\quad
  (m,\vv)\in \CE 2p{Q;\mu_0,\mu_1},
\end{equation}
so that 
\begin{equation}
  \label{eq:100}
  \sup\Big\{\cA(u,\alpha): (u,\alpha)\in \hj_q(Q,H)\Big\}
  \le \min\Big\{\cB(m,\vv):  (m,\vv)\in \CE 2p{Q;\mu_0,\mu_1}\Big\}.
\end{equation}
We will present two main results; 
first of all, we will justify the duality between 
Problem \ref{p:control_HJ} and Problem \ref{p:control_eq:cont}
by a classical min-max argument starting from a suitable 
saddle point formulation: in this way we will show that there is no duality gap in
\eqref{eq:100},
even if we restrict the set of competitors for the dual problem
to smoother functions. 
This weak duality result holds for arbitrary pair of measures
$\mu_0,\mu_1\in \cP_2(\R^d)$.

Our second result will show that if $\mu_i$ have densities in $L^p$ 
then also Problem \ref{p:control_HJ}
admits a solution and the supremum in \eqref{eq:100} is attained.

We first compute the form of the dual problem by a classical min-max
argument.

\subsubsection*{Saddle point formulation of primal and dual problems}
Let us consider the convex set  
\begin{equation}
  \label{eq:136}
  \B:=\Big\{(m,\ww)\in \big(L^p\cap L^1_\weight(Q)\big)\times 
   L^{2p/(p+1)}(Q;\R^d):  m\ge 0,
   \ \tilde\cB(m,\ww)<\infty\Big\}.
\end{equation}
A pair $(m,\ww)\in \mathbb B$ is a weak solution of
the continuity equation 
\begin{equation}
  \label{eq:125}
  \partial_t m+\nabla\cdot \ww=0\quad\text{in }\DD'(Q)
\end{equation}
with initial and final condition $\mu_0,\mu_1\in \cP_2(\R^d)$ 
if and only if 
\begin{equation}
  \label{eq:126}
  \sup_{u\in \A}
  \int_{\R^d}u_0\,\d\mu_0 
  -\int_{\R^d}u_1\,\d\mu_1
  +\int_Q \big(\partial_t u\,m+Du\cdot
  \ww\big)\,\d x\,\d t=0,
\end{equation}
where
\begin{equation}
  \label{eq:129}
  \A:=\Big\{u\in C^1(\overline Q): \sup_{Q}\frac{|u|+|\partial_t u|}{1+|x|}+|Du|<+\infty\Big\}.
\end{equation}
Notice that when $u\in \A$ and $\alpha:=-\partial_t u+H(x,Du)$ 
\eqref{eq:98} still makes sense for arbitrary measures
$\mu_0,\mu_1\in \cP_2(\R^d)$.
By introducing the saddle function
\begin{equation}
\begin{aligned}
  \cL((m,\ww),u)
  &:=
  \int_Q \Big( \tilde L(x,m,\ww)+F(x,m)\Big)\,\d x\,\d t\\
  &\qquad+
  \int_{\R^d}u_0\,\d\mu_0- 
  \int_{\R^d}u_1\,\d\mu_1
  +\int_Q \Big(\partial_t u\,m+Du\cdot
  \ww\Big)\,\d x\,\d t
\end{aligned}\label{eq:135}
\end{equation}
the primal problem can be equivalently obtained as
\begin{equation}
  \label{eq:134}
  \min_{(m,\sww)\in \K}\tilde\cB(m,\ww)
  =
 \adjustlimits \min_{(m,\sww)\in \mathbb B}\sup_{u\in \mathbb A}\cL((m,\ww),u).
\end{equation}
Notice that $\cL$ is convex w.r.t.~$(m,\ww)$ and concave w.r.t.~$(u,\alpha)$.
A nice application of Von Neumann Theorem yields the following result:
\begin{theorem}
  \label{thm:min-max}
  For every $\mu_0,\mu_1\in \cP_2(\R^d)$ we have
  \begin{equation}
    \label{eq:137}
    \begin{aligned}
      \min_{(m,\sww)\in \K}\tilde\cB(m,\ww)
      &= \adjustlimits\min_{(m,\sww)\in \mathbb B}\sup_{u\in \mathbb
        A}\cL((m,\ww),u) 
      = {\adjustlimits\sup_{u\in \mathbb A}
      \inf_{(m,\sww)\in \mathbb B}\cL((m,\ww),u)}
    \\&=
      \sup\Big\{\cA(u,\alpha):u\in \A,\ -\partial_t u+H(x,Du)\le
      \alpha,\ \alpha\in \cX^q(Q)\Big\}.
    \end{aligned}
  \end{equation}
\end{theorem}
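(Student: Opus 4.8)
The plan is to establish the chain of equalities in \eqref{eq:137} by combining an abstract minimax theorem with explicit computations of the partial infimum of $\cL$ over $(m,\ww)\in\mathbb B$. The first equality is just \eqref{eq:134}, which has already been derived from the equivalence \eqref{eq:126}. For the second equality, the inequality $\sup\inf\le\inf\sup$ is automatic, so the real content is the reverse inequality, which is a genuine duality (no-gap) statement. Here I would invoke the refined Von Neumann minimax principle mentioned in the introduction (stated in the appendix, \S\ref{subsec:minimax}): the saddle function $\cL$ is convex in $(m,\ww)$ and concave (in fact affine) in $u$; $\mathbb A$ is a convex set; and the key compactness ingredient is that sublevels of $(m,\ww)\mapsto\sup_{u\in\mathbb A}\cL((m,\ww),u)=\tilde\cB(m,\ww)$ restricted to $\mathbb K$ are bounded and relatively compact in $L^p(Q)\times L^{2p/(p+1)}(Q;\R^d)$ with respect to weak topology — this is exactly what Lemmas \ref{le:apriori1} and \ref{le:apriori} give, together with the weak lower semicontinuity of $\tilde\cB$ from Lemma \ref{le:lsc}. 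One must be slightly careful that $\sup_{u\in\mathbb A}\cL((m,\ww),u)$ equals $\tilde\cB(m,\ww)$ when $(m,\ww)\in\mathbb K$ and $+\infty$ otherwise (the supremum over $u$ detects whether the continuity equation and the boundary conditions hold), which reduces the left-hand $\inf\sup$ to the constrained minimum; this uses \eqref{eq:126}.

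The third equality is the explicit dualization. For fixed $u\in\mathbb A$, I would compute
\[
\inf_{(m,\sww)\in\mathbb B}\cL((m,\ww),u)
=\int_{\R^d}u_0\,\d\mu_0-\int_{\R^d}u_1\,\d\mu_1
+\inf_{m\ge0,\sww}\int_Q\Big(\tilde L(x,m,\ww)+F(x,m)+\partial_t u\,m+Du\cdot\ww\Big)\,\d x\,\d t.
\]
Pointwise minimization in $\ww$ uses $\inf_{\svv}\big(L(x,\vv)+Du\cdot\vv\big)=-H(x,Du)$ (which is \eqref{eq:8} read as a Fenchel identity, recalling $L(x,\cdot)=H^*(x,-\cdot)$); after substituting $\ww=m\vv$ this yields $\inf_{\svv}\big(\tilde L(x,m,\ww)+Du\cdot\ww\big)=-m\,H(x,Du)$ for $m\ge0$. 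Then pointwise minimization in $m\ge0$ gives $\inf_{m\ge0}\big(-H(x,Du)+\partial_t u\big)m+F(x,m)=-F^*\big(-\partial_t u+H(x,Du)\big)$, which is \eqref{eq:2}. Therefore
\[
\inf_{(m,\sww)\in\mathbb B}\cL((m,\ww),u)
=\int_{\R^d}u_0\,\d\mu_0-\int_{\R^d}u_1\,\d\mu_1-\int_Q F^*\big(x,-\partial_t u+H(x,Du)\big)\,\d x\,\d t,
\]
and since $u\in\mathbb A$ is $C^1$ with the stated growth, the traces $u_0^+=u_0$, $u_1^-=u_1$ coincide with the pointwise values, so the right-hand side is exactly $\cA(u,\alpha)$ with $\alpha:=-\partial_t u+H(x,Du)\in\cX^q(Q)$ (the membership $\alpha\in\cX^q(Q)$ follows from the growth bound \eqref{H_least_quadratic2} and $u\in\mathbb A$). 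Finally, since $F^*(x,\cdot)$ is increasing \eqref{eq:6bis}, replacing the equality constraint $-\partial_t u+H(x,Du)=\alpha$ by the inequality $-\partial_t u+H(x,Du)\le\alpha$ can only increase $\cA$; conversely for a subsolution pair $(u,\alpha)$ with $u\in\mathbb A$ one has $\alpha\ge-\partial_t u+H(x,Du)$ and monotonicity of $F^*$ gives $\cA(u,\alpha)\le\cA(u,-\partial_t u+H(x,Du))$, so the supremum over subsolution pairs with $u\in\mathbb A$ equals the supremum over the equality constraint. This identifies $\sup_{u\in\mathbb A}\inf_{(m,\sww)\in\mathbb B}\cL$ with the last expression in \eqref{eq:137}.

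The main obstacle is the minimax step itself: a bare Von Neumann theorem requires compactness of one of the two variable sets, and $\mathbb A$ (smooth functions) is certainly not compact, while $\mathbb B$ carries only weak compactness of sublevels. The correct tool is the version of the minimax principle in which one replaces compactness of $\mathbb B$ by the requirement that the convex function $(m,\ww)\mapsto\sup_{u}\cL((m,\ww),u)$ has weakly compact sublevels together with lower semicontinuity — precisely the setting prepared by Lemmas \ref{le:apriori1}, \ref{le:apriori}, and \ref{le:lsc}. One must verify the hypotheses of that abstract statement carefully: convexity-concavity and affinity of $\cL$ in $u$ are immediate; the coercivity/compactness comes from the a priori estimates once we know that $\inf_{(m,\sww)\in\mathbb B}\sup_u\cL<\infty$, which in turn follows because $\mathbb K$ is nonempty whenever $\KL_{2,p}(\mu_0,\mu_1)<\infty$ (the degenerate case $\mathbb K=\emptyset$ makes both sides $+\infty=\sup\emptyset$... here one should note $\sup_{u\in\mathbb A}\inf\cL=+\infty$ as well, by testing with $u$ having large slope, so the identity still holds — this boundary case deserves an explicit remark). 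A secondary technical point is to make sure the interchange of $\inf_{m,\sww}$ with the space-time integral in the pointwise minimization is justified; this is a standard measurable-selection / normal-integrand argument, using that $\tilde L$ and $F$ are nonnegative normal integrands after adding the affine corrections $\gamma_H^+m$ and $\gamma_f m$ as in the proof of Lemma \ref{le:lsc}, so I would only indicate it rather than carry it out in full.
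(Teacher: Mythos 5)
Your overall architecture is right — first equality from \eqref{eq:134}, explicit Fenchel-pointwise computation of $\inf_{(m,\sww)\in\mathbb B}\cL(\cdot,u)$, and the monotonicity of $F^*$ to pass from the equality to the inequality constraint — and those parts line up with the paper. The gap is in the minimax step. You claim that the tool is a version of Von Neumann where the coercivity/compactness hypothesis is placed on the composite function $(m,\ww)\mapsto\sup_{u\in\mathbb A}\cL((m,\ww),u)$, restricted to $\mathbb K$. But the theorem stated in \S\ref{subsec:minimax} (Theorem \ref{thm:VonNeumann}) asks for something structurally different: the existence of a \emph{single} element $a_\star\in\A$ such that the sublevel $\B_\star=\{b:\cL(a_\star,b)\le C_\star\}$ is compact, together with lower semicontinuity of $\cL(a,\cdot)$ on $\B_\star$ for every $a$. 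Compactness of sublevels of $\sup_a\cL(a,\cdot)$ is weaker (it is contained in every $\cL(a_\star,\cdot)$-sublevel) and, even granting a variant of the theorem that could use it, you have not established it on $\mathbb B$: Lemmas \ref{le:apriori1} and \ref{le:apriori} give the a priori bounds only under the constraint $(m,\vv)\in\CE2p{Q;\mu_0,\mu_1}$ — that is, on $\mathbb K$, where the continuity equation and the boundary data supply the moment bound $\int(1+|x|)m$. On the larger unconstrained set $\mathbb B$ (only nonnegativity and finite $\tilde\cB$), that moment is not controlled, so the compactness you invoke is not available exactly where the minimax is being applied.

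The missing ingredient is the construction of a concrete $u_\star\in\mathbb A$ whose Lagrangian alone is coercive on $\mathbb B$. The paper takes $u_\star(t,x)=2t\,(A_0^2+A_1^2|x|^2)^{1/2}$ with $A_0,A_1$ large enough, and shows by direct computation that
\begin{equation*}
\cL\big((m,\ww),u_\star\big)\ge \int_Q\Big(\tfrac1{4c_H}m|\vv|^2+\tfrac1{2pc_f^p}m^p+(1+|x|)\,m\Big)\,\d x\,\d t-C_\cL
\end{equation*}
for an explicit constant $C_\cL$. The key point is that the term $\partial_t u_\star$ provides the linear growth in $|x|$, so the moment bound on $m$ comes from the saddle function itself, not from the continuity equation; combined with \eqref{eq:92} this makes $\B_\star$ weakly compact. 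This is what makes the hypothesis of Theorem \ref{thm:VonNeumann} verifiable without circularly smuggling in the constraint you are trying to relax. So you should either (a) exhibit such a $u_\star$ and verify its coercivity on $\mathbb B$, as the paper does, or (b) cite and verify a genuinely different minimax/perturbation-function theorem whose hypothesis is the one you invoke — but as written, you claim to use the paper's Theorem \ref{thm:VonNeumann} while checking a hypothesis it does not have. Your third equality and the final monotonicity argument are fine; the interchange of infimum and integral is handled in the paper by two applications of \cite[Chap.~IX, Prop.~2.1]{Ekeland-Temam} (first over $\vv$ in $L^2(Q,m\lambda;\R^d)$, then over $m$ in $L^p$ on bounded rectangles followed by monotone convergence), which is the precise form of the ``normal-integrand'' argument you sketch.
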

\begin{proof}
  Let us first check that we can apply Von Neumann minimax Theorem 
  \ref{thm:VonNeumann}.
   Clearly $\cL$ satisfies \eqref{eq:112} and
  \eqref{eq:113}
  and it is not restrictive to assume that 
  $C_\star:=1+{\adjustlimits\sup_{u\in \mathbb A}
      \inf_{(m,\sww)\in \mathbb B}\cL((m,\ww),u)} $ is finite.
   We endow $\B$ with the product weak topology
  of $L^p(Q)\times L^{2p/(p+1)}(Q)$.
  
  We choose constants $A_1\ge 1+c_H^+$ and $A_0\ge 1+4c_HA_1+c_H^+$
  and the function $u_\star:=2t(A^2_0+A_1^2 |x|^2)^{1/2}\in \A$; 
  we can check that
\begin{align*}
  \partial_t u_\star+Du_\star\cdot \vv
  &\ge 
    2(A_0^2+A^2_1 |x|^2)^{1/2} -2A_1 \,|\vv|
    \ge
    2(A_0^2+A^2_1 |x|^2)^{1/2} -4c_H{A_1}- \frac1{4c_H}\,|\vv|^2,\\
  L(x,\vv)+\partial_t u_\star+Du_\star\cdot \vv
  &\ge
    \frac1{4c_H}|\vv|^2
    +2(A_0^2+A^2_1 |x|^2)^{1/2} -4c_H{A_1}-c_H^+(1+|x|)
    \\&
        \ge
        \frac1{4c_H}|\vv|^2+(A_0-4c_HA_1-c_H^+)+(A_1-c_H^+)|x|
        \ge 
           \frac1{4c_H}|\vv|^2+(1+|x|),
\end{align*}
so that 
\begin{equation}
  \label{eq:145}
  \cL((m,\ww),u_\star)\ge \int_Q\Big(\frac 1{4c_H}m|\vv|^2+
  \frac 1{2pc_f^p}m^p+\big(1+|x|\big)m\Big)\,\d x\,\d t-C_\cL,
\end{equation}
with $
  C_\cL:=  2A_0+2A_1 \int_{\R^d}|x|\,\d \mu_1+C_F$.

Combining \eqref{eq:145} with the estimate \eqref{eq:92},
we can see that the convex set 
$$\B_\star:=\big\{(m,\ww)\in \B:
\cL((m,\ww),u_\star)\le C_\star\}$$ 
is compact in $\B$.
Lemma \ref{le:lsc} also shows that for every $u\in \A$ 
$(m,\ww)\mapsto \cL((m,\ww),u)$ is lower semicontinuous in $\B_\star$
so that we can interchange the order $\inf$ and $\sup$ in \eqref{eq:137}.

Let us now compute
\begin{align*}
  \inf_{(m,\sww)\in \B}&\cL((m,\ww),u)=
  \int_{\R^d}u_0\,\d\mu_0 -
  \int_{\R^d}u_1\,\d\mu_1 
                         -\mathcal C(u),\\
  \mathcal C(u)
  &:=\sup_{(m,\sww)\in \B}
  \int_Q (-\partial_t u\,m-Du\cdot
  \ww)\,\d x\,\d t
  -\int_Q \Big( \tilde L(x,m,\ww)+F(x,m)\Big)\,\d x\,\d t
  \\&=
  \sup_{m\in L^p\cap L^1_\weight(Q),\, m\ge 0}            
      \bigg(\int_Q \Big(-\partial_t u\,m-F(x,m)\Big)\,\d x\,\d t
      \\&\qquad+\sup_{\svv\in L^2(Q,m\lambda;\R^d)}
      \int_Q\Big(-Du\cdot
      \vv- L(x,\vv)\Big)m\,\d x\,\d t\bigg)
\end{align*}
By applying the general duality theorem \cite[Chap. IX,
Prop. 2.1]{Ekeland-Temam} (see also Remark 2.1)
to the space $L^2(Q,m\lambda ;\R^d)$ (notice that $m$ is a finite measure
and $Du\in L^2(Q, m\lambda ;\R^d)$) we obtain 
\begin{equation}
  \label{eq:151}
  \sup_{\svv\in L^2(Q, m\lambda ;\R^d)}
  \int_Q\Big(Du\cdot
  \vv-L(x,\vv)\Big)m\,\d x\,\d t=
  \int_Q H(x,Du)m\,\d x\, \d t.
\end{equation}
Let us now set 
$\alpha:=-\partial_t u+H(x,Du)\in L^\infty_{1/\weight}(Q)$ we obtain
\begin{equation}
  \label{eq:152}
  \mathcal C(u)=
  \sup_{m\in L^p\cap L^1_\weight(Q),\ m\ge 0}       
  \int_Q \Big(\alpha m-F(x,m)\Big)\,\d x\,\d t.
\end{equation}
Since $\alpha m-F(x,m)\le F^*(x,\alpha)$ it is immediate to see that
\begin{equation}
  \mathcal C(u)\le \int_Q F^*(x,\alpha)\,\d x\,\d t.
\end{equation}
On the other hand, restricting the supremum in \eqref{eq:152} to
functions
$m$ vanishing outside a rectangle $R_k=(0,1)\times B_k$, where
$B_k$ is the ball centered at $0$ of radious $k$ in $\R^d$, and applying the 
\cite[Chap. IX,
Prop. 2.1]{Ekeland-Temam} in $L^p(R_k)$ we obtain
\begin{equation}
    \mathcal C(u)\ge \int_{R_k} F^*(x,\alpha)\,\d x\,\d t.
\end{equation}
Since $F^*$ is nonnegative, a limit as $k\to\infty$ by Beppo Levi
monotone convergence theorem yields
\begin{equation}
  \label{eq:155}
  \mathcal C(u)=\int_Q F^*(x,\alpha)\,\d x\,\d t.
\end{equation}
Eventually, since 
$F^*$ is increasing w.r.t.~the second variable,
we obtain the last identity of \eqref{eq:137}. 
\end{proof}
\nc

\subsection{Existence of a solution to the dual problem}
\label{subsec:existence-dual}
Let us now show that in Problem \ref{p:control_HJ} 
the maximum is reached and therefore it coincides with the supremum of
Problem \ref{p:control_HJ}, when the measures $\mu_i$ have $L^p$
densities.
\GGG
Let us first make a preliminary remark concerning a natural lower
bound for $\alpha$.
\begin{remark}[Lower bound on $\alpha$]
  \label{rem:lower_alpha}
  \upshape
  Since $F^*(x,a)\ge0$ and $F^*(x,a)=0$ if $a\le f(x,0)$, 
  it is not restrictive to assume that any competing pair
  $(u,\alpha)\in \hj_q(Q,H)$
  for the maximization of $\cA$ satisfies the lower bound
  \begin{equation}
    \label{eq:253}
    \alpha(t,x)\ge f(x,0)\quad\text{a.e.~in $Q$}.
  \end{equation}
  In fact, if $(u,\alpha)\in \hj_q(Q,H)$ we can always replace $\alpha$ by $\tilde\alpha:=\alpha\lor f(x,0)$ 
  still obtaining a pair $(u,\tilde\alpha)\in \hj_q(Q,H)$ 
  with $\cA(u,\alpha)=\cA(u,\tilde\alpha)$.
\end{remark}

\begin{theorem}[Solution to the dual problem]
  \label{thm:minimum_dual}
  If $\mu_i=m_i\LL^d\in \cP_{2,p}^r(\R^d)$ 
  then there exists an optimal pair 
  $(u^*,\alpha^*) \in \hj_q(Q,H)$ satisfying \eqref{eq:253} such that 
\[\cA(u^*,\alpha^*) = \max_{(u,\alpha) \hj_q(Q,H)}\cA(u,\alpha).\]
\end{theorem}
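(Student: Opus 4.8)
The plan is to apply the direct method of the Calculus of Variations to the functional $\cA$ over the convex set $\hj_q(Q,H)$, using the stability and a priori estimates established in Section \ref{subsec:stability} — principally Theorem \ref{t:stability} (stability and semicontinuity) and Theorem \ref{thm:precise_repr} — together with the finiteness of $\sup\cA$ guaranteed by the minimax identity \eqref{eq:137} and Lemma \ref{le:apriori}. First I would take a maximizing sequence $(u_n,\alpha_n)\in\hj_q(Q,H)$ for $\cA$; by Remark \ref{rem:lower_alpha} we may assume $\alpha_n\ge f(x,0)$ a.e., so that $F^*(x,\alpha_n)\ge 0$ and each $\cA(u_n,\alpha_n)$ is finite once $n$ is large. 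Since $\mu_i=m_i\LL^d\in\cP_{2,p}^r(\R^d)$, the supremum is finite by the weak duality \eqref{eq:100} and Theorem \ref{p:min_B}, so $\cA(u_n,\alpha_n)\to S:=\sup\cA<+\infty$.

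The key step is to extract uniform bounds. From $\cA(u_n,\alpha_n)\ge S-1$ and the growth condition \eqref{eq:6} for $F^*$, namely $F^*(x,a)\ge\frac1{qc_f^q}(a-\gamma_f(x))_+^q$, one obtains $\int_Q(\alpha_n-\gamma_f)_+^q\,\d x\,\d t\le C$; combined with the lower bound $\alpha_n\ge f(x,0)\ge -\gamma_f$ and $\gamma_f\in L^q$, this gives a uniform bound for $\alpha_n$ in $L^q(Q)$, hence in $\cX^q(Q)$. Thus up to a subsequence $\alpha_n\weakto\alpha$ weakly in $L^q(Q)$, and since $\cX^q(Q)=(L^p\cap L^1_\weight(Q))'$, also weakly$^*$ in $\cX^q(Q)$. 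Next, the upper bound on $\cA(u_n,\alpha_n)$ forces $\int_{\R^d}u_n^+(0,\cdot)\,\d\mu_0-\int_{\R^d}u_n^-(1,\cdot)\,\d\mu_1\ge S-1$; on the other hand Corollary \ref{c:duality:u_Lq} bounds this same quantity from above in terms of $W_2^2(\mu_0,\mu_1)$, $\|\beta_n\|_{\cX^q(Q)}$ and $\|m_i\|_{L^p\cap L^1_\weight}$, with $\beta_n=\alpha_n+\gamma_H^-$ now uniformly bounded in $\cX^q(Q)$. This pins down $\int_{\R^d}u_n^+(0,\cdot)\,\d\mu_0$ and $\int_{\R^d}u_n^-(1,\cdot)\,\d\mu_1$ between two finite constants, so both are uniformly bounded. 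Plugging these bounds into the a priori estimate \eqref{eq:75} of Theorem \ref{thm:precise_repr} (applied with $a=0$, $b=1$, $\mu_a=\mu_0$, $\mu_b=\mu_1$, which is legitimate since $\mu_i\in\cP_{2,p}^r$) yields $\sup_n\sup_{a\le t\le b}\|\sfu_n(t,\cdot)\|_{\cX^q(\R^d)}<\infty$ for every $0<a<b<1$, i.e.\ exactly the hypothesis \eqref{eq:76} of Theorem \ref{t:stability}.

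Then I would apply Theorem \ref{t:stability}: it produces a subsequence and a limit $u$ with $(u,\alpha)\in\hj_q(Q,H)$, the convergences \eqref{eq:77}--\eqref{eq:58}, and — crucially — the upper semicontinuity \eqref{eq:semicontinuity}, $\limsup_n\cA(u_n,\alpha_n)\le\cA(u,\alpha)$. Since the left-hand side equals $S$ and $\cA(u,\alpha)\le S$ by definition of the supremum, we conclude $\cA(u,\alpha)=S$, so $(u,\alpha)$ is a maximizer; replacing $\alpha$ by $\alpha\lor f(x,0)$ as in Remark \ref{rem:lower_alpha} (which does not change $\cA$ and keeps the pair in $\hj_q(Q,H)$) gives the asserted lower bound \eqref{eq:253}. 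Set $(u^*,\alpha^*):=(u,\alpha\lor f(x,0))$.

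I expect the main obstacle to be the passage to uniform bounds on the \emph{boundary trace} terms, i.e.\ the step converting an \emph{upper} bound on $\cA(u_n,\alpha_n)$ into a \emph{two-sided} control of $\int u_n^+(0,\cdot)\,\d\mu_0$ and $\int u_n^-(1,\cdot)\,\d\mu_1$ separately. The subtlety is that $u_n^-(1,\cdot)$ can a priori be $+\infty$ on a set of positive measure and $u_n^+(0,\cdot)$ can be $-\infty$, so each integral lives in $[-\infty,+\infty]$; one has to argue that finiteness of $\cA(u_n,\alpha_n)$ together with the \emph{reverse} inequality of Corollary \ref{c:duality:u_Lq} — whose right-hand side is finite precisely because $\mu_i\in\cP_{2,p}^r$ — traps the \emph{difference} in a bounded interval, and then that the lower bound on the difference forces each term to be finite and bounded (the positive part of $u_n^-(1,\cdot)$ being $\mu_1$-integrable with uniformly bounded integral, and symmetrically for $u_n^+(0,\cdot)$). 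Once this is in place, everything else is a routine assembly of the cited results. A minor secondary point requiring care is checking that Theorem \ref{thm:precise_repr}'s estimate \eqref{eq:75} genuinely extends to $a=0$, $b=1$ — but this is exactly the last sentence of its proof, so it may be quoted directly.
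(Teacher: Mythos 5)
Your overall route matches the paper exactly: maximizing sequence, lower bound on $\alpha_n$ via Remark~\ref{rem:lower_alpha}, $\cX^q$-bound on $\alpha_n$, boundary-trace control, estimate~\eqref{eq:75} of Theorem~\ref{thm:precise_repr} to get~\eqref{eq:76}, and finally the stability Theorem~\ref{t:stability}. However, there are two genuine gaps in the way you derive the uniform bounds.

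First, the $\cX^q(Q)$-bound on $\alpha_n$ does not follow ``from $\cA(u_n,\alpha_n)\ge S-1$ and the growth condition~\eqref{eq:6}'' alone: that lower bound only gives $\int_Q F^*(x,\alpha_n)\le\int u_n^+(0)\,\d\mu_0-\int u_n^-(1)\,\d\mu_1-(S-1)$, with a right-hand side that is \emph{not} a priori controlled. You invoke Corollary~\ref{c:duality:u_Lq} only afterwards, at which point you have already assumed the $\cX^q$-bound on $\beta_n$ — so as written the argument is circular. The correct closure is a bootstrap: combine~\eqref{eq:6} and the trace estimate of Corollary~\ref{c:duality:u_Lq} to get $\|\alpha_n\|_{\cX^q(Q)}^q\le C_2\bigl(1+\|\alpha_n\|_{\cX^q(Q)}\bigr)$, which then yields a uniform bound.

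Second, and more seriously, a two-sided bound on the \emph{difference} $\int u_n^+(0)\,\d\mu_0-\int u_n^-(1)\,\d\mu_1$ does not bound the two integrals \emph{separately}, contrary to what you assert in the last paragraph: one could have $\int u_n^+(0)\,\d\mu_0=\int u_n^-(1)\,\d\mu_1=n\to\infty$ with bounded difference. You correctly observe the integrals are both finite (since otherwise $\cA(u_n,\alpha_n)=-\infty$), but finiteness is not boundedness, and without bounding $(I_0)_-$ and $(I_1)_+$ uniformly, estimate~\eqref{eq:75} does not yield the hypothesis~\eqref{eq:76} of Theorem~\ref{t:stability}. The missing idea is the gauge freedom: $\cA$ is invariant under translating $u_n$ by an additive constant, so one normalizes $\int u_n^-(1)\,\d\mu_1=0$, after which the bounded difference pins down $\int u_n^+(0)\,\d\mu_0\in[S-1,S+C_4]$, both parts become bounded, and~\eqref{eq:75} delivers~\eqref{eq:76}. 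This normalization step is essential and cannot be omitted.
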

\nc
\begin{proof}
\GGG
Let $(u_n,\alpha_n) \in \hj_q(Q,H)$ be a maximizing sequence 
such that 
\begin{displaymath}
  \lim_{n\to\infty}\cA(u_n,\alpha_n)=\sup_{(u,\alpha)\in
    \hj_q(Q,H)}\cA(u,\alpha)=A=
  \min_{(m,\sww)\in \K}\tilde\cB(m,\ww).
\end{displaymath}
\nc
We can assume that $\cA(u_n,\alpha_n)$ is bounded below, say by $A-1$.

\GGG 
By remark \ref{rem:lower_alpha} we can suppose that
$\alpha_n\ge f(x,0)\ge-\gamma_f$.
\eqref{eq:6} also yields
\begin{equation}
\label{eq:158}
\frac 1{q c_f^q}\int_{Q} \big(\alpha_n - \gamma_f \big)_+^q\,\d x
\d t  \leq \int_{Q} F^*(x,\alpha_n)\, \d x \d t\leq 
\int_{\R^d}
u_n^+(0,x)m_0\, \d x
-\int_{\R^d}
u_n^-(1,x)m_1\, \d x +1-A.
\end{equation} 
Using  Corollary \ref{c:duality:u_Lq}, we get   that
there exists a constant $C_1>0$ such that
\begin{equation}
\label{eq:159}
\begin{split}
\int_{\R^d}
u_n^+(0,x)m_0\, \d x
-\int_{\R^d}
u_n^-(1,x)m_1\, \d x &\leq C_1\left( 1 + \| \alpha_n \|_{\cX^q(\R^d)}
\right)
\quad \text{for every }n\in N.
\end{split}
\end{equation}
Combining this information with $\alpha_n\ge -\gamma_f$ and
\eqref{eq:158}
this implies that 
\begin{equation}
\| \alpha_n \|_{\cX^q(Q)}^q  \leq C_2 \left( 1 + \| \alpha_n \|_{\cX^q(Q)} \right)
\end{equation}
whence the uniform bound $\| \alpha_n \|_{\cX^q(Q)} \leq C_3$ 
and consequently $\int_{Q} F^*(x,\alpha_n) \le C_4$, 
for constants $C_i$ independent of $n$.

Since $\cA$ is invariant under translations of $u$ by a constant, 
we can tune $(u_n,\alpha_n)$ so that $\int_{\R^d} u_n^-(1,x) m_1\,\d x
= 0$.
If we recall that $\int_{\R^d} u_n^+(0,x) m_0\,\d x \geq \int_{Q}
F^*(x,\alpha_n) +A-1$,  we deduce the uniform estimate 
\begin{equation}
  \label{eq:160}
  A-1\le \int_{\R^d} u_n^+(0,x) m_0\,\d x\le A+C_4
\end{equation}
\nc
Now, thanks to the uniform bound on $\| \alpha_n \|_{cX^q(Q)}$ there
exists a subsequence $k \mapsto n(k)$ such that $\alpha_{n(k)}
\weakto^* \alpha^*$ in $\cX^q(Q)$.
\GGG
By Theorem \ref{thm:precise_repr} 
we also get for every $[a,b] \subset I$ 
\begin{equation}
\sup_{n \in \N,\ r\in [a,b]} \|\sfu_n(r,\cdot)\|_{\cX^q(\R^d)} < \infty.
\end{equation}
Then, from the stability result contained in Theorem \ref{t:stability}
there exists a subsequence $\{n(k)\}$
and a limit function $u^*$ 
such that 
$(u^*,\alpha^*)\in \hj_q(Q,H)$ 
and $\cA( u^*, \alpha^*)  \geq \limsup_{n \to \infty} \cA( u_{n(k)},
\alpha_{n(k)})=A$.
Thus $(u^*,\alpha^*)$ attains the maximum of $\cA$.
\end{proof} 

\subsection{Optimality conditions and the weak formulation of the 
  Mean Field planning system}
\label{subsec:variational-optimal}
\GGG
Let us first derive the three crucial optimality conditions satisfied
by the optimal solutions of the primal and of the dual problem.
\begin{theorem}[Optimality conditions]
  \label{l:a_leq_B}
Let $\mu_i=m_i\LL^d\in \cP_{2,p}^r(\R^d)$ be given, 
let $(u,\alpha)\in \hj_q(Q,H)$ 
be a  weak subsolution   to \eqref{HJ_alpha} satisfying \eqref{eq:253},
and let $(m,\vv)\in \CE 2p{Q;\mu_0,\mu_1}$ be a weak solution to the continuity
equation connecting $\mu_0$ to $\mu_1$. 
The following properties are equivalent:
\begin{enumerate}[(i)]
\setlength\itemindent{0pt}
\item $\cA(u,\alpha) = \cB(m,\vv)$.
\item $(m,\vv)$ 
is an optimal solution to Problem \ref{p:control_eq:cont},
$(u,\alpha)$ is an optimal solution
to Problem \ref{p:control_HJ}
\item
$(u,\alpha)$ and $(m,\vv)$ satisfy the optimality conditions:
\begin{enumerate}[\rm (O1)]
\item $\alpha=f(x,m)$ 
  $\lambda $-a.e.~in $Q$. 
\item $\vv=-D_pH(x,Du)$ $\tilde \mu$-a.e.~in $Q$.
\item $u_0^+m_0,u_1^-m_1\in L^1(\R^d)$ and
  $u$ is a ``renormalized'' solution to 
  \begin{gather}
    \label{eq:161}
    \left\{
    \begin{aligned}
      &\partial_t(um)+\nabla\cdot (um\,\vv)=\Big(H(x,Du)+Du\cdot
      \vv-\alpha\Big)m&&\text{in }Q,\\
      &(um)_{t=0+}=u_0^+m_0,\quad (um)_{t=1-}=u_1^-m_1&&\text{on
      }\partial Q
    \end{aligned}
    \right.
  \end{gather}
  i.e.~for every $(\zeta,Z)\in \ZZ_c$ 
  \begin{equation}
    \label{eq:205}
    \partial_t(Z(u)m)+\nabla\cdot (Z(u)m\,\vv)=\zeta(u)\Big(H(x,Du)+Du\cdot
    \vv-\alpha\Big)m\quad\text{in }\DD'(Q)
  \end{equation}
  and
  \begin{equation}
    \label{eq:206}
      \aplim_{t\down0}\int_{\R^d}u(t,x)m(t,x)\,\d x=
      \int_{\R^d}u_0^+m_0\,\d x, \ 
    \aplim_{t\up1}\int_{\R^d}u(t,x)m(t,x)\,\d x=
    \int_{\R^d}u_1^-m_1\,\d x.
  \end{equation}
  \end{enumerate}
\end{enumerate}
\end{theorem}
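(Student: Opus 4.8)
The plan is to prove the equivalences $(i)\Leftrightarrow(ii)\Leftrightarrow(iii)$ by exploiting the main identity \eqref{eq:109dbis} of Theorem \ref{thm:duality-with-defect}, which, after recalling the definitions \eqref{eq:81}, \eqref{eq:81bis}, \eqref{eq:141}, rewrites the gap $\cB(m,\vv)-\cA(u,\alpha)$ as a sum of three manifestly nonnegative contributions:
\begin{equation}
  \label{eq:gap-decomposition}
  \cB(m,\vv)-\cA(u,\alpha)=\int_Q \sfY_H\,m\,\d\lambda+\int_Q \sfY_F\,\d\lambda+\vartheta(\R\times\overline Q),
\end{equation}
at least once we know that the integrability hypotheses of \eqref{eq:109dbis} are met, i.e.~that $\int_{\R^d}(u_0^+\wedge 0)\,\d\mu_0>-\infty$ and $\int_{\R^d}(u_1^-\vee 0)\,\d\mu_1<+\infty$; here one uses Corollary \ref{c:duality:u_Lq}, which gives exactly the two-sided bound needed for these traces since $\mu_i\in\cP_{2,p}^r(\R^d)$. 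Granting this, the implication $(i)\Rightarrow(iii)$ is immediate: $\cA(u,\alpha)=\cB(m,\vv)$ forces each of the three nonnegative terms in \eqref{eq:gap-decomposition} to vanish, whence $\sfY_H=Y_H(x,Du,\vv)=0$ $\tilde\mu$-a.e.\ in $Q$, $\sfY_F=Y_F(x,m,\alpha)=0$ $\LL^{d+1}$-a.e., and $\vartheta(\R\times\overline Q)=0$; the first identity is, by Fenchel duality and the $\pp$-differentiability of $H$ (Assumption \ref{h.1}(H3)), equivalent to (O2) $\vv=-H_\spp(x,Du)$ $\tilde\mu$-a.e., the second is, by \eqref{eq:282} and \eqref{eq:6bis} together with the assumption $\alpha\ge f(x,0)$, equivalent to (O1) $\alpha=f(x,m)$ $\lambda$-a.e. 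The vanishing of $\vartheta(\R\times\overline Q)$ is then translated into (O3) precisely by Corollary \ref{cor:optimality}: the identity $\vartheta(\R\times\overline Q)=0$ holds iff the renormalized equation \eqref{eq:172bis} holds for every nonnegative $\zeta\in C^0_c(\R)$ — which is exactly \eqref{eq:205} — and the weak trace conditions \eqref{eq:198} hold, which are \eqref{eq:206}; moreover the finiteness of $\vartheta$ together with \eqref{eq:109dbis} upgrades the traces $u_0^+$, $u_1^-$ to genuine $L^1$ functions against $\mu_0,\mu_1$, giving $u_0^+m_0,u_1^-m_1\in L^1(\R^d)$.

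Conversely, for $(iii)\Rightarrow(i)$ I would simply run the same chain backwards: (O1) gives $\sfY_F=0$ a.e.\ by \eqref{eq:282}, (O2) gives $\sfY_H=0$ $\tilde\mu$-a.e., and (O3) — via the second half of Corollary \ref{cor:optimality}, using that \eqref{eq:205} is \eqref{eq:172bis} and \eqref{eq:206} is \eqref{eq:198} — yields $\vartheta(\R\times\overline Q)=0$; plugging into \eqref{eq:gap-decomposition} gives $\cB(m,\vv)=\cA(u,\alpha)$. The equivalence $(i)\Leftrightarrow(ii)$ is then an essentially formal consequence of the weak duality \eqref{eq:99}–\eqref{eq:100} of Corollary \ref{cor:second_duality} and the attainment results already established: by Theorem \ref{p:min_B} the primal infimum is a minimum, by Theorem \ref{thm:minimum_dual} the dual supremum is a maximum, and by Theorem \ref{thm:min-max} there is no duality gap, so $\min\cB=\max\cA$; hence $\cA(u,\alpha)=\cB(m,\vv)$ holds iff $\cA(u,\alpha)$ equals this common value iff $(u,\alpha)$ and $(m,\vv)$ are respectively optimal. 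One must be slightly careful here only in that the supremum defining $\cA$ in Problem \ref{p:control_HJ} ranges over all of $\hj_q(Q,H)$ while Theorem \ref{thm:min-max} states the equality also with the smaller class $\A$; since $\A\subset$ (the $u$-component of) $\hj_q(Q,H)$ and weak duality \eqref{eq:100} pins both sides to $\min\cB$, there is no discrepancy.

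I expect the main obstacle to be bookkeeping rather than a conceptual difficulty: one must check carefully that the technical hypotheses of \eqref{eq:109dbis} are legitimate in the present situation — concretely, that Corollary \ref{c:duality:u_Lq} really supplies $\int_{\R^d}(u_0^+\wedge 0)\,\d\mu_0>-\infty$ and $\int_{\R^d}(u_1^-\vee 0)\,\d\mu_1<+\infty$ under the standing hypotheses $\mu_i=m_i\LL^d\in\cP_{2,p}^r(\R^d)$, $\int_{\R^d}(u_0^+\vee 0)\,\d\mu_0<+\infty$, $\int_{\R^d}(u_1^-\wedge 0)\,\d\mu_1>-\infty$ of Corollary \ref{cor:optimality} — and that when any of these one-sided integrals is only semi-finite so that $\cA(u,\alpha)=-\infty$, the equivalences degenerate correctly (in that case $(i)$ fails, $(u,\alpha)$ is not optimal, and at least one of (O3)'s $L^1$ conditions fails). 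The rest — matching the Fenchel-duality identities $\{Y_H=0\}\Leftrightarrow\vv=-H_\spp(x,Du)$ and $\{Y_F=0\}\Leftrightarrow\alpha=f(x,m)$ (on $\{m>0\}$, with $\alpha\le f(x,0)$ compatible on $\{m=0\}$ but pinned to $f(x,0)$ by the standing assumption $\alpha\ge f(x,0)$), and the translation of $\vartheta(\R\times\overline Q)=0$ into the renormalized system \eqref{eq:161} — is a direct invocation of Corollary \ref{cor:optimality} and the definitions, requiring no new estimates.
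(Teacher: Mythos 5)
Your proof is correct and follows essentially the same route as the paper's, which simply invokes \eqref{eq:99}, Theorem \ref{thm:min-max} and Corollary \ref{cor:optimality}: the gap decomposition \eqref{eq:109dbis} and its translation into the Fenchel identities $\{Y_H=0\}\Leftrightarrow\mathrm{(O2)}$, $\{Y_F=0\}\wedge\eqref{eq:253}\Leftrightarrow\mathrm{(O1)}$, and the contact-defect vanishing $\vartheta(\R\times\overline Q)=0\Leftrightarrow\mathrm{(O3)}$, is exactly what Corollary \ref{cor:optimality} packages. One small over-claim to correct in the wording: Corollary \ref{c:duality:u_Lq} does \emph{not} supply the two-sided integrability $\int_{\R^d}(u_0^+\wedge 0)\,\d\mu_0>-\infty$ and $\int_{\R^d}(u_1^-\vee 0)\,\d\mu_1<+\infty$ (it only controls the difference from above and is vacuous when the difference is $-\infty$), so these may genuinely fail; but as your closing paragraph already notes, if either fails then $\cA(u,\alpha)=-\infty$ while $\min\cB>-\infty$, so (i), (ii) and the $L^1$ clause of (iii) all fail simultaneously and the equivalences hold vacuously — the honest argument is that case split, not an appeal to Corollary \ref{c:duality:u_Lq}.
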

\begin{proof}
The equivalence between (i) and (ii) follows by 
\eqref{eq:99} and Theorem \ref{thm:min-max}.
Corollary
\ref{cor:optimality} yields 
the equivalence between (ii) and (iii).
\end{proof}
Recall that \eqref{eq:205} and \eqref{eq:206} can be equivalently
formulated as 
  \begin{equation}
    \label{eq:161bis}
    \begin{aligned}
      -\int_Q Z(u)m& (\partial_t\varphi+D\varphi\cdot\vv)\,\d x\,\d t
      +\int_{\R^d}Z(u_0^+)m_0\varphi(0,\cdot)\,\d x-
      \int_{\R^d}Z(u_1^-)m_0\varphi(0,\cdot)\,\d x= 
      \\&=\int_Q
      \zeta(u)\Big(H(x,Du)+Du\cdot \vv-\alpha\Big)m\varphi \,\d x\,\d
      t\quad\text{for every $\varphi\in C^1_c(\overline Q)$.}
    \end{aligned}
  \end{equation}

The previous result provides a natural notion of solution 
to the system \eqref{eq:wMFPP}, interpreted as the optimality condition
for the two optimization problems \ref{p:control_eq:cont} and
\ref{p:control_HJ}.

\begin{definition}[Variational weak solutions to the MFPP system]
\label{def:solution}
Let $\mu_i=m_i\LL^d\in \cP_{2,p}^r(\R^d)$ 
be given
and let us suppose that the structural assumption Assumptions
\ref{h.1}
are satisfied.
A pair $(m,u)$ is a weak  solution to the MFPP system \eqref{eq:wMFPP} if
\begin{itemize}
\item[(i)] $m\in L^p\cap L^1_\weight(Q)$ 
  is probability density and
  $u\in L^1_{\loc}(Q)$ is weak subsolution to  
\begin{equation}
-\partial_t u + H(x,Du) \leq f(x,m)\quad\text{in }\DD'(Q).
\end{equation} 
In particular, the left and right traces $u^+_0, u^-_1$ 
at $t=0,1$ are well defined
in the sense of measure convergence of Theorem 
\ref{thm:precise_repr}.
\item[(ii)]
  $m$ belongs to $\rmA_2(Q)$ and it
  is a distributional solution to  
\begin{equation}\label{eqmDu}
\partial_t m  - \nabla\cdot  (m\,D_\spp
H(x,Du)) = 0  \qquad
\text{in }\DD'(Q).
\end{equation}
In particular $m$ has a precise representative $\mu$ in the sense of Lemma
\ref{le:precise-m} and left and right traces at $t=0,1$ satisfying the boundary conditions
$  \mu_{t=0}= \mu_0, \ \mu_{t=1} = \mu_1.
$
\item[(iii)] 
\GGG
$u_0^+m_0,\ u_1^-m_1\in L^1(\R^d)$ and 
$u$ is a ``renormalized'' solution to \eqref{eq:161},
in the sense of \eqref{eq:205}, \eqref{eq:206}.
\end{itemize}
\end{definition}
\begin{remark}\label{r:solution}
\upshape
\GGG Since $m\in \rmA_2(Q)$, it admits a precise representative
$\mu \in \ac^2(I; \cP_2(\R^d))$ so that the boundary conditions 
$\mu\restr{t=i}=\mu_i$ make sense, 
$m |D_\spp H(\cdot,Du)|^2\in L^1_{\rm loc}(Q)$ and 
\eqref{eqmDu} can be formulated in the usual distributional sense, see 
Theorem \ref{l:duality_hj_cont}.
\end{remark}
We can state Theorem \ref{l:a_leq_B} in the following form.
\begin{theorem}[Solutions to MFPP coincide with solutions to the
  primal-dual problem]
\label{thm:main}
Let us assume the structural conditions \ref{h.1} with
$\mu_i=m_i\LL^d\in \cP_{2,p}^r(\R^d)$. 
\begin{enumerate}
\item If $(m,\vv) \in \CE2p{Q;\mu_0,\mu_1}$ is a minimizer for Problem
  \ref{p:control_eq:cont} and $(u,\alpha) \in \hj_q(H,Q)$ is a
  maximizer of Problem \ref{p:control_HJ} also satisfying
  \eqref{eq:253}, then the pair $(m,u)$ is a variational weak solution
  of the system \eqref{eq:wMFPP} and we can identify
  $\alpha = f(\cdot, m)$ $\LL^{d+1}$-a.e.~in $Q$ and
  $\vv = -D_\spp H(\cdot, Du)$ $\tilde\mu$-a.e.~in $Q$.

 \item Conversely, if $(m,u)$ is a variational weak solution to the
  planning problem \eqref{eq:wMFPP} according to Definition
  \ref{def:solution}, then the two pairs $(m, -D_\spp H(\cdot, Du))$ and
  $(u,f(\cdot, m))$ are solutions to Problem \ref{p:control_eq:cont}
  and Problem \ref{p:control_HJ}, respectively.
\end{enumerate}
\end{theorem}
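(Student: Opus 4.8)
The plan is to recognize that the statement to be proved, Theorem~\ref{thm:main}, is essentially a repackaging of the optimality-condition characterization already obtained in Theorem~\ref{l:a_leq_B} (equivalently Corollary~\ref{cor:optimality}), combined with the existence results of Theorem~\ref{p:min_B} and Theorem~\ref{thm:minimum_dual} and the no-duality-gap assertion of Theorem~\ref{thm:min-max}. So the proof is mostly bookkeeping: translating the three optimality conditions (O1)--(O3) into the three defining conditions (i)--(iii) of Definition~\ref{def:solution}, and back.

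For part~(1), I would argue as follows. By Theorem~\ref{p:min_B} a minimizer $(m,\vv)$ of Problem~\ref{p:control_eq:cont} exists, by Theorem~\ref{thm:minimum_dual} a maximizer $(u,\alpha)$ of Problem~\ref{p:control_HJ} satisfying \eqref{eq:253} exists, and by Theorem~\ref{thm:min-max} (the minimax identity) together with \eqref{eq:99} we have $\cA(u,\alpha)=\min\tilde\cB=\cB(m,\vv)$; hence the hypothesis (i) of Theorem~\ref{l:a_leq_B} holds, so its equivalent condition (iii) holds. Then (O1) gives $\alpha=f(\cdot,m)$ $\LL^{d+1}$-a.e., so $(u,f(\cdot,m))=(u,\alpha)\in\hj_q(Q,H)$, which is exactly Definition~\ref{def:solution}(i); (O2) gives $\vv=-D_\spp H(\cdot,Du)$ $\tilde\mu$-a.e., so the continuity equation satisfied by $(m,\vv)$ becomes precisely \eqref{eqmDu}, which is Definition~\ref{def:solution}(ii) (the boundary conditions $\mu_{t=i}=\mu_i$ are part of the definition of $\CE2p{Q;\mu_0,\mu_1}$ and of Lemma~\ref{le:precise-m}); and (O3) is verbatim Definition~\ref{def:solution}(iii), using \eqref{eq:205}--\eqref{eq:206}. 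The inequality $-\partial_t u+H(x,Du)\le f(x,m)$ required in (i) is just the subsolution property defining $\hj_q(Q,H)$ after the identification $\alpha=f(\cdot,m)$; since $f(x,m)\ge f(x,0)$ the lower bound \eqref{eq:253} is automatic. This gives part~(1) after noting $m\in L^p\cap L^1_\weight(Q)$ is built into $\CE2pQ$.

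For part~(2), conversely, suppose $(m,u)$ is a variational weak solution. Set $\alpha:=f(\cdot,m)$ and $\vv:=-D_\spp H(\cdot,Du)$. I would first check that $(u,\alpha)\in\hj_q(Q,H)$: the subsolution inequality is Definition~\ref{def:solution}(i), and $\alpha=f(\cdot,m)\in\cX^q(Q)$ follows from the growth bound (H2) together with $m\in L^p(Q)$ (so $|m|^{p-1}\in L^q(Q)$) and $\gamma_f\in L^q(\R^d)\subset\cX^q(Q)$. Next $(m,\vv)\in\CE2p{Q;\mu_0,\mu_1}$: by Remark~\ref{r:solution}, $m\in\rmA_2(Q)$ and $m|D_\spp H(\cdot,Du)|^2\in L^1_{\loc}(Q)$, but to land in $\CE2pQ$ I need $\vv\in L^2(Q,\tilde\mu;\R^d)$ globally, i.e.\ $\int_Q|D_\spp H(x,Du)|^2 m\,\d\lambda<\infty$; this follows from the quadratic growth of $H_\spp$ (a consequence of (H3)) once $\int_Q|Du|^2 m\,\d\lambda<\infty$, which in turn is Theorem~\ref{l:duality_hj_cont}(1) applied on $(0,1)$ together with the integrability of the traces $u_0^+m_0, u_1^-m_1$ guaranteed by Definition~\ref{def:solution}(iii). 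With $(u,\alpha)$ and $(m,\vv)$ so defined, conditions (O1), (O2), (O3) of Theorem~\ref{l:a_leq_B}(iii) hold by construction and by Definition~\ref{def:solution}(iii); hence by the equivalence (iii)$\Leftrightarrow$(ii) in Theorem~\ref{l:a_leq_B}, $(m,\vv)$ solves Problem~\ref{p:control_eq:cont} and $(u,\alpha)$ solves Problem~\ref{p:control_HJ}.

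The main obstacle I anticipate is not in the logical structure but in the verification, in part~(2), that $(m,\vv)$ genuinely belongs to $\CE2p{Q;\mu_0,\mu_1}$ as opposed to merely solving the continuity equation in the weaker local sense of Remark~\ref{r:solution}: one must upgrade $m|\vv|^2\in L^1_{\loc}(Q)$ to $m|\vv|^2\in L^1(Q)$ up to the endpoints $t=0,1$, and ensure the endpoint traces of the precise representative $\mu$ coincide with $\mu_0,\mu_1$. I expect this to be handled exactly as in the proof of Theorem~\ref{l:duality_hj_cont} --- in fact its last clause already provides \eqref{eq:71} with $s=0$, $t=1$ under the integrability of $\int u_0^+\,\d\mu_0$ and $\int u_1^-\,\d\mu_1$, which Definition~\ref{def:solution}(iii) supplies. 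A minor additional point is checking the measurability/Carath\'eodory hypotheses needed so that $f(\cdot,m)$ and $H_\spp(\cdot,Du)$ are well-defined elements of the relevant spaces, which is immediate from the Carath\'eodory assumptions in \ref{h.1}.
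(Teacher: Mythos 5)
Your proof is correct and is essentially the argument the paper intends: the paper presents Theorem~\ref{thm:main} as a direct reformulation of Theorem~\ref{l:a_leq_B}, without a separate written proof, and you supply precisely the required translation between the optimality conditions (O1)--(O3) and Definition~\ref{def:solution}. One step in part~(2) deserves to be made explicit, since as written it risks circularity: to invoke Theorem~\ref{l:duality_hj_cont}(1) and get $\int_Q |Du|^2\,m\,\d\lambda<\infty$ you already need a pair in $\CE 2pQ$, and you cannot use $(m,-D_\spp H(\cdot,Du))$ for this since that is exactly what you are trying to establish. The fix is to note that $m\in\rmA_2(Q)$ (part of Definition~\ref{def:solution}(ii)) provides, via Lemma~\ref{le:precise-m}, some admissible velocity $\vv_0$ with $(m,\vv_0)\in\CE 2pQ$; applying Theorem~\ref{l:duality_hj_cont} to $(u,\alpha)$ and $(m,\vv_0)$, together with the trace integrability from Definition~\ref{def:solution}(iii), then yields the global bound on $Du$, after which the pointwise estimate $|D_\spp H(x,\pp)|^2\le C\big(|\pp|^2+\gamma^+_H(x)+\gamma^-_H(x)\big)$ (a consequence of Fenchel duality between $H$ and $L$ and the two-sided growth bound (H3), using $m\in L^1_\weight(Q)$ to control the $\gamma^\pm_H$ terms) upgrades $-D_\spp H(\cdot,Du)$ to an element of $L^2(Q,\tilde\mu;\R^d)$, so that $(m,-D_\spp H(\cdot,Du))\in\CE 2p{Q;\mu_0,\mu_1}$ as needed.
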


Existence of variational weak solutions  
to the MFPP system \eqref{eq:wMFPP} 
can now be easily obtained by 
the results of Sections \ref{subsec:variational-primal} 
and \ref{subsec:variational-dual}. As for uniqueness, this can only be obtained  for $m$ and for $Du$ (on the support of $m$) by strengthening the convexity assumptions.

\begin{theorem}[Existence and uniqueness of solutions to MFPP]
Under the structural assumptions \ref{h.1}
with $\mu_i=m_i\LL^d\in \cP_{2,p}^r(\R^d)$ there exists a variational weak solution $(u,m)$ 
to the planning  problem \eqref{eq:wMFPP} and
the function
\begin{equation}
  \label{eq:249}
  \alpha:=f(x,m)\quad
  \text{is independent of the choice of $m$,}
\end{equation}
in the sense that if 
$(u,m)$ and
$(u',m')$ are two solutions then $f(\cdot,m)=f(\cdot,m')$ $\LL^{d+1}$
a.e.~in $Q$. Moreover
\begin{enumerate}
  \item
  if $f(x,\cdot)$ is strictly increasing then $m$ 
  is unique (up to $\LL^{d+1}$-negligible sets) and 
  the vector field $\vv=-D_\spp H(x,Du)$ is uniquely determined
  $m\lambda$-a.e.~in $Q$
\item If $H$ is strictly convex (equivalently, if $L$ is
  differentiable
  w.r.t.~$\vv$) then also $Du$ is uniquely determined 
  $m\lambda$-a.e.
\end{enumerate}
\end{theorem}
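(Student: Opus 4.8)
The plan is to derive the theorem as a consequence of the duality machinery of Sections~\ref{subsec:variational-primal}--\ref{subsec:variational-optimal}: Theorems~\ref{p:min_B}, \ref{thm:minimum_dual}, \ref{thm:min-max}, \ref{l:a_leq_B} and \ref{thm:main} do essentially all the work.

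\emph{Existence.} First I would note that $\mu_i=m_i\LL^d\in\cP_{2,p}^r(\R^d)$ implies $\KL_{2,p}(\mu_0,\mu_1)<\infty$ by Lemma~\ref{le:basic-Lp}, so Theorem~\ref{p:min_B} produces a minimizer $(m,\vv)\in\CE 2p{Q;\mu_0,\mu_1}$ of the primal Problem~\ref{p:control_eq:cont}, and Theorem~\ref{thm:minimum_dual} produces a maximizer $(u,\alpha)\in\hj_q(Q,H)$ of the dual Problem~\ref{p:control_HJ} satisfying the normalization~\eqref{eq:253}. Theorem~\ref{thm:main}(1) then immediately yields that $(m,u)$ is a variational weak solution of~\eqref{eq:wMFPP} and that $\alpha=f(\cdot,m)$ $\LL^{d+1}$-a.e.\ in $Q$, $\vv=-D_\spp H(\cdot,Du)$ $\tilde\mu$-a.e.

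\emph{Independence of $\alpha=f(\cdot,m)$.} Given two variational weak solutions $(u,m)$ and $(u',m')$, I would invoke Theorem~\ref{thm:main}(2) to see that $(u,f(\cdot,m))$ and $(u',f(\cdot,m'))$ are maximizers of Problem~\ref{p:control_HJ} while $(m,-D_\spp H(\cdot,Du))$ and $(m',-D_\spp H(\cdot,Du'))$ are minimizers of Problem~\ref{p:control_eq:cont}; since there is no duality gap (Theorem~\ref{thm:min-max}) all four realize the common optimal value. I would then pair the ``crossed'' couple $(u,f(\cdot,m))\in\hj_q(Q,H)$ --- which satisfies~\eqref{eq:253} because $f$ is increasing and $m\ge0$ --- with the primal minimizer $(m',-D_\spp H(\cdot,Du'))\in\CE 2p{Q;\mu_0,\mu_1}$: both realize the optimal value, so $\cA(u,f(\cdot,m))=\cB(m',-D_\spp H(\cdot,Du'))$, and Theorem~\ref{l:a_leq_B} (equivalently Corollary~\ref{cor:optimality}) applies; its optimality condition (O1) reads $f(\cdot,m)=f(\cdot,m')$ $\LL^{d+1}$-a.e.\ in $Q$, which is~\eqref{eq:249}.

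\emph{Uniqueness of $m$, $\vv$, $Du$.} If $f(x,\cdot)$ is strictly increasing, the uniqueness part of Theorem~\ref{p:min_B} gives that the primal minimizer $m$ is unique up to $\LL^{d+1}$-negligible sets and that the optimal velocity field of Problem~\ref{p:control_eq:cont} is unique $m\lambda$-a.e.; since every variational weak solution $(u,m)$ supplies, via Theorem~\ref{thm:main}(2), the primal minimizer $(m,-D_\spp H(\cdot,Du))$, it follows that $m$ is unique and that $\vv=-D_\spp H(x,Du)$ is uniquely determined $m\lambda$-a.e. If in addition $H(x,\cdot)$ is strictly convex --- equivalently $L(x,\cdot)=H^*(x,-\cdot)$ is differentiable --- then $D_\spp H(x,\cdot)$ is injective, so the identity $D_\spp H(x,Du)=D_\spp H(x,Du')$ $m\lambda$-a.e.\ forces $Du=Du'$ $m\lambda$-a.e.

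\emph{Expected main difficulty.} Most of the statement is a bookkeeping exercise on top of results already proved; the genuinely delicate point is the determination of $\alpha$ when $f$ is not strictly monotone and $m$ itself need not be unique. There no uniqueness-of-minimizers argument is available and one must rely on the crossed primal--dual pairing, taking care to check that $(u,f(\cdot,m))$ really lies in $\hj_q(Q,H)$ (weak subsolution property and $f(\cdot,m)\in\cX^q(Q)$), satisfies the lower bound~\eqref{eq:253}, has the right boundary integrability $u_0^+m_0,u_1^-m_1\in L^1(\R^d)$, and attains the optimal value, so that the biconditional in Theorem~\ref{l:a_leq_B} can legitimately be used to extract (O1).
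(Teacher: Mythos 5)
Your proof is correct and it is essentially the argument the paper has in mind: the paper states the theorem without a written proof, preceding it with the one-line remark that existence ``can now be easily obtained'' from Theorems~\ref{p:min_B}, \ref{thm:minimum_dual}, \ref{thm:min-max}, \ref{l:a_leq_B} and \ref{thm:main}, so your write-up supplies exactly the missing bookkeeping. The only part that genuinely needs care --- and you correctly identify it --- is the independence of $\alpha=f(\cdot,m)$: the crossed pairing of $(u,f(\cdot,m))\in\hj_q(Q,H)$ with $(m',-D_\spp H(\cdot,Du'))\in\CE 2p{Q;\mu_0,\mu_1}$ is the right move, and your verification that $(u,f(\cdot,m))$ really belongs to $\hj_q(Q,H)$ and satisfies~\eqref{eq:253} (from (H2) and $m\in L^p(Q)$ one gets $f(\cdot,m)\in L^q(Q)\subset\cX^q(Q)$, and $f(\cdot,m)\ge f(\cdot,0)$ by monotonicity), together with the zero duality gap from Theorem~\ref{thm:min-max}, legitimizes the use of the biconditional in Theorem~\ref{l:a_leq_B}/Corollary~\ref{cor:optimality} to extract (O1). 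For the uniqueness claims, reading part~2 as holding under the additional hypothesis of part~1 (which the ``also'' invites) makes your chain $\vv$ unique $\Rightarrow D_\spp H(x,Du)=D_\spp H(x,Du')$ $m\lambda$-a.e.\ $\Rightarrow Du=Du'$ $m\lambda$-a.e.\ (by injectivity of $D_\spp H(x,\cdot)$ under strict convexity) exactly what is needed.
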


We conclude this section with a last characterization of optimizers of
Problem \ref{p:control_eq:cont}, where a modified Lagrangian is
involved.
First of all, whenever $\alpha\in \cX^q(Q)$ we set
\begin{equation}
  \label{eq:244}
  L_\alpha(t,x,\vv):=L(x,\vv)+\alpha(t,x).
\end{equation}
\begin{theorem}[Optimizers of the modified Lagrangian cost]
  \label{thm:modified_Lagrangian_cost}
  Let $\mu_i=m_i\LL^d\in \cP_{2,p}^r(\R^d)$ and $(m,\vv)\in \CE
  2p{Q;\mu_0,\mu_1}$.  The pair 
  $(m,\vv)$ is a solution to Problem \ref{p:control_eq:cont} if and
  only if setting $\alpha:=f(x,m)$ the pair
  $(m,\vv)$ minimizes the modified Lagrangian dynamic cost
  \begin{equation}
    \label{eq:245}
    \mathcal L_\alpha(m',\vv'):=
    \int_Q L_\alpha(t,x,\vv')m'\,\d x\,\d t=
    \int_Q \Big(L(x,\vv')+\alpha(t,x)\Big)m'\,\d x\,\d t
  \end{equation}
  among all pairs $(m',\vv')\in \CE 2p{Q;\mu_0,\mu_1}$,
  i.e.
  \begin{equation}
    \label{eq:246}
        \mathcal L_\alpha(m,\vv)\le \mathcal L_\alpha(m',\vv')
        \quad
        \text{for every }(m',\vv')\in \CE 2p{Q;\mu_0,\mu_1}.
  \end{equation}
\end{theorem}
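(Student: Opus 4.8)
The plan is to derive the equivalence from the no--duality--gap between Problems \ref{p:control_eq:cont} and \ref{p:control_HJ} (Theorems \ref{thm:minimum_dual}, \ref{thm:min-max}, \ref{l:a_leq_B}) combined with the Fenchel--Young inequalities between $H,L$ and between $F,F^*$. Fix $(m,\vv)\in\CE 2p{Q;\mu_0,\mu_1}$ and set $\alpha:=f(\cdot,m)$. Using the growth bounds \eqref{eq:6}, \eqref{eq:7}, \eqref{eq:9} and H\"older's inequality (with $m^{p-1}\in L^q(Q)$, $\gamma_f\in L^q(\R^d)$, $m'\in L^p\cap L^1_\weight(Q)$) one checks that $\int_Q F^*(x,\alpha)\,\d x\,\d t$, $\int_Q\alpha\,m'\,\d x\,\d t$ and $\int_Q F(x,m')\,\d x\,\d t$ are finite for every competitor $(m',\vv')\in\CE 2p{Q;\mu_0,\mu_1}$, and that $\mathcal L_\alpha(m',\vv')<\infty\Leftrightarrow\cB(m',\vv')<\infty$; in particular $\mathcal L_\alpha(m,\vv)<\infty$. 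The starting point is the exact identity
\begin{equation}
  \label{eq:modLagid}
  \cB(m',\vv')=\mathcal L_\alpha(m',\vv')+\int_Q Y_F(x,m',\alpha)\,\d x\,\d t-\int_Q F^*(x,\alpha)\,\d x\,\d t,
\end{equation}
which is just the rearrangement of $F(x,m')-\alpha m'=Y_F(x,m',\alpha)-F^*(x,\alpha)$; here $\int_Q Y_F(x,m',\alpha)\,\d x\,\d t$ is finite and nonnegative, and it vanishes at $(m',\vv')=(m,\vv)$ because $\alpha=f(x,m)$ forces $Y_F(x,m,\alpha)=0$ $\lambda$-a.e.\ (on $\{m>0\}$ since $\alpha=f(x,m)\in\partial_m F(x,\cdot)(m)$, on $\{m=0\}$ since $F(x,0)=0$ and $F^*(x,f(x,0))=0$ by \eqref{eq:6bis}). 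Hence $\cB(m',\vv')\ge\mathcal L_\alpha(m',\vv')-\int_Q F^*(x,\alpha)\,\d x\,\d t$ with equality at $(m,\vv)$, which gives at once the implication ``$(m,\vv)$ minimizes $\mathcal L_\alpha$ $\Rightarrow$ $(m,\vv)$ minimizes $\cB$'': for every competitor $\cB(m',\vv')\ge\mathcal L_\alpha(m',\vv')-\int_Q F^*(x,\alpha)\,\d x\,\d t\ge\mathcal L_\alpha(m,\vv)-\int_Q F^*(x,\alpha)\,\d x\,\d t=\cB(m,\vv)$.

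The converse implication is the main difficulty: \eqref{eq:modLagid} bounds $\cB$ only from \emph{below} by $\mathcal L_\alpha$ up to a constant, so a competitor with small $\mathcal L_\alpha$ but large $\int_Q Y_F(x,\cdot,\alpha)$ is not yet excluded, and one cannot avoid invoking the dual problem. Assume then $(m,\vv)$ is optimal for Problem \ref{p:control_eq:cont}. By Theorem \ref{thm:minimum_dual} there is a maximizer $(u,\alpha^*)\in\hj_q(Q,H)$ of $\cA$, and by Theorem \ref{thm:min-max} $\cA(u,\alpha^*)=\cB(m,\vv)$, so Theorem \ref{l:a_leq_B} applies and yields the optimality conditions (O1)--(O3). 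In particular $\alpha^*=f(\cdot,m)=\alpha$ $\lambda$-a.e., $u_0^+m_0,u_1^-m_1\in L^1(\R^d)$, and, combining $\cA(u,\alpha)=\cB(m,\vv)$ with the pointwise identity $F(x,m)+F^*(x,\alpha)=\alpha m$ (which is $Y_F(x,m,\alpha)=0$),
\begin{equation}
  \label{eq:modLagtrace}
  \int_{\R^d}u_0^+\,\d\mu_0-\int_{\R^d}u_1^-\,\d\mu_1=\int_Q\big(L(x,\vv)+\alpha\big)m\,\d x\,\d t=\mathcal L_\alpha(m,\vv).
\end{equation}

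The final step is to test the dual pair $(u,\alpha)$ against an arbitrary competitor $(m',\vv')\in\CE 2p{Q;\mu_0,\mu_1}$, with associated curve $\mu'$, through the duality inequality \eqref{duality_formula} of Theorem \ref{l:duality_hj_cont} at the endpoints $s=0$ and $t=1$: this is admissible because $\mu_0,\mu_1\in\cP_{2,p}^r(\R^d)$ gives $0,1\in D_p[\mu']$, because $\int_{\R^d}u_0^+\,\d\mu_0>-\infty$ and $\int_{\R^d}u_1^-\,\d\mu_1<+\infty$ by (O3), and because $\int_Q|Du|^2m'\,\d x\,\d t<\infty$ by \eqref{eq:71}. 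It reads
\[
\int_{\R^d}u_0^+\,\d\mu_0-\int_{\R^d}u_1^-\,\d\mu_1\le\int_Q\big(\alpha-H(x,Du)-Du\cdot\vv'\big)m'\,\d x\,\d t\le\int_Q\big(\alpha+L(x,\vv')\big)m'\,\d x\,\d t=\mathcal L_\alpha(m',\vv'),
\]
the second inequality being the pointwise Fenchel--Young bound $-H(x,Du)-Du\cdot\vv'\le L(x,\vv')$ multiplied by $m'\ge0$ and integrated. Comparing with \eqref{eq:modLagtrace} yields $\mathcal L_\alpha(m',\vv')\ge\mathcal L_\alpha(m,\vv)$ for every competitor, i.e.\ $(m,\vv)$ minimizes $\mathcal L_\alpha$, which completes the equivalence. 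The remaining work is routine: all the finiteness and integrability claims above are consequences of the structural bounds and H\"older's inequality, and the validity of \eqref{duality_formula} at $t=0,1$ rests on Theorem \ref{thm:precise_repr} together with the trace integrability recorded in (O3); no compactness or approximation argument is needed, both directions being exact once the dual optimizer is available.
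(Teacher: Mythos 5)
Your proof is correct and follows essentially the same route as the paper: the easy direction rests on the Fenchel identity $\alpha m = F(x,m) + F^*(x,\alpha)$ (which you phrase as $Y_F(x,m,\alpha)=0$), and the converse invokes Theorem \ref{thm:minimum_dual} to produce a dual maximizer, Theorem \ref{thm:main} (equivalently Theorem \ref{l:a_leq_B}) to identify $\alpha^*=f(\cdot,m)$, and then \eqref{duality_formula} with the pointwise Fenchel bound $-H(x,Du)-Du\cdot\vv'\le L(x,\vv')$. The only difference is that you spell out the integrability and trace-admissibility hypotheses more explicitly; the paper leaves them implicit.
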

\begin{proof}
  The sufficiency of \eqref{eq:246} is easy:
  since $\alpha=f(x,m)$ 
  we have by Fenchel inequality
  \begin{equation}
    \label{eq:247}
    \alpha m=F(x,m)+F^*(x,\alpha), \quad
    \alpha m'\le F(x,m')+F^*(x,\alpha)\quad
    \text{$\lambda$-a.e.~in $Q$,} 
  \end{equation}
  so
  that \eqref{eq:246} yields
  \begin{align*}
    \mathcal B(m,\vv)
    &=
    \int_Q\Big(L(x,\vv)m+F(x,m)\Big)\,\d\lambda=
      \mathcal L_\alpha(m,\vv)-
      \int_Q F^*(x,\alpha)\,\d\lambda
      \\
    &\topref{eq:246}\le
      \mathcal L_\alpha(m',\vv')-
      \int_Q F^*(x,\alpha)\,\d\lambda
      \topref{eq:247}\le \mathcal B(m',\vv').
  \end{align*}
  In order to prove the converse implication, 
  we use Theorem \ref{thm:minimum_dual} to find a 
  maximizer $(u,\alpha^*)$ of Problem \ref{p:control_HJ};
  since $(m,\vv)$ is a solution to 
  Problem \ref{p:control_eq:cont}, Theorem \ref{thm:main} shows that
  $\alpha^*=f(x,m)=\alpha$.

  \eqref{eq:247} and Theorem \ref{l:duality_hj_cont}(3) then yield
  \begin{align*}
      \mathcal L_\alpha(m,\vv)
    &\topref{eq:247}=\mathcal B(m,\vv)
      + \int_Q F^*(x,\alpha)\,\d\lambda
      =\mathcal A(u,\alpha)+ \int_Q F^*(x,\alpha)\,\d\lambda
    \\&=
        \int_{\R^d}u^+_0\,\d\mu_0-
        \int_{\R^d}u^-_1\,\d\mu_1
        \le 
        \int_Q \Big(\alpha-H(x,Du)-Du\cdot \vv'\Big)m'\,\d\lambda
    \\&\le  \int_Q \Big(\alpha+L(x,\vv')\Big)m'\,\d\lambda=
        \mathcal L_\alpha(m',\vv').
  \end{align*}
\end{proof}
Notice that the function $\alpha$ defining the modified Lagrangian
dynamic cost \eqref{eq:245} 
does not depend on the particular choice of the solution $m$,
thanks to \eqref{eq:249}. The above proof also shows that 
if $(m,\vv)$ is any optimal solution to Problem \ref{p:control_eq:cont}
and $(u,\alpha)$ is any optimal solution to Problem \ref{p:control_HJ}
we have
\begin{equation}
  \label{eq:254}
  \cL_\alpha(m,\vv)=
   \int_{\R^d}u^+_0\,\d\mu_0-
   \int_{\R^d}u^-_1\,\d\mu_1
\end{equation}
so that these quantities do not depend on the particular choices of
$m,\vv$ and $u$ as well.
It is natural to interpret the above results
in terms of an Optimal Transport problem associated to the Lagrangian
\eqref{eq:244}. This will be the aim of the next section.

\section{Optimal plans: a Lagrangian viewpoint}
\label{sec:Lagrangian}

In this section we are concerned with optimality conditions at the level of single agents trajectories.
In particular we are interested in giving a good notion of Nash equilibria for the system.
To do it we look at the problem in a Lagrangian fashion, studying
suitable measures on admissible paths. The crucial step in this
procedure is a probabilistic representation of 
measure-valued solutions to the continuity equation, known as superposition principle.  A similar approach was already used for a  Lagrangian formulation of  mean field games  in \cite{cardaliaguet2016first} borrowing arguments from \cite{ambrosio2009geodesics}, \cite{carlier+al}.

\subsection{Continuity equation and
  measures on the space of continuous curves}
\label{subsec:dynamic-plans}
We introduce the (complete and separable metric) space 
\begin{equation}
  \label{eq:181}
  \Gamma:=C^0([0,1];\R^d)\quad
  \text{endowed with the uniform metric,}
\end{equation} 
and its Borel (in fact $F_\sigma$) subset 
of absolutely continuous curves 
$\ac_2([0,1];\R^d)$;
for every $\gamma\in \ac_2([0,1];\R^d)$ we call $\rmE_2$ the lower
semicontinuous
energy functional
\begin{equation}
  \label{eq:212}
  \rmE_2(\gamma):=\int_0^1|\dot \gamma(t)|^2\,\d t,
  \quad\text{with}\quad
  \rmE_2(\gamma):=+\infty\text{ if }\gamma\not\in\ac_2([0,1];\R^d).
\end{equation}
We denote by $\sfe_t:\Gamma\to \R^d$ and $\sfe:[0,1]\times \Gamma\to Q$ the
evaluation maps
\begin{equation}
  \label{eq:49}
  \sfe_t(\gamma):=\gamma(t),\quad
  \sfe(t,\gamma):=(t,\gamma(t))\quad
  \text{for every }t\in [0,1],\ \gamma\in \Gamma,
\end{equation}
and by $\sfd:(0,1)\times \ac_2([0,1];\R^d)\to (0,1)\times \R^d\times
\R^d$ 
the Borel map
\begin{equation}
  \label{eq:242}
  \sfd(t,\gamma):=
  \begin{cases}
    (t,\gamma(t),\dot\gamma(t))&\text{if $\gamma$ is differentiable at
      $t$},\\
    (t,\gamma(t),0)&\text{otherwise.}
  \end{cases}
\end{equation}
Every measure $\eeta\in \cP(\Gamma)$ 
induces a continuous curve $\mu \in \rmC^0([0,1];\cP(\R^d))$ 
and a probability measure $\tilde \mu\in \cP(Q)$ by push-forward
\begin{equation}
  \label{eq:179}
  \mu_t:=(\sfe_t)_\sharp \eeta,\quad
  \tilde\mu=\sfe_\sharp\eeta.
\end{equation}
If $\eeta$ is concentrated on $\ac^2([0,1];\R^d)$ 
(i.e.~$\eeta\big(\Gamma\setminus \ac^2([0,1];\R^d)\big)=0$)
and
\begin{equation}
  \label{eq:211}
  \int_\Gamma E_2(\gamma)\,\d\eeta(\gamma)<\infty,
\end{equation}
then $\mu\in \ac_2([0,1];\cP_2(\R^d))$ and it solves the continuity
equation
\begin{equation}
  \label{eq:213}
  \partial_t \mu_t+\nabla \cdot(\mu\vv)=0
\end{equation}
for the vector field $\vv$ which is barycenter 
of the measure
$\tilde \nu=\sfd_\sharp(\eeta)$ w.r.t.~$\tilde\mu$.
In fact, $\tilde\nu$ can be disintegrated w.r.t.~its marginal
$\tilde\mu$
by $\tilde\nu=\int_Q \nu_{t,x}\,\d\tilde\mu(t,x)$ and 
\begin{equation}
  \label{eq:214}
  \vv(t,x)=\int_{\R^d}v\,\d\nu_{t,x}(v).
\end{equation}
Since
\begin{equation}
  \label{eq:215}
  \int_\Gamma E_2(\gamma)\,\d\eeta(\gamma)
  =\int_\Gamma \int_0^1|\dot\gamma(t)|^2\,\d t\,\d\eeta(\gamma)
  =\int_{(0,1)\times \R^d\times \R^d}
  |v|^2\,\d\tilde\nu(t,x,v)=
  \int_{Q}\Big(\int_{\R^d}|v|^2\,\d\nu_{t,x}\Big)\,\d\tilde\mu(t,x)
\end{equation}
one immediately has by Jensen's inequality that 
\begin{equation}
  \label{eq:216}
  |\vv(t,x)|^2\le 
  \int_{\R^d}|v|^2\,\d\nu_{t,x}(v),\quad
  \int_{Q}|\vv|^2\,\d\tilde\mu\le 
  \int_{Q}\Big(\int_{\R^d}|v|^2\,\d\nu_{t,x}\Big)\,\d\tilde\mu(t,x)\le 
  \int_\Gamma E_2(\gamma)\,\d\eeta(\gamma).
\end{equation}
and similarly, for convex Lagrangians,
\begin{equation}
  \label{eq:217}
  \int_{Q}L(x,\vv)\,\d\tilde\mu\le 
  \int_{Q}\Big(\int_{\R^d}L(x,v)\,\d\nu_{t,x}\Big)\,\d\tilde\mu(t,x)\le 
  \int_\Gamma L(\gamma(t),\dot\gamma(t))\,\d\eeta(\gamma).
\end{equation}
The following result \cite[Section 8.2]{ambrosio2008gradient}
shows that 
any solution to the continuity equation \eqref{eq:213}
admits the representation \eqref{eq:179} for a measure $\eeta$ 
tighten to $\mu$ so that \eqref{eq:216} and \eqref{eq:217} become in
fact an
identity.
\begin{theorem}[Superposition principle]
  \label{thm:superposition_principle}
  If $\mu\in \ac^2([0,1];\cP_2(\R^d))$ 
  is a solution to the continuity
  equation
  \eqref{cont_eq} with respect to $\vv\in
L^2(Q,\tilde\mu;\R^d)$, then
there exists a measure 
$\eeta\in \cP(\Gamma)$ concentrated on the set of curves 
$\gamma\in \ac^2([0,1];\R^d)$ 
which are integral solutions to the ODE's
\begin{equation}
  \label{eq:46}
  \dot\gamma(t)=\vv(t,\gamma(t))\quad\text{$\LL^1$-a.e.~in }(0,1),
\end{equation}
such that $(\sfe_t)_\sharp \eeta=\mu_t$ for every $t\in [0,1]$ and
$\sfe_\sharp\eeta=\tilde\mu$.
\end{theorem}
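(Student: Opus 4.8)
The statement to prove is the Superposition Principle (Theorem \ref{thm:superposition_principle}), which asserts that any $L^2$-solution of the continuity equation arises as the push-forward of a probability measure on the path space concentrated on integral curves of the velocity field. This is a classical result, so the plan is to reduce it to the form stated in \cite[Section 8.2]{ambrosio2008gradient} and to verify the hypotheses needed there, paying attention to the $L^2$ integrability in time and to the finiteness of the second moment of the measures $\mu_t$.

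\medskip

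\textbf{Plan of proof.} First I would recall the setting: by hypothesis $\mu\in\ac^2([0,1];\cP_2(\R^d))$ solves $\partial_t\mu+\nabla\cdot(\vv\mu)=0$ with $\vv\in L^2(Q,\tilde\mu;\R^d)$, so in particular $\int_0^1\big(\int_{\R^d}|\vv(t,x)|^2\,\d\mu_t(x)\big)\,\d t<\infty$ and, since $\mu$ is $W_2$-absolutely continuous with $L^2$ metric speed, $t\mapsto\int_{\R^d}|x|^2\,\d\mu_t(x)$ is finite and bounded on $[0,1]$. The first step is to truncate and localize: one applies the standard superposition principle (Ambrosio–Gigli–Savar\'e, \cite[Theorem 8.2.1]{ambrosio2008gradient}) which gives a probability measure $\eeta\in\cP(\Gamma)$ concentrated on solutions $\gamma$ of $\dot\gamma(t)=\vv(t,\gamma(t))$ with $(\sfe_t)_\sharp\eeta=\mu_t$, once one knows $\int_0^1\int_{\R^d}\frac{|\vv(t,x)|}{1+|x|}\,\d\mu_t(x)\,\d t<\infty$; this integrability is immediate from Cauchy–Schwarz, the square-integrability of $\vv$ and the uniform bound on the second moments of $\mu_t$.

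\medskip

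Second, I would verify that the resulting $\eeta$ is concentrated on $\ac^2([0,1];\R^d)$ and satisfies \eqref{eq:211}. The point is that for $\eeta$-a.e.\ curve $\gamma$ one has $\dot\gamma(t)=\vv(t,\gamma(t))$ a.e., hence
\begin{equation*}
  \int_\Gamma\int_0^1|\dot\gamma(t)|^2\,\d t\,\d\eeta(\gamma)
  =\int_\Gamma\int_0^1|\vv(t,\gamma(t))|^2\,\d t\,\d\eeta(\gamma)
  =\int_0^1\int_{\R^d}|\vv(t,x)|^2\,\d\mu_t(x)\,\d t<\infty,
\end{equation*}
using $(\sfe_t)_\sharp\eeta=\mu_t$ and Fubini. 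Hence $\int_\Gamma\rmE_2(\gamma)\,\d\eeta<\infty$, so $\eeta$ is concentrated on $\ac^2([0,1];\R^d)$ and the energy is finite. The identity $\sfe_\sharp\eeta=\tilde\mu$ follows from $(\sfe_t)_\sharp\eeta=\mu_t$ for every $t$ together with the disintegration formula \eqref{eq:42} defining $\tilde\mu$: for bounded Borel $h$ on $Q$ one has $\int_Q h\,\d(\sfe_\sharp\eeta)=\int_\Gamma h(t,\gamma(t))\,\d\eeta\,\d t$ — more precisely $\int_0^1\int_\Gamma h(t,\gamma(t))\,\d\eeta(\gamma)\,\d t=\int_0^1\int_{\R^d}h(t,x)\,\d\mu_t(x)\,\d t=\tilde\mu(h)$.

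\medskip

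\textbf{Main obstacle.} The only genuinely delicate point, and the one I would treat most carefully, is the integrability needed to apply the abstract superposition theorem: one must control $\vv$ along the flow uniformly in time, and the natural quantity is $|\vv|/(1+|x|)$ integrated against $\tilde\mu$. Here the $L^2$ (rather than $L^1$) bound on $\vv$ combined with the finite-second-moment bound on $(\mu_t)$ — both of which are built into the definition of $\ac^2([0,1];\cP_2(\R^d))$ and of $\CE 2pQ$ — is exactly what makes Cauchy–Schwarz work: $\int_0^1\int|\vv|(1+|x|)^{-1}\,\d\mu_t\,\d t\le\big(\int_0^1\int|\vv|^2\,\d\mu_t\,\d t\big)^{1/2}\big(\int_0^1\int(1+|x|)^{-2}\,\d\mu_t\,\d t\big)^{1/2}<\infty$. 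Once this is in hand, everything else is a routine verification of hypotheses and a change-of-variables computation, so I would keep that part brief and cite \cite[Section 8.2]{ambrosio2008gradient} for the construction of $\eeta$ itself.
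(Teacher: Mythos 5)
Your proposal is correct and takes exactly the route the paper takes: the theorem is stated here with a citation to \cite[Sect.~8.2]{ambrosio2008gradient} and without its own proof, and you reduce it to that reference, check the required integrability, and upgrade the conclusion to concentration on $\ac^2$-curves via the finite-energy identity and to $\sfe_\sharp\eeta=\tilde\mu$ via Fubini. One small remark: the bound $\int_0^1\int_{\R^d}(1+|x|)^{-2}\,\d\mu_t\,\d t\le 1$ is immediate since $(1+|x|)^{-2}\le 1$ and each $\mu_t$ is a probability measure, so the second-moment bound is not actually needed in the Cauchy--Schwarz step (in fact, already $\int|\vv|\,\d\tilde\mu\le\|\vv\|_{L^2(Q,\tilde\mu)}$ suffices because $\tilde\mu\in\cP(Q)$).
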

In the following we call $\cP_{2,p}(\Gamma)$ 
the collection of measures $\eeta\in \cP(\Gamma)$ such that
\begin{equation}
  \int_\Gamma\rmE_2(\gamma)\,\d\eeta<\infty,\quad
  \sfe_\sharp \eeta=m\lambda\quad
  \text{for }m\in L^p(Q),
\end{equation}
and we say that $\eeta$ is \emph{tightened} to $(m,\vv)\in \CE 2p{Q}$
if $\eeta$ satisfies the conditions of Theorem
\ref{thm:superposition_principle}.
We want to study the variational properties of dynamic plans $\eeta$
tightened to minimizers of Problem \ref{p:control_eq:cont}.

\subsection{Lifting functions to $\Gamma$ and regularization of
  dynamic plans}
\label{subsec:lifting}
Let $w\in L^0(Q)$ and $\sfw$ be a Borel representative of $w$.
If $\eeta\in \cP_{2,p}(\Gamma)$ the measure
$\tilde\mu=\sfe_\sharp\eeta$
is absolutely continuous w.r.t.~$\LL^{d+1}$ so that 
the function
\begin{equation}
  \label{eq:218}
  w_\Gamma=\sfw\circ\sfe :(0,1)\times \Gamma\to\R,
  \quad
  w_\Gamma(t,\gamma):=\sfw(t,\gamma(t))
\end{equation}
is well defined and its equivalence class modulo
$\tilde\eeta$-negligible sets in $I\times \Gamma$ does not depend
on the particular $\lambda$-representative of $w$.
Similarly, we can consider the function
$\ell(t,x,y):=L(x,y)$
\begin{equation}
  \label{eq:220}
  L_\Gamma:=\ell\circ \sfd:(0,1)\times\Gamma\to\R,\quad
  L_\Gamma(t,\gamma):=L(\gamma(t),\dot\gamma(t)).
\end{equation}

\begin{lemma}\label{l:energy_inequality_plan}
Let $u$ be a weak subsolution to $\hj(H,\alpha)$ 
with (right continuous) representative $\sfu$ and $\alpha \in
L^q(Q)$ and let 
$\eeta \in \cP_{2,p}(\Gamma)$ be a given plan, with
$\mu_t:= (\sfe_t)_\# \eta$. 
If 
\begin{equation}
  \label{eq:219}
  B_\Gamma(t,\gamma):=
  \int_0^t \Big(L(\gamma(r),\dot\gamma(r))+\alpha(r,\gamma(r))\Big)\,\d r
  =\int_0^t \Big(L_\Gamma(r,\gamma)+\alpha_\Gamma(r,\gamma)\Big)\,\d r
\end{equation}
we can find a $\eeta$-negligible set $N\subset \Gamma$ 
and a Borel representative $u_\star$ of $u_\Gamma$ such that
for every $0 \leq s \leq t \leq 1$
 \begin{equation}
   \label{eq:223}
   u_\star(s,\gamma)+B_\Gamma(s,\gamma)\le 
   u_\star(t,\gamma)+B_\Gamma(t,\gamma)
   \quad\text{for every }\gamma\in \Gamma\setminus N.
 \end{equation}
 Morever, there exists a countable set $J$ such that 
 for every $t\in D_p[\mu]\setminus J$
 \begin{equation}
   \label{eq:225}
    u_\star(t,\gamma)= u_\Gamma(t,\gamma)=\sfu(t,\gamma(t))
    \quad \text{for $\eeta$-a.e.~$\gamma\in \Gamma$}
  \end{equation}
  and \eqref{eq:223} also holds at $s=0$ and $t=1$ with the traces of
  $\sfu$:
  \begin{equation}
    \label{eq:256}
    u^+_0(\gamma(0))-u^-_1(\gamma(1))\le 
    \int_0^1
    \Big(L(\gamma(r),\dot\gamma(r))+\alpha(\gamma(r),\dot\gamma(r))\Big)\,\d
    r=B_\Gamma(1,\gamma)
    \quad\text{for $\eeta$-a.e.~$\gamma$.}
  \end{equation}
\end{lemma}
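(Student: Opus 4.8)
\emph{Overview of the plan.} The desired inequality \eqref{eq:223} is exactly the monotonicity condition in Lemma~\ref{le:increasing-main}, read on the Polish space $\Omega:=\Gamma$ endowed with the finite measure $\frm:=\eeta$. More precisely, set $\beta_\Gamma:=L_\Gamma+\alpha_\Gamma$, so that $B_\Gamma(t,\gamma)=\int_0^t\beta_\Gamma(r,\gamma)\,\d r$, and note that $(u,\alpha)\in\hj_q(Q,H)$ since $\alpha\in L^q(Q)\subset\cX^q(Q)$, so $\sfu$ and its traces are provided by Proposition~\ref{hj2_piacq} and Theorem~\ref{thm:precise_repr}. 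The plan is: (i) check an integrability bound making $\beta_\Gamma$ an admissible datum; (ii) establish the a.e.\ monotonicity of $\gamma\mapsto u_\Gamma(\cdot,\gamma)+B_\Gamma(\cdot,\gamma)$ on the time set $D_p[\mu]$, where $\mu:=(\sfe_\cdot)_\sharp\eeta$ with associated $(m,\vv)\in\CE 2p Q$; (iii) feed this into Lemmas~\ref{le:increasing-main} and~\ref{le:traces} to produce $u_\star$, the identification \eqref{eq:225} and the endpoint inequality \eqref{eq:256}.

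\emph{Step 1 (integrability).} First I would show $\beta_\Gamma\in L^1(\LL^1\otimes\eeta)$, which gives \eqref{eq:237strong} for $\beta_\Gamma$ and makes $B_\Gamma(\cdot,\gamma)$ absolutely continuous for $\eeta$-a.e.\ $\gamma$. Using $\sfe_\sharp\eeta=m\lambda$ and $\dot\gamma=\vv(\cdot,\gamma)$ $\eeta$-a.e.\ (superposition principle), one has $\int_{I\times\Gamma}|\beta_\Gamma|\,\d(\LL^1\otimes\eeta)=\int_Q\big(|\alpha|+|L(x,\vv)|\big)m\,\d x\,\d t$; the $\alpha$-term is finite because $\alpha\in L^q(Q)$, $m\in L^p(Q)$, and the $L$-term because $|L(x,\vv)|\le\frac{c_H}2|\vv|^2+C\weight(x)$ with $\int_Q|\vv|^2m<\infty$ (finite energy) and $m\in L^1_\weight(Q)$.

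\emph{Step 2 (pointwise monotonicity — the core).} Fix $s<t$ in $D_p[\mu]$ and an arbitrary Borel $A\subset\Gamma$ with $\eeta(A)>0$. The normalized restriction $\eeta(A)^{-1}\eeta\restr A$ is again concentrated on integral curves of $\dot\gamma=\vv(\cdot,\gamma)$, and the pair $(\hat m^A,\vv)$ with $\hat m^A\lambda:=\eeta(A)^{-1}\sfe_\sharp(\eeta\restr A)$ belongs to $\CE 2p Q$: indeed $\hat m^A\le\eeta(A)^{-1}m$, so $\hat m^A\in L^p\cap L^1_\weight(Q)$ is a probability density and $\vv\in L^2(Q,\hat m^A\lambda;\R^d)$, and $(\hat m^A,\vv)$ solves the continuity equation. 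Moreover $D_p[\mu]\subset D_p[\hat\mu^A]$ since $\hat\mu^A_r\le\eeta(A)^{-1}\mu_r$. Applying \eqref{duality_formula} to $(\hat m^A,\vv)$ between $s$ and $t$, and then the Fenchel inequality $Y_H(x,Du,\vv)\ge0$ (so that $\alpha-H(x,Du)-Du\cdot\vv\le\alpha+L(x,\vv)$ $\LL^{d+1}$-a.e.\ on $Q$), the right-hand side is $\le\int_s^t\int_{\R^d}\big(\alpha+L(x,\vv)\big)\hat m^A\,\d x\,\d r$, which by the superposition identity equals $\eeta(A)^{-1}\int_A\int_s^t\beta_\Gamma(r,\gamma)\,\d r\,\d\eeta$. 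Multiplying by $\eeta(A)$ and rewriting $\eeta(A)\int_{\R^d}\sfu_r\,\d\hat\mu^A_r=\int_A u_\Gamma(r,\gamma)\,\d\eeta$ (well defined since $\sfu_r\in\cX^q(\R^d)$ and $\mu_r$ has an $L^p$ density, hence $u_\Gamma(r,\cdot)\in L^1(\eeta)$), we arrive at
\begin{equation*}
  \int_A\Big(u_\Gamma(s,\gamma)+B_\Gamma(s,\gamma)-u_\Gamma(t,\gamma)-B_\Gamma(t,\gamma)\Big)\,\d\eeta\le0 .
\end{equation*}
As $A$ is arbitrary (the case $\eeta(A)=0$ being trivial), $u_\Gamma(s,\cdot)+B_\Gamma(s,\cdot)\le u_\Gamma(t,\cdot)+B_\Gamma(t,\cdot)$ $\eeta$-a.e.\ for all $s<t$ in $D_p[\mu]$: this is precisely hypothesis (ii) of Lemma~\ref{le:increasing-main} for the pair $(u_\Gamma,-\beta_\Gamma)$ on $(\Gamma,\eeta)$, with full-measure time set $D:=D_p[\mu]$.

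\emph{Step 3 (extraction, traces, and the main obstacle).} Lemma~\ref{le:increasing-main} now yields a right-continuous precise representative $u_\star$ of $u_\Gamma$ and a $\eeta$-negligible $N\subset\Gamma$ such that $t\mapsto u_\star(t,\gamma)+B_\Gamma(t,\gamma)$ is nondecreasing on $[0,1]$ for every $\gamma\in\Gamma\setminus N$, which is \eqref{eq:223}; part (I.2) of the same lemma gives an at most countable $J\subset D_p[\mu]$ with $u_\star(t,\gamma)=u_\Gamma(t,\gamma)=\sfu(t,\gamma(t))$ for $\eeta$-a.e.\ $\gamma$ when $t\in D_p[\mu]\setminus J$, which is \eqref{eq:225}. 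Since $\beta_\Gamma$ satisfies \eqref{eq:237strong}, Lemma~\ref{le:traces} applies, the one-sided limits $u_\star^+(0,\gamma)=\lim_{t\downarrow0}u_\star(t,\gamma)$ and $u_\star^-(1,\gamma)=\lim_{t\uparrow1}u_\star(t,\gamma)$ exist for every $\gamma$, and letting $s\downarrow0$, $t\uparrow1$ in \eqref{eq:223} (using $B_\Gamma(s,\gamma)\to0$) gives $u_\star^+(0,\gamma)-u_\star^-(1,\gamma)\le B_\Gamma(1,\gamma)$ for $\eeta$-a.e.\ $\gamma$. The remaining, and most delicate, point is to identify $u_\star^+(0,\gamma)=u_0^+(\gamma(0))$ and $u_\star^-(1,\gamma)=u_1^-(\gamma(1))$, i.e.\ to compare the trace of the lifted function taken along the moving base point $\gamma(t)$ with the everywhere-defined trace $\sfu^+(0,\cdot)=\lim_{t\downarrow0}\sfu(t,\cdot)$ of Lemma~\ref{le:traces} evaluated at the fixed point $\gamma(0)$. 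Here I would combine \eqref{eq:225} along a sequence $t_n\downarrow0$ in $D_p[\mu]\setminus J$, the uniform interior bounds on $\|\sfu_r\|_{\cX^q(\R^d)}$ from Theorem~\ref{thm:precise_repr} (letting the lower endpoint tend to $0$), the pointwise everywhere convergence $\sfu(t,x)\to\sfu^+(0,x)$, and the absolute continuity of $\mu_0$ in the setting where the lemma is invoked, to pass the limit through the evaluation maps; the endpoint $t=1$ is symmetric. Steps 1–2 are routine once the restriction trick for $\eeta$ is in place, and Step 3 is a direct application of Lemmas~\ref{le:increasing-main} and~\ref{le:traces} apart from this trace identification along characteristics, which is the genuine obstacle.
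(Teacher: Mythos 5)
Your overall architecture — localize the plan, apply the duality inequality \eqref{duality_formula} between two times in $D_p[\mu]$, use Fenchel to replace $\alpha-H-Du\cdot\vv$ by $\alpha+L$, then feed the resulting $\tilde\frm$-a.e.\ monotonicity into Lemma~\ref{le:increasing-main} on the Polish space $(\Gamma,\eeta)$ — is the same as the paper's. But Step~2 contains a genuine gap.

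You write that ``$\dot\gamma=\vv(\cdot,\gamma)$ $\eeta$-a.e.\ (superposition principle)'' and then claim that $(\hat m^A,\vv)$ solves the continuity equation and that $\int_Q L(x,\vv)\hat m^A\,\d\lambda=\eeta(A)^{-1}\int_A L_\Gamma\,\d\eeta$. This tacitly assumes the given plan $\eeta$ is \emph{tightened} to $(m,\vv)$, i.e.\ concentrated on integral curves of $\vv$. That is not a hypothesis of the lemma: $\eeta\in\cP_{2,p}(\Gamma)$ only prescribes finite energy and an $L^p$ density for $\sfe_\sharp\eeta$. The superposition principle (Theorem~\ref{thm:superposition_principle}) produces \emph{some} tightened plan for a given $(m,\vv)$; it does not say that every plan with those marginals is tightened. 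And indeed the lemma is used later (Lemma~\ref{le:regularization}) on the translated plans $\eeta_n=(R_n)_\sharp(\eeta\otimes g_1\LL^d)$, which are generically not tightened. Once tightening is dropped, the correct velocity field associated to the localized curve $\eeta(A)^{-1}\sfe_\sharp(\eeta\restr A)$ is the \emph{barycenter} $\vv^A$ of $\sfd_\sharp(\eeta\restr A)$, which varies with $A$; and one must then invoke the Jensen inequality \eqref{eq:217} for the convex Lagrangian to pass from $\int_Q L(x,\vv^A)\hat m^A\,\d\lambda$ \emph{up} to $\int_\Gamma\int_0^1 L(\gamma,\dot\gamma)\,\d t\,\d\eeta^A$. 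This is exactly what the paper does (with localizers $\varphi\in C_b(\Gamma)$ instead of indicators, an inessential difference), and it is the step your argument is missing. The repair is small — replace $\vv$ by $\vv^A$ throughout and add one line citing \eqref{eq:217} — but as written the middle of your chain of inequalities is unjustified.

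A secondary remark: you correctly flag the identification $u_\star^+(0,\gamma)=u_0^+(\gamma(0))$, $u_\star^-(1,\gamma)=u_1^-(\gamma(1))$ as a delicate point and only sketch a plan for it. The paper sidesteps the comparison entirely by invoking the \emph{endpoint} version of \eqref{duality_formula} (the last sentence of Theorem~\ref{l:duality_hj_cont}) directly in the localized inequality, so that the traces $\sfu^+(0,\cdot)$ and $\sfu^-(1,\cdot)$ already appear in the integral bound \eqref{eq:224} before Lemma~\ref{le:increasing-main} is ever applied; the monotonicity lemma then only needs to handle interior times. This is cleaner than trying to recover the trace along the moving base point $\gamma(t)$ a posteriori, and you should adopt it.
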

\begin{proof}
  Let us choose a nonnegative 
  $\varphi\in C_b(\Gamma)$ such that 
  $M(\varphi):=\int_\Gamma\varphi\,\d\eeta>0$.
  The corresponding measure $\eeta^\varphi:=M(\varphi)^{-1}\varphi\eeta$
  induces a curve $\mu^\varphi$ with density $m^\varphi\in L^p(Q)$ 
  and a vector field $\vv^\varphi$ 
as in \eqref{eq:179} and \eqref{eq:214} 
such that $(m^\varphi,\vv^\varphi)\in \CE2pQ$.
From \eqref{duality_formula} of 
Theorem \ref{l:duality_hj_cont}  and Fenchel inequality we get
for every $s,t\in D_p[\mu]$ (with the obvious modifications 
for $s=0$ and $t=1$)
\begin{equation}
\begin{split}
  \int_\Gamma \sfu(s,\gamma(s))\,\d\eeta^\varphi=
  \int_{\R^d} \sfu_s \,\d\mu^\varphi_s &\leq  \int_{\R^d}
  \sfu^\varphi_t
  \,\d\mu^\varphi_t + \int_s^t \int_{\R^d}  \Big[ L(x,\vv^\varphi) +
    \alpha(r, x) \Big] 
  \,\d\tilde \mu^\varphi \\
  &\leq  \int_\Gamma\sfu(t,\gamma(t)) \,\d\eeta^\varphi + 
  \int_{\Gamma}\int_s^t \Big[ L(\gamma(r), \dot{\gamma}(r))  + 
    \alpha(r, \gamma(r))\Big] \,\d r \, \d \eeta^\varphi(\gamma),
\end{split}
\end{equation}
and therefore 
\begin{equation}
  \label{eq:224}
  \int_\Gamma \Big(\sfu_\Gamma(s,\gamma)+B_\Gamma(s,\gamma)\Big)
   \varphi(\gamma)\,\d\eeta\le 
   \int_\Gamma \Big(\sfu_\Gamma(t,\gamma)+B_\Gamma(t,\gamma)\Big)
   \varphi(\gamma)\,\d\eeta.
\end{equation}
Since $\varphi$ is arbitrary we deduce that 
\begin{equation}
  \label{eq:243}
  \sfu_\Gamma(s,\cdot)+B_\Gamma(s,\cdot)\le 
  \sfu_\Gamma(t,\cdot)+B_\Gamma(t,\cdot)\quad
  \text{$\eeta$-a.e.~in $\Gamma$}.
\end{equation}
The implication $(ii)\Rightarrow(iii)$ of Lemma
\ref{le:increasing-main} yields 
\eqref{eq:223}; property (I.2) of the same Lemma yields \eqref{eq:225}
and \eqref{eq:256}. 
\end{proof}
We want now to extend the previous Lemma to arbitrary dynamic plans
in $\cP(\Gamma)$ concentrated on $\ac^2([0,1];\R^d)$ 
with initial and final marginals
$\mu_i=(\sfe_i)_\sharp\eeta\ll\LL^d$.
We will slightly reinforce the structural assumptions \ref{h.1} by
supposing that 
\begin{equation}
  \label{eq:268}
  f,L\text{ are nonnegative.}\quad
  L\text{ is continuous in $\R^d \times \R^d$}.
\end{equation}
Following the approach of \cite{ambrosio2009geodesics} (see also
\cite{cardaliaguet2016first}),
for every Borel function $\alpha\in L^q(Q)$
we introduce a
precise representative
of $\alpha$, obtained by convolving $\alpha$ with the (symmetric) Heat kernel 
$g_\eps$ \eqref{eq:190}. To simplify some aspects concerning measurability, we 
use here a definition based on a given sequence $(\eps(n))_{n\in
  \N}\subset (0,1]$ such that $\eps(n)\downarrow0$ as $n\to\infty$ 
(we may choose, e.g.~$\eps(n):=2^{-n}$).
\begin{equation}
\label{eq:274}
\hat \alpha(t,x) := \limsup_{n\to\infty} 
\alpha_n(t,x),
\quad
\alpha_n
(t,x):=(\alpha(t,\cdot)\ast g_{\eps(n)})(x)= \int_{\R^d} \alpha(x+ \varepsilon(n) y)
g_1(y)\,\d y.
\end{equation}  
Notice that $\alpha=\hat\alpha$ $\lambda$-a.e.~in $\R^d$;
moreover, if $\alpha'=\alpha$ $\lambda$-a.e.~in $Q$, then 
there exists a $\LL^1$-negligible set $N\subset (0,1)$ such that 
$\hat \alpha'(t,x)=\hat\alpha(t,x)$ for every $t\in (0,1)\setminus N$
and
$x\in \R^d$. 
In particular, the function $\hat\alpha_\Gamma=\hat\alpha\circ\sfe$ 
is well defined for every $\eeta\in \cP_2(\Gamma)$, since
the first marginal of $\sfe_\sharp\eeta$ is $\LL^1$,
and $\hat\alpha'(\cdot,\gamma(\cdot))=\hat\alpha(\cdot,\gamma(\cdot))$ 
$\LL^1$-a.e.~in $(0,1)$ for $\eeta$-a.e.~$\gamma$.
Notice that the composition 
\eqref{eq:218} for arbitrary $\alpha\in L^q(Q)$ 
is well defined only for $\eeta\in \cP_{2,p}(\Gamma)$:
the choice of the precise representative $\hat\alpha$ extends this
operation to arbitrary $\eeta\in \cP_2(\Gamma)$.

We will also consider the corresponding maximal function (depending on
the sequence $\eps(n)$)
\begin{equation}
\label{eq:275}
\hat M\alpha(t,x):= \sup_{n\in \N} 
\int_{\R^d} |\alpha(x+ \varepsilon(n) y)|
g_1(y)\,\d y
=\sup_{n\in \N} 
\int_{\R^d} |\alpha(z)|
g_{\eps(n)}(x-z)\,\d z.
\end{equation}
Since clearly $\hat M\alpha$ is bounded by the maximal function
$M\alpha$, i.e.
\begin{equation}
  \label{eq:276}
  \hat M\alpha(t,x)\le M\alpha(t,x):= \sup_{r>0} 
  \int_{\R^d} |\alpha(x+ r y)|
  g_1(y)\,\d y,
\end{equation}
as noticed in \cite[p.~456]{ambrosio2009geodesics}, 
we have
\begin{equation}
  \label{eq:248}
  \hat M\alpha(t,x)=\hat M\hat \alpha(t,x),\quad
  \alpha_n(t,x)\le \hat M\alpha(t,x)\ \text{if }n\in \N,\quad
  \hat M\alpha\in L^q(Q).
\end{equation}
Let us fix a few measurability properties of $\hat \alpha$ and
$\hat M\alpha$ which will turn to be useful in the following.
Recall that a Borel function $g:Q\to [0,+\infty]$ is a normal integrand
if it is lower semicontinuous 
w.r.t.~$x$ for $\LL^1$-a.e.~$t\in
(0,1)$
(see \cite[Chap.~VIII, Def.~1.1]{Ekeland-Temam}).
\begin{lemma}
  \label{le:measurability}
  Let $\alpha:Q\to\R$ be a Borel map in $L^q(Q)$ and let $\hat\alpha$,
  $\hat M\alpha$ be defined as in \eqref{eq:274} and \eqref{eq:275}.
  \begin{enumerate}[\rm 1.]
  \item 
    $\hat M\alpha$ is a Borel and normal positive integrand
  so that 
  \begin{gather}
    \label{eq:278}
    \text{the functional}\quad
    \gamma\mapsto \cM_\alpha[\gamma]:=\int_0^1 \hat
    M\alpha(t,\gamma(t))\,\d t \quad
    \text{is lower semicontinuous in $\Gamma$},\\
    \label{eq:279}
    \rmD(\cM_\alpha):=\Big\{\gamma\in \Gamma:
    \cM_\alpha[\gamma]<\infty\Big\}\quad
    \text{is a $F_\sigma$, thus Borel, subset of $\Gamma$}.
  \end{gather}
  \item
    $\hat \alpha$ is a Borel map with values in $[0,+\infty]$, 
  the composition $t\mapsto \hat\alpha(t,\gamma(t))$ is a Borel curve
  for every $\gamma\in \Gamma$, 
  and the map $\hat\alpha_\Gamma:\gamma\to
  \hat\alpha(\cdot,\gamma(\cdot))$ is Borel
  from $\Gamma$ to $L^0((0,1);[0,+\infty])$.
\item
  The integral functional $\cI_\alpha:\Gamma\to[0,+\infty]$ 
  \begin{equation}
    \label{eq:280}
    \cI_\alpha[\gamma]:=\int_0^1 \hat\alpha(t,\gamma(t))\, \d t
  \end{equation}
  is a Borel map from $\Gamma$ to $[0,+\infty]$.
  \end{enumerate}
\end{lemma}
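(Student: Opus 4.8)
All three assertions are purely measure-theoretic, and I would first record two elementary facts about the mollifications. Write $\alpha_n(t,\cdot):=\alpha(t,\cdot)\ast g_{\eps(n)}$ and $a_n(t,\cdot):=|\alpha(t,\cdot)|\ast g_{\eps(n)}$, so that $\hat\alpha=\limsup_n\alpha_n$ and, by \eqref{eq:275}, $\hat M\alpha=\sup_n a_n$. Since $\alpha\in L^q(Q)$, Fubini's theorem gives $\alpha(t,\cdot)\in L^q(\R^d)$ for $\LL^1$-a.e.\ $t$; for each such $t$ the convolution with the smooth kernel $g_{\eps(n)}$ is a $C^\infty$, in particular continuous, function of $x$, and on the exceptional $\LL^1$-null set of $t$ we simply set $\alpha_n(t,\cdot)=a_n(t,\cdot):=0$. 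Moreover $\alpha_n$ and $a_n$ are jointly Borel on $Q$: the map $(t,x,z)\mapsto\alpha(t,z)g_{\eps(n)}(x-z)$ is Borel, and one applies Tonelli's theorem first to its modulus and then to its positive and negative parts, all finite on the full-measure set of $t$ where $\alpha(t,\cdot)\in L^q$. Finally, I will repeatedly use that a pointwise limit of a sequence of Borel maps with values in a separable metric space is again Borel.

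\textbf{Claim 1.} As a countable supremum of the Borel functions $a_n$, the function $\hat M\alpha$ is Borel; and for $\LL^1$-a.e.\ $t$ it is a supremum of continuous functions of $x$, hence lower semicontinuous in $x$, so it is a normal positive integrand in the sense recalled before the statement. For \eqref{eq:278}, let $\gamma_k\to\gamma$ in $\Gamma$; then $\gamma_k(t)\to\gamma(t)$ for every $t$, so for a.e.\ $t$ we have $\liminf_k\hat M\alpha(t,\gamma_k(t))\ge\hat M\alpha(t,\gamma(t))$, and since $t\mapsto\hat M\alpha(t,\gamma_k(t))$ is Borel and nonnegative, Fatou's lemma gives $\liminf_k\cM_\alpha[\gamma_k]\ge\cM_\alpha[\gamma]$. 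Thus $\cM_\alpha$ is l.s.c.\ on $\Gamma$, whence $\rmD(\cM_\alpha)=\bigcup_{N\in\N}\{\gamma\in\Gamma:\cM_\alpha[\gamma]\le N\}$ is a countable union of closed sets, i.e.\ an $F_\sigma$ (a fortiori Borel) subset, which is \eqref{eq:279}.

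\textbf{Claim 2.} Being a countable $\limsup$ of the Borel functions $\alpha_n$, the function $\hat\alpha$ is Borel on $Q$; its values lie in $[0,+\infty]$ because under assumption \eqref{eq:268} we have $\alpha\ge0$, so we may fix a pointwise nonnegative Borel representative and then $\alpha_n\ge0$. For fixed $\gamma\in\Gamma$ the curve $t\mapsto\hat\alpha(t,\gamma(t))$ is the composition of the Borel map $\hat\alpha$ with the continuous map $t\mapsto(t,\gamma(t))$, hence Borel. It remains to show that $\hat\alpha_\Gamma\colon\gamma\mapsto\hat\alpha(\cdot,\gamma(\cdot))$ is Borel from $\Gamma$ to $L^0((0,1);[0,+\infty])$; here I would transport everything through the increasing homeomorphism $\zeta$ of \S\ref{subsec:conv-measure} carrying $[0,+\infty]$ onto $[0,\tfrac12]$, so that $\zeta\circ\hat\alpha_\Gamma(\gamma)=\limsup_n\zeta\circ\alpha_n(\cdot,\gamma(\cdot))$ is valued in the bounded set $L^0((0,1);[0,\tfrac12])$, on which convergence in measure coincides with convergence in $L^1(0,1)$. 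For fixed $n$, $\gamma\mapsto\zeta\circ\alpha_n(\cdot,\gamma(\cdot))$ is continuous from $\Gamma$ to $L^1(0,1)$, since uniform convergence $\gamma_k\to\gamma$ forces $\alpha_n(t,\gamma_k(t))\to\alpha_n(t,\gamma(t))$ for a.e.\ $t$ and dominated convergence applies because $|\zeta|\le\tfrac12$. Finite lattice operations being continuous on $L^1$, each $\gamma\mapsto\max_{N\le n\le M}\zeta\circ\alpha_n(\cdot,\gamma(\cdot))$ is continuous; letting $M\uparrow\infty$ and then $N\uparrow\infty$, and using monotone convergence in $L^1$, exhibits $\gamma\mapsto\limsup_n\zeta\circ\alpha_n(\cdot,\gamma(\cdot))$ as an iterated pointwise $L^1$-limit of Borel maps, hence Borel; since $\zeta$ commutes with $\sup$, $\inf$ and $\limsup$, the map $\hat\alpha_\Gamma$ is Borel.

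\textbf{Claim 3 and main obstacle.} I would write $\cI_\alpha=\mathcal J\circ\hat\alpha_\Gamma$, where $\mathcal J(f):=\int_0^1 f\,\d t=\sup_{N\in\N}\int_0^1(f\wedge N)\,\d t$ is the integral functional on $L^0((0,1);[0,+\infty])$; each $f\mapsto\int_0^1(f\wedge N)\,\d t$ is continuous for the convergence in measure, the integrands being dominated by the constant $N$, so $\mathcal J$ is a countable supremum of continuous functions, hence lower semicontinuous and in particular Borel, and $\cI_\alpha$ is Borel as a composition of Borel maps. The step I expect to cost the most care is the Borel measurability of $\hat\alpha_\Gamma$ in Claim 2: the $\limsup$ over the unbounded index $n$ does not interact well with the convergence-in-measure topology unless one first truncates its range via $\zeta$, after which the countable lattice operations deliver only Borel, not continuous, maps — which, however, is exactly the regularity required. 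Everything else — joint Borel measurability of the mollifications, continuity in $x$ for a.e.\ $t$, and the Fatou and monotone-convergence arguments — is routine.
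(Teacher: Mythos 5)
Your proof is correct and follows essentially the same route as the paper's: mollifications $\alpha_n,a_n$ are jointly Borel Carathéodory integrands, $\hat M\alpha$ is a normal positive integrand, and Borel measurability of $\hat\alpha_\Gamma$ and $\cI_\alpha$ is obtained by iterated limits of continuous/Borel maps followed by composition with a lower semicontinuous integral functional. The only deviations are cosmetic: you invoke Fatou directly where the paper cites the Ekeland--Temam lower semicontinuity result, and you obtain continuity of $\gamma\mapsto\alpha_n(\cdot,\gamma(\cdot))$ via the bounded $\zeta$-transport rather than through the $L^q$ majorization \eqref{eq:277} used in the paper; you also make explicit the (tacit in the paper) reliance on $\alpha\ge 0$ when asserting $\hat\alpha$ is $[0,+\infty]$-valued, which is a small improvement in rigor.
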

\begin{proof}
  Since we have chosen a Borel representative $\alpha:Q\to \R$,
  for every $n\in \N$
  the functions $\alpha_n:Q\to [0,+\infty]$ are Borel and also Caratheodory integrands,
  since they are finite and continuous w.r.t.~$x$ for a.e.~$t\in
  (0,1)$; moreover they satisfy
  \begin{equation}
    \label{eq:277}
    \sup_{x\in \R^n}|\alpha_n(t,x)|\le c_n
    \Big(\int_{\R^d}|\alpha(t,x)|^q\Big)^{1/q},\quad
    \int_0^1\Big(\sup_{x\in \R^n}|\alpha_n(t,x)|\Big)^q\,\d t\le c_n \|\alpha\|_{L^q(Q)}^q
  \end{equation}
  for some constant $c_n>0$.
  It follows that for every $\gamma\in \Gamma$ the curve $t\mapsto
  \alpha_n(t,\gamma(t))$ is Borel and the map $\gamma\mapsto
  \alpha_n(\cdot,\gamma(\cdot))$ is continuous from $\Gamma$ to
  $L^q(0,1)$ and a fortiori to $L^0(0,1)$.
  
  We deduce that $\hat M\alpha$ is Borel and normal, so that
  \eqref{eq:278} follows by \cite[Chap.~VIII,
  Prop.~1.4]{Ekeland-Temam}.
  
  Concerning the second point, it easily follows since setting
  $\alpha_{n,m}:=\sup_{m\le k\le n}\alpha_k$,
  $\hat\alpha_m:=\lim_{n\to\infty}\alpha_{m,n}$, 
  we have
  $\hat\alpha(t,x):=
  \lim_{m\to\infty}\hat\alpha_m$  
  which is therefore pointwise limit of Borel maps.
  The same structure holds for the corresponding composition maps
  from the (complete and separable metric space) 
  $\Gamma$ to the (complete and separable metric space) 
  $L^0((0,1);[0,+\infty])$, so that 
  the map $\hat\alpha_\Gamma:\gamma\to
  \hat\alpha(\cdot,\gamma(\cdot))$ is Borel
  from $\Gamma$ to $L^0((0,1);[0,+\infty])$. 
  
  It follows that the integral functional $\cI_\alpha$ 
  is a Borel map from $\Gamma$ to $[0,+\infty]$,
  being the composition of a lower semicontinuous map (the integral,
  from $L^0((0,1);[0,+\infty]))$ to $\R$) with a Borel map.
\end{proof}
If $\eeta\in \cP_{2,p}(\Gamma)$ then the function
$(\hat M\alpha)\circ \sfe$ belongs to $L^1(I\times\Gamma,\tilde\eeta)$.
Since also $\hat M\alpha(t,\cdot)$ is everywhere defined in $\R^d$ 
for $\LL^1$-a.e.~$t\in (0,1)$, 
we will more generally consider dynamic plans $\eeta\in \cP_2(\Gamma)$
concentrated on $\rmD(\cM_\alpha)$, so that 
$ \int_0^1 (\hat M\alpha)(t,\gamma(t))\,\d t=
\int_0^1 (\hat M\alpha)_\Gamma(t,\gamma)\,\d t<\infty $
for $\eeta$-a.e.~$\gamma$.

Let us introduce the sets
\begin{equation}
  \label{eq:270}
\begin{aligned}
  \Gamma_{\alpha,k}:={}&\Big\{\gamma\in \ac^2([0,1];\R^d):
  \rmE_2[\gamma]+\cM_\alpha[\gamma]\le k
    \Big\},\quad k\in \N,\\
    \Gamma_\alpha:=&
  \Big\{\gamma\in \ac^2([0,1];\R^d):
    \cM_\alpha[\gamma]<+\infty  \Big\}
                     ={}\bigcup_{k\in\N}\Gamma_{\alpha,k}.
\end{aligned}
\end{equation}
\begin{lemma}[Trace inequality along plans in $\cP_2(\Gamma)$]
  \label{le:regularization}
  Let $u$ be a weak subsolution to $\hj(H,\alpha)$ 
  with nonnegative
  $\alpha\in L^q(Q)$ and 
  traces $u^+_0,u^-_1\in L^0(\R^d,\bar \R)$ 
  and let $\eeta\in \cP(\Gamma)$ be a dynamic plan satisfying
  \begin{equation}
    \label{eq:257}
    \mu_i=(\sfe_i)_\sharp\eeta\ll\LL^d,\ i=0,1;\quad
    \gamma\in \Gamma_\alpha
    \quad
    \text{for $\eeta$-a.e.~$\gamma$.}
  \end{equation}
  Then \eqref{eq:256} holds.
\end{lemma}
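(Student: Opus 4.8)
The plan is to reduce the statement for a general dynamic plan $\eeta \in \cP(\Gamma)$ satisfying \eqref{eq:257} to the already-established Lemma \ref{l:energy_inequality_plan}, which covers plans in $\cP_{2,p}(\Gamma)$ (i.e.\ with $L^p$ space-time marginal). The obstruction is that such an $\eeta$ need not have an absolutely continuous space-time marginal at all, so the composition $\alpha\circ\sfe$ and even $u\circ \sfe$ are not a priori meaningful; this is precisely why the precise representative $\hat\alpha$ and the maximal function $\hat M\alpha$ of \eqref{eq:274}--\eqref{eq:276} were introduced, together with the measurability properties of Lemma \ref{le:measurability}.

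First I would regularize $\eeta$ itself. Following \cite{ambrosio2009geodesics}, I would convolve in space the curves carried by $\eeta$ (or equivalently push $\eeta$ forward under the time-independent translation-and-average induced by the heat kernel $g_{\eps(n)}$), producing a sequence of plans $\eeta_n$ whose marginals $\mu_{n,t}=(\sfe_t)_\sharp\eeta_n$ are absolutely continuous with $L^p$ (indeed bounded) densities for each $t$, hence $\eeta_n\in \cP_{2,p}(\Gamma)$; the key point is that the regularized Lagrangian action and the regularized $\alpha$-action stay controlled, using convexity of $L$ and Jensen's inequality for the energy (as in \eqref{eq:216}--\eqref{eq:217}), and using that $\alpha_n \le \hat M\alpha$ pointwise from \eqref{eq:248}, so that $\int_0^1\alpha_n(t,\gamma_n(t))\,\d t \le \cM_\alpha[\gamma]$ uniformly in $n$. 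Since $\eeta$ is concentrated on $\Gamma_\alpha$, the dominating quantity $\rmE_2[\gamma]+\cM_\alpha[\gamma]$ is $\eeta$-a.e.\ finite, and by decomposing $\eeta = \sum_k \eeta\restr{\Gamma_{\alpha,k}}$ I can first restrict attention to each $\Gamma_{\alpha,k}$, where everything is uniformly bounded, and recover the general case by monotone convergence at the end.

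Next I would apply Lemma \ref{l:energy_inequality_plan} to each $\eeta_n$ (after the cutoff to $\Gamma_{\alpha,k}$, renormalized to a probability measure), obtaining, for $\eeta_n$-a.e.\ $\gamma$,
\begin{equation*}
  u_0^+(\gamma(0))-u_1^-(\gamma(1)) \le \int_0^1\Big(L(\gamma(r),\dot\gamma(r)) + \hat\alpha(r,\gamma(r))\Big)\,\d r,
\end{equation*}
with $\hat\alpha$ replaced by $\alpha_n$ on the right. Then I would pass to the limit $n\to\infty$: the curves converge uniformly (and with weak convergence of derivatives, from the uniform energy bound on $\Gamma_{\alpha,k}$), the traces $u_0^+,u_1^-$ of $u$ are handled by the semicontinuity/trace results of Theorem \ref{thm:precise_repr} and Lemma \ref{le:traces}, the $L$-term passes to the liminf by convexity and continuity of $L$ (assumption \eqref{eq:268}), and the $\alpha_n$-term is handled by Fatou together with $\alpha_n\le \hat M\alpha\in L^q$ on one side and the convergence $\alpha_n\to\hat\alpha$ $\LL^1$-a.e.\ along the limiting curves on the other — this is where the precise-representative machinery of \eqref{eq:274} and the measurability guarantees of Lemma \ref{le:measurability} are essential, ensuring that the limit integrand is genuinely $\int_0^1\hat\alpha(r,\gamma(r))\,\d r$ and that all the functionals involved are Borel so that "$\eeta$-a.e." statements survive the limit. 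Finally, since $\alpha=\hat\alpha$ $\lambda$-a.e.\ and $\eeta$ is concentrated on curves along which $\hat\alpha(\cdot,\gamma(\cdot))=\alpha(\cdot,\gamma(\cdot))$ a.e.\ in time, the resulting inequality is exactly \eqref{eq:256}, and summing over $k$ gives the claim for all of $\eeta$.

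The main obstacle I expect is the limit passage in the $\alpha$-term: one must simultaneously keep $\int_0^1\alpha_n(r,\gamma_n(r))\,\d r$ bounded uniformly (to use it as a constraint and to get compactness) and identify the limit as $\int_0^1\hat\alpha(r,\gamma(r))\,\d r$ without a uniform integrability gain — this is delicate because $\gamma_n\to\gamma$ only uniformly, while $\alpha_n$ is merely $L^q$ in space. The resolution is to exploit the structure $\hat\alpha = \limsup_n \alpha_n$, the pointwise domination by $\hat M\alpha$, lower semicontinuity of $\gamma\mapsto \int_0^1 \hat M\alpha(t,\gamma(t))\,\d t$ from \eqref{eq:278}, and Fatou applied along the subsequence, exactly mirroring the argument in \cite{ambrosio2009geodesics}. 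The trace terms are a secondary technical point: one must check that $u_0^+,u_1^-$ evaluated along $\gamma(0),\gamma(1)$ make sense $\eeta$-a.e., which follows because $(\sfe_i)_\sharp\eeta \ll \LL^d$ by hypothesis \eqref{eq:257} and $u_0^+,u_1^-\in L^0(\R^d;\bar\R)$.
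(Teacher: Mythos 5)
Your overall strategy is the same as the paper's: regularize $\eeta$ by pushing it forward under the random translations $\gamma\mapsto\gamma+\eps(n)y$ with $y\sim g_1\LL^d$, observe that the resulting plans $\eeta_n$ have heat-regularized (hence $L^p$) space-time marginals, apply the conclusion of Lemma~\ref{l:energy_inequality_plan} to $\eeta_n$, and pass to the limit $n\to\infty$ using the precise representative $\hat\alpha$, the domination $\alpha_n\le\hat M\alpha$, and continuity/growth of $L$. The decomposition into $\Gamma_{\alpha,k}$ and the final union over $k$ are also exactly what the paper does. So the architecture is correct.

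However, there is a genuine gap in your handling of the trace terms, which you dismiss as ``a secondary technical point''. After applying \eqref{eq:256} along $\eeta_n$, the inequality involves $u_0^+(\gamma(0)+\eps(n)y)$ and $u_1^-(\gamma(1)+\eps(n)y)$, and you must prove that after integrating against $g_1(y)\,\d y$ these converge to $u_0^+(\gamma(0))$ and $u_1^-(\gamma(1))$. Since $u_0^+\in L^0(\R^d;\R\cup\{-\infty\})$ and $u_1^-\in L^0(\R^d;\R\cup\{+\infty\})$ are merely measurable and $\bar\R$-valued, the convolution $\int u_0^+(\gamma(0)+\eps(n)y)g_1(y)\,\d y$ may not even be defined, let alone converge; the hypothesis $\mu_i\ll\LL^d$ tells you the traces make sense $\eeta$-a.e.\ at $\gamma(i)$ but says nothing about translated evaluation points. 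The paper resolves this by first truncating the \emph{subsolution} to $u_k:=T_k(u)$ (which is still a weak subsolution with the same $\alpha$, under the standing positivity assumption \eqref{eq:268}), so that the truncated traces $u_{ik}=T_k(u_i^\pm)$ are bounded Borel functions, admit Lebesgue points almost everywhere, and the averaged translated traces converge by the Lebesgue differentiation theorem. This truncation is then removed together with the cutoff in $k$, by organizing the argument around the sets $\Sigma_k\subset\Gamma_{\alpha,k}$ defined via the Lebesgue-point sets $U_{ik}^r$ (and observing that the inequality is vacuous when $\gamma(0)\notin U_0$ or $\gamma(1)\notin U_1$). Citing Theorem~\ref{thm:precise_repr} or Lemma~\ref{le:traces} here is a red herring: those give trace behaviour in time along $\cP_{2,p}^r$-families, not Lebesgue-point stability under spatial convolution of a possibly $\pm\infty$-valued trace. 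Without the truncation of $u$ you have no clean limit passage for the boundary terms.
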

\begin{proof}
  Let us consider two Borel representatives of $u_0^+,u_1^-$ and
  set $U_{0k}:=\{x\in \R^d:-k\le u^+_0(x)\le k\}$ and
  $U_{1k}:=\{x\in \R^d:-k\le u^-_1(x)\le k\}$; since $u_0^+$ takes
  values
  in $\R\cup\{-\infty\}$ and $u^-_1$ takes values in
  $\R\cup\{+\infty\}$, we have
  \begin{displaymath}
    U_0:=\bigcup_{k\in \N}U_{0k}=\{x\in \R^d:u_0^+>-\infty\},\quad
    U_1:=\bigcup_{k\in \N}U_{1k}=\{x\in \R^d:u_1^-<+\infty\}.
  \end{displaymath}
  We introduce the truncations   $T_k(r):=-k\lor r\land k$
  and the corresponding maps $u_{0k}:=T_k(u_0^+)$ and 
  $u_{1k}:=T_k(u_1^-)$. 
  We set
  \begin{equation}
U_{ik}^r:=\Big\{x\in U_k:
\lim_{n\to\infty}
\int_{\R^d}|u_{ik}(x+\eps(n) y)-u_{i}(x)| g_1(y)\,\d
  y=0\Big\}.\label{eq:271}
  \end{equation}
  Since $u_{ik}\in L^\infty(\R^d)$ we know that
  $ \lim_{n\to\infty}
  \int_{\R^d}|u_{ik}(x+\eps(n) y)-u_{ik}(x)| g_1(y)\,\d
  y=0$ for $\LL^d$-a.e.~point of $\R^d$, so that 
  $\LL^d(U_{ik}\setminus U_{ik}^r)=0$ for every $k\in \R^d$. 
  We eventually set $U_i^r:=\bigcup_{k\in \N} U_{ik}^r$ and notice that 
  \begin{equation}
    \label{eq:269}
    \eeta\Big(\big\{\gamma\in \Gamma:
    \gamma(i)\in U_i\setminus U_i^r,\ i=0,1\big\}\Big)=0
  \end{equation}
  since $\mu_i\ll\LL^d$.
  For every $k\in \N$ let us introduce the sets
  \begin{align}
    \label{eq:283}
    \Sigma_k:={}&\Big\{\gamma\in \Gamma_{\alpha,k}:
    \gamma(i)\in U_{ik}^r,\ i=0,1\Big\}.
  \end{align}
  Since \eqref{eq:256} is trivially satisfied if $\gamma(0)\in
  \R^d\setminus U_0$ or $\gamma(1)\in \R^d\setminus U_1$, 
  it is sufficient to prove \eqref{eq:256} in the case when
  $\eeta$ is concentrated in the set $\Sigma_k.$ 
  We consider the cartesian product $X:=\Gamma\times \R^d$ endowed
  with the
  Probability measure $\ssigma:=\eeta\otimes (g_1\LL^d)$ and the Borel map
  $R_n:\Gamma\times \R^d\to \Gamma$,
  $R_n(\gamma,y):= \gamma+\eps(n) y$.
  Setting $\eeta_n:=(R_n)_\sharp \ssigma$
  it is easy to check that 
  $\eeta_n$ is concentrated on $\ac^2([0,1];\R^d)$ with
  \begin{displaymath}
    \int \rmE_2[\gamma]\,\d\eeta_n=
    \int \rmE_2[R_n(\gamma,y)]\,\d\ssigma(\gamma,y)=
    \int \rmE_2[\gamma]\,\d\ssigma(\gamma,y)=
    \int \rmE_2[\gamma]\,\d\eeta(\gamma),
  \end{displaymath}
  since translations by vectors $\eps(n) y$ do not modify the energy of a
  curve.
  On the other hand, setting $\mu_t:=(\sfe_t)_\sharp\eeta$, we have
  \begin{equation}
    \label{eq:259}
    (\sfe_t)_\sharp\eeta_n=(\mu_t)\ast g_{\eps(n)}\quad\text{for every
    }t\in [0,1],    
  \end{equation}
  since for every $\phi\in C_b(\R^d)$
  \begin{align*}
    \int_{\R^d}\phi(x)\,\d (\sfe_t)_\sharp\eeta_n
    &=
      \int_{\Gamma}\phi(\gamma(t))
      \,\d \eeta_n(\gamma)
      =\int_{\Gamma\times \R^d}\phi(\gamma(t)+\eps(n) y)g_1(y)\,
      \,\d \eeta(\gamma)\, \d y
     \\ &=\int_{\R^d}\Big(\int_{\R^d}\phi(x+\eps(n) y)\,\d\mu_t(x)\Big)g_1(y)
          \,\d y=
          \int_{\R^d} \phi\,\d(\mu_t\ast g_{\eps(n)}).
  \end{align*}
  \eqref{eq:194} shows that $\eeta_n\in \cP_{2,p}(\R^d)$. 
  Applying \eqref{eq:256} to the truncated subsolutions $u_k:=T_k(u)$
  and observing that 
  $u_k$ is a subsolution to $-\partial_t u_k+H(x,Du_k)\le \alpha$, 
\begin{equation}	  
  \begin{split}
    T_k\big(u^+_0(\gamma(0)+\eps(n) y)\big) &-
    T_k\big(u^-_1(\gamma(1)+\eps(n) y)\big) \\
    &\le 
    \int_0^1
    \Big(L(\gamma(r)+\eps(n) y,\dot\gamma(r))+\alpha(r,\gamma(r)+\eps(n) y)\Big)\,\d
    r
    \quad\text{for $\ssigma$-a.e.~$(\gamma,y)$,}
  \end{split}
  \end{equation}
  A further integration w.r.t.~$y$ yields 
  that there exists a Borel set $\Sigma_\star\subset \Sigma_k$ such that 
  $\eeta(\Sigma_k\setminus\Sigma_\star)=0$ and 
  \begin{equation}
    \label{eq:261}
    \begin{aligned}
      \int_{\R^d}T_k\big(u^+_0(\gamma(0)+\eps(n) y)\big)\,&g_1(y)\,\d y -
      \int_{\R^d}T_k\big(u^-_1(\gamma(1)+\eps(n) y)\big)g_1(y)\,\d y
      \\&\le
      \int_{\R^d}\Big(\int_0^1
      \Big(L(\gamma(r)+\eps(n) y,\dot\gamma(r))+\alpha(r,\gamma(r)+\eps(n)
      y)\Big)\,\d r\Big)g_1(y)\,\d y,
    \end{aligned}
  \end{equation}
  for every $n\in \N$ 
  and $\gamma\in \Sigma_\star$.
  We can then pass to the limit as $n\to\infty$: 
  the structural bounds \eqref{eq:9} (with $c_H^+=0$) yield
  \begin{align*}
    0\le \int_0^1
    L(\gamma(r)+\eps(n) y,\dot\gamma(r))\,\d r\le 
    c_H^-(1+2|y|^2)+
    2c_H^-\int_0^1|\gamma(r)|^2\,\d r+\frac {c_H}2\rmE_2[\gamma]
  \end{align*}
  so that Lebesgue Dominated Convergence theorem and the continuity of
  $L$ yield
  \begin{equation}
    \label{eq:264}
    \lim_{n\to\infty}
    \int_{\R^d}\Big(\int_0^1
    L(\gamma(r)+\eps(n) y,\dot\gamma(r))\,\d
    r\Big)g_1(y)\,\d y=
    \int_0^1
    L(\gamma(r),\dot\gamma(r))\,\d
    r.
  \end{equation}
  Similarly
  we get
  \begin{equation}
    \label{eq:266}
    \int_{\R^d}\Big(\int_0^1
    \alpha(r,\gamma(r)+\eps(n) y)\,\d
    r\Big)g_1(y)\,\d y=
    \int_0^1 \alpha_{n}(r,\gamma(r))\,\d r,\quad
    \alpha_{n}:=\alpha(t,\cdot)\ast g_{\eps(n)};
  \end{equation}
  since $0\le \alpha_n\le \hat M\alpha$ 
  and $\int_0^1 \hat M\alpha(t,\gamma(t))\,\d t\le k$ by assumption, we
  conclude that
  \begin{equation}
    \label{eq:267}
    \limsup_{n\to\infty}
    \int_{\R^d}\Big(\int_0^1
    \alpha(r,\gamma(r)+\eps(n) y)\,\d
    r\Big)g_1(y)\,\d y\le 
    \int_0^1 \hat\alpha(r,\gamma(r))\,\d r.
  \end{equation}
  Combining \eqref{eq:264} and \eqref{eq:267} and the fact that 
  $\gamma(i)\in U_{ik}^r$ we eventually get
  \begin{equation}
    \label{eq:261bis}
    u^+_0(\gamma(0))-
    u^-_1(\gamma(1))\le 
    \int_0^1
    \Big(L(\gamma(r),\dot\gamma(r))+\hat\alpha(r,\gamma(r))\Big)\,\d
    r\quad\text{for every }\gamma\in \Sigma_\star.
  \end{equation}
\end{proof}

\subsection{Modified Lagrangian cost and optimal dynamic plans}
\label{subsec:optimal_dynamic_plans}
We can now introduce a cost function associated to the modified
Lagrangian 
\eqref{eq:244}
for any nonnegative Borel map $\alpha\in L^q(Q)$. 
Since we want to compute
$\alpha$ 
along dynamic plans $\eeta'\in \cP(\Gamma)$ which may not have 
absolutely continuous marginals $\mu_t'=(\sfe_t)_\sharp\eeta'$, 
we will add an extra summability condition in terms of the maximal
function $\hat M\alpha$, as in  \cite{ambrosio2009geodesics}, and we
replace $L_\alpha$ by
\begin{equation}
  \label{eq:244bis}
  L_{\hat\alpha}(t,x,\vv):=L(x,\vv)+\hat\alpha(t,x)\quad\text{with}\quad
  \rmL_{\hat\alpha}[\gamma]:=\int_0^1
  L_{\hat\alpha}(t,\gamma(t),\dot\gamma(t))\,\d t
  \quad\text{for every }\gamma\in \Gamma_\alpha,
\end{equation}
where $\Gamma_\alpha$ is defined by \eqref{eq:270}.
\begin{definition}[Modified Lagrangian cost]
  For every nonnegative Borel $\alpha\in L^q(Q)$ 
  let $\hat\alpha$, $\hat M\alpha$, and $\Gamma_\alpha$ be defined as 
  in \eqref{eq:274}, \eqref{eq:275} and \eqref{eq:270}.
  We consider the set
    \begin{equation}
    X_{\alpha}:=(\sfe_0,\sfe_1)(\Gamma_\alpha)=
    \big\{(\gamma(0),\gamma(1)): 
    \gamma\in \Gamma_\alpha\big\}.
\label{eq:286}
\end{equation}
The Lagrangian cost
  $c_\alpha:\R^d\times\R^d\to [0,+\infty]$ is defined by
  \begin{equation}
    \label{eq:250}
    c_{\hat\alpha}(x_0,x_1):=
      \inf\Big\{
    \int_0^1 L_{\hat\alpha}(t,\gamma(t),\dot\gamma(t))\,\d t
       :
       \gamma\in \Gamma_\alpha, 
       \ 
    \gamma(0)=x_0,\ \gamma(1)=x_1\Big\}\quad
    \text{if }(x_0,x_1)\in X_{\alpha},
  \end{equation}
with the usual convention to set $c_{\hat\alpha}(x_0,x_1):=+\infty$ 
if $(x_0,x_1)\not\in X_\alpha$.
\end{definition}
\newcommand{\oomega}{\boldsymbol \omega}
Let us preliminary study the measurability properties of $c_{\hat\alpha}$.
\begin{lemma}
  \label{le:c-measurability}
  $X_\alpha$ is a $F_\sigma$ set in $\R^d\times \R^d$.
  For every $\mmu\in \cP(\R^d\times\R^d)$ 
  $c_{\hat\alpha}$ is $\mmu$-measurable (equivalently, $c_{\hat\alpha}$ is
  universally measurable). 
  Moreover, for every $\eps>0$ 
  there exists a $\mmu$-measurable map $\oomega:X_{\hat\alpha}\to 
  \Gamma$ such that for every $(x_0,x_1)\in X_{\hat\alpha}$ the curve
  $\gamma=\oomega(x_0,x_1)$ satisfies
  \begin{equation}
    \label{eq:287}
    \gamma\in \Gamma_{\hat\alpha},\quad
    \gamma(i)=x_i,\quad
    \int_0^1L_{\hat\alpha}(t,\gamma(t),\dot\gamma(t))\,\d t
    \le c_{\hat\alpha}(x_0,x_1)+\eps.
  \end{equation} 
\end{lemma}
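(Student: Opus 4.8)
The plan is to read off everything from the measurability facts collected in Lemma \ref{le:measurability}, combined with two standard tools from descriptive set theory: projections of Borel sets in Polish spaces are analytic, hence universally measurable, and multifunctions with (universally) measurable graph and values in a Polish space admit (universally) measurable selections (the von Neumann--Aumann scheme, used in exactly this context in \cite[Section 3]{ambrosio2009geodesics}). I would proceed in three steps. First, for the claim that $X_\alpha$ is $F_\sigma$: by \eqref{eq:212} and \eqref{eq:278} the functionals $\rmE_2$ and $\cM_\alpha$ are nonnegative and lower semicontinuous on $\Gamma$, so each $\Gamma_{\alpha,k}=\{\gamma:\rmE_2[\gamma]+\cM_\alpha[\gamma]\le k\}$ is closed and contained in $\ac^2([0,1];\R^d)$; intersecting with the closed set $\{|\gamma(0)|\le j\}$ gives sets $\Gamma_{\alpha,k,j}$ which are equibounded ($|\gamma(t)|\le j+k^{1/2}$) and equicontinuous ($|\gamma(t)-\gamma(s)|\le k^{1/2}|t-s|^{1/2}$), hence compact in $\Gamma$ by Ascoli--Arzel\`a. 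Since $(\sfe_0,\sfe_1):\Gamma\to\R^d\times\R^d$ is continuous, each $(\sfe_0,\sfe_1)(\Gamma_{\alpha,k,j})$ is compact, so
\begin{displaymath}
X_\alpha=(\sfe_0,\sfe_1)(\Gamma_\alpha)=\bigcup_{k,j\in\N}(\sfe_0,\sfe_1)(\Gamma_{\alpha,k,j})
\end{displaymath}
is $\sigma$-compact, a fortiori $F_\sigma$ (and in particular Borel).

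Second, I would introduce the functional $\Psi:(\R^d\times\R^d)\times\Gamma\to[0,+\infty]$ defined by $\Psi(x_0,x_1,\gamma):=\int_0^1 L_{\hat\alpha}(t,\gamma(t),\dot\gamma(t))\,\d t$ if $\gamma\in\Gamma_\alpha$ with $\gamma(i)=x_i$, and $\Psi:=+\infty$ otherwise, so that $c_{\hat\alpha}(x_0,x_1)=\inf_{\gamma\in\Gamma}\Psi(x_0,x_1,\gamma)$. The point is that $\Psi$ is Borel: the kinetic part $\gamma\mapsto\int_0^1 L(\gamma(t),\dot\gamma(t))\,\d t$ (set to $+\infty$ off $\ac^2$) is lower semicontinuous on $\Gamma$ by the classical semicontinuity theorem for autonomous integrands that are nonnegative, continuous, convex and superlinear in the velocity (see e.g.\ \cite[Theorem 6.54]{Fonseca-Leoni07}); the potential part $\cI_\alpha[\gamma]=\int_0^1\hat\alpha(t,\gamma(t))\,\d t$ is Borel on $\Gamma$ by Lemma \ref{le:measurability}(3); the constraint set $\{(x_0,x_1,\gamma):\gamma\in\Gamma_\alpha,\ \gamma(i)=x_i\}$ is Borel by Step~1 together with the continuity of the endpoint maps; and sums of $[0,+\infty]$-valued Borel functions are Borel. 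Consequently, for every $t\in\R$,
\begin{displaymath}
\{c_{\hat\alpha}<t\}=\mathrm{proj}_{\R^d\times\R^d}\{(x_0,x_1,\gamma):\Psi(x_0,x_1,\gamma)<t\}
\end{displaymath}
is the projection of a Borel subset of the Polish space $(\R^d\times\R^d)\times\Gamma$, hence analytic, hence universally measurable; since moreover $\{c_{\hat\alpha}<+\infty\}=X_\alpha$ is Borel, $c_{\hat\alpha}$ is universally measurable, and a fortiori $\mmu$-measurable for every $\mmu\in\cP(\R^d\times\R^d)$.

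Third, for the $\eps$-optimal selection, fix $\mmu$ and $\eps>0$. On $X_\alpha$ the value $c_{\hat\alpha}$ is finite: any $\gamma\in\Gamma_\alpha$ with the prescribed endpoints has $\int_0^1 L(\gamma(t),\dot\gamma(t))\,\d t<\infty$ by \eqref{eq:9} (using $\gamma\in\ac^2$ and $\gamma$ bounded) and $\cI_\alpha[\gamma]\le\cM_\alpha[\gamma]<\infty$, so $\Psi(x_0,x_1,\gamma)<\infty$. Hence the multifunction $(x_0,x_1)\mapsto S_\eps(x_0,x_1):=\{\gamma\in\Gamma:\Psi(x_0,x_1,\gamma)\le c_{\hat\alpha}(x_0,x_1)+\eps\}$ has nonempty values over $X_\alpha$, and any $\gamma\in S_\eps(x_0,x_1)$ automatically lies in $\Gamma_\alpha$ and satisfies $\gamma(i)=x_i$. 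Its graph, intersected with $X_\alpha\times\Gamma$, is the sublevel set $\{\Psi(x_0,x_1,\gamma)-c_{\hat\alpha}(x_0,x_1)\le\eps\}$ of the difference of the Borel function $\Psi$ and the universally measurable function $c_{\hat\alpha}$ (a nonnegative quantity, so no $\infty-\infty$ arises), hence it belongs to the $\mmu$-completion of $\BB(\R^d\times\R^d)\otimes\BB(\Gamma)$. The von Neumann--Aumann measurable selection theorem then provides a $\mmu$-measurable map $\oomega:X_\alpha\to\Gamma$ with $\oomega(x_0,x_1)\in S_\eps(x_0,x_1)$ for every $(x_0,x_1)\in X_\alpha$, which is exactly \eqref{eq:287}.

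I do not expect a real obstacle: the substance is entirely in Lemma \ref{le:measurability}, and the rest is a careful bookkeeping of measurability classes. The single conceptual point worth stressing is that $\hat\alpha$, being defined in \eqref{eq:274} as a $\limsup$ of convolutions, is merely Borel and not lower semicontinuous in $x$, so $\cI_\alpha$ and hence the cost $c_{\hat\alpha}$ need not be lower semicontinuous; this is precisely why one cannot hope for a Borel cost (or a Borel selection) and must settle for $\mmu$-measurability for each prescribed $\mmu$ — just as in \cite{ambrosio2009geodesics}. The only computation that deserves a little care is the Borel measurability of $\Psi$, which is why I would split it into the lower semicontinuous kinetic term and the Borel potential term $\cI_\alpha$ and handle them separately.
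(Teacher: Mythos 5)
Your proof is correct and follows essentially the same route as the paper's: both write $X_\alpha$ as a countable union of compact images under $(\sfe_0,\sfe_1)$, both obtain universal measurability of $c_{\hat\alpha}$ from the projection (equivalently, Souslin) theorem applied to the Borel sublevel sets of the Lagrangian functional on $\Gamma_\alpha$, and both conclude with the von Neumann--Aumann selection theorem applied to the $\eps$-optimal graph. Your presentation is slightly more explicit on two points the paper treats tersely --- the Ascoli--Arzel\`a compactness of the slices $\Gamma_{\alpha,k,j}$, and the Borel measurability of the full integrand $\Psi$ via the split into a lower-semicontinuous kinetic part and the Borel potential part $\cI_\alpha$ from Lemma \ref{le:measurability} --- but these are filling in detail, not a different argument.
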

\begin{proof}
  We can write 
  \begin{equation}\label{eq:281}
    X_\alpha=\bigcup_{k\in \N}\Big\{(x_0,x_1)\in \R^d\times\R^d:
    |x_0|+|x_1|\le k, \ 
    \exists\,\gamma\in \Gamma_{\alpha,k}:\gamma(i)=x_i,\ i=0,1\Big\},
  \end{equation}
  and it is easy to check that each set contributing to the countable
  union in \eqref{eq:281} is compact.
  
  By Lemma \ref{le:measurability} $\Gamma_\alpha$ 
  is a $F_\sigma$ subset of $\Gamma$ and
  the map $\rmL_{\hat\alpha}:\Gamma_\alpha\to [0,+\infty]$ 
  is Borel, so that for every $r\in (0,+\infty]$ the set
  $\Xi_{\hat\alpha}(r):=\big\{\gamma\in \Gamma_\alpha:
  \rmL_{\hat\alpha}[\gamma]<r\big\}$ is Borel in $\Gamma$.

  Since the sublevel $\{(x_0,x_1)\in \R^d\times\R^d:
  c_{\hat\alpha}(x_0,x_1)<r\}$ 
  coincides with the image of $\Xi_{\hat\alpha}(r)$ 
  through the continuous map $(\sfe_0,\sfe_1):\Gamma\to \R^d\times\R^d$, 
  the Projection
  Theorem (see, e.g.~,\cite[Thm.~III.23]{Castaing-Valadier77}, \cite[Thm.~7.4.1]{BogachevII-07})
  then shows that 
  $\{c_{\hat\alpha}<r\}$ is a
  Souslin set and therefore universally measurable. $c_{\hat\alpha}$ 
  is universally measurable as well.

  Let us now consider the sets
  \begin{displaymath}
    W':=\Big\{\big((x_0,x_1),\gamma\big)\in (\R^d\times\R^d)\times\Gamma:
    x_0=\gamma(0),\ x_1=\gamma(1)\Big\},\quad
    W'':=W\cap (X_{\alpha}\times \Gamma_\alpha);
  \end{displaymath}
  since $W'$ is closed, $W''$ is a Borel subset of 
  $(\R^d\times\R^d)\times\Gamma$.
  For every $\eps>0$ we eventually set
  \begin{displaymath}
    W_\eps:=\Big\{\big((x_0,x_1),\gamma\big)\in W'':
    \rmL_{\hat\alpha}[\gamma]-c_{\hat\alpha}(x_0,x_1)<\eps\Big\}.
  \end{displaymath}
  $W_\eps$ is a $\BB_\smmu\otimes \BB(\Gamma)$-measurable 
  subset of $(\R^d\times\R^d)\times\Gamma$,
  where 
  $\BB_\smmu$ is the $\sigma$-algebra of $\mmu$-measurable subsets of 
  $\R^d\times \R^d$ and $\BB(\Gamma)$ is the $\sigma$-algebra of Borel
  subsets of $\Gamma$.
  The projection of $W_\eps$ on the first component coincides with 
  $X_{\alpha}$. 
  Applying Aumann Selection Theorem 
  \cite[Thm.~III.22]{Castaing-Valadier77}, \cite[Thm.~6.9.13]{BogachevII-07} we can find
  a $\BB_\smmu$-measurable map $\oomega:X_{\alpha}
  \to \Gamma$ such that 
  $(x_0,x_1,\oomega(x_0,x_1))\in W_\eps$ for every $x_0,x_1\in X_{\alpha}$.
\end{proof}
We can now state the main result connecting solutions of MFPP and
dynamic plans.
\begin{theorem}
  Let us assume that the structural properties 
  \ref{h.1} hold together with \eqref{eq:268}. 
  Let $\mu_i=m_i\LL^d\in \cP_{2,p}^r(\R^d)$,
  let $\eeta\in \cP_{2,p}(\Gamma)$ a dynamic plan
  satisfying $(\sfe_i)_{\sharp}\eeta=\mu_i$, $i=0,1$,
  and let us call $ m\in L^p(Q)$ a Borel density of
  $\mu=\sfe_\sharp\eeta$ and
  $\alpha:=f(\cdot, m)$.
  
  Then the following conditions are equivalent:
  \begin{enumerate}[(i)]
  \item $\eeta$ is tightened to an optimal solution $(m,\vv)$ of 
    Problem \ref{p:control_eq:cont}.
     \item 
    There exist a weak subsolution $u\in L^1_{loc}(Q)$ to 
          $-\partial_t u+H(x,Du)\le \alpha$ such that 
          \begin{equation}
            \label{eq:255}
            u^+_0(\gamma(0))-u^-_1(\gamma(1))=
            \int_0^1 L_{\hat \alpha}(t,\gamma(t),\dot\gamma(t))\,\d t
            \quad\text{for $\eeta$-a.e.~$\gamma$}.
          \end{equation}
        \item
          We have
    \begin{equation}
      \label{eq:272}
      \begin{gathered}
        \int_\Gamma\Big(\int_0^1 L_{\hat
          \alpha}(t,\gamma(t),\dot\gamma(t))\,\d t\Big)\,\d\eeta\le
        \int_\Gamma\Big(\int_0^1 L_{\hat
          \alpha}(t,\gamma(t),\dot\gamma(t))\,\d t\Big)\,\d\eeta'\\
        \text{if $\eeta'\in \cP(\Gamma)$,
          $(\sfe_i)_\sharp \eeta'=\mu_i$,
          $\gamma\in \ac^2([0,1];\R^d)$ and
          $\int_0^1 \hat M\alpha(t,\gamma(t))\,\d t<\infty$ for $\eeta'$-a.e.~$\gamma$.}
      \end{gathered}
    \end{equation}
  \item We have
    \begin{enumerate}[\rm E.1]
    \item $\eeta$ is concentrated on $c_{\alpha}$-minimizing paths, i.e.
      \begin{equation}
        \label{eq:251}
        \int_0^1 L_{\hat \alpha}(t,\gamma(t),\dot\gamma(t))\,\d t
        =c_{\alpha}(\gamma(0),\gamma(1))
        \quad\text{for $\eeta$-a.e.~$\gamma\in \Gamma$}.
      \end{equation}
      \item
        The plan $\mmu_\seeta:=\sfe_\sharp\eeta= (\sfe_0,\sfe_1)_\sharp\eeta$ is a solution to
        the optimal transport problem induced by $c_{\hat\alpha}$:
        \begin{equation}
          \label{eq:252}
          \int_{\R^d\times \R^d}c_{\hat\alpha}(x_0,x_1)\,\d\mmu_\seeta
          =\min\Big\{
          \int_{\R^d\times \R^d}c_{\hat\alpha}(x_0,x_1)\,\d\mmu:
          \mmu\in \cP(\R^d\times\R^d),\ 
          \pi^i_\sharp\mmu=\mu_i\Big\}.
        \end{equation}
    \end{enumerate}
  \end{enumerate}
  If one of the above equivalent conditions hold, then every 
  optimal solution $(u,\alpha)$ of Problem \ref{p:control_HJ}
  satisfies \eqref{eq:255}, and provides a pair of optimal
  Kantorovich potentials for the problem \eqref{eq:252}, i.e.
  \begin{equation}
    \label{eq:291}
    u_0^+(x_0)-u_1^-(x_1)\le c_{\hat\alpha}(x_0,x_1)\quad\text{in
    }\R^d\times \R^d;\quad
    u_0^+(x_0)-u_1^-(x_1)= c_{\hat\alpha}(x_0,x_1)
    \quad\text{for $\mmu_\seeta$-a.e.~$x_0,x_1$.}    
  \end{equation}
  Moreover
  $\sfu(\cdot,\gamma(\cdot))\in W^{1,1}(0,1)$
  with
  \begin{align}
    \label{eq:263}
    \dot\gamma(t)&=-D_{\spp}H(x,Du(t,\gamma(t))&\text{a.e.~in }(0,1),\\
    \label{eq:260}
    \frac\d{\d
      t}\sfu(t,\gamma(t))&=L(\gamma(t),\dot\gamma(t))+\alpha(t,\gamma(t))
                                              &\text{a.e.~in $(0,1)$,}
  \end{align}
  for $\eeta$-a.e.~$\gamma\in \Gamma$.
\end{theorem}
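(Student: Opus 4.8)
The plan is to prove the equivalences through the chain $(i)\Rightarrow(ii)\Rightarrow(iii)$, $(iii)\Rightarrow(iv)$, $(iv)\Rightarrow(iii)$, $(iii)\Rightarrow(i)$ and $(iii)\Rightarrow(ii)$, and then to read off the statements on Kantorovich potentials and on \eqref{eq:263}--\eqref{eq:260} from the data collected along the way. A remark used throughout: since $m\in L^p(Q)$ and $\hat M\alpha\in L^q(Q)$, H\"older's inequality makes $\gamma\mapsto\int_0^1\hat M\alpha(t,\gamma(t))\,\d t$ integrable against $\eeta$, against any plan in $\cP_{2,p}(\Gamma)$, and against any plan tightened to an element of $\CE2p{Q;\mu_0,\mu_1}$, so all these plans live on the set $\Gamma_\alpha$ of \eqref{eq:270}; moreover $\hat\alpha=\alpha$ $\lambda$-a.e.\ identifies $\int_0^1 L_{\hat\alpha}(t,\gamma(t),\dot\gamma(t))\,\d t$ with $\int_0^1\big(L(\gamma(t),\dot\gamma(t))+\alpha(t,\gamma(t))\big)\,\d t$ along $\eeta$-a.e.\ curve. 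For $(i)\Rightarrow(ii)$ I would take an optimal pair $(u^*,\alpha^*)\in\hj_q(Q,H)$ of Problem \ref{p:control_HJ} from Theorem \ref{thm:minimum_dual}; Theorem \ref{thm:main} forces $\alpha^*=f(\cdot,m)=\alpha$. Applying Lemma \ref{le:regularization} to $u^*$ and $\eeta$ gives $u^{*,+}_0(\gamma(0))-u^{*,-}_1(\gamma(1))\le\int_0^1 L_{\hat\alpha}(t,\gamma(t),\dot\gamma(t))\,\d t$ for $\eeta$-a.e.\ $\gamma$; integrating against $\eeta$, using that $\eeta$ is tightened (so the Jensen bound \eqref{eq:217} is an equality, whence $\int_\Gamma\big(\int_0^1 L(\gamma(t),\dot\gamma(t))\,\d t\big)\,\d\eeta=\int_Q L(x,\vv)m\,\d\lambda$) together with identity \eqref{eq:254}, both sides coincide, so the nonnegative integrand difference vanishes $\eeta$-a.e., i.e.\ \eqref{eq:255} holds with $u=u^*$. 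As $(u^*,\alpha^*)$ was an arbitrary optimal dual pair, this also proves the concluding assertion that every optimizer of Problem \ref{p:control_HJ} satisfies \eqref{eq:255}. Then $(ii)\Rightarrow(iii)$ is immediate: for any admissible $\eeta'$ as in \eqref{eq:272}, Lemma \ref{le:regularization} (its hypotheses hold since $(\sfe_i)_\sharp\eeta'=\mu_i\ll\LL^d$ and $\eeta'$ lives on $\Gamma_\alpha$) gives the trace inequality $\eeta'$-a.e., and integrating it against $\eeta'$ while comparing with \eqref{eq:255} integrated against $\eeta$ (both sides equal $\int u^+_0\,\d\mu_0-\int u^-_1\,\d\mu_1$) yields \eqref{eq:272}.

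For $(iii)\Rightarrow(iv)$ I would use the $\mmu$-measurable selection $\oomega$ of Lemma \ref{le:c-measurability}: for $\eps>0$, the plan $\oomega_\sharp\mmu_\seeta$, with $\mmu_\seeta=(\sfe_0,\sfe_1)_\sharp\eeta$, is admissible in \eqref{eq:272} and has $\int_\Gamma\big(\int_0^1 L_{\hat\alpha}\big)\,\d\oomega_\sharp\mmu_\seeta\le\int c_{\hat\alpha}\,\d\mmu_\seeta+\eps$, while trivially $\int_\Gamma\big(\int_0^1 L_{\hat\alpha}\big)\,\d\eeta\ge\int c_{\hat\alpha}(\gamma(0),\gamma(1))\,\d\eeta=\int c_{\hat\alpha}\,\d\mmu_\seeta$; letting $\eps\downarrow0$ forces equality, which is E.1, and repeating the selection from an arbitrary competitor $\mmu'$ with $\pi^i_\sharp\mmu'=\mu_i$ and finite $c_{\hat\alpha}$-cost gives $\int c_{\hat\alpha}\,\d\mmu_\seeta\le\int c_{\hat\alpha}\,\d\mmu'$, which is E.2. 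The converse $(iv)\Rightarrow(iii)$ is a short computation: E.1 and E.2 give $\int_\Gamma\big(\int_0^1 L_{\hat\alpha}\big)\,\d\eeta=\int c_{\hat\alpha}\,\d\mmu_\seeta=\min\{\int c_{\hat\alpha}\,\d\mmu:\pi^i_\sharp\mmu=\mu_i\}$, whereas any admissible $\eeta'$ obeys $\int_\Gamma\big(\int_0^1 L_{\hat\alpha}\big)\,\d\eeta'\ge\int c_{\hat\alpha}\,\d(\sfe_0,\sfe_1)_\sharp\eeta'\ge\min\{\int c_{\hat\alpha}\,\d\mmu:\pi^i_\sharp\mmu=\mu_i\}$.

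To close the cycle from $(iii)$: testing \eqref{eq:272} against a plan tightened to an arbitrary optimal solution $(\bar m,\bar\vv)$ of Problem \ref{p:control_eq:cont} gives $\int_\Gamma\big(\int_0^1 L_{\hat\alpha}\big)\,\d\eeta\le\cL_\alpha(\bar m,\bar\vv)$, and testing against plans tightened (Theorem \ref{thm:superposition_principle}) to arbitrary $(m',\vv')\in\CE2p{Q;\mu_0,\mu_1}$ shows the pair $(m,\vv)$ induced by $\eeta$ satisfies $\cL_\alpha(m,\vv)\le\int_\Gamma\big(\int_0^1 L_{\hat\alpha}\big)\,\d\eeta\le\cL_\alpha(m',\vv')$; with $\alpha=f(\cdot,m)$, Theorem \ref{thm:modified_Lagrangian_cost} then makes $(m,\vv)$ optimal for Problem \ref{p:control_eq:cont}, and taking $(m',\vv')=(m,\vv)$ forces $\int_\Gamma\big(\int_0^1 L(\gamma(t),\dot\gamma(t))\,\d t\big)\,\d\eeta=\int_Q L(x,\vv)m\,\d\lambda$, i.e.\ equality in the Jensen inequality \eqref{eq:217}. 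Since $H(x,\cdot)$ is differentiable, $L(x,\cdot)$ is strictly convex, so this equality forces the disintegration of $\sfd_\sharp\eeta$ over $\tilde\mu=\sfe_\sharp\eeta$ to be a.e.\ the Dirac mass at $\vv(t,x)$; hence $\eeta$-a.e.\ curve solves $\dot\gamma=\vv(t,\gamma)$ and $\eeta$ is tightened to the optimal $(m,\vv)$, which is $(i)$. For $(iii)\Rightarrow(ii)$ one now knows $(m,\vv)$ is optimal, so $\alpha^*=\alpha$ as before, and rerunning the argument of $(i)\Rightarrow(ii)$ with the equality in \eqref{eq:217} just obtained produces $u=u^*$ satisfying \eqref{eq:255}. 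The remaining conclusions follow: Theorem \ref{l:a_leq_B} gives $\vv=-D_\spp H(\cdot,Du)$ $\tilde\mu$-a.e., hence \eqref{eq:263}; the monotone function $t\mapsto \sfu(t,\gamma(t))+B_\Gamma(t,\gamma)$ of Lemma \ref{l:energy_inequality_plan}, forced constant by \eqref{eq:255}, gives $\sfu(\cdot,\gamma(\cdot))\in W^{1,1}(0,1)$ and, upon differentiation, \eqref{eq:260}; and applying Lemma \ref{le:regularization} to mollified plans with $\LL^d$-absolutely continuous marginals concentrated near an arbitrary minimizing curve, combined with \eqref{eq:255} and E.1, yields the inequality and the $\mmu_\seeta$-a.e.\ equality of \eqref{eq:291}.

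The main obstacle I expect is the bookkeeping between the various auxiliary plans (tightened, mollified, selected) so that the hypotheses of Lemmas \ref{l:energy_inequality_plan}, \ref{le:regularization} and \ref{le:c-measurability} are met exactly, and — the genuinely delicate point — the extraction of the equality cases: the equality in the Jensen bound \eqref{eq:217}, which upgrades ``$\eeta$ induces $(m,\vv)$'' to ``$\eeta$ is tightened to $(m,\vv)$'' in $(iii)\Rightarrow(i)$, and the pointwise-a.e.\ sharpening of the trace inequality of Lemma \ref{le:regularization} needed for \eqref{eq:260}, \eqref{eq:263} and \eqref{eq:291}; the latter rests on the chain rule $\tfrac{\d}{\d t}\sfu(t,\gamma(t))=\partial_t u+Du\cdot\dot\gamma$ along $\eeta$-a.e.\ curve, handled as in the regularization scheme used to prove Lemma \ref{le:regularization}.
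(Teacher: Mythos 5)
Your proof is correct and follows essentially the same architecture as the paper's: the cyclic chain of implications, Lemma~\ref{le:regularization} for the trace inequality, the measurable selection of Lemma~\ref{le:c-measurability}, the identity~\eqref{eq:254}, and Theorem~\ref{thm:modified_Lagrangian_cost} to close the cycle. The one place you are genuinely more careful is in closing the cycle: the paper's Claim~4 proves $(iv)\Rightarrow(i)$ via a chain that ends with the equality $\int_\Gamma\int_0^1 L_{\hat\alpha}\,\d\eeta = \int_Q L_{\hat\alpha}(t,x,\vv)m\,\d\lambda$, but for a plan $\eeta\in\cP_{2,p}(\Gamma)$ and its barycenter velocity $\vv$ Jensen only gives $\ge$ there, and this equality --- which is exactly what upgrades ``$\eeta$ induces $(m,\vv)$'' to ``$\eeta$ is \emph{tightened} to $(m,\vv)$'' --- is asserted without justification, so the tightness part of condition~(i) is never explicitly established. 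You supply the missing step in $(iii)\Rightarrow(i)$, obtaining equality in the Jensen bound~\eqref{eq:217} by comparing with a tightened plan for the same $(m,\vv)$, then invoking strict convexity of $L(x,\cdot)$ (which follows from the differentiability of $H(x,\cdot)$) to collapse the disintegration of $\sfd_\sharp\eeta$ to $\delta_{\vv(t,x)}$. Your direct proof of $(iii)\Rightarrow(iv)$ via the $\eps$-near-optimal selection is also slightly cleaner than the paper's contradiction argument, though both rest on the same selection lemma.

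The paper's proof block in fact does not contain derivations of~\eqref{eq:291}, \eqref{eq:263}, \eqref{eq:260}; your sketches of these are in the right spirit. Two cautions: in~\eqref{eq:260} you need the right-continuous representative $u_\star$ of Lemma~\ref{l:energy_inequality_plan} to identify the increasing function $t\mapsto u_\star(t,\gamma)+B_\Gamma(t,\gamma)$ that~\eqref{eq:255} forces to be constant, after which absolute continuity of $B_\Gamma(\cdot,\gamma)$ yields $u_\star(\cdot,\gamma)\in W^{1,1}$; and in~\eqref{eq:291} the inequality ``in $\R^d\times\R^d$'' cannot be literal since $u_0^+,u_1^-$ are only $\LL^d$-a.e.\ defined --- the mollification argument of Lemma~\ref{le:regularization} yields the inequality on a set of full measure (Lebesgue points of the truncated traces), and the $\mmu_\seeta$-a.e.\ equality then comes from E.1 and~\eqref{eq:255}. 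This is a measure-theoretic sharpening you correctly flag as the delicate point, though it should be stated precisely if this were to be written up in full.
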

\begin{proof}
  We divide the proof in various steps.
  
  \smallskip\noindent\textbf{Claim 1: $(i)\Rightarrow (ii)$}
  We can select an optimal pair $(u,\alpha)$ solving Problem
  \ref{p:control_HJ}.
  Since $\eeta\in \cP_{2,p}(\Gamma)$ \eqref{eq:256} shows 
  that 
  \begin{equation}
    \label{eq:255-}
    u^+_0(\gamma(0))-u^-_1(\gamma(1))\le
    \int_0^1 L_{\hat \alpha}(t,\gamma(t),\dot\gamma(t))\,\d t
    \quad\text{for $\eeta$-a.e.~$\gamma$}.
  \end{equation}
  On the other, thanks to \eqref{eq:254} we have
  \begin{align*}
    \int_\Gamma \Big( u^+_0(\gamma(0))-u^-_1(\gamma(1))\Big)\,\d\eeta
    &=
      \int_{\R^d}u_0^+\,\d\mu_0-
      \int_{\R^d}u_1^-\,\d\mu_1=
      \int_Q L_\alpha(t,x,\vv)m\,\d x\,\d t\\&=
      \int_Q L_{\hat\alpha}(t,x,\vv)m\,\d x\,\d t
    =\int_\Gamma\Big(\int_0^1 L_{\hat \alpha}(t,\gamma(t),\dot\gamma(t))\,\d t\Big)\,\d\eeta,
  \end{align*}
  so that \eqref{eq:255} holds.
  
  \smallskip\noindent\textbf{Claim 2: $(ii)\Rightarrow (iii)$}
  It is sufficient to apply Lemma \ref{le:regularization}
  and use the fact that $(\sfe_i)_\sharp\eeta'=\mu_i$:
  \begin{align*}
    \int_\Gamma\Big(\int_0^1 L_{\hat
          \alpha}(t,\gamma(t),\dot\gamma(t))\,\d t\Big)\,\d\eeta
    &= \int_\Gamma \Big(
      u^+_0(\gamma(0))-u^-_1(\gamma(1))\Big)\,\d\eeta
      =
           \int_{\R^d}u_0^+\,\d\mu_0-
      \int_{\R^d}u_1^-\,\d\mu_1
          \\&=
              \int_\Gamma \Big(
      u^+_0(\gamma(0))-u^-_1(\gamma(1))\Big)\,\d\eeta'
              \le \int_\Gamma\Big(\int_0^1 L_{\hat
          \alpha}(t,\gamma(t),\dot\gamma(t))\,\d t\Big)\,\d\eeta'
  \end{align*}
  
  \smallskip\noindent\textbf{Claim 3: $(iii)\Rightarrow (iv)$}
  Since $\hat M\alpha\in L^p(Q)$ we have
  $\int_\Gamma \int_0^1 \hat M\alpha(t,\gamma(t))\,\d
  t\d\eeta(\gamma)<\infty$ so that 
  $\eeta$ is concentrated on 
  $\Gamma_\alpha$.
  Conditions E.1, E.2 are then equivalent to say that 
  \begin{equation}
    \label{eq:273}
    \int_\Gamma \int_0^1 L_{\hat
      \alpha}(t,\gamma(t),\dot\gamma(t))\,\d t\,\d\eeta(\gamma)=
    \min \Big\{
          \int_{\R^d\times \R^d}c_{\hat\alpha}(x_0,x_1)\,\d\mmu:
          \mmu\in \cP(\R^d\times\R^d),\ 
          \pi^i_\sharp\mmu=\mu_i\Big\}.
  \end{equation}
  We argue by contradiction and we assume that there exists $\delta>0$
  and a plan
  $\mmu\in \cP(\R^d\times\R^d)$ with marginals $\mu_0,\mu_1$ such that 
  \begin{equation}
    \label{eq:258}
    \int_{\R^d\times \R^d}c_{\hat\alpha}(x_0,x_1)\,\d\mmu<
     \int_\Gamma \int_0^1 L_{\hat
      \alpha}(t,\gamma(t),\dot\gamma(t))\,\d t\,\d\eeta(\gamma)-\delta.
  \end{equation}
  By using the measurable selection theorem 
  stated in Lemma \ref{le:c-measurability}, we can find 
  a $\mmu$-measurable map $\oomega:X_\alpha
  \to \Gamma_\alpha$ 
  such that 
  for every $x_0,x_1$ the curve $\gamma=\oomega(x_0,x_1)\in
  \ac_2([0,1];\R^d)$ 
  satisfies
  \begin{displaymath}
    \gamma(i)=x_i,\ \int_0^1 \hat M\alpha(t,\gamma(t))\,\d t<\infty,\ 
    \int_0^1 L_{\hat\alpha}(t,\gamma(t),\dot\gamma(t))\le c_{\hat\alpha}(x_0,x_1)+\delta.
  \end{displaymath}
  It is clear that the plan $\eeta':=\oomega_\sharp\mmu$ contradicts
  \eqref{eq:272}.

  \smallskip\noindent\textbf{Claim 4: $(iv)\Rightarrow (i)$}

  Let $(m',\vv')$ be in $\CE 2p{Q;\mu_0,\mu_1}$. 
  By Theorem \ref{thm:superposition_principle} we can find 
  a plan $\eeta'\in \cP_{2,p}(\Gamma)$ tightened to $(m',\vv')$
  and thus satisfying $(\sfe_i)_\sharp\eeta'=\mu_i$,
  $\hat M\alpha(\cdot,\gamma)\in L^1(0,1)$ for $\eeta'$-a.e.~$\gamma$.
  Setting $\mmu':=(\sfe_0,\sfe_1)_\sharp \eeta'$ we have
  \begin{align*}
    \cL_\alpha(m',\vv')
    &=
    \int_Q L_\alpha(t,x,\vv')m'\,\d x\,\d t=
    \int_Q L_{\hat\alpha}(t,x,\vv')m'\,\d x\,\d t
    =
      \int_\Gamma \int_0^1
      L_{\hat\alpha}(t,\gamma(t),\dot\gamma(t))\,\d
      t\,\d\eeta'(\gamma)
    \\&\ge
        \int_{\R^d\times\R^d} c_{\hat\alpha}(\gamma(0),\gamma(1))\,\d\eeta'(\gamma)
        =\int_{\R^d\times\R^d} c_{\hat\alpha}(x_0,x_1)\,\d\mmu'(x_0,x_1)
        \ge \int_{\R^d\times\R^d} c_{\hat\alpha}(x_0,x_1)\,\d\mmu(x_0,x_1)
        \\&=
            \int_\Gamma \int_0^1
      L_{\hat\alpha}(t,\gamma(t),\dot\gamma(t))\,\d
            t\,\d\eeta(\gamma)
            =
             \int_Q L_{\hat\alpha}(t,x,\vv)m\,\d x\,\d t=\cL_\alpha(m,\vv).
  \end{align*}
  Thus $(m,\vv)$ minimizes the modified Lagrangian dynamic cost and we
  can apply Theorem \ref{thm:modified_Lagrangian_cost}.  
\end{proof}

\appendix
\section{Appendix}
\subsection{Minimax and duality}
\label{subsec:minimax}

Let $\A,\B$ be convex sets of some vector spaces and let us suppose that
$\B$ is endowed with some Hausdorff topology. Let $\cL:\A\times \B\to \R$
be
a saddle function satisfying
\begin{align}
  \label{eq:112}
  a\mapsto \cL(a,b)&\quad\text{is concave in $\A$ for every
                     $b\in \B$},\\
  \label{eq:113}
  b\mapsto\cL(a,b)&\quad\text{is convex in $\B$ for every $a\in \A$.}
\end{align}
It is always true that
\begin{equation}
  \label{eq:114}
  \adjustlimits\inf_{b\in \B}\sup_{a\in \A}\cL(a,b)\ge
  \adjustlimits\sup_{a\in \A}\inf_{b\in \B }\cL(a,b).
\end{equation}
The next result provides an important sufficient condition to
guarantee the equality in \eqref{eq:114}:
we use a formulation which is slightly more general than the statement
of \cite[Thm. 3.1]{Simons98}, but it follows by the same argument.
We reproduce here the main part of the proof for the easy of the reader.
\begin{theorem}[Von Neumann]
  \label{thm:VonNeumann}
  Let us suppose that \eqref{eq:112}, \eqref{eq:113} hold and that
  there exists $a_\star\in \A$ and $C_\star>\adjustlimits\sup_{a\in \A}\inf_{b\in \B}\cL(a,b)$ such that
    \begin{gather}
          \label{eq:115}
      \B_\star:=\Big\{b\in \B:\cL(a_\star,b)\le
      C_\star\Big\}\quad\text{is not empty and compact in $\B$,}\\
      \label{eq:140}
      \text{$b\mapsto\cL(a,b)$ is lower semicontinuous in $\B_\star$ for
        every $a\in \A$}.
    \end{gather}
  Then
  \begin{equation}
  \label{eq:114bis}
  \adjustlimits\min_{b\in \B}\sup_{a\in \A}\cL(a,b)=
  \adjustlimits\sup_{a\in \A}\inf_{b\in \B }\cL(a,b).
\end{equation}
\end{theorem}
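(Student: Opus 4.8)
The plan is to prove the nontrivial inequality $\adjustlimits\inf_{b\in\B}\sup_{a\in\A}\cL(a,b)\le\adjustlimits\sup_{a\in\A}\inf_{b\in\B}\cL(a,b)=:v$ (the reverse one being exactly \eqref{eq:114}), and simultaneously to check that the outer infimum is attained, so that it may be written as a minimum. The whole argument follows the scheme of \cite[Thm.~3.1]{Simons98}; I reproduce the skeleton and indicate where the coercivity hypothesis \eqref{eq:115} enters.

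First I would reduce everything to the compact set $\B_\star$. The function $\psi(b):=\sup_{a\in\A}\cL(a,b)$ is a supremum of maps that are lower semicontinuous on $\B_\star$ by \eqref{eq:140}, hence $\psi$ is lower semicontinuous on the compact $\B_\star$ and attains its minimum there; moreover $\psi(b)\ge\cL(a_\star,b)>C_\star>v$ for every $b\in\B\setminus\B_\star$. Consequently, if one shows $\min_{\B_\star}\psi\le v$, then this value is $<C_\star$, is not attained (nor approached) by $\psi$ outside $\B_\star$, and hence equals $\inf_{\B}\psi$; together with \eqref{eq:114} this gives \eqref{eq:114bis} and shows the infimum is a minimum. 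So the task becomes: for every $c$ with $v<c<C_\star$, produce $b^\ast\in\B_\star$ with $\cL(a,b^\ast)\le c$ for all $a\in\A$.

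Such a $b^\ast$ is precisely a point of $\bigcap_{a\in\A}K_a$, where $K_a:=\{b\in\B_\star:\cL(a,b)\le c\}$ is a closed, hence compact, subset of $\B_\star$ by \eqref{eq:140}. By compactness it suffices to verify the finite intersection property, and I would always throw $a_\star$ into the finite subfamilies (note $K_{a_\star}\neq\emptyset$, since $\inf_{\B}\cL(a_\star,\cdot)\le v<c<C_\star$ forces some $b$ with $\cL(a_\star,b)<c$, which then lies in $\B_\star$). Given $a_1=a_\star,a_2,\dots,a_n\in\A$, suppose for contradiction that $K_{a_1}\cap\dots\cap K_{a_n}=\emptyset$, i.e. $\max_i\cL(a_i,b)>c$ for all $b\in\B_\star$; combining with $\cL(a_\star,b)>C_\star>c$ on $\B\setminus\B_\star$ gives $\max_i\cL(a_i,b)>c$ for all $b\in\B$. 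Writing $\max_i\cL(a_i,b)=\sup_{\sigma}\sum_i\sigma_i\cL(a_i,b)$ over the standard simplex and using concavity \eqref{eq:112} (whence $\sum_i\sigma_i\cL(a_i,\cdot)\le\cL(\sum_i\sigma_i a_i,\cdot)$), the finite-dimensional minimax lemma — the compact convex simplex against $\B_\star$, with a function affine (so continuous and concave) in $\sigma$ and convex lower semicontinuous in $b$, handled by Hahn--Banach separation of disjoint closed convex subsets of $\B_\star$ (equivalently Helly, or Ky~Fan/Brouwer), and re-combined with $a_\star$ to keep the value above $c$ off $\B_\star$ — yields $a^\ast\in\A$ with $\cL(a^\ast,b)>c$ for all $b\in\B$. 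This contradicts $\inf_{\B}\cL(a^\ast,\cdot)\le v<c$, establishing the finite intersection property and hence the theorem.

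The hard part will be this last step, specifically the passage from ``$\cL(a^\ast,\cdot)>c$ on the compact $\B_\star$'' to ``$\cL(a^\ast,\cdot)>c$ on all of $\B$'': since $\cL(a,\cdot)$ is only lower semicontinuous on $\B_\star$ and is in general unbounded below on $\B$, one cannot run the finite-dimensional minimax on $\B$ directly. The remedy is to keep $a_\star$ in every finite subfamily and to exploit the bound $\cL(a_\star,\cdot)>C_\star>c$ on $\B\setminus\B_\star$ — that is, the coercivity \eqref{eq:115} — so that the convex combinations produced by the minimax lemma stay above $c$ on the complement of $\B_\star$ as well. Getting this interplay precise, together with the separation argument behind the finite-dimensional minimax, is exactly what is carried out in \cite[Thm.~3.1]{Simons98}, and I would follow that proof essentially verbatim.
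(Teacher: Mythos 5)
Your overall strategy tracks the paper's closely: both reduce to the compact sublevel set $\B_\star$, aim for the finite intersection property of the sublevel sets, always force $a_\star$ into the finite subfamilies, and invoke a finite\mbox{-}dimensional (simplex) minimax argument, namely Simons' Lemma~2.1. So there is no divergence in route, and you are right to single out the delicate point: the minimax argument run on the compact $\B_\star$ alone only controls $\cL(a^*,\cdot)$ on $\B_\star$, while the contradiction with $\inf_{\B}\cL(a^*,\cdot)\le v<c$ needs a bound on all of $\B$.

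Where I push back is on the fix you sketch. You propose that keeping $a_\star$ in the finite family makes ``the convex combinations produced by the minimax lemma stay above $c$ on the complement of $\B_\star$ as well.'' This does not follow: the optimal simplex weight $\sigma^*$ from the minimax on $\B_\star$ may assign zero (or arbitrarily small) weight to $a_\star$, and even if $\sigma^*_1>0$, for $b\in\B\setminus\B_\star$ the concavity estimate $\cL(a^*,b)\ge\sum_i\sigma^*_i\cL(a_i,b)$ is useless because the terms $\cL(a_i,b)$ for $i\ge2$ are completely uncontrolled off $\B_\star$ (nothing in \eqref{eq:112}--\eqref{eq:140} bounds them below). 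The correct interplay between the simplex lemma and the coercivity \eqref{eq:115} runs the other way: one applies the purely order-theoretic \emph{infimal} form of Simons' Lemma~2.1 directly on \emph{all of} $\B$, where no topology or semicontinuity is needed. Indeed, for every simplex weight $\sigma$, concavity in $a$ gives $\inf_{\B}\sum_i\sigma_i\cL(a_i,b)\le\inf_{\B}\cL(\sum_i\sigma_ia_i,b)\le v<c$, while convexity in $b$ supplies the requisite convexity-type hypothesis of the lemma (midpoint weights), so the lemma yields $\inf_{\B}\max_i\cL(a_i,b)\le v<c$. Only \emph{then} does coercivity enter: since $a_\star$ is among the $a_i$, $\max_i\cL(a_i,b)\ge\cL(a_\star,b)>C_\star>c$ on $\B\setminus\B_\star$, which forces the infimum over $\B$ to be approached inside $\B_\star$; lower semicontinuity \eqref{eq:140} on the compact $\B_\star$ then produces a point $b\in\B_\star$ with $\max_i\cL(a_i,b)<c$, i.e.\ a point of $\bigcap_i K_{a_i}$. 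With that step replaced, the rest of your argument (reduction to $\B_\star$, attainment of the minimum, and passage $c\downarrow v$) goes through.
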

\begin{proof}
  Let $s:=\adjustlimits\sup_{a\in \A}\inf_{b\in \B }\cL(a,b)$ and
  let $\B_a:=\{b\in \B:\cL(a,b)\le s\}$,
  $\B_{a\star}:=\{b\in \B:\cL(a,b)\le s\}$.
  If $A\subset \A$ is a collection
  containing $a_\star$ then
  \begin{displaymath}
    \B_A=\bigcap_{a\in A}\B_a=
    \bigcap_{a\in A}\B_{a\star}
  \end{displaymath}
  so that $\B_A$ is a (possibly empty) compact set.
  The thesis follows if we check that $\B_\A$ contains a point $\bar
  b$, since in that case
  $\adjustlimits\min_{b\in \B}\sup_{a\in \A}\cL(a,b)\le 
  \sup_{a\in \A}\cL(a,\bar b)\le s$ by construction.

  Since $\B_A$ are compact whenever $a_\star\in A$, it is sufficient
  to prove that for every finite collection $A=\{a_1,\cdots,a_n\}$
  containing $a_\star$ the intersection $B_A$ is not empty.
  To this aim, since $b\mapsto \cL(a_k,b)$ are convex functions,
  \cite[Lemma 2.1]{Simons98} yields
  \begin{align*}
    {\adjustlimits\min_{b\in \B_\star}\sup_{1\le k\le n}\cL(a_k,b)=}
    \min_{b\in \B_\star}\sum_{k=1}^N \nchi_k \cL(a_k,b)
  \end{align*}
  for a suitable choice of nonnegative coefficients $\nchi_k\in
  [0,1]$ with $\sum_{k=1}^n\nchi_k=1$. We thus get by concavity
  \begin{align*}
    {\min_{b\in \B_\star}\sum_{k=1}^n\cL(a_k,b)\le }
    \min_{b\in \B_\star}\cL(\sum_{k=1}^N\nchi_k
    a_k,b)\le s.
  \end{align*}
\end{proof}

\subsection{Convergence in measure}
\begin{lemma}\label{l:conv_meas}
Let $u_n, u \in L^0(\Omega,\frm)$, $n\in \N$. 
The following properties are equivalent:
\begin{itemize}
\item[(a)] $\lim_{n \uparrow + \infty} d(u_n,u) = 0$;
\item[(b)] $u_n$ converges to $u$ in $\varrho$-measure, i.e. 
\begin{equation}
  \label{eq:23}
  \text{for every $\varepsilon > 0:$}\quad
  \lim_{n \uparrow + \infty} \varrho\left( \lbrace x \in \Omega: |u_n(x) - u(x)| \geq \varepsilon \rbrace \right) = 0;
\end{equation}
\item[(c)] $u_n$ converges to $u$ in measure, according to \eqref{eq:24}.
\end{itemize}
\GGG
Moreover, if $\zeta:\R\to \R$ is any continuous and strictly
increasing map, $u_n$ converges to $u$ in measure if and only if 
$\zeta\circ u_n$ converges in measure to $\zeta\circ u$.
\end{lemma}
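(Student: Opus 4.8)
The plan is to prove the chain $(a)\Leftrightarrow(b)$ and $(b)\Leftrightarrow(c)$, and then to deduce the invariance under $\zeta$ by a subsequence argument that exploits the metrizability thereby obtained. First I would treat $(a)\Leftrightarrow(b)$, which only involves the probability measure $\varrho$. For $(a)\Rightarrow(b)$, Markov's inequality gives, for $0<\varepsilon<1$, $\varrho(\{|u_n-u|\ge\varepsilon\})=\varrho(\{|u_n-u|\wedge1\ge\varepsilon\})\le\varepsilon^{-1}d(u_n,u)\to0$, while for $\varepsilon\ge1$ one bounds the set by $\{|u_n-u|\ge1/2\}$. For $(b)\Rightarrow(a)$ one splits $\Omega$ into $\{|u_n-u|<\varepsilon\}$ and its complement, obtaining $d(u_n,u)\le\varepsilon\,\varrho(\Omega)+\varrho(\{|u_n-u|\ge\varepsilon\})=\varepsilon+\varrho(\{|u_n-u|\ge\varepsilon\})$, so that $\limsup_n d(u_n,u)\le\varepsilon$ for every $\varepsilon>0$.

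Next I would prove $(b)\Leftrightarrow(c)$, where one has to pass between $\varrho=\rho\,\frm$ and $\frm$. For $(c)\Rightarrow(b)$, fix $\varepsilon>0$ and write $\Omega=\bigcup_k\Omega_k$ with $\Omega_k\uparrow\Omega$ and $\frm(\Omega_k)<\infty$ ($\frm$ being $\sigma$-finite); since $\varrho(\Omega)=1$ one has $\varrho(\Omega\setminus\Omega_k)\to0$, and on each $\Omega_k$ one has $\varrho\le\frm$ because $\rho\le1$, whence $\varrho(\{|u_n-u|\ge\varepsilon\})\le\varrho(\Omega\setminus\Omega_k)+\frm(\{x\in\Omega_k:|u_n-u|\ge\varepsilon\})$; letting first $n\to\infty$ (using $(c)$ with $F=\Omega_k$) and then $k\to\infty$ gives the conclusion. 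For $(b)\Rightarrow(c)$ the only point to watch is that $\rho$ may be small on a set $F$ with $\frm(F)<\infty$: here I would use that $F_\delta:=\{x\in F:\rho(x)\ge\delta\}$ satisfies $\frm(F\setminus F_\delta)\to0$ as $\delta\downarrow0$, by continuity from above of $\frm$ (using $\frm(F)<\infty$ and the fact that $\rho>0$ everywhere, so $\bigcap_\delta(F\setminus F_\delta)=\emptyset$), whereas $\frm\le\delta^{-1}\varrho$ on $F_\delta$; therefore $\frm(\{x\in F:|u_n-u|\ge\varepsilon\})\le\frm(F\setminus F_\delta)+\delta^{-1}\varrho(\{|u_n-u|\ge\varepsilon\})$, and one concludes by letting first $n\to\infty$ and then $\delta\downarrow0$. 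This control of the degeneracy of $\rho$ is the only mildly delicate step, and I expect it to be the main (minor) obstacle.

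For the invariance under $\zeta$: by the equivalences just proved, convergence in measure is metrizable (by $d$, relative to the finite measure $\varrho$), so it suffices to use that $x_n\to x$ in a metric space if and only if every subsequence has a further subsequence converging to $x$. If $u_n\to u$ in measure, any subsequence still does, hence—by the Borel--Cantelli choice of indices $n_k$ with $\varrho(\{|u_{n_k}-u|>2^{-k}\})<2^{-k}$—admits a sub-subsequence converging $\varrho$-a.e.; continuity of $\zeta$ then gives $\zeta\circ u_{n_{k_j}}\to\zeta\circ u$ $\varrho$-a.e., and since $|\zeta\circ u_{n_{k_j}}-\zeta\circ u|\wedge1\le1\in L^1(\varrho)$, dominated convergence yields $d(\zeta\circ u_{n_{k_j}},\zeta\circ u)\to0$; the subsequence principle then forces $\zeta\circ u_n\to\zeta\circ u$ in measure. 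The converse is the same argument applied to $\zeta^{-1}$, which is continuous and strictly increasing on the open interval $\zeta(\R)$ where all the functions $\zeta\circ u_n$ and $\zeta\circ u$ take values, so that the compositions $\zeta^{-1}\circ(\zeta\circ u_n)=u_n$ are meaningful and the same $\varrho$-a.e.\ reasoning closes the proof.
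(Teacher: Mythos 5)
Your proof is correct and follows essentially the same route as the paper: the implication $(c)\Rightarrow(b)$ rests, as in the text, on $\varrho\le\frm$ and an exhaustion by sets of finite $\frm$-measure (you use an increasing family $\Omega_k\uparrow\Omega$ and a two-step limit, the paper uses a disjoint partition and a dominated convergence for the resulting series — the same idea with a different bookkeeping), and $(b)\Rightarrow(c)$ rests on the absolute continuity of $\frm\restr F$ with respect to $\varrho$, which the paper obtains by quoting the $\eps$--$\delta$ criterion for finite measures and which you prove directly by introducing the sets $F_\delta=\{\rho\ge\delta\}$ (using $\rho>0$ everywhere and $\frm(F)<\infty$ to get $\frm(F\setminus F_\delta)\downarrow 0$, and $\frm\le\delta^{-1}\varrho$ on $F_\delta$). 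For $(a)\Leftrightarrow(b)$ and the $\zeta$-invariance the paper simply cites Bogachev; your Markov/splitting argument and the subsequence--Borel--Cantelli--a.e.~argument (including the remark that $\zeta^{-1}$ is continuous on the open interval $\zeta(\R)$, which contains all the relevant values) are exactly what those references contain. So the proposal is a correct, more self-contained rendering of the paper's proof rather than a genuinely different one.
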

\begin{proof}
\GGG 
The equivalence between (a) and (b) is well known (see
e.g.~\cite[4.7.60]{BogachevI-07}).  

(b) $\Rightarrow$ (c): 
 since $\rho>0$ $\frm$-a.e., $\frm$ is
 absolutely continuous w.r.t.~$\varrho$ with density $\rho^{-1}$, so that the finite measure 
 $\frm\restr F$ satisfies \cite[Theorem 2.5.7]{BogachevI-07}
 \begin{displaymath}
   \forall\, \eta>0\quad\exists\,\delta>0:\quad
   \frm(A)<\eta\quad\text{if }A\subset F,\ \varrho(A)<\delta.
 \end{displaymath}
 \eqref{eq:23} then yields \eqref{eq:24}.
 
(c) $\Rightarrow$ (b): we consider a measurable 
 partition $(F_k)_{k\in \N}$ of $\Omega$ with $\frm(F_k)<\infty$ for
 every $k\in \N$.
 If \eqref{eq:24} holds, setting $A_n:=\{x\in
 \Omega:|u_n(x)-u(x)|>\eps\}$ we get for every $k\in \N$
 \begin{displaymath}
   \lim_{n\to\infty}\varrho(A_n\cap F_k)=0
 \end{displaymath}
 since $\varrho\le \frm$, so that 
 \begin{displaymath}
   \lim_{n\to\infty}\varrho(A_n)=
   \lim_{n\to\infty}\sum_{k\in \N}\varrho(A_n\cap F_k)=
   \sum_{k\in \N}\lim_{n\to\infty}\varrho(A_n\cap F_k)=0
 \end{displaymath}
 where the interchange between the series and the integral is
 justified by the uniform domination
 \begin{displaymath}
   \varrho(A_n\cap F_k)\le \varrho(F_k),\quad
   \sum_{k\in \N}\varrho(F_k)=1.
 \end{displaymath}
 The last statement of the Lemma 
 concerning the composition map $f\mapsto \zeta\circ f$ 
 is well known (see e.g.~\cite[Cor.~2.2.6]{BogachevI-07}).
 \end{proof}

\subsection{Positive bilinear functionals}
\begin{theorem}
  \label{thm:bilinear-Riesz}
  Let $T:C_c(\R)\times C_c(\R^d)\to \R$ be a bilinear map which is
  positive on pairs of positive functions. 
  Then there exists a positive Radon measure $\vartheta$ on $\R\times
  \R^d$ such that $T(\phi,\psi)=\int_{\R\times \R^d}
  \varphi(t)\psi(x)\,\d\vartheta(t,x)$ for every $\varphi\in
  C^0_c(\R)$, $\psi\in C^0_c(\R^d)$.
\end{theorem}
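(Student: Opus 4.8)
The plan is to reduce the statement to the classical Riesz--Markov representation theorem on the locally compact, $\sigma$-compact space $\R\times\R^d$, after promoting $T$ to a genuine positive linear functional on $C_c(\R^{d+1})$. First I would let $\mathscr{A}\subset C_c(\R^{d+1})$ be the linear span of the ``separable'' functions $(t,x)\mapsto\varphi(t)\psi(x)$, $\varphi\in C_c(\R)$, $\psi\in C_c(\R^d)$; since $\mathscr{A}$ is, via an injective identification, the algebraic tensor product $C_c(\R)\otimes C_c(\R^d)$ and $T$ is bilinear, $T$ defines a unique linear functional $\Lambda$ on $\mathscr{A}$ with $\Lambda(\varphi\otimes\psi)=T(\varphi,\psi)$ --- equivalently, $\sum_i\varphi_i(t)\psi_i(x)\equiv0$ forces $\sum_iT(\varphi_i,\psi_i)=0$, which is immediate from bilinearity.

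The main obstacle --- the only non-routine point --- is showing that $\Lambda$ is positive on $\mathscr{A}$, i.e.\ $f=\sum_i\varphi_i\otimes\psi_i\ge0$ implies $\Lambda(f)\ge0$; this is not obvious because a nonnegative element of $\mathscr{A}$ need not be a nonnegative combination of products of nonnegative functions. To handle it I would first, for each $\psi\ge0$, use Riesz--Markov to write $T(\varphi,\psi)=\int_\R\varphi\,\d\mu_\psi$ for a positive Radon measure $\mu_\psi$ on $\R$, observe that $\psi\mapsto\mu_\psi$ is monotone and additive, hence extends linearly to all $\psi\in C_c(\R^d)$, and note that $\psi_0\ge|\psi|$ gives $|\mu_\psi|\le\mu_{\psi_0}$. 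Given $f\ge0$, after replacing $\{\psi_i\}$ by a basis $\{e_1,\dots,e_r\}$ of their linear span I would write $f=\sum_k\phi_k\otimes e_k$, pick $\psi_0\ge|e_k|$ for all $k$, and use Radon--Nikodym to write $\mu_{e_k}=g_k\,\mu_{\psi_0}$ with $|g_k|\le1$ $\mu_{\psi_0}$-a.e., so that $\Lambda(f)=\int_\R\big(\sum_k g_k(t)\phi_k(t)\big)\,\d\mu_{\psi_0}(t)$. The key observation is that $\phi(t)=(\phi_k(t))_k$ lies in the closed convex cone $\mathcal C=\{c:\sum_k c_ke_k\ge0\}$ for every $t$ (because $f\ge0$), while $g(t)=(g_k(t))_k$ lies in the dual cone $\mathcal C^*$ for $\mu_{\psi_0}$-a.e.\ $t$: indeed, for each $c\in\mathcal C$ the function $\sum_kc_ke_k$ is nonnegative, so $\sum_kc_k\mu_{e_k}$ is a positive measure and $\sum_kc_kg_k\ge0$ a.e., and one concludes by intersecting these null sets over a countable dense subset of $\mathcal C$. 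Then $g(t)\cdot\phi(t)\ge0$ a.e.\ and hence $\Lambda(f)\ge0$.

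Once positivity on $\mathscr{A}$ is in hand, I would extend $\Lambda$ to $C_c(\R^{d+1})$ by density: $\mathscr{A}$ is a point-separating subalgebra containing, for every compact set, an element $\ge1$ on it, so by Stone--Weierstrass followed by multiplication by a fixed product cutoff $\chi_1\otimes\chi_2$ every $g\in C_c(\R^{d+1})$ is a uniform limit of $f_n\in\mathscr{A}$ with supports in a fixed compact set; positivity yields $|\Lambda(f_n)-\Lambda(f_m)|\le\|f_n-f_m\|_\infty\,\Lambda(e)$ for a suitable $e\in\mathscr{A}^+$, so $\Lambda(g):=\lim_n\Lambda(f_n)$ is well defined (interleaving approximating sequences), linear, and still positive. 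Finally, since $\R\times\R^d$ is locally compact and $\sigma$-compact, Riesz--Markov produces a positive Radon measure $\vartheta$ with $\Lambda(g)=\int g\,\d\vartheta$, and evaluating on $g=\varphi\otimes\psi$ gives $T(\varphi,\psi)=\int_{\R\times\R^d}\varphi(t)\psi(x)\,\d\vartheta(t,x)$, as desired. (The same argument works verbatim with $\R^d$ replaced by any locally compact $\sigma$-compact metric space, which is the form actually used in the body with $d$ replaced by $d+1$.)
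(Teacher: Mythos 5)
Your proof is correct, and the central step — proving positivity of the induced linear functional on the algebra of separable functions — is handled by a genuinely different argument from the paper's. The paper fixes $\zeta_i$ and replaces each with a discrete convolution approximation $\zeta_{i,\tau}(t)=\tfrac KN\sum_k\zeta_i(k/N)h_\tau(t-k/N)$; after rearranging the finite sums, the positivity of $T$ on products of nonnegative functions yields $\sum_k T_{h_\tau(\cdot-k/N)}(\psi(k/N,\cdot))\ge0$ directly, and the error terms are controlled by a cutoff $\zeta_K$. This is a hands-on, self-contained Riemann-sum argument that never invokes measures in the intermediate step. You instead first apply Riesz--Markov in the $t$-variable for each fixed nonnegative $\psi$, extend $\psi\mapsto\mu_\psi$ linearly, dominate the relevant signed measures by $\mu_{\psi_0}$, and pass to Radon--Nikodym densities $g_k$; positivity then becomes the pointwise statement that $\phi(t)\in\mathcal C$ and $g(t)\in\mathcal C^*$ $\mu_{\psi_0}$-a.e., which is convex biduality in $\R^r$ together with intersecting countably many null sets. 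What your route buys is conceptual clarity and reuse of standard machinery (Riesz--Markov in one variable, Radon--Nikodym, dual cones); what the paper's route buys is that the whole argument is elementary and quantitative, with explicit error bounds \eqref{eq:167}--\eqref{eq:169}, and avoids any selection or null-set bookkeeping. Both are valid, and your final density/extension step via Stone--Weierstrass and a dominating element of $\mathscr A^+$ matches the paper's concluding reasoning.
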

\begin{proof}
  Let us briefly sketch the proof.  To this aim, we denote by
  $y=(t,x)$ the element in $\R^{d+1}$ and we first define a linear
  functional $T$ on functions $\psi\in C^0_c(\R\times \R^{d+1})$
  admitting the representation
  \begin{equation}
    \label{eq:165}
    \psi(u,y)=\sum_{i=1}^I \zeta_i(u)\varphi_i(y)\quad
    \zeta_i\in C^0_c(\R),\ \varphi_i\in C^0_c(\R^{d+1}).
  \end{equation}
  The natural definition would be
  \begin{equation}
    \label{eq:166}
    T(\psi):=\sum_{i=1}^IT_{\zeta_i}(\varphi_i).
  \end{equation}
  Let us check that if $\psi\ge0$ then the above expression is positive:
  this would imply that \eqref{eq:166} is independent of the
  representation of $\psi$ given by \eqref{eq:165}.

  In order to prove this property, let us consider a nonnegative
  symmetric regularization kernel $h\in C^\infty_c(\R)$ with
  $\supp(h)\subset (-1/2,1/2)$, $\int_\R h(t)\,\d t=1$,
  $h_\tau(t):=\tau^{-1}h(t/\tau)$.  We select an integer $K>0$
  sufficiently big so that $\supp(\zeta_i)\subset (-K+1,K-1)$ for
  every $i=1,2,\ldots,I$ and we fix a nonnegative function
  $\zeta_K\in C^0_c(\R)$ such that $\zeta_K(t)\equiv 1$ on $K$.

  For every $\tau\in (0,1)$ we set
  \begin{equation}
    \label{eq:168}
    \zeta_{i,\tau}(t):=\frac K{N}\sum_{k=-NK}^{NK} \zeta_i
    (k/N)h_\tau(t-k/N).
  \end{equation}
  Notice that $\zeta_{i,\tau}$ is supported in $(-K+1/2,K-1/2)$;
  moreover, since $\lim_{\tau\downarrow0}\zeta_i\ast h_\tau=\zeta_i$
  uniformly, $\zeta_i$ are uniformly continuous and
  $\supp(\zeta_i\ast h_\tau)\subset (-K+1/2,K-1/2)$ for $\tau<1$, for
  every $\eps>0$ we can find $N$ sufficiently big so that
  \begin{equation}
    \label{eq:167}
    \sup_{t\in \R}\Big|\zeta_{i,\tau}(t)-\zeta_i(t)\Big|\le \eps,\quad
    \supp(\zeta_{i,\tau}-\zeta_i)\subset (-K+1/2,K-1/2),\quad
    -\eps \zeta_K\le |\zeta_{i,\tau}-\zeta_i|\le \eps \zeta_K.
  \end{equation}
  It follows by the positivity of $T_\cdot$ that for every nonnegative
  $\varphi\in C^0_c(\R^{d+1})$
  \begin{equation}
    \label{eq:169}
    -\eps T_{\zeta_K}(\varphi)\le
    |T_{\zeta_{i,\tau}}(\varphi)-T_{\zeta_i}(\varphi)|\le 
    \eps T_{\zeta_K}(\varphi).
  \end{equation}
  On the other hand
  \begin{align*}
    \sum_{i=1}^I T_{\zeta_{i,\tau}}(\varphi_i)
    &=
      \sum_{i=1}^I \sum_{k=-KN}^{KN}
      \zeta_i(k/N)T_{h_\tau(\cdot-k/N)}(\varphi_i)
      =\sum_{k=-KN}^{KN}\sum_{i=1}^I 
      \zeta_i(k/N)T_{h_\tau(\cdot-k/N)}(\varphi_i)
    \\&=
        \sum_{k=-KN}^{KN}
        T_{h_\tau(\cdot-k/N)}\Big(\sum_{i=1}^N(\zeta_i(k/N)\varphi_i)\Big)
        =\sum_{k=-KN}^{KN}T_{h_\tau(\cdot-k/N)}\Big(\psi(k/N,\cdot)\Big)\ge0
  \end{align*}
  where in the last inequality we used the fact that $h$ is
  nonnegative and $y\mapsto \psi(k/N,y)\ge0$ by assumption.  Combining
  the last inequality with \eqref{eq:169} we conclude that
  \eqref{eq:166} defines a positive linear functionals on the algebra
  of functions admitting the decomposition \eqref{eq:165}.  Since this
  algebra is uniformly dense in $C^0_c(\R)$ we deduce that there
  exists a unique Radon measure $\vartheta$ on $\R\times \R^{d+1}$
  representing $T$ as in \eqref{eq:164}.

\end{proof}

\bibliography{biblio}
\bibliographystyle{plain}

\end{document}